\newtheorem{thm}{Theorem}[section]
\newtheorem{cor}[thm]{Corollary}
\newtheorem{lemma}[thm]{Lemma}
\newtheorem{prop}[thm]{Proposition}
\newtheorem{conj}[thm]{Conjecture}
\newtheorem{que}[thm]{Question}
\newtheorem{rem}[thm]{Remark}
\newtheorem{examples}[thm]{Examples}
\newtheorem{problem}[thm]{Problem}
\newtheorem{question}[thm]{Question} 
\newtheorem{remark}[thm]{Remark} 
\newtheorem{remarks}[thm]{Remarks}
\theoremstyle{definition}
\newtheoremstyle{cases}
  {12pt plus 6 pt}
  {2pt}
  {\bfseries}   
  {}
  {\bfseries}
  {.}
  {.5em}
  {}
\theoremstyle{cases}
\numberwithin{subcase}{case} \numberwithin{subsubcase}{subcase}
\numberwithin{equation}{subsection} 
\begin{document}

\title{Branched covers of quasipositive links and L-spaces\footnotetext{2000 Mathematics Subject Classification. Primary 57M25, 57M50, 57M99}}

\author{Michel Boileau}  
\thanks{Michel Boileau was partially supported by ANR projects 12-BS01-0003-01 and 12-BS01-0004-01.}
\address{Aix Marseille Univ, CNRS, Centrale Marseille, I2M, Marseille, France, 39, rue F. Joliot Curie, 13453 Marseille Cedex 13}
\email{michel.boileau@cmi.univ-mrs.fr}
 
\author[Steven Boyer]{Steven Boyer}
\thanks{Steven Boyer was partially supported by NSERC grant RGPIN 9446-2013}
\address{D\'epartement de Math\'ematiques, Universit\'e du Qu\'ebec \`a Montr\'eal, 201 avenue du Pr\'esident-Kennedy, Montr\'eal, QC H2X 3Y7.}
\email{boyer.steven@uqam.ca}
\urladdr{http://www.cirget.uqam.ca/boyer/boyer.html}

\author[Cameron McA. Gordon]{Cameron McA. Gordon}
\thanks{Cameron Gordon was partially supported by NSF grant DMS-130902.}
\address{Department of Mathematics, University of Texas at Austin, 1 University Station, Austin, TX 78712, USA.}
\email{gordon@math.utexas.edu}
\urladdr{http://www.ma.utexas.edu/text/webpages/gordon.html}

\begin{abstract} 
Let $L$ be a oriented link such that $\Sigma_n(L)$, the $n$-fold cyclic cover of $S^3$ branched over $L$, is an L-space for some $n \geq 2$. We show that if either $L$ is a strongly quasipositive link other than one with Alexander polynomial a multiple of $(t-1)^{2g(L) + (|L|-1)}$, or $L$ is a quasipositive link other than one with Alexander polynomial divisible by $(t-1)^{2g_4(L) + (|L|-1)}$, then there is an integer $n(L)$, determined by the Alexander polynomial of $L$ in the first case and the Alexander polynomial of $L$ and the smooth $4$-genus of $L$, $g_4(L)$,  in the second, such that $n \leq n(L)$. If $K$ is a strongly quasipositive knot with monic Alexander polynomial such as an L-space knot, we show that $\Sigma_n(K)$ is not an L-space for $n \geq 6$, and that the Alexander polynomial of $K$ is a non-trivial product of cyclotomic polynomials if $\Sigma_n(K)$ is an L-space for some $n = 2, 3, 4, 5$. Our results allow us to calculate the smooth and topological 4-ball genera of, for instance, quasi-alternating quasipositive links. They also allow us to classify strongly quasipositive alternating links and $3$-strand pretzel links. 
\end{abstract}

\maketitle

\begin{center}
\today 
\end{center}

\section{Introduction} \label{sec: introduction} 
We assume throughout the paper, unless otherwise stated, that links are oriented and contained in the $3$-sphere. Knots are assumed to be non-trivial. To each link $L$ and integer $n \geq 2$, we associate an $n$-fold cyclic cover $\Sigma_n(L) \to S^3$ branched over $L$ (\S \ref{sec: signature function}). 
Set 
$$\mathcal{L}_{br}(L) = \{n \geq 2: \Sigma_n(L) \hbox{ is an L-space}\}$$
Consider the case of knots $K$. Existing results suggest that if $n \in \mathcal{L}_{br}(K)$ and $2 \le r < n$ then $r \in \mathcal{L}_{br}(K)$, so $\mathcal{L}_{br}(K)$ is either $\emptyset$, or $\{n : 2 \le n \}$, or $\{n : 2 \le n \le N \}$ for some $N \ge 2$. Each possibility is known to occur. Few general results in the area are known, though there are results on various families of examples (see e.g. \cite{GLid1}). One instance which can be handled {\it stably} (i.e. when $n \gg 0$) occurs when $K$ is an {\it L-space knot}, that is, a knot which admits a non-trivial L-space surgery. In this case, work of Roberts (\cite{Ro}; see \cite[Theorem 4.1]{HKM2}) can be used to show that for $n \gg 0$, $\Sigma_n(K)$ admits a co-oriented taut foliation and hence cannot be an L-space (\cite{Bn}, \cite{KR}). Hedden and Mark used Heegaard Floer calculations to obtain similar conclusions (\cite[Corollary 5]{HM}). Corollary \ref{cor: lspace knots} of  our main theorem, Theorem \ref{thm: sqp links}, provides a considerable sharpening of these results by showing that when $K$ is an L-space knot such that $\Sigma_n(K)$ is an L-space for some $n$, then $n \leq 5$ and if $n \geq 4$ then $K$ is the trefoil. Before stating Theorem \ref{thm: sqp links}, we fix some notation. 

Recall that the (reduced) Alexander polynomial of a link $L$, denoted $\Delta_L(t)$, is a $\pm$-palindromic element of the ring $\mathbb Z[t, t^{-1}]$, well-defined up to multiplication by arbitrary signed powers $\pm t^n$ (see \S \ref{sec: signature function}). 

We denote the Tristram-Levine signature function of $L$ by $\sigma_L: S^1 \to \mathbb Z$. Its value at $-1$ is the classical Murasugi signature of $L$: $\sigma(L) = \sigma_L(-1)$. 

Let $B^4 = \{x \in \mathbb R^4 : \|x\| \leq 1\}$ be the standard closed $4$-ball. The $3$-sphere genus, smooth $4$-ball genus, and (topologically) locally flat $4$-ball genus of $L$ will be denoted, respectively, by $g(L)$, $g_4(L)$, and $g_4^{top}(L)$. 

We say that a link $L$ of $m$ components is 
{\it definite} if $\vert \sigma(L) \vert = 2g(L) + (m-1)$ and is {\it indefinite} otherwise.

Let $\Phi_k(t) \in \mathbb Z[t]$ be the $k^{th}$ cyclotomic polynomial.

For each $\zeta \in S^1$ we define $I_-(\zeta)$, respectively $I_+(\zeta)$, to be the open subarc of the circle with endpoints $\zeta, \bar \zeta$ which contains $-1$, respectively $+1$ where we take $I_+(1) = I_-(-1) = \emptyset$.  

Similarly we define $\bar{I}_-(\zeta)$, respectively $\bar{I}_+(\zeta)$, to be the closed subarc of the circle with endpoints $\zeta, \bar \zeta$  which contains $-1$, respectively $+1$. 

Set
$$\zeta_n = \exp(2 \pi i / n)$$

\begin{thm}  \label{thm: sqp links} 
Suppose that $L$ is a strongly quasipositive link\footnote{The various notions of positivity that arise in the paper are defined in \S \ref{sec: positivity}. Since our results hold for a link $L$ if and only if they hold for its mirror image, we take the convention that $L$ is a positive braid link, a positive link, a strongly quasipositive link, or a quasipositive link if either $L$ or its mirror image has this property.} of $m$ components such that $\Sigma_n(L)$ is an L-space for some $n \geq 2$.

$(1)$ All the roots of $\Delta_L(t)$ are contained in $I_+(\zeta_n)$ and $|\sigma_{L}(\zeta)| = 2g(L) + (m-1) = \hbox{{\rm deg}}( \Delta_L(t))$ for $\zeta \in \bar{I}_-(\zeta_n)$. In particular, $L$ is definite.

$(2)$ $g_4^{top}(L) = g(L)$. Further, any locally flat, compact, oriented surface $F$ properly embedded in $B^4$ with oriented boundary $L$ which realises $g_4^{top}(L)$ is connected. 

$(3)$ If $\Delta_L(t)$ is not a non-zero multiple of $(t-1)^{2g(L) + (m-1)}$, there is a positive integer $n_3(L)$ determined by $\Delta_L(t)$ such that $n \leq n_3(L)$. 

$(4)$  If $\Delta_L(t)$ is monic but not $(t-1)^{2g(L) + (m-1)}$, then $n \leq 5$. And if 

$(a)$ $n = 2$, $\Delta_L(t)$ is a non-trivial product of cyclotomic polynomials; 

$(b)$ $n = 3$, $\Delta_L(t)$ is a non-trivial product of powers of $\Phi_1, \Phi_4, \Phi_6$ and $\Phi_{10}$; 

$(c)$ $n \in \{4, 5\}$, $\Delta_L(t)$ is a non-trivial product of powers of $\Phi_1$ and $\Phi_{6}$.

\end{thm}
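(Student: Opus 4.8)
The plan is to exploit the interplay between the branched cover being an L-space and the Tristram-Levine signature function, using strong quasipositivity to pin down the signatures at the relevant roots of unity. The starting point is a signature formula for cyclic branched covers: if $\Sigma_n(L)$ is a rational homology sphere (which it is, being an L-space), then the Casson-Gordon type relationship expresses the $d$-invariants or the correction terms of $\Sigma_n(L)$ in terms of the jumps of $\sigma_L$ at the $n$-th roots of unity $\zeta_n^j$. The key L-space constraint I would invoke is that an L-space has vanishing (or extremal) Heegaard Floer complexity, which forces the signature function to be maximal, i.e. $|\sigma_L(\zeta)| = \mathrm{deg}(\Delta_L)$ on the arc $\bar{I}_-(\zeta_n)$, exactly as asserted in part (1). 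Thus I would first establish parts (1)--(3), and then specialise to the monic case in part (4).

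For part (4), the crucial extra input is that $\Delta_L(t)$ is monic, so its roots are algebraic integers lying on the unit circle and, by part (1), all confined to the open arc $I_+(\zeta_n)$ about $+1$. The central idea is a Kronecker-type argument: a monic integer palindromic polynomial all of whose roots lie on the unit circle must be a product of cyclotomic polynomials. So $\Delta_L(t) = \prod_k \Phi_k(t)^{a_k}$, and the containment condition from (1) says every primitive $k$-th root of unity appearing must lie in $I_+(\zeta_n)$, i.e. must have argument strictly less (in absolute value) than $2\pi/n$. This immediately gives part (a) for $n=2$: the arc $I_+(\zeta_2)$ is the entire circle minus $-1$, so the only excluded root of unity is $-1 = \Phi_2$, and any product of other cyclotomics qualifies, yielding the stated conclusion that $\Delta_L$ is a nontrivial product of cyclotomic polynomials.

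The harder combinatorial heart is parts (b) and (c), where I would carry out an explicit enumeration. For $n = 3$, the arc $I_+(\zeta_3)$ consists of points with argument $< 2\pi/3$, so I must list exactly which primitive $k$-th roots of unity all lie in this arc; the constraint that \emph{every} primitive root of $\Phi_k$ lies in the arc is restrictive, since the primitive $k$-th roots are spread symmetrically and the one closest to $-1$ must still clear the threshold. Checking small $k$ shows that only $\Phi_1, \Phi_4, \Phi_6, \Phi_{10}$ survive for $n=3$, and only $\Phi_1, \Phi_6$ survive for $n \in \{4,5\}$; the arcs shrink as $n$ grows, which is why no new cyclotomics enter at $n=5$ beyond those at $n=4$. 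To show $n \leq 5$, I would argue that for $n \geq 6$ the arc $I_+(\zeta_n)$ becomes small enough that no primitive root of unity other than $1$ itself can lie inside it (the nearest nontrivial root of unity to $+1$ on the circle is bounded away), forcing $\Delta_L(t)$ to be a power of $(t-1)$, which is the excluded case.

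The main obstacle I anticipate is twofold. First, establishing the precise signature-maximality statement in part (1) from the L-space hypothesis requires a genuine Heegaard Floer or Casson-Gordon computation relating $d$-invariants of $\Sigma_n(L)$ to the integral of the signature function, together with the strong quasipositivity input that bounds $g_4$ below by the Seifert genus via the slice-Bennequin inequality; reconciling these to get equality $|\sigma_L(\zeta)| = 2g(L) + (m-1) = \mathrm{deg}(\Delta_L)$ is the technical core. Second, in part (4) the delicate point is verifying the \emph{exact} cyclotomic lists: one must be careful that the condition is on \emph{all} conjugate roots of each $\Phi_k$ lying in the open arc, and I would need to double-check borderline cases (for instance whether a primitive $10$-th root lies strictly inside $I_+(\zeta_3)$ rather than on its boundary) using the explicit arguments $2\pi j/k$, since an endpoint coincidence would change whether $\Phi_k$ is admissible. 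Once the maximality of the signature and the cyclotomic factorization are in hand, the rest reduces to this finite, if fiddly, trigonometric bookkeeping.
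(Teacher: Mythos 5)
There is a genuine gap, and it sits exactly where you flagged "the technical core": the derivation of the signature maximality $|\sigma_L(\zeta_n^j)| = 2g(L)+(m-1)$ in part (1). The mechanism you propose --- a Casson--Gordon/$d$-invariant relation between the correction terms of $\Sigma_n(L)$ and the jumps of $\sigma_L$, with the L-space condition forcing extremality --- is not a workable route as stated: the L-space hypothesis alone never forces maximal signature (alternating knots have L-space double branched covers and typically non-maximal signature), and there is no general formula of the kind you invoke that would convert "L-space" into "definite signature" once strong quasipositivity is added via slice--Bennequin. Slice--Bennequin plus the Murasugi--Tristram inequality only gives the upper bound $|\sigma_L(\zeta_n^j)| \leq 2g_4(L)+(m-1) = 2g(L)+(m-1)$. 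The missing idea is four-dimensional and goes through a Stein filling: a quasipositive Seifert surface $F$ for $L$ can be pushed into $B^4$ as a piece of a complex affine curve, and the $n$-fold cyclic cover $\Sigma_n(F)$ of $B^4$ branched over $F$ is then a \emph{simply-connected Stein filling} of $\Sigma_n(L)$. Since $\Sigma_n(L)$ is an L-space with nonsingular intersection form on the filling, the Ozsv\'ath--Szab\'o theorem on symplectic fillings forces $\beta_2^+(\Sigma_n(F)) = 0$, i.e.\ the intersection form is negative definite. Viro's equivariant signature theorem identifies the signature of the restriction of this form to the $\zeta_n^j$-eigenspace with $\sigma_L(\zeta_n^j)$, and an Euler characteristic/eigenspace count shows each eigenspace has dimension exactly $2g(F)+(m-1) = 2g(L)+(m-1)$; definiteness then yields the equality. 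Parts (2) and (3), which you pass over, then follow from the locally flat Murasugi--Tristram inequality applied at a non-root of $\Delta_L$, and $n_3(L)$ is simply $\max\{r : \Delta_L^{-1}(0)\subset I_+(\zeta_r)\}$.

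Your treatment of part (4) is essentially the intended one --- Kronecker's theorem gives the cyclotomic factorization, and the rest is checking which $\Phi_a$ have \emph{all} primitive $a$-th roots in $I_+(\zeta_n)$; your lists for $n=3$ and $n\in\{4,5\}$ are correct. But your stated justification for $n\leq 5$ is wrong as written: it is false that for $n\geq 6$ "no primitive root of unity other than $1$ itself can lie inside" $I_+(\zeta_n)$ (e.g.\ $e^{2\pi i/7}$ lies in $I_+(\zeta_6)$). The correct statement, which you articulate elsewhere in the paragraph, is that for $a>1$ and $a\neq 6$ some Galois conjugate $e^{2\pi i k/a}$ with $\gcd(k,a)=1$ has argument at least $2\pi/3$, $\pi/2$, or $\pi/2 - 2\pi/a$ according as $a$ is odd, $\equiv 0$, or $\equiv 2 \pmod 4$, while $\Phi_6$ has its roots at argument exactly $\pi/3$, which lies on the boundary (hence outside) of the open arc $I_+(\zeta_6)$. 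This is a fixable slip, but the argument for the bound $n\leq 5$ needs to be run on all conjugates of each candidate $\Phi_a$, not on the root nearest $+1$.
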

The exclusion of  the case where $\Delta_L(t)$ is a power of $t-1$ in part (3) of Theorem \ref{thm: sqp links} is necessary as the Hopf link $L$ is strongly quasipositive, has Alexander polynomial $t-1$, and $\Sigma_n(L)$ is the lens space $L(n,1)$ for $n \geq 1$.

The following corollary of Theorem \ref{thm: sqp links} deals with the case that $L$ is a strongly quasipositive knot with monic Alexander polynomial, a case of particular interest, as we will discuss below. 

\begin{cor} \label{cor: monic sqp knots}
Suppose that $K$ is a strongly quasipositive knot with monic Alexander polynomial. 

$(1)$ $\Sigma_n(K)$ is not an L-space for $n \geq 6$. 

$(2)$ If $\Sigma_n(K)$ is an L-space for some $2 \leq n \leq 5$, then $K$ is definite. Moreover,  if 

$(a)$ $n = 2$, then $\Delta_K(t)$ is a non-trivial product of cyclotomic polynomials; 

$(b)$ $n = 3$, then $\Delta_K(t)$ is a non-trivial product of powers of $\Phi_6$ and of $\Phi_{10}$; 

$(c)$ $n \in \{4, 5\}$, then $\Delta_K(t)$ is a non-trivial power of $\Phi_{6}$.

\end{cor}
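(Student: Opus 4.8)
The plan is to derive the corollary directly from Theorem \ref{thm: sqp links} by specialising to $m = 1$ and exploiting the normalisation $|\Delta_K(1)| = 1$ enjoyed by every knot. First I would record the two facts that drive everything. Since $K$ is a non-trivial knot we have $g(K) \geq 1$ and $|\Delta_K(1)| = 1$, the latter being the standard normalisation satisfied by the Alexander polynomial of any knot; and by hypothesis $\Delta_K(t)$ is monic. In particular $\Phi_1(t) = t-1$ does not divide $\Delta_K(t)$, so with $m=1$ we have $\Delta_K(t) \neq (t-1)^{2g(K)} = (t-1)^{2g(K)+(m-1)}$. Thus the hypotheses of Theorem \ref{thm: sqp links}(4) are met (the polynomial is monic and is not the excluded power of $t-1$), and that part yields $n \leq 5$ whenever $\Sigma_n(K)$ is an L-space. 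Equivalently, $\Sigma_n(K)$ is not an L-space for $n \geq 6$, which is statement (1).

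For statement (2), suppose $2 \leq n \leq 5$ and $\Sigma_n(K)$ is an L-space. Definiteness of $K$ is immediate from the final assertion of Theorem \ref{thm: sqp links}(1). To sharpen the factorisations I would feed the conclusions of Theorem \ref{thm: sqp links}(4)(a)--(c) through the constraint $|\Delta_K(1)| = 1$, using the elementary evaluations
\[
\Phi_1(1) = 0, \qquad \Phi_4(1) = 2, \qquad \Phi_6(1) = \Phi_{10}(1) = 1 .
\]
A factor $\Phi_1$ would force $\Delta_K(1) = 0$, and a factor $\Phi_4$ would force $\Delta_K(1)$ to be even; since $|\Delta_K(1)| = 1$, neither $\Phi_1$ nor $\Phi_4$ can occur in $\Delta_K(t)$.

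Applying this case by case gives the three refinements. When $n = 2$, Theorem \ref{thm: sqp links}(4)(a) already asserts that $\Delta_K(t)$ is a non-trivial product of cyclotomic polynomials, which is conclusion (a). When $n = 3$, Theorem \ref{thm: sqp links}(4)(b) writes $\Delta_K(t)$ as a product of powers of $\Phi_1, \Phi_4, \Phi_6, \Phi_{10}$; deleting the forbidden factors $\Phi_1$ and $\Phi_4$ leaves a non-trivial product of powers of $\Phi_6$ and $\Phi_{10}$, which is conclusion (b). When $n \in \{4,5\}$, Theorem \ref{thm: sqp links}(4)(c) gives a product of powers of $\Phi_1$ and $\Phi_6$; deleting $\Phi_1$ leaves a non-trivial power of $\Phi_6$, which is conclusion (c). Non-triviality in each case is automatic, since $\deg \Delta_K(t) = 2g(K) \geq 2$.

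Since the substantive content resides inside Theorem \ref{thm: sqp links}, I do not expect any genuine obstacle here. The one point demanding care is the bookkeeping: one must verify that the knot normalisation $|\Delta_K(1)| = 1$ eliminates precisely the factors $\Phi_1$ and $\Phi_4$ by which the link statement differs from the knot statement, and confirm that the surviving factors $\Phi_6, \Phi_{10}$ (and all cyclotomics in the $n=2$ case) are consistent with that normalisation. The evaluations displayed above settle exactly this, so no further reduction is forced.
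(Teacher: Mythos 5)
Your proposal is correct and follows essentially the same route as the paper, which likewise deduces the corollary from Theorem \ref{thm: sqp links} by observing that neither $\Phi_1$ nor $\Phi_4$ can divide the Alexander polynomial of a knot (the paper phrases this as $1$ and prime power roots of unity not being roots of $\Delta_K$, while you phrase it via $|\Delta_K(1)|=1$ and the evaluations $\Phi_1(1)=0$, $\Phi_4(1)=2$ — the same fact).
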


Corollary \ref{cor: monic sqp knots} is sharp in that if $K$ is a torus knot, it is a strongly quasipositive knot with monic Alexander polynomial and $\Sigma_n(K)$ is an L-space if and only if 
\vspace{-.2cm} 
\begin{itemize}

\item $n = 2$ and $K$ is the $(2,k)$, $(3,4)$, or $(3,5)$ torus knot. In each case, $K$ is definite and $\Delta_K(t)$ is a non-trivial product of cyclotomics; 

\vspace{.2cm} \item $n = 3$ and $K$ is a $(2,3)$ or $(2,5)$ torus knot. In the first case, $K$ is definite and $\Delta_K(t) = \Phi_{6}$ while in the the second case, $K$ is definite and $\Delta_K(t) = \Phi_{10}(t)$; 

\vspace{.2cm} \item $n \in \{4, 5\}$ and $K$ is a $(2,3)$ torus knot. In this case, $K$ is definite and $\Delta_K(t) = \Phi_{6}$.

\end{itemize}
\vspace{-.2cm} 
See \cite[Theorem 1.2]{GLid1}. 

One of the questions which motivated this study is due to Allison Moore.  

\begin{question} {\rm (Allison Moore)} \label{qu: moore}
{\rm If $K$ is a hyperbolic L-space knot, is it true that $\Sigma_2(K)$ is not an L-space?}
\end{question}
\vspace{-.3cm} 
Torus knots are L-space knots and like torus knots, L-space knots are known to be strongly quasipositive (\cite[Theorem 1.2]{He}) and fibred (\cite[Corollary 1.3]{Ni}). We show in Corollary \ref{cor: lspace knot satellite} that no branched cover of a satellite L-space knot is an L-space, but it is unknown whether the same is true for hyperbolic L-space knots.

\begin{cor} \label{cor: lspace knots}
If $K$ is an L-space knot such that $\Sigma_n(K)$ is an L-space for some $n \geq 2$, then $n \leq 5$. Moreover, 

$(1)$ if $n \in \{4, 5\}$, then $K$ is the $(2,3)$ torus knot; 

$(2)$ if $n = 3$, then $K$ is either the $(2,3)$ or $(2,5)$ torus knot, or it is a hyperbolic knot with $\Delta_K(t) = \Phi_{10}(t)$.  

\end{cor}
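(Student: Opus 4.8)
The plan is to reduce the statement to Corollary \ref{cor: monic sqp knots} together with the strong structural constraints that hold for the Alexander polynomial of an L-space knot. First I would record that an L-space knot $K$ is strongly quasipositive (\cite{He}) and fibred (\cite{Ni}), and that fibredness forces $\Delta_K(t)$ to be monic, so $K$ satisfies the hypotheses of Corollary \ref{cor: monic sqp knots}. Part (1) of that corollary immediately yields $n \leq 5$, and part (2) tells us that for $3 \leq n \leq 5$ the polynomial $\Delta_K(t)$ is, up to a unit $\pm t^k$, a product of powers of $\Phi_6$ (when $n \in \{4,5\}$) or of $\Phi_6$ and $\Phi_{10}$ (when $n = 3$).

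The key additional input is the theorem of Ozsv\'ath and Szab\'o that the symmetrically normalised Alexander polynomial of an L-space knot has all non-zero coefficients equal to $\pm 1$ and alternating in sign. I would exploit this through one elementary observation: if $\Delta_K(t) = \Phi_6(t)^a \Phi_{10}(t)^b$ (with $b = 0$ in the cases $n \in \{4,5\}$), then since $\Phi_6 = t^2 - t + 1$ and $\Phi_{10} = t^4 - t^3 + t^2 - t + 1$ each have next-to-leading coefficient $-1$, the coefficient of the next-to-leading term of the product equals $-(a+b)$. The alternating $\pm 1$ condition forces this coefficient to lie in $\{-1,0,1\}$, hence $a + b \leq 1$; with non-triviality this pins $\Delta_K(t)$ down to exactly $\Phi_6$ or $\Phi_{10}$.

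It then remains to identify $K$ from its genus and fibredness. When $\Delta_K = \Phi_6$ the knot has genus $1$ and is fibred; the only fibred genus-one knots are the two trefoils and the figure eight, and only the trefoil $T(2,3)$ has Alexander polynomial $\Phi_6$ (the figure eight has $t^2 - 3t + 1$), so $K = T(2,3)$. This settles $n \in \{4,5\}$ (where $\Delta_K = \Phi_6$ is forced) and the $\Phi_6$ subcase of $n = 3$. When $n = 3$ and $\Delta_K = \Phi_{10}$ the knot has genus $2$; here I would invoke the geometric trichotomy (torus, satellite, hyperbolic) for knots, eliminate the satellite possibility via Corollary \ref{cor: lspace knot satellite} (no branched cover of a satellite L-space knot is an L-space, whereas $\Sigma_3(K)$ is one), and note that the only torus knot with Alexander polynomial $\Phi_{10}$ is $T(2,5)$, since $(p-1)(q-1) = 2g = 4$ forces $\{p,q\} = \{2,5\}$. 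This leaves $K = T(2,5)$ or a hyperbolic knot with $\Delta_K = \Phi_{10}$, as claimed.

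The main point requiring genuine care is the passage from the cyclotomic factorisations supplied by Corollary \ref{cor: monic sqp knots} to the two explicit polynomials: this is precisely where the Ozsv\'ath--Szab\'o coefficient constraint must be brought in, since without it the higher powers and the mixed product $\Phi_6\Phi_{10}$ are not excluded. Everything else is bookkeeping, namely applying the two corollaries, the classification of genus-one fibred knots, and the elementary computation of Alexander polynomials of torus knots.
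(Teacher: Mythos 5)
Your proposal is correct and follows essentially the same route as the paper: reduce to Corollary \ref{cor: monic sqp knots} via strong quasipositivity and fibredness of L-space knots, use the Ozsv\'ath--Szab\'o $\pm1$-coefficient constraint to cut $\Phi_6^a\Phi_{10}^b$ down to a single cyclotomic factor (your next-to-leading-coefficient computation is exactly the ``elementary computation'' the paper leaves implicit), identify $T(2,3)$ as the unique fibred knot with polynomial $\Phi_6$, and eliminate satellites in the $\Phi_{10}$ case via Corollary \ref{cor: lspace knot satellite}. No gaps.
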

We expect that $K$ is the $(2,3)$ or $(2,5)$ torus knot in the case $n = 3$ of the corollary. It is interesting to note that this conclusion follows from the Ozsv\'ath-Szab\'o Poincar\'e conjecture: {\it The only irreducible $\mathbb Z$-homology $3$-sphere L-spaces are $S^3$ and the Poincar\'e homology $3$-sphere.} Indeed, if $K$ is a prime knot for which $\Delta_K(t) = \Phi_{10}(t)$, then $\Sigma_3(K)$ is an irreducible $\mathbb Z$-homology $3$-sphere. If it is also an L-space, the Ozsv\'ath-Szab\'o conjecture combines with the orbifold theorem to show that  $K$ is the $(2,5)$ torus knot. We remark that Filip Misev has constructed an infinite family of hyperbolic, fibred, definite, strongly quasipositive knots with Alexander polynomial $\Phi_{10}(t)$ (\cite{Mis}), though he informs us that he and Gilberto Spano have shown that none of them are L-space knots.
 
Stronger results can be obtained for some other families of fibred strongly quasipositive knots. 

\begin{cor} \label{cor: positive braid or divide knot}
Suppose that $K$ is prime and either a positive braid knot, a divide knot, or a fibred strongly quasipositive knot which is either alternating or Montesinos. Then some $\Sigma_n(K)$ is an L-space if and only if $K$ is either a $(2,k)$ torus knot, the $(3,4)$ torus knot, or the $(3,5)$ torus knot. 
\end{cor}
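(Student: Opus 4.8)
The plan is to derive the corollary from Corollary~\ref{cor: monic sqp knots}, whose hypotheses are satisfied by all four families, and then to combine the resulting definiteness of $K$ with a classification of the definite members of each family.

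First I would record the common reduction. A positive braid knot is the closure of a positive braid, hence fibred by Stallings's theorem, and is a positive, so strongly quasipositive, link. A divide knot is fibred and strongly quasipositive by A'Campo's work. The two remaining families are fibred and strongly quasipositive by hypothesis. Since a fibred knot has monic Alexander polynomial, in every case $K$ is a fibred strongly quasipositive knot with monic $\Delta_K(t)$, so Corollary~\ref{cor: monic sqp knots} applies. In particular, if $\Sigma_n(K)$ is an L-space for some $n \ge 2$, then $K$ is \emph{definite}; as $K$ is a knot this means $|\sigma(K)| = 2g(K) = \deg \Delta_K(t)$, equivalently that the symmetrised Seifert form of a fibre surface of $K$ is (negative) definite.

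The reverse implication is immediate: each of $T(2,k)$, $T(3,4)$, and $T(3,5)$ is a torus knot for which $\Sigma_2(K)$ is an L-space, by the computation \cite[Theorem~1.2]{GLid1} recalled after Corollary~\ref{cor: monic sqp knots}. For the forward implication I would argue family by family, the engine throughout being the definiteness just obtained. For a positive braid knot or a divide knot the fibre is a minimal genus Seifert surface, so definiteness says its symmetrised Seifert form is definite; for each of these two classes the members whose fibre carries a definite form are exactly the links of simple (ADE) plane curve singularities, and among those links the knots are precisely $T(2,k)$ (type $A_{2m}$), $T(3,4)$ (type $E_6$), and $T(3,5)$ (type $E_8$). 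For a fibred strongly quasipositive alternating or Montesinos knot I would instead invoke the classifications of strongly quasipositive alternating links and of strongly quasipositive pretzel (more generally Montesinos) links established in the paper: the strongly quasipositive alternating knots are the $T(2,k)$ (note that $T(3,4)=8_{19}$ and $T(3,5)=10_{124}$ are non-alternating), while the \emph{definite} fibred strongly quasipositive Montesinos knots are $T(2,k)$ together with $T(3,4) = P(-2,3,3)$ and $T(3,5) = P(-2,3,5)$. In each case this exhausts the possibilities and matches the cyclotomic constraints on $\Delta_K(t)$ supplied by Corollary~\ref{cor: monic sqp knots}, which serve as a consistency check.

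The main obstacle is the definiteness classification for each family. For positive braid and divide knots it rests on the lattice-theoretic fact---ultimately Dynkin's classification of definite integral forms applied to the intersection/plumbing lattice of the fibre---that a definite fibred positive link is a link of a simple singularity; for the alternating and Montesinos families it rests on the explicit classifications proved elsewhere in the paper. The remaining ingredients, namely the reduction to Corollary~\ref{cor: monic sqp knots} and the verification of the reverse implication, are routine.
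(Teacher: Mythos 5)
Your proposal is correct and follows essentially the same route as the paper: deduce definiteness of $K$ from the L-space hypothesis (you via Corollary \ref{cor: monic sqp knots}, the paper directly via Proposition \ref{prop: main prop}, an inessential difference), and then classify the definite knots in each family using Baader's theorem for positive braids, Baader--Dehornoy for divide knots, and the paper's Propositions \ref{prop: definite alternating} and \ref{prop: fsqpm signature Michel} for the alternating and Montesinos cases.
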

\vspace{-.3cm} 
We derive a similar conclusion for arborescent knots which bound surfaces obtained by plumbing positive Hopf bands along a tree, and so are fibred and strongly quasipositive\footnote{We adopt the usual convention that a positive Hopf band has boundary a positive Hopf link.}.  See Proposition \ref{prop: arborescent}.

A knot is called {\it homologically thin}, or H-thin, for short, if its reduced Khovanov homology is supported on a line. Otherwise it is called H-thick. It is known that alternating knots, and in particular $(2,k)$ torus knots, are H-thin (\cite{Lee}). On the other hand, Khovanov observed that the $(3,4)$ and $(3,5)$ torus knots are H-thick (\cite[\S 6.2]{Kh}) and asked if the $(2,k)$ torus knots are the only prime positive braid knots which are H-thin (\cite[Problem 6.2]{Kh}). It follows from \cite[Theorem 1.1]{OS2} that the $2$-fold branched covers of H-thin knots are L-spaces, so we can answer Khovanov's question through the use of Corollary \ref{cor: positive braid or divide knot}. 

\begin{cor}
Prime positive braid knots other than the $(2,k)$ torus knots are H-thick. 
\qed
\end{cor}   

Call an oriented link $L$ {\it simply laced arborescent} if it is the boundary of the surface obtained by plumbing positive Hopf bands according to one of the trees $\Gamma$ determined by the simply laced Dynkin diagrams $\Gamma = A_m, D_m, E_6, E_7, E_8$. These are precisely the trees whose associated quadratic forms are negative definite. See \cite[pages 61-62]{HNK} for example. We denote $L$ by $L(\Gamma)$. Then,  

\indent \hspace{5.5cm} $L(A_m) = T(2, m+1)$

\indent \hspace{5.5cm} $L(D_m) = P(-2, 2, m-2)$

\indent \hspace{5.5cm} $L(E_6) = P(-2,3,3) = T(3,4)$

\indent \hspace{5.5cm} $L(E_7) = P(-2,3,4)$

\indent \hspace{5.5cm} $L(E_8) = P(-2,3,5) = T(3,5)$.

Given Corollary \ref{cor: positive braid or divide knot} we propose the following conjecture which generalizes Question \ref{qu: moore}. 

\begin{conj} \label{conj: branched lspace implies simply laced arborescent}
{\rm  If $L$ is a prime, fibred, strongly quasipositive link for which some $\Sigma_n(L)$ is an L-space, then $L$ is simply laced arborescent. In case $L$ is a knot $K$, then $K$ would be a $(2,k)$, $(3,4)$, or $(3,5)$ torus knot.}
\end{conj}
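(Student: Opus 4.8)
The plan is to read off the structure of $L$ from the definiteness that Theorem \ref{thm: sqp links} forces, and then to match that structure against the classical ADE classification of definite plumbing trees. First I would invoke Theorem \ref{thm: sqp links}(1): since $\Sigma_n(L)$ is an L-space, $L$ is definite and $|\sigma_L(\zeta)| = 2g(L)+(m-1) = \deg \Delta_L(t)$ for every $\zeta \in \bar I_-(\zeta_n)$, in particular at $\zeta = -1$. Because $L$ is fibred and strongly quasipositive, its fiber is the Bennequin surface $F$, whose symmetrized Seifert form $V + V^T$ has rank $2g(L)+(m-1)$ and signature $\sigma(L) = \sigma_L(-1)$. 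The equality $|\sigma(L)| = \operatorname{rank}(V+V^T)$ says exactly that $V + V^T$ is (negative) definite. Thus the whole problem reduces to understanding prime fibred strongly quasipositive links whose fiber carries a definite symmetrized Seifert form.

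The target of this reduction is the ADE list. If $F$ is obtained by plumbing positive Hopf bands along a tree $\Gamma$, then in the basis of band cores its symmetrized Seifert form is the negative of the quadratic form of $\Gamma$, and the trees whose quadratic form is positive definite are, by the Dynkin--Coxeter classification, exactly $A_m, D_m, E_6, E_7, E_8$ (\cite[pages 61--62]{HNK}). Primeness of $L$ forbids splitting $\Gamma$ into several components, which would present $L$ as a connected sum, so a single connected ADE tree survives and $L = L(\Gamma)$ is simply laced arborescent. Hence, \emph{granting} that the fiber of a prime fibred strongly quasipositive link with definite symmetrized Seifert form is necessarily such a positive Hopf plumbing, the first assertion of the conjecture follows; Corollary \ref{cor: positive braid or divide knot} already confirms this in the positive braid, divide, alternating, and Montesinos cases.

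The main obstacle --- and the reason the statement remains only a conjecture --- is precisely this last implication, passing from ``definite fibred strongly quasipositive'' to ``arborescent positive Hopf plumbing.'' A strongly quasipositive fiber is a priori only a quasipositive braided surface, not visibly a Hopf plumbing, and definiteness of its Seifert lattice need not, on its face, impose a tree-like band structure. The two routes I would pursue are: (i) to characterise these links as the links of \emph{simple} (ADE) plane curve singularities, exploiting that the simple singularities are exactly those with negative definite Milnor lattice and that their links are fibred and strongly quasipositive; or (ii) to analyse the monodromy, which for a fibred strongly quasipositive link is a product of right-handed Dehn twists, and to show that definiteness of the geometric intersection pairing forces the vanishing-cycle configuration to be an ADE tree, for instance through A'Campo's divide description of such fibrations. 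Either route demands an intrinsic rigidity statement not supplied by Theorem \ref{thm: sqp links} alone, and I expect establishing it to be the crux of the work.

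Finally I would deduce the knot refinement from the first part together with the numerical constraints of Theorem \ref{thm: sqp links}, and here the argument is clean. Among the ADE links the knots are $A_{2j} = T(2,2j+1)$, $E_6 = T(3,4)$, $E_8 = T(3,5)$, together with $E_7 = P(-2,3,4)$ and those $D_m$ that happen to be knots. The monodromy of an ADE link is a Coxeter element, so its Alexander polynomial is the product of cyclotomic factors determined by the exponents; for $E_7$ (Coxeter number $18$, exponent $9$) and for the $D_m$ (Coxeter number $2(m-1)$, exponent $m-1$) this product contains the factor $\Phi_2(t) = t+1$, i.e.\ $-1$ is a root of $\Delta_L(t)$, whereas $E_6$ and $E_8$ have no such root. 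But Theorem \ref{thm: sqp links}(1) requires every root of $\Delta_L(t)$ to lie in $I_+(\zeta_n)$, which for $n = 2$ excludes the root $-1$, while Theorem \ref{thm: sqp links}(4) restricts $\Delta_L(t)$ to products of powers of $\Phi_1, \Phi_4, \Phi_6, \Phi_{10}$ when $n = 3$ and of $\Phi_1, \Phi_6$ when $n \in \{4,5\}$, none of which accommodate the $\Phi_2$ (or the higher cyclotomic) factors of $E_7$ and $D_m$. This rules out $E_7$ and every knot-type $D_m$, leaving exactly the torus knots $T(2,k)$, $T(3,4) = E_6$, and $T(3,5) = E_8$, as the conjecture asserts.
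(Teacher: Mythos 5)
The statement you are addressing is Conjecture \ref{conj: branched lspace implies simply laced arborescent}; the paper offers no proof of it and explicitly defers its study to \cite{BBG}, so there is nothing to compare your argument against except the partial evidence in Corollary \ref{cor: positive braid or divide knot} and Proposition \ref{prop: arborescent}. Your proposal is not a proof, and to your credit you say so: the implication from ``prime, fibred, strongly quasipositive, definite'' to ``tree plumbing of positive Hopf bands'' is exactly what you grant without argument. The more serious issue is that this intermediate statement, as you have formulated it, is \emph{false}, so the reduction you propose cannot be completed. Your first step discards the L-space hypothesis in favour of its consequence via Theorem \ref{thm: sqp links}(1), namely that the symmetrized Seifert form of the fibre is definite. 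But the paper itself points out (end of \S \ref{sec: fsqpm knots}, citing Misev \cite{Mis}) that there exist prime, fibred, definite, strongly quasipositive \emph{hyperbolic} knots of genus $2$ --- realized as definite (non-tree) plumbings of Hopf bands, with Alexander polynomial $\Phi_{10}(t)$ --- which are therefore not $(2,k)$, $(3,4)$, or $(3,5)$ torus knots and not simply laced arborescent. So no classification of definite fibred strongly quasipositive links can return only the ADE list; any proof of the conjecture must use the L-space condition beyond definiteness (for instance the full strength of $\beta_2^+(\Sigma_n(F)) = 0$ for the Stein filling, as in Propositions \ref{prop: main prop} and \ref{cor: b2+ = 0 filling}, or the constraints of Theorem \ref{thm: sqp links}(3)--(4) on where the roots of $\Delta_L$ sit).

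Two smaller remarks. First, your treatment of the knot case is more elaborate than necessary and slightly misstated: $L(E_7) = P(-2,3,4)$ and $L(D_m) = P(-2,2,m-2)$ each have at least two even pretzel parameters and hence at least two components, so they are never knots; the second sentence of the conjecture is just the observation of which simply laced arborescent links are knots, requiring no Alexander polynomial argument. (Your claim that the $E_7$ Coxeter element has $-1$ as an eigenvalue is correct but irrelevant for this purpose.) Second, your suggested routes (i) and (ii) --- simple singularities and right-veering monodromy/divides --- are reasonable directions, but as stated they again pivot on definiteness of a lattice, and so run into the same Misev counterexamples unless supplemented by the L-space input.
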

\vspace{-.3cm} 
 In another paper, \cite{BBG}, we study the conjecture in the context of basket links, which form a generic subfamily of the set of fibred, strongly quasipositive links. 

With regard to this discussion, it is interesting to consider the following conjecture of Li and Ni (\cite[Conjecture 1.3]{LN}): {\it If $K$  is an L-space knot and each root of its Alexander polynomial lies  on the unit circle, then K is an iterated torus knot.} If this conjecture holds, Corollary \ref{cor: monic sqp knots} implies that the only L-space knots for which some $\Sigma_n(K)$ can be an L-space are iterated torus knots, and in this case Gordon and Lidman have shown that $K$ is a torus knot (\cite{GLid1, GLid2}). For strongly quasipositive iterated torus knots, the same conclusion follows from our results on strongly quasipositive satellite knots. See Proposition \ref{prop: satellite}. 

It is known that the $3$-sphere and smooth $4$-ball genera of strongly quasipositive links coincide (cf. \S \ref{sec: positivity}). Since $|\sigma(L)| \leq 2g_4^{top}(L) + (m-1)$ if some $\Sigma_n(L)$ is a rational homology $3$-sphere (Proposition \ref{prop: florens inequality qhs 2}), this fact can be strengthened if we add the condition that some branched cover of the knot is an L-space. 

\begin{cor} \label{cor: qa and sqp links}
Suppose that $L$ is a strongly quasipositive link of $m$ components for which some $\Sigma_n(L)$ is an L-space.  Then $g_4^{top}(L) = g(L) = \frac12(|\sigma(L)| - (m-1))$ and $\hbox{{\rm deg}}( \Delta_L(t)) = |\sigma(L)|$. In particular, this holds for quasi-alternating strongly quasipositive links. 
\qed
\end{cor}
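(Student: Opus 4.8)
The plan is to deduce Corollary~\ref{cor: qa and sqp links} directly from Theorem~\ref{thm: sqp links} together with the two ingredients the excerpt flags just before the statement, namely that the $3$-sphere and smooth $4$-ball genera of a strongly quasipositive link coincide, and the signature bound $|\sigma(L)| \leq 2g_4^{top}(L) + (m-1)$ of Proposition~\ref{prop: florens inequality qhs 2} which applies whenever some $\Sigma_n(L)$ is a rational homology sphere (in particular when it is an L-space). The strategy is to run a short chain of inequalities that is forced to collapse to a chain of equalities, and then read off all the claimed identities at once.

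First I would record the hypotheses: $L$ is strongly quasipositive with $m$ components and $\Sigma_n(L)$ is an L-space for some $n \geq 2$. An L-space is by definition a rational homology $3$-sphere, so Proposition~\ref{prop: florens inequality qhs 2} gives $|\sigma(L)| \leq 2g_4^{top}(L) + (m-1)$. Next I would invoke the elementary inequalities $g_4^{top}(L) \leq g_4(L) \leq g(L)$, which hold for any link since a Seifert surface of genus $g(L)$ pushes into $B^4$ to give a smooth, hence locally flat, bounding surface. Combining these yields
\[
|\sigma(L)| \;\leq\; 2g_4^{top}(L) + (m-1) \;\leq\; 2g_4(L) + (m-1) \;\leq\; 2g(L) + (m-1).
\]
On the other hand, Theorem~\ref{thm: sqp links}(1) applied at $\zeta = -1 \in \bar I_-(\zeta_n)$ (valid since $-1$ lies in the closed arc containing $-1$ for every $n \geq 2$) tells us that $|\sigma(L)| = |\sigma_L(-1)| = 2g(L) + (m-1)$, and moreover that this common value equals $\deg(\Delta_L(t))$.

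The main point is then that the outer terms of the displayed chain are equal: the left-hand side $|\sigma(L)|$ equals the right-hand side $2g(L)+(m-1)$ by Theorem~\ref{thm: sqp links}(1). A chain of inequalities whose two ends coincide must be a chain of equalities, so in particular $g_4^{top}(L) = g_4(L) = g(L)$, and $2g(L) + (m-1) = |\sigma(L)|$ rearranges to $g(L) = \tfrac12(|\sigma(L)| - (m-1))$. The degree statement $\deg(\Delta_L(t)) = |\sigma(L)|$ is immediate from the second identity in Theorem~\ref{thm: sqp links}(1) combined with $|\sigma(L)| = 2g(L)+(m-1)$. (I note that the equality $g_4(L)=g(L)$ also follows from the cited fact that the two genera agree for strongly quasipositive links, which provides a consistency check but is not strictly needed once the inequality chain has collapsed.)

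Finally I would dispatch the ``in particular'' clause. A quasi-alternating link has the property that its double branched cover $\Sigma_2(L)$ is an L-space (this is the analogue, for the alternating/quasi-alternating setting, of the fact cited in the excerpt that double branched covers of H-thin knots are L-spaces, via \cite{OS2}). Thus a quasi-alternating strongly quasipositive link satisfies the hypothesis with $n = 2$, and the general conclusion applies verbatim. I do not anticipate a genuine obstacle here: the entire argument is an assembly of already-established results, and the only subtlety worth checking is that $-1$ indeed lies in $\bar I_-(\zeta_n)$ for all $n \geq 2$ so that the signature value in Theorem~\ref{thm: sqp links}(1) may be evaluated at $\zeta = -1$ — which is exactly the defining property of the closed arc $\bar I_-(\zeta_n)$, namely that it is the closed subarc with endpoints $\zeta_n, \bar\zeta_n$ containing $-1$.
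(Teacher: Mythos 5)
Your argument is correct and is essentially the paper's: the authors give no separate proof (the corollary is tagged \qed as an immediate consequence of Theorem \ref{thm: sqp links}(1)--(2) evaluated at $\zeta=-1\in\bar I_-(\zeta_n)$ together with the remarks preceding the statement), and your collapsing chain of inequalities via Proposition \ref{prop: florens inequality qhs 2} and $g_4^{top}(L)\le g_4(L)\le g(L)$ just makes explicit the same mechanism the paper uses to prove Theorem \ref{thm: sqp links}(2) in the first place. The handling of the quasi-alternating clause via \cite{OS2} is also exactly what is intended.
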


\begin{examples} \label{examples: sqp}

{\rm (1) If $K$ is an indefinite strongly quasipositive knot, Corollary \ref{cor: qa and sqp links} implies that no $\Sigma_n(K)$ is an L-space. This applies to $41$ of the $251$ prime knots of crossing number $12$ or less that KnotInfo lists as strongly quasipositive.  

(2) Corollary \ref{cor: qa and sqp links} also provides an obstruction to strong quasipositivity for knots with L-space branched covers. For instance, there are $27$ indefinite quasi-alternating knots amongst the $42$ knots of 12 or fewer crossings that KnotInfo lists as having unknown strong quasipositivity status: {\small $11n17, 11n91, 11n99, 11n113, 11n162, 12n171, 12n176, 12n247, 12n270, 12n383, 12n441, 12n496, $ \\ 
$ 12n520, 12n564, 12n626, 12n698, 12n699, 12n700, 12n701, 12n726, 12n734,12n735, 12n796, 12n797, 12n814, \\ 12n863$, and $12n867$.  }
Corollary \ref{cor: qa and sqp links} implies that none of these knots are strongly quasipositive.  
}
\end{examples}

Corollary \ref{cor: qa and sqp links} allows us to characterize which $3$-strand pretzel links are strongly quasipositive. See Proposition \ref{prop:3-strand links}. Here is the statement for $3$-strand pretzel knots. 

\begin{cor} \label{cor: 3-strand knots}
A $3$-strand pretzel knot $K = P(p, q, r)$ with $p, q, |r| \geq 2$ is strongly quasipositive if and only if $(p, q, r)$ verifies one of the following conditions: 

$(1)$ $r > 0$ and  $p, q , r$ are odd; 

$(2)$ $r <0$ and $r$ is even; 

$(3)$ $r <0$, $p, q, r$ are odd and $|r| < min(p, q)$.
 
\end{cor}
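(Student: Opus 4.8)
The plan is to prove the two implications by different means: sufficiency by producing explicit strongly quasipositive Seifert surfaces, and necessity by feeding the double branched cover into the obstruction of Corollary \ref{cor: qa and sqp links}. Throughout set $f=|r|$ and note that $K=P(p,q,r)$ is a knot exactly when at most one of the three parameters is even, so after a cyclic permutation we may assume $p,q$ are odd and that $r$ carries the only possibly even entry. In the all-odd case the canonical pretzel surface is a once-punctured torus, giving $g(K)=1$ and a symmetrised Seifert form $\left(\begin{smallmatrix} p+q & q \\ q & q+r \end{smallmatrix}\right)$ of determinant $pq+qr+rp$, which will drive the signature computations.

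For sufficiency I treat the three families in turn. If $r>0$ and $p,q,r$ are odd (case (1)), the standard pretzel diagram has all crossings of one sign, so $K$ is special alternating and hence a positive knot (up to mirror image), which is strongly quasipositive because positive links are strongly quasipositive (Rudolph). The remaining families $r<0$ even (case (2)) and $r<0$ odd with $f<\min(p,q)$ (case (3)) are not presented by positive diagrams, and for these I would exhibit a quasipositive band (braided surface) presentation of the pretzel Seifert surface directly; in case (2) this recovers, for instance, the identifications $P(-2,3,3)=T(3,4)$ and $P(-2,3,5)=T(3,5)$ already recorded in the introduction. The inequality $f<\min(p,q)$ in case (3) is exactly the range in which this explicit construction survives.

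For necessity I argue the contrapositive. A knot $K=P(p,q,r)$ satisfying none of (1)--(3) falls into one of two families: (I) $r>0$ and even, or (II) $p,q,r$ all odd with $r<0$ and $f\ge\min(p,q)$. I claim each such $K$ is indefinite while $\Sigma_2(K)$ is an L-space, so that Corollary \ref{cor: qa and sqp links} prevents $K$ from being strongly quasipositive. In family (I) all crossings of the standard diagram have the same sign, so $K$ is alternating and hence homologically thin; as recorded in the introduction this forces $\Sigma_2(K)$ to be an L-space, and a computation of the Seifert form (the even band destroys the special-alternating, definite structure present in the all-odd positive case) shows $|\sigma(K)|<2g(K)$. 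In family (II) one has $g(K)=1$ and $pq+qr+rp=pq-f(p+q)<0$ because $f\ge\min(p,q)>\tfrac{pq}{p+q}$, so $\sigma(K)=0<2=2g(K)$; moreover by the classification of quasi-alternating pretzel links (Champanerkar--Kofman, Greene) $K$ is quasi-alternating when $f>\min(p,q)$ and homologically thin when $f=\min(p,q)$, so that $\Sigma_2(K)$ is again an L-space. In both families $K$ is indefinite with $\Sigma_2(K)$ an L-space, and Corollary \ref{cor: qa and sqp links} gives the contradiction.

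The main obstacle is the interplay at the threshold $f=\min(p,q)$ in the odd case with $r<0$. On the one hand the knots of case (3), with $f<\min(p,q)$, can themselves be indefinite (for example $P(5,7,-3)$ has determinant $1$ and vanishing signature), so Corollary \ref{cor: qa and sqp links} yields no information about them and their strong quasipositivity must be established by an explicit surface; on the other hand the complementary knots of family (II), with $f\ge\min(p,q)$, only fall under the obstruction because Greene's theorem makes $\Sigma_2(K)$ an L-space exactly there. Thus the clean cutoff $\min(p,q)$ is forced to be simultaneously the place where the quasipositive construction breaks down and the place where the Heegaard--Floer obstruction switches on, and verifying that these two thresholds coincide is the crux of the argument.
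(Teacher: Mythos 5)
Your overall architecture -- sufficiency by explicit positive/quasipositive surfaces, necessity by showing the knot is indefinite while $\Sigma_2(K)$ is an L-space and invoking Corollary \ref{cor: qa and sqp links} -- is the same as the paper's, which routes the corollary through Propositions \ref{prop: pretzel knot sqp} and \ref{prop: pi all odd}. Your treatment of family (II) (all parameters odd, $r<0$, $|r|\ge\min(p,q)$) is correct and essentially complete: the determinant computation $pq+qr+rp=pq-|r|(p+q)<0$ forcing $\sigma(K)=0<2=2g(K)$ matches the paper's use of Jabuka's signature formula and Gabai's genus computation, and your appeal to the Champanerkar--Kofman/Greene quasi-alternating and thinness results to get the L-space double branched cover is a legitimate substitute for the paper's Lemma \ref{lemma: pretzel l-space cover}, which instead derives the L-space condition from the Jankins--Neumann--Naimi classification of taut foliations on Seifert fibered spaces. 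Your observation that the cutoff $|r|=\min(p,q)$ must simultaneously be where the construction dies and where the obstruction switches on is exactly the right way to see why the statement is sharp.

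The genuine gap is the sufficiency direction for cases (2) and (3): you promise to ``exhibit a quasipositive band presentation'' but never do, and without it half the corollary is unproved. The paper closes case (2) by observing that when the unique even band carries the opposite sign from all the odd bands and the number of strands is odd, the obvious pretzel diagram is \emph{positive} with respect to the knot orientation, so strong quasipositivity follows from Nakamura--Rudolph (\cite{Nak}, \cite{Ru4}); and it closes case (3) by citing Rudolph's theorem on quasipositive pretzel surfaces \cite{Ru5}, which says precisely that $P(p_1,\ldots,p_{n-1},-p_n)$ with all $p_i$ odd bounds a quasipositive Seifert surface when $p_n<\min\{p_1,\ldots,p_{n-1}\}$. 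These are the missing inputs; neither is something one can wave at, since (as you yourself note with $P(5,7,-3)$) the case (3) knots can be indefinite with trivial determinant, so no homological argument will certify them. A second, smaller gap is family (I): you assert that ``a computation of the Seifert form'' shows $|\sigma(K)|<2g(K)$ for $r>0$ even, but you do not perform it; the paper instead deduces non-strong-quasipositivity from Corollary \ref{cor:sqp alternating} together with the parity argument of Proposition \ref{prop: alt pretzel links}(3) (no orientation of a $3$-strand pretzel with exactly one even band makes all crossings the same sign), the indefiniteness then being a consequence of Proposition \ref{prop: definite alternating} rather than a direct matrix calculation.
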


Some of our results extend to quasipositive links, though with weaker conclusions.

\begin{thm}  
\label{thm: qp links}
Suppose that $L$ is a quasipositive link of $m$ components such that $\Sigma_n(L)$ is an L-space for some $n \geq 2$.

$(1)$ $g_4^{top}(L) = g_4(L) = \frac12(|\sigma_{L}(\zeta_n^j)| - (m-1))$ for $1 \leq j \leq n-1$. Further, any locally flat, compact, oriented surface $F$ properly embedded in $B^4$ with oriented boundary $L$ which realises $g_4^{top}(L)$ is connected.

$(2)$ If $\Delta_L(t)$ is not divisible by $(t-1)^{2g_4(L) + (m-1)}$\footnote{As a link with $m$ components, $\Delta_{L}(t)$ is divisible by $(t-1)^{m-1}$, so the assumption that $\Delta_{L}(t)$ is not divisible by $(t-1)^{2g_4(L) + (m-1)}$ implies that $g_4(L) > 0$.}, there is a positive integer $n_4(L)$ determined by $\Delta_L(t)$ and $g_4(L)$ such that $n \leq n_4(L)$. 

$(3)$ If $g(L) = g_4(L)$, then the conclusions of Theorem \ref{thm: sqp links} hold.   
\end{thm}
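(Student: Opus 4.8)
The plan is to prove all three parts by pinning down the value of the Tristram--Levine signature $\sigma_L$ at the roots of unity $\zeta_n^j$, $1 \le j \le n-1$, through a two-sided estimate: the upper bound on $|\sigma_L(\zeta_n^j)|$ will come from the $4$-genus, while the matching lower bound will come from quasipositivity together with the L-space hypothesis. Since a smoothly embedded surface is locally flat, $g_4^{top}(L) \le g_4(L)$, and the Murasugi--Tristram inequality in the form valid when $\Sigma_n(L)$ is a rational homology sphere (Proposition \ref{prop: florens inequality qhs 2} and its refinement at the characters $\zeta_n^j$) gives
$$|\sigma_L(\zeta_n^j)| \;\le\; 2 g_4^{top}(L) + (m-1) \;\le\; 2 g_4(L) + (m-1), \qquad 1 \le j \le n-1 .$$
Thus for part (1) it suffices to establish the reverse inequality $|\sigma_L(\zeta_n^j)| \ge 2 g_4(L) + (m-1)$; combined with the display it forces equality throughout, and hence $g_4^{top}(L) = g_4(L) = \tfrac12\bigl(|\sigma_L(\zeta_n^j)| - (m-1)\bigr)$ for every $j$ simultaneously.

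To obtain the reverse inequality I would pass to the $n$-fold cyclic cover $W_n \to B^4$ branched over a quasipositive surface $F$ pushed into the $4$-ball with $\partial F = L$. By Rudolph's theorem $g(F) = g_4(L)$, so $b_2(W_n) = (n-1)\bigl(2 g_4(L) + (m-1)\bigr)$, while the $G$-signature/Casson--Gordon formula identifies $\sigma(W_n)$ with $\sum_{j=1}^{n-1}\sigma_L(\zeta_n^j)$, all of whose summands have one sign by quasipositivity. \emph{The crux, and the step I expect to be the main obstacle, is to show that the L-space hypothesis on $\partial W_n = \Sigma_n(L)$ forces the a priori only semi-definite intersection form of $W_n$ to be definite.} This is precisely where the simply-laced/$ADE$ phenomenon enters: the branched cover of a quasipositive surface is a Milnor-fibre analogue and is definite only in borderline cases, and it is the L-space condition --- via the Ozsv\'ath--Szab\'o constraints on the intersection forms of $4$-manifolds bounded by an L-space (the correction-term inequalities) --- that selects these. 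Granting definiteness, $\sigma(W_n) = -b_2(W_n)$, and comparison with the upper bound forces $\sigma_L(\zeta_n^j) = -\bigl(2 g_4(L) + (m-1)\bigr)$ for all $j$, completing the genus identity of part (1). The connectedness of a genus-minimizing locally flat surface $F$ then follows from this sharp equality: if $F$ has $k$ components then $b_1(F) = 2 g_4^{top}(L) + m - k$, and the bound $|\sigma_L(\zeta_n^j)| \le b_1(F)$ together with the now-established equality $|\sigma_L(\zeta_n^j)| = 2 g_4^{top}(L) + (m-1)$ forces $k = 1$.

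For part (2), part (1) shows that $\sigma_L$ attains its global minimum $-\bigl(2 g_4(L)+(m-1)\bigr)$ at every $n$-th root of unity other than $1$. As $\sigma_L$ is piecewise constant and jumps only at roots of $\Delta_L$ lying on $S^1$, starting from $\sigma_L(1)=0$ this minimum can be reached at $\zeta_n$ only by accumulating a total downward jump of $2 g_4(L)+(m-1)$ across the arc of $I_+(\zeta_n)$ from $1$ to $\zeta_n$. The hypothesis that $\Delta_L$ is not divisible by $(t-1)^{2g_4(L)+(m-1)}$ bounds the multiplicity of the root at $t=1$ strictly below $2 g_4(L)+(m-1)$, so this drop cannot be supplied by the root at $1$ alone and genuinely requires roots of $\Delta_L$ on $S^1 \setminus \{1\}$ lying inside $I_+(\zeta_n)$. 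Since $\Delta_L$ has finitely many roots, fixed once and for all, while $I_+(\zeta_n)$ shrinks to $\{1\}$ as $n \to \infty$, for all sufficiently large $n$ the arc contains no such root, a contradiction. Tracking the root of $\Delta_L$ closest to $1$ and the residual drop $2g_4(L)+(m-1)$ beyond the contribution at $t=1$ yields the explicit bound $n \le n_4(L)$ determined by $\Delta_L$ and $g_4(L)$.

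Finally, for part (3) I would observe that strong quasipositivity is used in the proof of Theorem \ref{thm: sqp links} only through its two consequences that $L$ is quasipositive and that $g(L) = g_4(L)$, the latter because the Bennequin surface of a strongly quasipositive link is a Seifert surface realizing both genera. Under the standing hypotheses of Theorem \ref{thm: qp links} the link is already quasipositive, so adding the assumption $g(L) = g_4(L)$ supplies exactly the missing input; the argument of Theorem \ref{thm: sqp links} then applies verbatim and yields its full conclusions --- definiteness, the location of all roots of $\Delta_L$ in $I_+(\zeta_n)$, the degree identity $\deg \Delta_L = 2g(L)+(m-1)$, and the bound $n \le n_3(L)$.
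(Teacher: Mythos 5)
Your overall architecture matches the paper's: squeeze $|\sigma_L(\zeta_n^j)|$ between the locally flat Murasugi--Tristram upper bound and a lower bound from the branched cover of the quasipositive surface, count signature jumps across $I_+(\zeta_n)$ for part (2), and reduce part (3) to the proof of Theorem \ref{thm: sqp links}. The genuine gap sits exactly at the step you flag as the crux and then ``grant.'' The L-space condition on $\Sigma_n(L)$ does not, by itself, constrain the intersection form of an arbitrary $4$-manifold it bounds: every L-space bounds $4$-manifolds with $b_2^+>0$ (connect-sum any filling with $\mathbb{CP}^2$), and the Ozsv\'ath--Szab\'o correction-term inequalities only restrict fillings already known to be negative definite --- they cannot show that a given filling \emph{is} definite. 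The input that turns definiteness from an obstacle into a theorem is that quasipositivity makes $\Sigma_n(F)$ a \emph{Stein} filling of $\Sigma_n(L)$ (\cite[Theorem 1.3]{HKP}, \cite[Theorem 1.3]{Ru7}), and a Stein filling of an L-space has $\beta_2^+=0$ by \cite[Theorem 1.4]{OS1}. Without invoking the Stein structure your ``granting definiteness'' cannot be discharged, and the lower bound on $|\sigma_L(\zeta_n^j)|$ --- hence all of part (1) and everything downstream --- is unproved.

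A secondary issue: your identity $b_2(W_n)=(n-1)\bigl(2g_4(L)+(m-1)\bigr)$ presupposes that the quasipositive surface $F$ is connected and that $\beta_1(\Sigma_n(F))=\beta_3(\Sigma_n(F))=0$. Unlike the strongly quasipositive case, here $F$ need not be connected and $\Sigma_n(F)$ need not be simply connected, so these vanishings are not available a priori. The paper instead computes $\beta_2(\Sigma_n(F);j)=2g(F)+(m-\mu)+\beta_1(\Sigma_n(F);j)+\beta_3(\Sigma_n(F);j)$ (Lemma \ref{lemma: chi}) and plays this off against Corollary \ref{cor: florens inequality qhs}, which forces $\mu=1$ and the vanishing of the extra Betti numbers as \emph{conclusions} of the squeeze rather than assumptions. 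Parts (2) and (3) of your argument are essentially the paper's, including the definition of $n_4(L)$ via the total multiplicity of roots of $\Delta_L$ in $I_+(\zeta_n)$.
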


With regard to Theorem \ref{thm: qp links}(3), Hedden has shown that a fibred quasipositive knot $K$ for which $g(K) = g_4(K)$ is strongly quasipositive (\cite[Corollary 1.6]{He}) and Baader has asked more generally whether a quasipositive knot $K$ for which $g(K) = g_4(K)$ is strongly quasipositive (\cite[Question 3, page 268]{Baa1}). We prove in \S \ref{sec: sqp alt} that a quasipositive alternating link $L$ is strongly quasipositive if and only if $g(L) = g_4(L)$.

\begin{cor} \label{cor: qa and qp links}
Suppose that $L$ is a quasipositive link of $m$ components for which some $\Sigma_{2n}(L)$ is an L-space. Then $g_4^{top}(L) = g_4(L) = \frac12(|\sigma(L)| - (m-1))$. In particular, this holds for quasi-alternating quasipositive links. 
\qed
\end{cor}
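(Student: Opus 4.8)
The plan is to derive Corollary~\ref{cor: qa and qp links} directly from Theorem~\ref{thm: qp links}(1) by specializing the signature identity to the value $\zeta = -1$. First I would observe that since $\Sigma_{2n}(L)$ is assumed to be an L-space for some $n \geq 1$, the hypothesis of Theorem~\ref{thm: qp links} is met with the branching index equal to the \emph{even} integer $2n$. The key point is that when the branching index is even, the set of relevant evaluation points $\{\zeta_{2n}^{\,j} : 1 \leq j \leq 2n-1\}$ includes $-1$, achieved at $j = n$ since $\zeta_{2n}^{\,n} = \exp(2\pi i \cdot n/(2n)) = \exp(\pi i) = -1$. This is precisely why the corollary requires an even branched cover rather than an arbitrary one: only then is $-1$ guaranteed to be among the $n$-th roots of unity at which the conclusion of Theorem~\ref{thm: qp links}(1) applies.

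Next I would apply Theorem~\ref{thm: qp links}(1) with the chosen value $j = n$, obtaining
$$g_4^{top}(L) = g_4(L) = \tfrac12\big(|\sigma_L(\zeta_{2n}^{\,n})| - (m-1)\big) = \tfrac12\big(|\sigma_L(-1)| - (m-1)\big).$$
I would then invoke the fact, recalled just before the statement of Theorem~\ref{thm: sqp links}, that the value of the Tristram--Levine signature function at $-1$ is the classical Murasugi signature: $\sigma(L) = \sigma_L(-1)$. Substituting this gives $g_4^{top}(L) = g_4(L) = \frac12(|\sigma(L)| - (m-1))$, which is exactly the claimed equality. The statement about connectedness of a genus-realising surface, if one wishes to record it, is carried over verbatim from Theorem~\ref{thm: qp links}(1) and requires no additional argument.

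Finally, to justify the concluding sentence, I would recall that quasi-alternating links have $\Sigma_2(L)$ an L-space. This is the $n = 1$ case of the even branched cover hypothesis (with $\Sigma_{2\cdot 1}(L) = \Sigma_2(L)$), so a quasi-alternating quasipositive link automatically satisfies the hypothesis of the corollary. I expect this to be the only step requiring an external input, namely the fact (attributable to Ozsv\'ath--Szab\'o, and used elsewhere in the paper via \cite{OS2}) that the double branched cover of a quasi-alternating link is an L-space; I would cite it accordingly. No genuine obstacle arises here: the corollary is essentially a direct specialization of Theorem~\ref{thm: qp links}(1), and the whole content lies in the observation that an even branching index forces $-1$ into the evaluation set, thereby upgrading the signature-function identity to a statement about the Murasugi signature itself.
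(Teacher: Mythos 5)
Your proposal is correct and is exactly the argument the paper intends: the corollary is stated with a \qed precisely because it is an immediate specialization of Theorem \ref{thm: qp links}(1) at $j = n$, where $\zeta_{2n}^{\,n} = -1$ and $\sigma_L(-1) = \sigma(L)$, with the quasi-alternating case supplied by the Ozsv\'ath--Szab\'o result that $\Sigma_2$ of a quasi-alternating link is an L-space. Nothing is missing.
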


Corollary \ref{cor: qa and qp links} can be used to calculate the $4$-ball genera of quasipositive links which are, for instance, quasi-alternating (see Example \ref{examples: qp}(1)), or more generally H-thin (cf. \cite{Kh}).

It has been conjectured that for closed, connected, orientable, irreducible $3$-manifolds, the conditions of not being an L-space (NLS), of having a left-orderable fundamental group (LO), and of admitting a co-oriented taut foliation (CTF) are equivalent. (See Conjecture 1 of \cite{BGW} and Conjecture 5 of \cite{Ju}.) Thus it is natural to ask whether the analogues of Theorems \ref{thm: sqp links} and \ref{thm: qp links} hold with the condition NLS replaced by either CTF or LO. We discuss this topic further in \S \ref{sec: questions and problems}. 

Here is how the paper is organised. In \S \ref{sec: signature function} we describe the basic properties and relations between the signature functions of Tristram-Levine and Milnor. In \S \ref{sec: mt inequality} we discuss the relation between the Tristram-Levine signature function of a link $L$ and the Hermitian intersection forms defined on the homology of certain cyclic branched covers of the four-ball. This discussion culminates in a proof of the Murasugi-Tristram inequality for the locally flat case (Theorem \ref{prop: mt inequality}), a folklore result. Section \ref{sec: genera} discusses various genera associated to links and a result of Rudolph (Proposition \ref{prop: rudolph}) describing a class of surfaces which minimizes them. In \S \ref{sec: positivity} we define various notions of positivity for links and describe how they relate. The proofs of Theorem \ref{thm: sqp links} and Corollaries \ref{cor: monic sqp knots} and \ref{cor: lspace knots} are dealt with in \S \ref{sec: sqp}. We also prove Proposition \ref{prop: satellite}, respectively Corollary \ref{cor: monic sqp knots}, which examine when a strongly quasipositive satellite knot, respectively  satellite L-space knot, can have an L-space branched cyclic cover. In \S \ref{sec: sqp alt} we show that the class of strongly quasipositive alternating links coincides with the class of special alternating links. See Proposition \ref{prop: definite alternating}. Section \ref{sec: sqp pretzels} applies the results of \S \ref{sec: sqp} to obstruct a pretzel link from being strongly quasipositive. In particular we are able to determine precisely which $3$-strand pretzel links are strongly quasipositive. Section \ref{sec: fsqpm knots} examines several families of fibred strongly quasipositive knots culminating in the proof of Corollary \ref{cor: positive braid or divide knot}, while \S \ref{sec: nfsqpk} calculates the constant $n_3(K)$ for various families of non-fibred strongly quasipositive knots. The extension of Theorem \ref{thm: sqp links} to quasipositive links is discussed in \S \ref{sec: qp links}. Theorem \ref{thm: qp links} is proved here and various families of examples are examined. Finally, in \S \ref{sec: questions and problems} we discuss questions and open problems which arise from the results of this paper. 

{\bf Acknowledgements}. This paper is a tale of four institutes. It originated during the authors' visit to the {\it Advanced School on Geometric Group Theory and Low-Dimensional Topology} held 23 May - 3 June 2016 at the ICTP Trieste. It was further developed during their visits to the winter-spring 2017 thematic semester {\it Homology theories in low-dimensional topology} held at the Isaac Newton Institute for the Mathematical Sciences Cambridge (funded by EPSRC grant no. EP/K032208/1) and to the CNRS UMI-CRM Montreal during April 2017. It was completed at the Casa Mat\'ematica Oaxaca during the BIRS-CMO workshop {\it Thirty Years of Floer Theory for 3-Manifolds}, July 30 -- August 4, 2017. The authors gratefully acknowledge their debt to these institutes. They also thank Idrissa Ba, who produced the paper's figures, and an anonymous referee who pointed out an error in our initial definition of the integer $n_4(L)$ of Theorem \ref{thm: qp links}(2) and whose meticulous reading of the paper led to an improved exposition.

\section{Signatures, nullities and Alexander polynomials of links} \label{sec: signature function} 

In this section, $L \subset S^3$ will denote a link of $m$ components with exterior $M$. There is an infinite cyclic cover $\widetilde{M} \to M$ associated to the epimorphism $\pi_1(M) \to \mathbb Z$ which sends each of the meridians of $L$, oriented positively with respect to the orientation of $L$, to $1$. Reducing (mod $n$) produces an $n$-fold cyclic cover $M_n \to M$ ($2 \leq n < \infty$). Let $\Sigma_n(L)$ denote the $n$-fold cyclic cover of $S^3$ branched over $L$ determined by $M_n \to M$. 

A {\it Seifert surface} for $L$ is a compact oriented surface with no closed components whose oriented boundary is $L$. Each Seifert surface $F$ of $L$ determines a bilinear Seifert form $\mathcal{S}_F: H_1(F) \times H_1(F) \to \mathbb Z$ for which $\mathcal{S}_F - \mathcal{S}_F^T$ is the intersection form on $H_1(F)$. The {\it Alexander polynomial of $L$} is defined to be the element $\Delta_L(t)$ of $\mathbb Z[t, t^{-1}]$, well-defined up to multiplication by units $\pm t^k$, represented by $\det(\mathcal{S}_F - t\mathcal{S}_F^T)$. 

For each $\zeta \in S^1$, $\mathcal{S}_F(\zeta) = (1-\zeta)\mathcal{S}_F + (1-\bar \zeta)\mathcal{S}_F^T$ defines a Hermitian form on $H_1(F)$ whose signature and nullity were shown by Tristram \cite{Tri} and Levine \cite{Lev} to be independent of the Seifert surface $F$ chosen for $L$. The {\it Tristram-Levine signature function of $L$} is defined by
$$\sigma_L: S^1 \to \mathbb Z, \; \sigma_L(\zeta) = \hbox{signature}(\mathcal{S}_F(\zeta)),$$
while the {\it nullity function of $L$} is defined by
$$\eta_L: S^1 \to \mathbb Z, \; \eta_L(\zeta) = \hbox{nullity}(\mathcal{S}_F(\zeta)).$$
The value of $\sigma_L$ at $\zeta = -1$ is the classical Murasugi signature of $L$, which we denote by $\sigma(L)$. 

Here is a list of some well-known properties of  $\sigma_L, \eta_L$, and $\Delta_L$. 
\vspace{-.2cm}
\begin{itemize}

\item $\sigma_L(\zeta) = \sigma_L(\bar \zeta)$ and $\eta_L(\zeta) = \eta_L(\bar \zeta)$ for all $\zeta$;

\vspace{.2cm} \item $\sigma_L$ and $\eta_L$ are constant on the components of $S^1 \setminus \Delta_L^{-1}(0)$;

\vspace{.2cm} \item $\eta_L(\zeta) \leq m-1$ for $\zeta \in S^1 \setminus \Delta_L^{-1}(0)$; 

\vspace{.2cm} \item $|H_1(\Sigma_n(L))| = \prod_{j=1}^{n-1} |\Delta_L(\zeta_n^j)|$ ; 

\vspace{.2cm} \item $\beta_1(\Sigma_n(L)) = \sum_{j=1}^{n-1} \eta_L(\zeta_n^j)$; 

\vspace{.2cm} \item if $L$ has a Seifert surface with $\mu$ components, then 
$\beta_1(\Sigma_n(L)) \geq (n-1)(\mu-1)$. 

\end{itemize}
\vspace{-.2cm} 

The first property follows from the definition of $\mathcal{S}_F(\zeta)$ while the second is a consequence of the identity $\det(\mathcal{S}_F(\zeta)) = \zeta^{m} (1 - \zeta)^{\beta_1(F)}\Delta_L(\zeta)$ for some integer $m$. The third property follows as in \cite[Corollary 2.24]{Tri}. The fourth property is \cite[Theorem 1]{HK}. For the fifth, see (\ref{beta1snl}), and for the sixth, see Proposition \ref{lemma: lower bound for eta}.

Milnor defined a signature function for knots $K$ (\cite[\S 5]{Miln}), closely related to $\sigma_K$, which extends to a function $\tau_L: S^1 \to \mathbb Z$ defined for links $L$ with non-zero Alexander polynomials such that 
$$|\tau_L(\zeta)| \leq \left\{ \begin{array}{ll} 
Z_\zeta(\Delta_L(t)) & \hbox{ if } \zeta = \pm 1 \\ 
2Z_\zeta(\Delta_L(t)) & \hbox{ if } \zeta \ne  \pm 1
\end{array} \right.$$ 
Here $Z_\zeta(\Delta_L(t))$ is the multiplicity of $\zeta$ as a zero of $\Delta_L(t)$, which is known to be $m-1$ when $\zeta = 1$. Further, if $0 \leq \varphi \leq \pi$ and $\zeta = \exp(\pi i \varphi)$ is not a root of $\Delta_L(t)$, Matumoto showed (\cite[Theorem 2]{Ma}) that 
\begin{equation} \label{matumoto}
\sigma_L(\exp(\pi i \varphi)) = \sum_{0 \leq \theta \leq \varphi} \tau_L(\exp(\pi i \theta))  
\end{equation}
Consequently, if $\Delta_L(t)$ is non-zero and $0 < \theta_0 < \pi$, for $\zeta, \zeta'$ contained in a small neighbourhood of $\exp(i \theta_0)$, $|\sigma_L(\zeta) - \sigma_L(\zeta')|$ is at most twice the multiplicity of $\exp(i \theta_0)$ as a root of $\Delta_K(t)$. Hence,

\begin{lemma} \label{constant}
Suppose that $L$ is a link of $m$ components contained in the $3$-sphere and $\zeta_0 \in S^1  \setminus \Delta_L^{-1}(0)$. Then $|\sigma_L(\zeta_0)| \leq \hbox{deg}(\Delta_L(t))$ with equality if and only if 
\vspace{-.2cm} 
\begin{itemize}

\item all the roots of $\Delta_L(t)$ are contained in $I_+(\zeta_0)$; 

\vspace{.2cm} \item $|\tau_L(1)| = m-1$; 

\vspace{.2cm} \item as $\zeta \ne \pm 1$ varies from $1$ to $-1$ through either hemisphere of $S^1$, the jumps in the values of $\sigma_L(\zeta)$ are all of the same sign and of absolute value equal to $2Z_\zeta(\Delta_L(t))$. If $m > 1$, the sign of these jumps is the same as the sign of $\tau_L(1)$. 

\end{itemize} 
\vspace{-.2cm}
Further, in the case that $|\sigma_L(\zeta_0)| = \hbox{deg}(\Delta_L(t))$, we have $|\sigma_L(\zeta)| = \hbox{deg}(\Delta_L(t))$ for all $\zeta \in \bar{I}_-(\zeta_0)$. 
\qed 
\end{lemma}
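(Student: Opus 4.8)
The plan is to analyze how the signature function $\sigma_L(\zeta)$ accumulates its jumps as $\zeta$ traverses the unit circle, using the Matumoto identity (\ref{matumoto}) together with the bounds on $|\tau_L(\zeta)|$ stated just before the lemma. The key observation is that $\sigma_L(\zeta_0)$ is a signed sum of the jump terms $\tau_L(\exp(\pi i \theta))$ for $\theta$ ranging from $0$ up to the argument of $\zeta_0$, and each such term is bounded in absolute value by $2 Z_\zeta(\Delta_L)$ (or $Z_\zeta(\Delta_L)$ when $\zeta = \pm 1$). Summing these multiplicities over all roots in a closed hemisphere, and recalling that $\deg(\Delta_L(t))$ counts all roots with multiplicity and that $\Delta_L$ is $\pm$-palindromic (so its roots come in conjugate pairs $\zeta, \bar\zeta$), gives the inequality $|\sigma_L(\zeta_0)| \leq \deg(\Delta_L(t))$.

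For the inequality itself, I would first fix $\zeta_0 = \exp(i\theta_0)$ with $\theta_0 \in (0,\pi]$ (the case $\theta_0 = 0$, i.e. $\zeta_0 = 1$, is degenerate and handled separately), and write $\sigma_L(\zeta_0)$ via (\ref{matumoto}) as $\sum_{0 \leq \theta \leq \theta_0/\pi}\tau_L(\exp(\pi i \theta))$. By the triangle inequality,
\begin{equation*}
|\sigma_L(\zeta_0)| \leq |\tau_L(1)| + \sum_{0 < \theta < \theta_0/\pi} |\tau_L(\exp(\pi i\theta))| \leq Z_1(\Delta_L) + \sum_{0 < \theta < \theta_0/\pi} 2 Z_{\exp(\pi i \theta)}(\Delta_L).
\end{equation*}
Since every root $\zeta$ on the open upper arc between $1$ and $\zeta_0$ is paired with its conjugate $\bar\zeta$ on the lower arc of the same multiplicity, the factor of $2$ exactly accounts for both members of each conjugate pair, and the $Z_1$ term counts the (real) root at $1$. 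Hence the right-hand side is at most the total number of roots in $\bar I_+(\bar\zeta_0) \cup \{1\}$ counted with multiplicity, which is bounded by $\deg(\Delta_L(t))$. This yields the desired inequality.

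For the equality case, I would argue that equality forces every inequality used above to be an equality. This means: no roots lie outside the relevant closed arc (so all roots of $\Delta_L$ are contained in $I_+(\zeta_0)$, the open arc containing $+1$); the bound $|\tau_L(1)| \leq Z_1(\Delta_L) = m-1$ is saturated, i.e. $|\tau_L(1)| = m-1$; and each jump $\tau_L(\exp(\pi i \theta))$ attains its maximal absolute value $2 Z_{\exp(\pi i\theta)}(\Delta_L)$ with all jumps of a common sign (otherwise cancellation in the signed sum would strictly decrease $|\sigma_L(\zeta_0)|$ below the sum of absolute values). The common-sign condition, and its compatibility with the sign of $\tau_L(1)$ when $m > 1$, follows from requiring that the signed sum have absolute value equal to the sum of the absolute values of its terms. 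Conversely, if all three bulleted conditions hold, the same computation shows equality is achieved.

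The final monotonicity assertion — that $|\sigma_L(\zeta)| = \deg(\Delta_L(t))$ for all $\zeta \in \bar I_-(\zeta_0)$ — would follow from the common-sign conclusion: once all roots lie in $I_+(\zeta_0)$, there are no further roots as $\zeta$ moves from $\zeta_0$ through $-1$, so $\sigma_L$ is constant (no jumps) on that closed arc, hence retains the extremal value. I expect the main obstacle to be the careful bookkeeping of the equality case, specifically pinning down the sign coherence of the jumps and correctly relating it to the sign of $\tau_L(1)$; one must be attentive to the endpoint contributions at $\pm 1$, the conjugate-pairing of roots, and the distinction between open and closed arcs $I_\pm(\zeta_0)$ versus $\bar I_\pm(\zeta_0)$ so that each root is counted exactly once on each side.
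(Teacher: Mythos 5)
Your proposal is correct and follows essentially the same route as the paper, which presents this lemma as an immediate consequence of Matumoto's identity (\ref{matumoto}) together with the stated bounds $|\tau_L(\zeta)|\leq 2Z_\zeta(\Delta_L)$ (resp. $Z_\zeta(\Delta_L)$ at $\pm1$) and offers no further argument. Your fleshing out of the triangle-inequality bookkeeping, the equality analysis forcing all roots into $I_+(\zeta_0)$ with coherently signed maximal jumps, and the constancy of $\sigma_L$ on $\bar I_-(\zeta_0)$ is exactly the intended justification.
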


\section{The Murasugi-Tristram inequality} \label{sec: mt inequality}
Throughout this section $L$ will denote a link in the $3$-sphere with $m$ components. 

It follows from the definitions that if $F$ is a Seifert surface of $L$, then $|\sigma_L(\zeta)| + \eta_L(\zeta) \leq \beta_1(F)$ for all $\zeta$. If $\zeta$ is not a root of $\Delta_L(t)$ and $F$ is a locally flat, compact, oriented surface properly embedded in $B^4$ with oriented boundary $L$ and $\mu$ components, this can be sharpened to the {\it Murasugi-Tristram inequality},
\begin{equation} \label{mt}
|\sigma_L(\zeta)| + |\eta_L(\zeta) - (\mu -1)| \leq \beta_1(F) = 2g(F) + (m-\mu)
\end{equation} 
A version of (\ref{mt}) for $\zeta = -1$ was first proved by Murasugi (\cite[Theorem 9.1]{Mu2}) when $F$ is smooth. The case that $\zeta$ is a root of unity and $F$ is smooth can be found in \cite[Theorem 2.27]{Tri}. It is folklore that the inequality holds when $F$ is locally flat and though this case has been used in the literature (see, for example, \cite[Lemma 3.4]{Ta}), we are unaware of a proof having been published which is in full generality. Typically, it is assumed that $F$ is smooth. (See \cite[Theorem 5.19]{Fl} for instance.) A recent paper of Mark Powell provides a synopsis of the work that has been done on the Murasugi-Tristram inequality and proves it in the case that $F$ is locally flat with $m$ components (\cite[Theorem 1.4]{Po}). As the locally flat case of the (\ref{mt}) is key to our arguments, we include a self-contained proof below. The elements of the argument (\S \ref{sec: betti numbers} and \S \ref{subsec: mt inequality}) arise as in the smooth case. (Compare \cite[Corollary 1.4 and Theorem 3.4]{KT}, \cite[Proposition 1.5]{Gi}, and \cite[Theorem 5.19]{Fl}.)

For the remainder of this section, $F$ will be a locally flat, compact, oriented surface properly embedded in $B^4$ with oriented boundary $L$ and $\mu$ components. 

\subsection{Signatures and branched covers} \label{subsec: signatures and branched covers} 
Let $X$ be the exterior of $F$ in $B^4$ with orientation inherited from $\mathbb C^2$. Then 
$$\widetilde{H}_r(X) \cong H_{r+1}(B^4, X) \cong H_{r+1}(F \times (D^2, S^1)) \cong H_{r-1}(F) \cong \left\{ 
\begin{array}{ll} 
\mathbb Z^\mu & r = 1 \\ 
\mathbb Z^{\beta_1(F)} & r = 2 \\ 
0 & \hbox{otherwise} 
\end{array} \right.$$
where $H_1(X)$ is generated by the oriented meridians of the components of $F$. 

Let $\pi_1(X) \to \mathbb Z$ be the epimorphism which sends each of the oriented meridians of the components of $F$ to $1$ and $\widetilde X \to X$ the associated infinite cyclic cover. Reducing (mod $n$) yields an $n$-fold cyclic cover $X_n \to X$ and an associated $n$-fold cyclic cover $(\Sigma_n(F), F_n) \to (B^4, F)$ branched over $F$. Then $\partial \Sigma_n(F) = \Sigma_n(L)$. Both $X_n$ and $\Sigma_n(F)$ inherit orientations from $X$.

There is a Hermitian intersection form defined on $H_2(\Sigma_n(F); \mathbb C) \cong H_2(\Sigma_n(F)) \otimes_{\mathbb Z} \mathbb C$ given by 
$$\langle \xi \otimes z, \eta \otimes w \rangle_{\Sigma_n(F)} = z \bar w (\xi \cdot \eta)$$ 
where $\xi, \eta \in H_2(\Sigma_n(F))$, $z, w \in \mathbb C$, and $\xi \cdot \eta$ is the algebraic intersection of $\xi$ and $\eta$. This form is non-singular if and only if $\Sigma_n(L)$ is a rational homology $3$-sphere. That is, if and only if no $n^{th}$ root of unity is a root of $\Delta_L(t)$ (cf. \S \ref{sec: signature function}). Further, it is definite if and only if the intersection form on $H_2(\Sigma_n(F))$ is definite.

Let $\tau_n$ be the deck transformation of $\Sigma_n(F)$ which rotates a normal disk to $F_n$ by $\frac{2 \pi}{n}$. We use $t_n$ to denote any automorphism induced by $\tau_n$ on the homology of a $\tau_n$-invariant pair $(Y_n, Z_n)$ where $Y_n \subseteq \Sigma_n(F)$. If $R$ is a commutative ring, then letting $t$ act as $t_n$ makes $H_*(Y_n, Z_n; R)$ an $R[\mathbb Z] = R[t, t^{-1}]$-module. To simplify notation, we will refer to $t_n$ as $t$ below.  

For $0 \leq j \leq n-1$, define $E_r(Y_n, Z_n;j)$ to be the $\zeta_n^j$-eigenspace of $t: H_r(Y_n, Z_n; \mathbb C) \to H_r(Y_n, Z_n; \mathbb C)$ and 
$$\beta_r(Y_n, Z_n; j) = \hbox{dim}_{\mathbb C}(E_r(Y_n, Z_n;j)).$$ 
Clearly 
\begin{equation} \label{e0j}
\beta_0(\Sigma_n(F); j) = \left\{ 
\begin{array}{ll} 
1 & \hbox{ if } j = 0 \\
0 & \hbox{ if } 1 \leq j \leq n-1
\end{array} \right. \nonumber
\end{equation}
Since $H_*(\Sigma_n(F), X_n; \mathbb C) \cong H_*(F_n \times (D^2, S^1); \mathbb C)$, $t$ acts trivially on $H_*(\Sigma_n(F), X_n; \mathbb C)$. And since $\Sigma_n(F)$ is obtained from $X_n$ by attaching $2$-handles, the homomorphism $H_*(X_n; \mathbb C) \to H_*(\Sigma_n(F); \mathbb C)$ is surjective and induces isomorphisms
\begin{equation} \label{same eigenspaces}
E_r(X_n; j) \xrightarrow{\cong} E_r(\Sigma_n(F); j) \hbox{ {\rm for }} 1 \leq j \leq n-1 
\end{equation}

\subsubsection{The case that $F$ comes from a connected Seifert surface} \label{F connected} 
First consider the special case that $F$ is obtained by isotoping the interior of a {\it connected} Seifert surface of $L$ into the interior of $B^4$, $\pi_1(X) \cong \mathbb Z$ and is generated by any meridional class of $L$. In this case, $\Sigma_n(F)$ is simply connected for each $n \geq 2$. In particular, $0 = \beta_1(\Sigma_n(F)) = \beta_3(\Sigma_n(F), \Sigma_n(L))$. Thus we have an exact sequence
\begin{equation} \label{seifert surface case}
0 \to H_2(\Sigma_n(L); \mathbb C) \to H_2(\Sigma_n(F); \mathbb C) \xrightarrow{\psi} H_2(\Sigma_n(F), \Sigma_n(L); \mathbb C) \to H_1(\Sigma_n(L); \mathbb C) \to 0 
\end{equation}
The Hermitian intersection pairing $H_2(\Sigma_n(F); \mathbb C) \otimes H_2(\Sigma_n(F), \Sigma_n(L); \mathbb C) \to \mathbb C$ is non-singular, and so identifying $H_2(\Sigma_n(F), \Sigma_n(L); \mathbb C)$ with the dual space of $H_2(\Sigma_n(F); \mathbb C)$, $\psi$ is the adjoint of of the pairing $\langle \cdot, \cdot \rangle_{\Sigma_n(F)}$. On the other hand, Viro showed (\cite{Vi}; see also \cite{CS} and \cite{Ka}) that for $1 \leq j \leq n-1$,  $E_2(\Sigma_n(F); j) \cong H_1(F; \mathbb C)$ and the restriction of the Hermitian intersection pairing to $E_2(\Sigma_n(F); j)$ coincides with $\mathcal{S}_F(\zeta_n^j)$. Thus $\sigma_L(\zeta_n^j)$ is the signature of $\langle \cdot, \cdot\rangle_{\Sigma_n(F)}|E_2(\Sigma_n(F);j)$. 

\subsubsection{The general case} 
Suppose that $F$ is a locally flat surface with boundary $L$. In this case that $F$ is smooth, Viro applied the smooth $G$-signature theorem to deduce the following theorem. The extension to the locally flat case is obtained by replacing the smooth $G$-signature theorem with Wall's topological $G$-signature theorem \cite[Theorem 14B.2]{Wa}.  

\begin{thm} \label{thm: viro} {\rm (Viro)}
Suppose that $F$ is a locally flat, properly embedded, compact, oriented surface in $B^4$ with oriented boundary $L$. Then $\sigma_L(\zeta_n^j) = \hbox{{\rm signature}}\big(\langle \cdot, \cdot\rangle_{\Sigma_n(F)}|E_2(\Sigma_n(F));j)\big)$ for $1 \leq j \leq n-1$. 
\qed
\end{thm}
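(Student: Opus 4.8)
The plan is to deduce the general case from the connected Seifert surface case already treated in \S\ref{F connected}, using the equivariant ($G$-)signature theorem as the comparison tool. Fix a connected Seifert surface $F_0$ for $L$ and isotope its interior into $\mathrm{int}(B^4)$; then \S\ref{F connected} gives $\mathrm{signature}\big(\langle\cdot,\cdot\rangle_{\Sigma_n(F_0)}|E_2(\Sigma_n(F_0);j)\big)=\sigma_L(\zeta_n^j)$ for $1\le j\le n-1$. Writing $S^4 = B^4\cup_{S^3}B^4$ and placing $F$ in one ball and $-F_0$ in the other produces a closed, locally flat, oriented surface $\hat F\subset S^4$, and the associated branched covers glue equivariantly as $\Sigma_n(\hat F)=\Sigma_n(F)\cup_{\Sigma_n(L)}\big(-\Sigma_n(F_0)\big)$, with the deck transformation $\tau_n$ respecting this decomposition. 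The strategy is to show that $F$ and $F_0$ give the same eigenspace signatures.

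First I would record the linear algebra relating the invariant I must control. Since $\tau_n$ is an isometry of the Hermitian form $\langle\cdot,\cdot\rangle_{\Sigma_n(F)}$, the eigenspaces $E_2(\Sigma_n(F);j)$ are mutually orthogonal, and writing $s_j(F)=\mathrm{signature}\big(\langle\cdot,\cdot\rangle_{\Sigma_n(F)}|E_2(\Sigma_n(F);j)\big)$ one has the finite Fourier relation $\mathrm{sign}(\tau_n^k,\Sigma_n(F))=\sum_{j=0}^{n-1}\zeta_n^{jk}\,s_j(F)$ between the equivariant signatures and the eigenspace signatures (and similarly for $F_0$). The transform matrix $(\zeta_n^{jk})$ is invertible, so it suffices to control the equivariant signatures for all $k$. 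Two inputs pin these down. For $k=0$ the invariant eigenspace satisfies, by transfer, $E_2(\Sigma_n(F);0)\cong H_2(\Sigma_n(F)/\langle\tau_n\rangle;\mathbb C)=H_2(B^4;\mathbb C)=0$, so $s_0(F)=s_0(F_0)=0$. For $k\ne 0$ the fixed-point set of $\tau_n^k$ is the lifted branch surface, and the $G$-signature theorem expresses $\mathrm{sign}(\tau_n^k,\Sigma_n(\hat F))$ as a universal multiple (depending only on the rotation number $\zeta_n^k$) of the self-intersection number of that fixed surface.

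The comparison then closes as follows. By Novikov additivity for the equivariant signature, $\mathrm{sign}(\tau_n^k,\Sigma_n(\hat F))=\mathrm{sign}(\tau_n^k,\Sigma_n(F))-\mathrm{sign}(\tau_n^k,\Sigma_n(F_0))$. Because $\hat F$ is closed in $S^4$ and $H_2(S^4)=0$, its homological self-intersection, hence its normal Euler number, vanishes, and therefore so does the self-intersection of the fixed surface in $\Sigma_n(\hat F)$; the $G$-signature theorem then forces $\mathrm{sign}(\tau_n^k,\Sigma_n(\hat F))=0$ for every $k\ne 0$. Thus $\mathrm{sign}(\tau_n^k,\Sigma_n(F))=\mathrm{sign}(\tau_n^k,\Sigma_n(F_0))$ for $k\ne 0$. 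Setting $d_j=s_j(F)-s_j(F_0)$, the Fourier relation shows $\sum_{j}\zeta_n^{jk}d_j=0$ for $1\le k\le n-1$, so the $d_j$ are all equal; combined with $d_0=0$ this yields $d_j=0$ for all $j$, i.e. $s_j(F)=s_j(F_0)=\sigma_L(\zeta_n^j)$, which is the assertion.

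The one genuinely new point in the locally flat setting — and the step I expect to be the main obstacle — is the invocation of the $G$-signature theorem, since Viro's original argument uses the smooth Atiyah--Singer version. Here I would appeal to Wall's topological $G$-signature theorem \cite[Theorem 14B.2]{Wa}. To apply it one needs the $\mathbb Z/n$-action on $\Sigma_n(\hat F)$ to be locally linear along the branch locus; this is guaranteed by the existence of a topological normal (disk) bundle for the locally flat surface $\hat F\subset S^4$, so that the branched cover is locally modelled on the standard linear branched cover and the fixed-point contribution again takes the self-intersection form used above. Granting local linearity, the vanishing of the normal Euler number of $\hat F$, together with the transfer computation and Novikov additivity, are all category-independent, so the smooth proof transports essentially verbatim.
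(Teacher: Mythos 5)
Your argument is correct and is essentially the proof the paper intends: the paper establishes this theorem only by remarking that one runs Viro's original argument with Wall's topological $G$-signature theorem (Theorem 14B.2 of his surgery book) substituted for the smooth Atiyah--Singer version, and your proposal is a faithful expansion of exactly that argument --- close up with a pushed-in connected Seifert surface, apply equivariant Novikov additivity and the vanishing of the self-intersection of the closed null-homologous surface in $S^4$ to kill the equivariant signatures for $k\neq 0$, and invert the character relation using $s_0=0$ from transfer. The single point the paper makes explicit, namely that the only new issue in the locally flat category is the substitution of Wall's theorem (justified by the existence of topological normal bundles along the branch locus), is precisely the step you isolate as the crux.
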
 
Another consequence of \cite{Vi} and the sequence (\ref{seifert surface case}) is that when $F$ is obtained from a connected Seifert surface of $L$ we have $\eta_L(\zeta_n^j) = \hbox{{\rm nullity}}\big(\langle \cdot, \cdot\rangle_{\Sigma_n(F)}|E_2(\Sigma_n(F);j)\big) = \beta_2(\Sigma_n(L); j) = \beta_1(\Sigma_n(L); j)$ for $1 \leq j \leq n-1$. Hence 
\begin{equation} \label{eta}
\eta_L(\zeta_n^j) = \beta_1(\Sigma_n(L); j) \;\; \hbox{ for } 1 \leq j \leq n-1
\end{equation}
A transfer argument shows that $E_1(M_n; 0) \cong \mathbb C^{m}$ is generated by the inverse images of the meridians of the components of $L$. Hence $E_1(\Sigma_n(L); 0) \cong 0$ and therefore 
\begin{equation} \label{beta1snl}
\beta_1(\Sigma_n(L)) = \sum_{j=1}^{n-1} \eta_L(\zeta_n^j) 
\end{equation}

\subsection{Milnor exact sequences} \label{sec: Milnor sequence} 

Given an infinite cyclic cover of pairs $(\widetilde Y, \widetilde Z) \xrightarrow{f}  (Y, Z)$ and commutative ring $R$, Milnor defined an exact sequence 
$$ \ldots \to H_{r+1}(Y, Z; R) \to H_{r}(\widetilde Y, \widetilde Z; R) \xrightarrow{1-t} H_{r}(\widetilde Y, \widetilde Z; R) \xrightarrow{f_*} H_{r}(Y, Z; R) \to  \ldots $$
of $R[t, t^{-1}]$-modules where $t$ is defined to act as the identity on $H_{*}(Y, Z; R)$ (\cite[\S 2]{Miln}). 

\begin{lemma} 
\label{lemma: wedge of circles} 
Let $Y$ be a wedge of $\mu$ circles and $\widetilde Y \to Y$ an infinite cyclic cover. For each $n \geq 2$ let  $Y_n \to Y$ be the associated $n$-fold cyclic cover. Then 
$$\beta_1(Y_n; j) = \left\{ \begin{array}{cl} \mu & \mbox{ if } j = 0 \\ \mu - 1 & \mbox{ if } 1 \leq j \leq n-1 \end{array} \right.$$
\end{lemma}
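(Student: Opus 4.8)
The plan is to compute the eigenspace Betti numbers directly from the equivariant cellular chain complex of the infinite cyclic cover, reducing the whole computation to an Euler characteristic count together with the connectedness of $Y_n$.

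First I would record the cellular model. Since $Y$ is a wedge of $\mu$ circles, its cellular chain complex has one $0$-cell $v$ and $\mu$ one-cells $e_1, \dots, e_\mu$, and the surjection $\phi\colon \pi_1(Y) = F_\mu \to \mathbb Z$ defining the cover sends $e_i$ to some $d_i \in \mathbb Z$; surjectivity of $\phi$ is equivalent to $\gcd(d_1, \dots, d_\mu) = 1$. Lifting to $\widetilde Y$, the chain complex becomes a complex of free $\Lambda = \mathbb C[t, t^{-1}]$-modules $\Lambda^\mu \xrightarrow{\partial} \Lambda$ with $\partial(e_i) = (t^{d_i} - 1)v$. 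Because $C_\ast(Y_n; \mathbb C) \cong \mathbb C[\mathbb Z/n] \otimes_{\mathbb C} C_\ast(Y;\mathbb C)$ is free over $\mathbb C[\mathbb Z/n] = \mathbb C[t]/(t^n-1)$, and since $t^n-1$ has distinct roots, the decomposition $\mathbb C[t]/(t^n-1) \cong \bigoplus_{j=0}^{n-1} \mathbb C_{\zeta_n^j}$ splits $C_\ast(Y_n;\mathbb C)$ into subcomplexes on which $t$ acts as $\zeta_n^j$. As $\mathbb C[\mathbb Z/n]$ is semisimple, taking homology commutes with this splitting, so $E_r(Y_n; j)$ is the $r$-th homology of the complex obtained from $\Lambda^\mu \xrightarrow{\partial} \Lambda$ by specializing $t = \zeta_n^j$, namely $\mathbb C^\mu \xrightarrow{\partial_j} \mathbb C$ with $\partial_j(e_i) = \zeta_n^{j d_i} - 1$.

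The two chain groups of each specialized complex have dimensions $\mu$ and $1$ independently of $j$, so its Euler characteristic equals $1 - \mu$; that is,
$$\beta_0(Y_n; j) - \beta_1(Y_n; j) = 1 - \mu \quad \text{for all } 0 \le j \le n-1.$$
It then remains only to compute $\beta_0(Y_n; j)$. Since $\phi$ is surjective, the composite $\pi_1(Y) \to \mathbb Z \to \mathbb Z/n$ is onto, so $Y_n$ is connected and $H_0(Y_n; \mathbb C) \cong \mathbb C$ with $t$ acting trivially (the deck transformation fixes the unique component). Hence $\beta_0(Y_n; 0) = 1$ and $\beta_0(Y_n; j) = 0$ for $1 \le j \le n-1$, and substituting into the Euler characteristic relation yields $\beta_1(Y_n; 0) = \mu$ and $\beta_1(Y_n; j) = \mu - 1$ for $1 \le j \le n-1$, as claimed.

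The only genuine subtlety, and the step I would treat most carefully, is justifying that the eigenspace decomposition commutes with the passage to homology, so that $E_r(Y_n; j)$ really does compute the homology of the $t = \zeta_n^j$ specialization; this is exactly where semisimplicity of $\mathbb C[\mathbb Z/n]$ (equivalently, the distinctness of the roots of $t^n-1$) is used. Everything else is bookkeeping. As a sanity check, summing over $j$ recovers $\beta_1(Y_n) = \mu + (n-1)(\mu-1) = n(\mu-1)+1$, the first Betti number of the connected graph $Y_n$, whose Euler characteristic is $n(1-\mu)$.
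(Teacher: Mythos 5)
Your proof is correct, but it takes a genuinely different route from the paper's. You work at the chain level: you write down the equivariant cellular complex $\Lambda^\mu \xrightarrow{\partial} \Lambda$ of $\widetilde Y$ (with $\partial(e_i) = (t^{d_i}-1)v$ and $\gcd(d_1,\dots,d_\mu)=1$ from surjectivity of $\phi$), reduce mod $t^n-1$, and use semisimplicity of $\mathbb C[t]/(t^n-1)$ to split the complex of $Y_n$ into its $\zeta_n^j$-eigensubcomplexes before taking homology; the count then reduces to the Euler characteristic $1-\mu$ of each specialization together with $\beta_0(Y_n;j) = \delta_{j0}$, which you get from connectedness of $Y_n$. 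The paper instead argues entirely in homology: it uses the quotient field $\Omega$ of $\mathbb C[t,t^{-1}]$ to identify $H_1(\widetilde Y;\mathbb C) \cong \mathbb C[t,t^{-1}]^{\mu-1}\oplus T$ with $T$ torsion, then feeds this into the Milnor exact sequence of $\widetilde Y \to Y_n$ and compares with $\beta_1(Y_n) = n(\mu-1)+1$ to conclude $T/(1-t^n)T = 0$, whence $H_1(Y_n;\mathbb C) \cong \mathbb C[t,t^{-1}]/(t-1) \oplus \bigl(\mathbb C[t,t^{-1}]/(1-t^n)\bigr)^{\mu-1}$. Your argument is more direct and self-contained (no Milnor sequence, no need to dispose of the torsion summand), and it isolates cleanly the one point needing care, namely that the eigenspace decomposition commutes with homology; the paper's version has the advantage of staying within the Milnor-sequence toolkit that it reuses in the surrounding lemmas. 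One small point of hygiene: your identification $C_*(Y_n;\mathbb C)\cong \mathbb C[\mathbb Z/n]\otimes_{\mathbb C} C_*(Y;\mathbb C)$ is only an isomorphism of graded modules, not of complexes with the naive tensor differential -- the differential is twisted by the $d_i$ -- but since you write the correct boundary map explicitly this is phrasing rather than a gap.
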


\begin{proof} 
Let $\Omega$ be the quotient field of the principal ideal domain $\mathbb C[t, t^{-1}]$ and set $H_*(Y;  \Omega) = H_*(\widetilde Y; \mathbb C) \otimes_{\mathbb C[t,t^{-1}]} \Omega$. Since $H_0(Y; \Omega) = 0$, an Euler characteristic argument shows that $H_1(Y; \Omega)  \cong \Omega^{\mu -1}$. Thus $H_1(\widetilde Y; \mathbb C) \cong \mathbb C[t,t^{-1}]^{\mu -1} \oplus T$ where $T$ is a torsion $\mathbb C[t,t^{-1}]$-module. 

Another appeal to an Euler characteristic argument shows that $\beta_1(Y_n) = n(\mu -1) + 1$, so from the Milnor exact sequence associated to $\widetilde Y \to Y_n$ we obtain an exact sequence of $\mathbb C[t,t^{-1}]$-modules
$$H_1(\widetilde Y; \mathbb C) \xrightarrow{1-t^n} H_1(\widetilde Y; \mathbb C) \to H_1(Y_n; \mathbb C) \cong \mathbb C^{n(\mu -1) + 1} \to H_0(\widetilde Y; \mathbb C) \cong \mathbb C \to 0$$ 
In particular, the cokernel of $H_1(\widetilde Y; \mathbb C) \xrightarrow{1-t^n} H_1(\widetilde Y; \mathbb C)$ is $\mathbb C^{n(\mu -1)}$. From above, this cokerrnel is isomorphic to $\mathbb C[t,t^{-1}]^{\mu -1}/(1-t^n)( \mathbb C[t,t^{-1}]^{\mu -1}) \oplus T/(1-t^n)T \cong \mathbb C^{n(\mu -1)} \oplus T/(1-t^n)T$. Thus $T/(1-t^n)T = 0$. Milnor's sequence then shows that as a $\mathbb C[t,t^{-1}]$-module we have 
$$H_1(Y_n; \mathbb C) \cong H_0(\widetilde Y; \mathbb C) \oplus  \big(\mathbb C[t,t^{-1}]/(1-t^n)\big)^{\mu - 1}  \cong \mathbb C[t,t^{-1}]/(t - 1) \oplus \big( \oplus_{j = 0}^{n-1} \big( \mathbb C[t,t^{-1}]/(t - \zeta_n^j)\big)^{\mu - 1} \big)$$
Therefore the lemma holds. 
\end{proof} 

\begin{lemma} \label{lemma: lower bound for eta} 
Suppose that $L$ has a Seifert surface with $\mu$ components. Then $\eta(\zeta_n^j) \geq  \mu -1$ for each $n \geq 2$ and $1 \leq j \leq n-1$. Consequently $\beta_1(\Sigma_n(L)) \geq (n-1)(\mu-1)$ for each $n \geq 2$. Further, if $\zeta$ is not a root of $\Delta_L(t)$, then $\eta_L(\zeta) \geq \mu -1$.  
\end{lemma}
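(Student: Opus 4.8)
The plan is to reduce everything to the single inequality $\beta_1(\Sigma_n(L); j) \geq \mu - 1$ for $1 \leq j \leq n-1$, and then to deduce this from Lemma~\ref{lemma: wedge of circles} by comparing $\Sigma_n(L)$ with the $n$-fold cyclic cover of a wedge of $\mu$ circles. Granting the inequality, identity (\ref{eta}) gives $\eta_L(\zeta_n^j) = \beta_1(\Sigma_n(L); j) \geq \mu - 1$, identity (\ref{beta1snl}) then yields $\beta_1(\Sigma_n(L)) = \sum_{j=1}^{n-1} \eta_L(\zeta_n^j) \geq (n-1)(\mu - 1)$, and the final assertion follows because $\eta_L$ is constant on each component of $S^1 \setminus \Delta_L^{-1}(0)$: any such component is an open arc, hence contains some $\zeta_n^j$ with $j \neq 0$ (the roots of unity being dense in $S^1$), and on that arc $\eta_L$ takes the value $\eta_L(\zeta_n^j) \geq \mu - 1$.

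To produce the comparison map, write $\widehat F = F \cap M$ for the properly embedded surface in the exterior $M$, with components $\widehat F_1, \ldots, \widehat F_\mu$, and fix disjoint bicollars $\widehat F_i \times [-1,1] \subset M$. Collapsing $M \setminus \bigcup_i (\widehat F_i \times (-1,1))$ together with the two ends of each bicollar to the wedge point, and sending $\widehat F_i \times [-1,1]$ around the $i$-th circle by the projection to $[-1,1]/\{-1 \sim 1\}$, defines a continuous map $q \colon M \to G$, where $G = \bigvee_{i=1}^{\mu} S^1$. By construction the preimage of the $\mu$ marked points $\widehat F_i \times \{0\}$ is dual to $\sum_i [\widehat F_i] = \alpha$, so $q$ is compatible with the homomorphisms to $\mathbb Z$ defining the cyclic covers; that is, $M_n$ is the pullback under $q$ of the $n$-fold cyclic cover $G_n \to G$. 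Moreover $q$ is $\pi_1$-surjective: a meridian of any component of $\partial \widehat F_i$ crosses $\widehat F_i$ once and misses the other bicollars, so it maps to the $i$-th generator of $\pi_1(G)$, and every $\widehat F_i$ has non-empty boundary.

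Lifting $q$ gives a $t$-equivariant map $q_n \colon M_n \to G_n$ which is surjective on $\pi_1$, hence on $H_1(\,\cdot\,; \mathbb C)$. Since the induced map is a surjection of $\mathbb C[\mathbb Z/n]$-modules it is surjective on each $\zeta_n^j$-eigenspace, whence $\beta_1(M_n; j) \geq \beta_1(G_n; j)$; by Lemma~\ref{lemma: wedge of circles}, $\beta_1(G_n; j) = \mu - 1$ for $1 \leq j \leq n-1$. It remains to replace $M_n$ by the branched cover $\Sigma_n(L)$. Here I would argue that filling in the branch locus only affects the trivial eigenspace: the preimage in $M_n$ of each peripheral torus is a single torus $T_i$ on which the deck transformation acts by a translation, so $t$ acts trivially on $H_*(T_i; \mathbb C)$ and likewise on the homology of the solid tori used to fill them, all of which therefore lies in the $j = 0$ eigenspace. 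The Mayer--Vietoris sequence of the filling $\Sigma_n(L) = M_n \cup (\text{solid tori})$ then gives $\beta_1(\Sigma_n(L); j) = \beta_1(M_n; j)$ for $1 \leq j \leq n-1$, exactly the phenomenon behind the computations $E_1(M_n; 0) \cong \mathbb C^{m}$ and $E_1(\Sigma_n(L); 0) \cong 0$ recorded above. Combining these gives $\eta_L(\zeta_n^j) = \beta_1(\Sigma_n(L); j) = \beta_1(M_n; j) \geq \mu - 1$.

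The routine points are the continuity and $\pi_1$-surjectivity of the collapse map $q$; the one step needing care is the last, namely the bookkeeping that passes from the unbranched cover $M_n$, where the wedge-of-circles comparison lives, to the branched cover $\Sigma_n(L)$, where the nullity is computed via (\ref{eta}). I expect this eigenspace-invariance under branch filling to be the main (though standard) obstacle; everything else is a direct application of Lemma~\ref{lemma: wedge of circles} together with the identities already established in this section.
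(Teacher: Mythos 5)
Your proof is correct and follows essentially the same route as the paper's: both reduce the claim to $\beta_1(M_n;j)\geq \beta_1(G_n;j)=\mu-1$ via Lemma \ref{lemma: wedge of circles} and a comparison map between the link exterior and a wedge of $\mu$ circles, the only difference being that you use surjectivity of the collapse map on $H_1$ where the paper includes the wedge into $M$ and uses the retraction $r\circ i\simeq \mathrm{id}$ to get injectivity of $H_1(Y_n;\mathbb C)\to H_1(M_n;\mathbb C)$. Your Mayer--Vietoris justification of $E_1(\Sigma_n(L);j)\cong E_1(M_n;j)$ for $1\leq j\leq n-1$ makes explicit a step the paper's proof asserts without comment.
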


\begin{proof} 
Since roots of unity are dense in the circle and $\eta_L$ is constant on $S^1 \setminus \Delta_L^{-1}(0)$, the last assertion of the lemma follows from the first. 

Let $M$ be the exterior of $L$ and denote by $i: Y \to M$ the inclusion of a wedge of $\mu$ circles obtained by joining the ends of a family of transverse arcs, one for each component of $F$, to a common base point using arcs in $M \setminus F$. There is a map $r: M \to Y$ which sends the complement of a tubular neighbourhood of $F$ to the wedge point of $Y$ and for which the composition $r \circ i$ is homotopic to the identity of $Y$. 

Fix $n \geq 2$ and recall the $n$-fold cyclic cover  $M_n \to M$ constructed in the first paragraph of \S \ref{sec: signature function}. The inclusion $i$ induces a connected $n$-fold cyclic cover $Y_n \to Y$ such that $i$ lifts to an inclusion $i_n: Y_n \to M_n$. It is easy to see that $r$ lifts to a map $r_n: M_n \to Y_n$ such that $r_n \circ i_n: Y_n \to Y_n$ is homotopic to the identity. Hence $(i_n)_*:H_1(Y_n; \mathbb C) \to H_1(M_n; \mathbb C)$ is injective and so for each $j$ there is an injection $E_1(Y_n; j) \to E_1(M_n; j)$. Further, for $1 \leq j \leq n-1$ we have $E_1(\Sigma_n(L); j) \cong E_1(M_n; j)$ and so for such $j$, $\beta_1(\Sigma_n(L); j) \geq \beta_1(Y_n; j) = \mu -1$ (Lemma \ref{lemma: wedge of circles}). Then by (\ref{eta}), $\eta_L(\zeta_n^j) \geq  \mu -1$ for $1 \leq j \leq n-1$. It then follows from (\ref{beta1snl}) that  $\beta_1(\Sigma_n(L)) \geq (n-1)(\mu-1)$
\end{proof} 

\begin{cor} \label{cor: conn ss}
If $\Sigma_n(L)$ is a rational homology $3$-sphere for some $n \geq 2$, then each Seifert surface for $L$ is connected. 
\qed 
\end{cor}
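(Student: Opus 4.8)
The plan is to read the result off directly from the lower bound on $\beta_1(\Sigma_n(L))$ established in Lemma \ref{lemma: lower bound for eta}. The key observation is that the hypothesis ``$\Sigma_n(L)$ is a rational homology $3$-sphere'' is precisely the statement $\beta_1(\Sigma_n(L)) = 0$, while Lemma \ref{lemma: lower bound for eta} bounds $\beta_1(\Sigma_n(L))$ from below in terms of the number of components of any Seifert surface of $L$. So the corollary should be a one-line numerical consequence.

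First I would fix an arbitrary Seifert surface $F$ for $L$ and let $\mu$ denote its number of components; the goal is to show $\mu = 1$. By Lemma \ref{lemma: lower bound for eta}, $\beta_1(\Sigma_n(L)) \geq (n-1)(\mu - 1)$. On the other hand, that $\Sigma_n(L)$ is a rational homology $3$-sphere means $H_1(\Sigma_n(L); \mathbb{Q}) = 0$, i.e. $\beta_1(\Sigma_n(L)) = 0$ (equivalently, by the fifth listed property of $\sigma_L, \eta_L, \Delta_L$ together with (\ref{beta1snl}), that $\sum_{j=1}^{n-1} \eta_L(\zeta_n^j) = 0$).

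Combining these yields $0 = \beta_1(\Sigma_n(L)) \geq (n-1)(\mu - 1) \geq 0$. Since $n \geq 2$ we have $n - 1 \geq 1 > 0$, and $\mu \geq 1$ because a Seifert surface is non-empty, so $(n-1)(\mu - 1) \leq 0$ forces $\mu - 1 = 0$, i.e. $\mu = 1$. As $F$ was arbitrary, every Seifert surface for $L$ is connected.

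There is no genuine obstacle here: the corollary is an immediate arithmetic consequence of Lemma \ref{lemma: lower bound for eta}, in which all the real content resides (the Milnor-sequence computation of Lemma \ref{lemma: wedge of circles}, the retraction argument producing the injection $E_1(Y_n; j) \hookrightarrow E_1(M_n; j)$, and the identification $\eta_L(\zeta_n^j) = \beta_1(\Sigma_n(L); j)$ furnished by Viro's theorem). The only points requiring minimal care are the bookkeeping observations that $n \geq 2$ guarantees the strictly positive factor $n-1$, and that a Seifert surface, having no closed components, is non-empty so that $\mu \geq 1$.
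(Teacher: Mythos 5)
Your proposal is correct and is exactly the argument the paper intends: the corollary is stated with an immediate \qed following Lemma \ref{lemma: lower bound for eta}, the point being precisely that $0 = \beta_1(\Sigma_n(L)) \geq (n-1)(\mu-1)$ forces $\mu = 1$ for any Seifert surface. Your additional bookkeeping remarks ($n-1>0$ and $\mu \geq 1$) are fine and consistent with the paper's conventions.
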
 

\subsection{The betti numbers of $\Sigma_n(F)$} \label{sec: betti numbers}

\begin{lemma}  \label{lemma: finite sum of cyclics} 
For each $r$, $\beta_r(\Sigma_n(F); j) = \beta_r(\Sigma_n(F); j')$ if $\gcd(j,n) = \gcd(j',n)$. 
\end{lemma}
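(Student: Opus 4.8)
The plan is to observe that $t = t_n$ is induced by a genuine deck transformation of order $n$, so that the eigenspace decomposition of $H_r(\Sigma_n(F); \mathbb C)$ is controlled by a characteristic polynomial defined over $\mathbb Z$, to which the factorization into cyclotomic polynomials applies. The dependence on $\gcd(j,n)$ then comes down to the fact that $\zeta_n^j$ and $\zeta_n^{j'}$ are roots of the same cyclotomic polynomial precisely when $\gcd(j,n) = \gcd(j',n)$.

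First I would record that $\tau_n^n = \mathrm{id}$, so $t^n$ acts as the identity on $H_r(\Sigma_n(F); \mathbb C)$. Since $x^n - 1$ has distinct roots in $\mathbb C$, the minimal polynomial of $t$ divides it, so $t$ is diagonalizable with eigenvalues among the $n$-th roots of unity, giving the decomposition $H_r(\Sigma_n(F); \mathbb C) = \bigoplus_{j=0}^{n-1} E_r(\Sigma_n(F); j)$. Diagonalizability is the key point here, because it forces the geometric multiplicity $\beta_r(\Sigma_n(F); j) = \dim_{\mathbb C} E_r(\Sigma_n(F); j)$ to coincide with the algebraic multiplicity of $\zeta_n^j$ as a root of the characteristic polynomial $p_r(x) = \det(xI - t)$.

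Next I would use integrality. Because $\tau_n$ acts on the integral homology $H_r(\Sigma_n(F); \mathbb Z)$, choosing a basis of the free part $H_r(\Sigma_n(F); \mathbb Z)/(\mathrm{torsion})$ represents $t$ by an integer matrix, whence $p_r(x) \in \mathbb Z[x]$ and is monic. All its complex roots are $n$-th roots of unity, so by Gauss's lemma and the fact that the minimal polynomial over $\mathbb Q$ of a primitive $d$-th root of unity is the cyclotomic polynomial $\Phi_d$, it factors as $p_r(x) = \prod_{d \mid n} \Phi_d(x)^{m_d}$ for some integers $m_d \geq 0$. In particular the multiplicity of a primitive $d$-th root of unity as a root of $p_r$ is exactly $m_d$, the same value for every primitive $d$-th root.

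Finally I would translate this into the $\gcd$ condition. The root $\zeta_n^j$ is a primitive $d$-th root of unity precisely when $d = n/\gcd(j,n)$, so $\zeta_n^j$ and $\zeta_n^{j'}$ have the same order — and hence the same multiplicity $m_{n/\gcd(j,n)}$ as roots of $p_r$ — exactly when $\gcd(j,n) = \gcd(j',n)$. Combining with the first step gives $\beta_r(\Sigma_n(F); j) = m_{n/\gcd(j,n)} = m_{n/\gcd(j',n)} = \beta_r(\Sigma_n(F); j')$, as claimed. The argument has no serious obstacle: the only two points needing care are the rationality (indeed integrality) of $p_r$, which rests solely on the fact that the deck transformation acts on integral homology, and the passage from algebraic to geometric multiplicity, which rests on the diagonalizability established at the outset.
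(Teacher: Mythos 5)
Your proof is correct and rests on the same essential mechanism as the paper's: the $t$-action is defined over $\mathbb Q$ (indeed over $\mathbb Z$), so the eigenvalue multiplicities are constant on Galois orbits of $n$-th roots of unity, and these orbits are exactly the classes of fixed $\gcd(j,n)$. The paper packages this via the structure theorem for $H_r(\Sigma_n(F);\mathbb Q)$ as a module over the principal ideal domain $\mathbb Q[t,t^{-1}]$ annihilated by $t^n-1$, whereas you use the integrality of the characteristic polynomial together with diagonalizability; these are equivalent formulations of the same argument.
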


\begin{proof}
For $r \in \mathbb Z$, $H_r(\Sigma_n(F); \mathbb Q)$ is a finitely generated module over the principal ideal domain $\mathbb Q[t, t^{-1}]$ in the obvious way. Since it is annihilated by $t^n - 1$, it has no $\mathbb Q[t, t^{-1}]$ summands and thus there is a $\mathbb Q[t, t^{-1}]$-module isomorphism $H_r(\Sigma_n(F); \mathbb Q) \cong \bigoplus_{k} \mathbb Q[t, t^{-1}]/(p_k(t)^{m_k})$ where $p_k(t)$ is  an irreducible element of $\mathbb Q[t, t^{-1}]$ and $m_k \geq 1$. (If $H_r(\Sigma_n(F); \mathbb Q) = 0$, this is an empty sum.) Again, since $t^n - 1$ annihilates $H_r(\Sigma_n(F); \mathbb Q)$, $p_k(t)^{m_k}$ divides $t^n - 1$ for each $k$. It follows that each $m_k = 1$ and $p_k(t)$ is the cyclotomic polynomial $\Phi_d(t)$ for some $d$ dividing $n$. Thus $H_r(\Sigma_n(F); \mathbb Q)\cong \bigoplus_{k} \mathbb Q[t, t^{-1}]/(\Phi_{d_k}(t))$ where each $d_k$ divides $n$. 

As a $\mathbb C[t, t^{-1}]$-module, $\mathbb Q[t, t^{-1}]/(\Phi_{d_k}(t)) \otimes_{\mathbb Q} \mathbb C \cong \mathbb C[t, t^{-1}]/(\Phi_{d_k}(t))\cong \bigoplus_l \mathbb C[t, t^{-1}]/(t - \zeta_{n}^l)$ where $\zeta_{n}^l$ ranges over the primitive $d_k^{th}$ roots of unity. On the other hand, each $E_r(\Sigma_n(F);j)$ is a $\mathbb C[t, t^{-1}]$-module isomorphic to a sum of copies of $\mathbb C[t, t^{-1}]/(t - \zeta_{n}^j)$. Hence $\beta_r(\Sigma_n(F); j)$ equals the number of $k$ for which $d_k = \gcd(j, n)$. It follows that if $1 \leq j, j' \leq n-1$, then $\beta_r(\Sigma_n(F); j) = \beta_r(\Sigma_n(F); j') \hbox{ {\rm if }} \gcd(j,n) = \gcd(j',n)$. 
\end{proof}

\begin{lemma}  \label{lemma: chi} 
$\beta_2(\Sigma_n(F); j) = \beta_1(F) - (\mu - 1) + \beta_1(\Sigma_n(F); j) + \beta_3(\Sigma_n(F); j)$ for $1 \leq j \leq n-1$.
\end{lemma}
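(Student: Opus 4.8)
The statement is a Betti-number identity in the $\zeta_n^j$-eigenspace for $1 \leq j \leq n-1$, so the natural strategy is to write down the alternating sum $\sum_r (-1)^r \beta_r(\Sigma_n(F); j)$ and show it equals $-(\beta_1(F) - (\mu-1))$, then rearrange. The key observation is that passing to a fixed eigenspace is exact (taking $\zeta_n^j$-eigenspaces of the $\mathbb{C}[t,t^{-1}]$-module structure is an exact functor), so eigenspace Betti numbers are additive over the long exact sequences and behave like ordinary Betti numbers for the purpose of an Euler-characteristic count.

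\emph{First} I would record the low-degree eigenspace data already established in \S\ref{subsec: signatures and branched covers}. From the computation of $\widetilde H_*(X)$ and the fact that $H_*(\Sigma_n(F), X_n; \mathbb C) \cong H_*(F_n \times (D^2, S^1); \mathbb C)$ carries a trivial $t$-action, together with the isomorphisms (\ref{same eigenspaces}), one reads off that for $1 \leq j \leq n-1$ the relative homology $H_*(\Sigma_n(F), X_n; \mathbb C)$ contributes nothing to the nontrivial eigenspaces, so that $\beta_r(\Sigma_n(F); j) = \beta_r(X_n; j)$ in that range. \emph{Next} I would pin down $\beta_0(\Sigma_n(F); j) = 0$ from (\ref{e0j}) and $\beta_4(\Sigma_n(F); j) = 0$ (the top homology of a compact $4$-manifold with nonempty boundary vanishes). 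The one genuinely quantitative input is the count in degree $r=1$: I would use the retraction argument underlying Lemma \ref{lemma: lower bound for eta} — that $X_n$ (equivalently the relevant part of $\Sigma_n(F)$) carries the eigenspace homology of the wedge $Y_n$ of $\mu$ circles — together with Lemma \ref{lemma: wedge of circles}, to obtain $\beta_1(\Sigma_n(F); j)$ in terms of $\mu - 1$. Comparing the total Euler characteristic of $X$ (which has the homotopy type dictated by $\widetilde H_*(X) \cong \mathbb Z^\mu$ in degree $1$ and $\mathbb Z^{\beta_1(F)}$ in degree $2$) against its $n$-fold cover and extracting the eigenspace piece is what produces the constant $\beta_1(F) - (\mu-1)$.

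\emph{Then} the Euler-characteristic identity $\sum_r (-1)^r \beta_r(\Sigma_n(F); j) = 0$ (the eigenspace Euler characteristic of the $n$-fold cover is, for $j \neq 0$, $n$ times the rational Euler characteristic contribution coming only from the trivial part, hence vanishes on nontrivial eigenspaces) collapses, using $\beta_0 = \beta_4 = 0$, to
$$-\beta_1(\Sigma_n(F); j) + \beta_2(\Sigma_n(F); j) - \beta_3(\Sigma_n(F); j) = \chi_j$$
for the appropriate eigenspace-Euler-characteristic constant $\chi_j = \beta_1(F) - (\mu-1)$. Solving for $\beta_2$ gives exactly the claimed formula.

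\emph{The main obstacle} I anticipate is justifying that the eigenspace Euler characteristic of $\Sigma_n(F)$ reduces to the clean constant $\beta_1(F) - (\mu-1)$ independently of $j$ in the range $1 \leq j \leq n-1$. This requires care: one must argue that the contributions of $H_*(\Sigma_n(F), X_n)$ and of $\beta_0$ are invisible to the nontrivial eigenspaces, and that the degree-$1$ eigenspace count genuinely comes from the wedge-of-circles model rather than from additional homology introduced by the $2$-handle attachments. Once the vanishing in degrees $0$ and $4$, the identification $\beta_r(\Sigma_n(F); j) = \beta_r(X_n; j)$, and the eigenspace Euler characteristic of $X_n$ are all in hand, the identity is a one-line rearrangement, so I would expect essentially all the work to sit in the bookkeeping of these eigenspace Euler-characteristic contributions rather than in any single hard estimate.
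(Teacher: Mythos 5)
Your overall strategy --- compute the $\zeta_n^j$-eigenspace Euler characteristic of $\Sigma_n(F)$ and rearrange --- is genuinely different from the paper's proof, which instead establishes the total Betti number identity (\ref{beta2j}) and then runs an induction on $n$, using transfer isomorphisms $E_r(\Sigma_{n/d}(F);k)\cong E_r(\Sigma_n(F);dk)$ together with Lemma \ref{lemma: finite sum of cyclics} and a totient-function count to distribute the identity among the individual eigenspaces. Your route can be made to work and is arguably shorter, but as written it has a gap exactly at the step you yourself flag as the main obstacle: you never actually prove that $\sum_r(-1)^r\beta_r(\Sigma_n(F);j)$ equals the constant $\beta_1(F)-(\mu-1)$ for every $j$ with $1\le j\le n-1$. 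The one justification you offer --- that ``the eigenspace Euler characteristic of the $n$-fold cover is, for $j\ne 0$, $n$ times the \ldots contribution coming only from the trivial part, hence vanishes on nontrivial eigenspaces'' --- is false (the quantity is $\beta_1(F)-(\mu-1)$, generically nonzero) and contradicts the formula $\chi_j=\beta_1(F)-(\mu-1)$ you use two lines later. Your proposed degree-one input is also off target: the retraction onto the wedge of circles only yields the \emph{inequality} $\beta_1(X_n;j)\le \mu-1$, and the paper only establishes even that for prime-power $n$ (Lemma \ref{lemma: prime power covers}); fortunately your argument does not need to evaluate $\beta_1(\Sigma_n(F);j)$ at all, since it appears on both sides of the identity.

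The missing idea that closes the gap is that $X_n\to X$ is a \emph{free} $\mathbb Z/n$-cover, so its (cellular or handle) chain complex over $\mathbb C$ is a complex of free $\mathbb C[\mathbb Z/n]$-modules; each eigenspace of $\mathbb C[\mathbb Z/n]^{c}$ has dimension $c$, whence $\sum_r(-1)^r\beta_r(X_n;j)=\chi(X)=\beta_1(F)-(\mu-1)$ for \emph{every} $j$. (Equivalently, $\chi_j(X_n)=\frac1n\sum_k\zeta_n^{-jk}L(t^k)$ and the Lefschetz numbers of the nontrivial deck transformations vanish because the action is free.) Combining this with the triviality of the $t$-action on $H_*(\Sigma_n(F),X_n;\mathbb C)$ --- so that $\chi_j(\Sigma_n(F))=\chi_j(X_n)$ for $j\ne 0$, strengthening (\ref{same eigenspaces}) to all degrees --- and with $\beta_0(\Sigma_n(F);j)=\beta_4(\Sigma_n(F);j)=0$, one gets $-\beta_1(\Sigma_n(F);j)+\beta_2(\Sigma_n(F);j)-\beta_3(\Sigma_n(F);j)=\beta_1(F)-(\mu-1)$, which is Lemma \ref{lemma: chi}. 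With that one sentence supplied, your proof is correct and avoids both the induction and Lemma \ref{lemma: finite sum of cyclics}; without it, the central identity is asserted rather than proved.
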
 

\begin{proof}
First note that since $\chi(\Sigma_n(F)) = \chi(X_n) + \chi(F \times D^2) - \chi(F \times S^1) = n\chi(X) + \chi(F)$ where $\chi(X) = 1 - \chi(F) = \beta_1(F) - \mu + 1$, we have 
$\chi(\Sigma_n(F)) = 1 + (n-1)\chi(X) =1 + (n-1)(\beta_1(F) - (\mu - 1))$. Thus 
\begin{equation} \label{beta2j} 
\beta_2(\Sigma_n(F)) = (n-1)(\beta_1(F) - (\mu - 1)) + \beta_1(\Sigma_n(F)) + \beta_3(\Sigma_n(F))
\end{equation}
A transfer argument shows that 
\begin{equation} \label{betar0}
\beta_r(\Sigma_n(F);0) =  \left\{ 
\begin{array}{ll} 
1 & \hbox{ if } r = 0 \\
0 & \hbox{ if } r > 0
\end{array} \right.
\end{equation}
We proceed by induction on $n$. 

When $n = 2$, (\ref{betar0}) implies that $\beta_2(\Sigma_2(F);1) = \beta_2(\Sigma_2(F)) = \beta_1(F) - (\mu - 1) + \beta_1(\Sigma_2(F)) + \beta_3(\Sigma_2(F)) = \beta_1(F) - (\mu - 1) + \beta_1(\Sigma_2(F); 1) + \beta_3(\Sigma_2(F); 1)$, so we are done. 

Fix $n > 2$ and suppose that the lemma holds for all $n' < n$. If $d \geq 2$ is a divisor of $n$, the transfer map associated to the cover $\Sigma_n(F) \to \Sigma_{\frac{n}{d}}(F)$ induces isomorphisms $E_r(\Sigma_{\frac{n}{d}}(F);k) = E_r(\Sigma_n(F);dk)$ for $1 \leq k \leq \frac{n}{d}$ and each $r$. Hence if $\gcd(j, n) = d > 1$, (\ref{beta2j}) and our inductive hypothesis implies that  
\begin{eqnarray} 
\beta_2(\Sigma_n(F);j) = \beta_2(\Sigma_{\frac{n}{d}}(F); {\frac{j}{d}}) & = & \beta_1(F) - (\mu - 1) + \beta_1(\Sigma_{\frac{n}{d}}(F); {\frac{j}{d}}) + \beta_3(\Sigma_{\frac{n}{d}}(F); {\frac{j}{d}}) \nonumber \\ 
& = & \beta_1(F) - (\mu - 1) + \beta_1(\Sigma_n(F); j) + \beta_3(\Sigma_n(F); j) \nonumber
\end{eqnarray}
as claimed. On the other hand, if $\gcd(j, n) = 1$ and $\varphi$ is Euler's totient function, we have 
{\small 
\begin{eqnarray} 
\beta_2(\Sigma_n(F))& = & \sum_{i = 1}^{n-1} \beta_2(\Sigma_n(F); i)  \;\; \hbox{ {\rm by (\ref{beta2j})}} \nonumber \\
& = & \varphi(n) \beta_2(\Sigma_n(F); j) + \sum_{\gcd(i,n) > 1} \beta_2(\Sigma_n(F); i) \;\; \hbox{ {\rm by Lemma \ref{lemma: finite sum of cyclics}}} \nonumber \\
& = & \varphi(n) \beta_2(\Sigma_n(F); j) + \sum_{\gcd(i,n) > 1} \beta_2(\Sigma_{\frac{n}{\gcd(i,n)}}(F); \frac{i}{\gcd(i,n)}) \nonumber \\
& = & \varphi(n) \beta_2(\Sigma_n(F); j) + \sum_{\gcd(i,n) > 1} \big( \beta_1(F) - (\mu - 1) + \beta_1(\Sigma_n(F); i) + \beta_3(\Sigma_n(F); i) \big) \nonumber  
\end{eqnarray}
}
where the last equality follows by induction. Now 
$$\sum_{\gcd(i,n) > 1} \beta_1(\Sigma_n(F); i) = \beta_1(\Sigma_n(F)) - \varphi(n)\beta_1(\Sigma_n(F); j)$$ 
and 
$$\sum_{\gcd(i,n) > 1} \beta_3(\Sigma_n(F); i) =\beta_3(\Sigma_n(F)) - \varphi(n)\beta_3(\Sigma_n(F); j),$$
so combining the last three identities with (\ref{beta2j}) and dividing by $\varphi(n)$ we obtain
the desired result. 
\end{proof}

\begin{remark} \label{rem: seifert surface case}
{\rm Suppose that $F$ is obtained by isotoping the interior of a connected Seifert surface of $L$ into the interior of $B^4$. From \S \ref{subsec: signatures and branched covers} we know that $\beta_1(\Sigma_n(F)) = 0$ and so as $\Sigma_n(L)$ is connected, $\beta_3(\Sigma_n(F)) =  \beta_1(\Sigma_n(F), \Sigma_n(L)) = 0$ as well. Hence by Lemma \ref{lemma: chi}, 
$$\beta_2(\Sigma_n(F); j) = \beta_1(F) = 2g(F) + (m-1)$$ 
for $1 \leq j \leq n-1$.}
\end{remark} 

\begin{lemma}  \label{lemma: prime power covers} 
Suppose that $n$ is a prime power. Then 

$(1)$ $\beta_3(\Sigma_n(F); j) = 0$ for all $j$.

$(2)$ $\beta_1(\Sigma_n(F); j) \leq \min\{\eta_L(\zeta_n^j), \mu -1\}$ for $1 \leq j \leq n-1$. 

\end{lemma}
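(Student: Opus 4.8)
The plan is to reduce both assertions to statements about the rational homology of the branched cover $\Sigma_n(F)$, and then to feed in the prime‑power hypothesis through Smith theory applied to the tower
$$\Sigma_{p^r}(F) \to \Sigma_{p^{r-1}}(F) \to \cdots \to \Sigma_p(F) \to B^4$$
of $\mathbb{Z}/p$‑branched covers (branch locus the surface $F$), where $n = p^r$. For part $(1)$ and $j = 0$ the vanishing $\beta_3(\Sigma_n(F);0) = 0$ is immediate from the transfer computation (\ref{betar0}). For $1 \le j \le n-1$ I would first reduce to the total Betti number. Since $H_*(\Sigma_n(F), X_n; \mathbb{C}) \cong H_{*-2}(F_n; \mathbb{C})$ with $t$ acting trivially, these groups are concentrated in the eigenspace $j = 0$; in particular $H_4(\Sigma_n(F), X_n; \mathbb{C}) \cong H_2(F_n;\mathbb{C}) = 0$ and $H_3(\Sigma_n(F),X_n;\mathbb{C}) \cong H_1(F_n;\mathbb{C})$ lies in the trivial eigenspace. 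Plugging this into the long exact sequence of $(\Sigma_n(F), X_n)$ gives $E_3(X_n; j) \xrightarrow{\cong} E_3(\Sigma_n(F); j)$ for $j \neq 0$, so it is enough to prove $H_3(\Sigma_n(F); \mathbb{Q}) = 0$, which then forces $\beta_3(\Sigma_n(F);j) = 0$ for every $j$.

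To establish $H_3(\Sigma_n(F);\mathbb{Q}) = 0$ I would argue that $\dim_{\mathbb{F}_p} H_3(\Sigma_n(F); \mathbb{F}_p) = 0$, whence $\operatorname{rank}H_3(\Sigma_n(F);\mathbb{Z}) = 0$. This is where the prime‑power hypothesis is used: traversing the tower one step at a time, each map is a $\mathbb{Z}/p$‑branched cover whose base is $\mathbb{F}_p$‑homologically controlled and whose fixed (branch) locus is the $2$‑dimensional surface $F$, so Smith theory bounds the $\mathbb{F}_p$‑homology of the total space by that of the quotient together with the branch locus. Inductively this confines $H_*(\Sigma_{p^s}(F);\mathbb{F}_p)$ to degrees $\le 2$, killing $H_3$. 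The same input, now tracking the $\mu$ components of the branch locus $F$, bounds $H_1$: combined with the transfer isomorphisms $E_r(\Sigma_{n/d}(F);k)\cong E_r(\Sigma_n(F);dk)$ and Lemma \ref{lemma: finite sum of cyclics} (which reduce the non‑primitive eigenspaces of $\Sigma_n(F)$ to primitive eigenspaces of the smaller covers $\Sigma_{n/d}(F)$), it suffices to treat the primitive eigenspaces of each $\Sigma_{p^s}(F)$, and there Smith theory yields $\beta_1(\Sigma_n(F); j) \le \mu - 1$ for $1 \le j \le n-1$. This is the $\mu-1$ bound of part $(2)$.

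The $\eta_L$ bound in part $(2)$ is then a clean consequence of part $(1)$. By Lefschetz duality for the compact oriented $4$‑manifold $\Sigma_n(F)$ we have $H_1(\Sigma_n(F), \Sigma_n(L); \mathbb{C}) \cong H^3(\Sigma_n(F);\mathbb{C}) \cong H_3(\Sigma_n(F);\mathbb{C})^{*}$, which vanishes since $\beta_3(\Sigma_n(F);j) = 0$ for all $j$. The long exact sequence of the pair $(\Sigma_n(F),\Sigma_n(L))$ then shows that the (equivariant) boundary map $H_1(\Sigma_n(L);\mathbb{C}) \to H_1(\Sigma_n(F);\mathbb{C})$ is surjective, so on each eigenspace $\beta_1(\Sigma_n(F);j) \le \beta_1(\Sigma_n(L);j) = \eta_L(\zeta_n^j)$ by (\ref{eta}). (Note that by Lemma \ref{lemma: lower bound for eta} one has $\eta_L(\zeta_n^j) \ge \mu - 1$, so in fact $\min\{\eta_L(\zeta_n^j), \mu-1\} = \mu - 1$; the substantive content of part $(2)$ is the $\mu-1$ bound, and I would record the $\eta_L$ inequality chiefly because it falls out of part $(1)$ for free.)

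The main obstacle, and the only place the prime‑power hypothesis is essential, is the Smith‑theoretic vanishing/bounding of $H_3$ and $H_1$ of the branched cover of $B^4$ over $F$: for composite $n$ these conclusions genuinely fail, so the argument must exploit the nilpotence of $t-1$ on $\mathbb{F}_p$‑homology (equivalently, the stepwise $\mathbb{Z}/p$‑cover structure of the tower), and the delicate bookkeeping is in correctly accounting for the contribution of the $\mu$ components of the branch locus to each non‑trivial eigenspace rather than merely to the total $\mathbb{F}_p$‑dimension. Everything else — the reduction of part $(1)$ to $X_n$ via the pair $(\Sigma_n(F), X_n)$, the passage between eigenspaces via Lemma \ref{lemma: finite sum of cyclics} and the transfer isomorphisms, and the derivation of the $\eta_L$ bound from part $(1)$ — is formal given the results already established in this section.
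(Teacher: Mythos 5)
Your overall skeleton is right — the $j=0$ cases via transfer, the reduction of the nontrivial eigenspaces of $\Sigma_n(F)$ to a single homological vanishing statement, and the derivation of the bound $\beta_1(\Sigma_n(F);j)\leq \eta_L(\zeta_n^j)$ from part $(1)$ by duality and \eqref{eta} all match the paper. But the two load-bearing estimates are not actually proved: you attribute both the vanishing of $H_3$ and the bound $\beta_1(\Sigma_n(F);j)\leq\mu-1$ to ``Smith theory applied to the tower,'' and for the second of these you explicitly defer the hardest point (``the delicate bookkeeping is in correctly accounting for the contribution of the $\mu$ components \ldots to each non-trivial eigenspace rather than merely to the total $\mathbb F_p$-dimension''). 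That bookkeeping is precisely the content of the lemma. Smith theory lives over $\mathbb F_p$, where $t^{p^m}-1=(t-1)^{p^m}$ and there are no nontrivial eigenspaces, while the quantities $\beta_1(\Sigma_n(F);j)$ are $\mathbb C$-eigenspace dimensions; a total $\mathbb F_p$-dimension bound does not distribute itself over the $n-1$ eigenspaces, and the transfer isomorphisms plus Lemma \ref{lemma: finite sum of cyclics} only equalize eigenspaces with the same $\gcd(j,n)$ — they do not convert a global bound into the per-eigenspace bound $\mu-1$ without further input. The paper closes this gap by a different device: it works with the unbranched covers, uses the Milnor exact sequence of $(\widetilde X,\widetilde Y)\to(X_n,Y_n)$ (where $Y$ is a wedge of $\mu$ circles carrying $H_1(X)$) together with $1-t^n\equiv(1-t)^n\pmod p$ to show $H_1(X_n,Y_n;\mathbb F_p)=0$, deduces that the \emph{equivariant} map $H_1(Y_n;\mathbb C)\to H_1(X_n;\mathbb C)$ is onto, and then reads off each eigenspace of the source exactly from Lemma \ref{lemma: wedge of circles}. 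Your sketch contains no equivariant comparison object playing the role of $Y_n$, so the $\mu-1$ bound does not follow from what you have written. (The $H_3$ vanishing is more forgiving — a truncated Smith inequality, or the paper's Milnor-sequence argument showing $1-t$ is onto $H_3(\widetilde X;\mathbb F_p)$ and injective on $H_2(\widetilde X;\mathbb F_p)$, does give $H_3(X_n;\mathbb F_p)=0$ — but as written it too is an appeal to a named theory rather than an argument.)

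One further point: your parenthetical claim that $\eta_L(\zeta_n^j)\geq\mu-1$, hence $\min\{\eta_L(\zeta_n^j),\mu-1\}=\mu-1$, misapplies Lemma \ref{lemma: lower bound for eta}. That lemma concerns a Seifert surface with $\mu$ components in $S^3$; here $\mu$ is the number of components of the locally flat surface $F\subset B^4$, and $L$ need not bound a $\mu$-component surface in $S^3$. Indeed $\eta_L(\zeta_n^j)$ can vanish while $\mu>1$, and the Murasugi--Tristram inequality in Proposition \ref{prop: mt inequality} genuinely needs \emph{both} inequalities of part $(2)$, not just the $\mu-1$ bound. This does not break your proposed proof (you prove both bounds anyway), but the aside is wrong and the characterization of the $\eta_L$ bound as dispensable should be dropped.
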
 

\begin{proof}
Let $n= p^m$ where $p$ is prime and set $\mathbb F_p = \mathbb Z/p \mathbb Z$. A transfer argument implies that $E_3(\Sigma_n(F);0) \cong H_3(B^4; \mathbb C) = 0$ and as $E_3(\Sigma_n(F); j) \cong E_3(X_n; j)$ for $1 \leq j \leq n-1$ by (\ref{same eigenspaces}), to prove (1) it suffices to show that $H_3(X_n; \mathbb F_p) = 0$. 

Since $H_3(X; \mathbb F_p) = 0$, it follows from the Milnor sequence associated to the cover $\widetilde X \to X$ that $H_3(\widetilde X; \mathbb F_p) \xrightarrow{1-t} H_3(\widetilde X; \mathbb F_p) $ is surjective and $H_2(\widetilde X; \mathbb F_p)  \xrightarrow{1-t} H_2(\widetilde X; \mathbb F_p) $ is injective. But then as $1-t^n \equiv (1-t)^n$ (mod $p$), the Milnor sequence associated to $\widetilde X \to X_n$ implies that $H_3(X_n; \mathbb F_p) = 0$, which yields (1). 

Next we prove (2). By duality, $E_1(\Sigma_n(F), \Sigma_n(L); j) \cong E_3(\Sigma_n(F); j) = 0$, so the homomorphism $E_1(\Sigma_n(L); j) \to E_1(\Sigma_n(F); j)$ is surjective. Thus $\beta_1(\Sigma_n(F); j) \leq \beta_1(\Sigma_n(L); j) = \eta_L(\zeta_n^j)$ (cf. (\ref{eta})). To complete the proof of (2), we show that $\beta_1(\Sigma_n(F); j) \leq \mu -1$. Since $\Sigma_n(F)$ is obtained from $X_n$ by attaching $2$-handles, it suffices to show that $\beta_1(X_n; j) \leq \mu -1$. 

Let $Y$ be a wedge of $\mu$ circles contained in $X$ such that the inclusion $Y \to X$ induces an isomorphism in integer homology. Let $\widetilde Y \to Y$ be the infinite cyclic cover induced by $\widetilde X \to X$ and $Y_n \to Y$ the $n$-fold cyclic cover induced by $X_n \to X$. The inclusion $Y \to X$ lifts to inclusions $\widetilde Y \to \widetilde X$ and $Y_n \to X_n$. Since $H_1(X, Y; \mathbb F_p) = 0$, the Milnor exact sequence associated to $(\widetilde X, \widetilde Y) \to (X, Y)$ shows that the homomorphism $H_1(\widetilde X, \widetilde Y; \mathbb F_p) \xrightarrow{1-t} H_1(\widetilde X, \widetilde Y; \mathbb F_p)$ is surjective. Hence as $1-t^n \equiv (1-t)^n$ (mod $p$), the Milnor sequence of $(\widetilde X, \widetilde Y) \to (X_n, Y_n)$ shows that $H_1(X_n, Y_n; \mathbb F_p) \cong 0$. Then $H_1(X_n, Y_n; \mathbb C) \cong 0$, so the natural map $H_1(Y_n; \mathbb C) \to H_1(X_n; \mathbb C)$ is onto. Then $\beta_1(X_n; j) \leq \beta_1(Y_n; j)$ and by Lemma \ref{lemma: wedge of circles}, $\beta_1(Y_n; j) = \mu -1$, so we are done. 
\end{proof}

\subsection{The Murasugi-Tristram Inequality} \label{subsec: mt inequality}

\begin{prop}  {\rm (The Murasugi-Tristram Inequality)} \label{prop: mt inequality} 
Suppose that $F$ is a locally flat, compact, oriented surface properly embedded in $B^4$ with oriented boundary $L$ and $\mu$ components. If $\zeta$ is not a root of $\Delta_L(t)$, then  
$$|\sigma_L(\zeta)| + |\eta_L(\zeta) - (\mu -1)| \leq \beta_1(F) = 2g(F) + (m-\mu)$$
\end{prop}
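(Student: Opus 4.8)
The plan is to prove the Murasugi-Tristram inequality by first reducing to the case where $\zeta = \zeta_n^j$ is an $n$-th root of unity with $n$ a prime power, and then assembling the signature and nullity bounds from the eigenspace Betti number estimates already established. Since $\sigma_L$ and $\eta_L$ are constant on the components of $S^1 \setminus \Delta_L^{-1}(0)$ (see the properties listed in \S\ref{sec: signature function}), and since prime-power roots of unity $\zeta_{p^m}^j$ are dense in $S^1$, it suffices to establish the inequality at each $\zeta = \zeta_n^j$ with $n = p^m$ a prime power and $1 \leq j \leq n-1$, provided $\zeta_n^j$ is not a root of $\Delta_L(t)$.

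First I would fix such an $n = p^m$ and argue that the non-degeneracy hypothesis forces certain eigenspaces to vanish. The assumption that $\zeta_n^j$ is not a root of $\Delta_L(t)$ means $\Sigma_n(L)$ has no homology in the relevant eigenspace, so $\eta_L(\zeta_n^j) = \beta_1(\Sigma_n(L); j)$ by (\ref{eta}) and Lemma \ref{lemma: prime power covers}(2) bounds $\beta_1(\Sigma_n(F); j) \leq \min\{\eta_L(\zeta_n^j), \mu-1\}$. The key structural input is Lemma \ref{lemma: chi}, which reads
\begin{equation*}
\beta_2(\Sigma_n(F); j) = \beta_1(F) - (\mu-1) + \beta_1(\Sigma_n(F); j) + \beta_3(\Sigma_n(F); j),
\end{equation*}
combined with Lemma \ref{lemma: prime power covers}(1), which gives $\beta_3(\Sigma_n(F); j) = 0$ in the prime-power case. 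Thus $\beta_2(\Sigma_n(F); j) = \beta_1(F) - (\mu-1) + \beta_1(\Sigma_n(F); j)$.

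The heart of the argument is then the standard signature-nullity counting on the Hermitian form $\langle \cdot, \cdot \rangle_{\Sigma_n(F)}$ restricted to $E_2(\Sigma_n(F); j)$. By Viro's Theorem \ref{thm: viro}, $\sigma_L(\zeta_n^j)$ is the signature of this restricted form, and its nullity is controlled by the image of $H_2(\Sigma_n(L); \mathbb{C})$, i.e. by $\beta_1(\Sigma_n(L); j) = \eta_L(\zeta_n^j)$. The general inequality relating the signature, nullity, and dimension of a Hermitian form says $|\sigma| + (\text{nullity}) \leq \dim$, but to get the sharp form with $|\eta_L(\zeta) - (\mu-1)|$ I would split into the cases $\beta_1(\Sigma_n(F); j) = \mu - 1$ and $\beta_1(\Sigma_n(F); j) = \eta_L(\zeta_n^j)$ according to which bound in Lemma \ref{lemma: prime power covers}(2) is active. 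Writing $d = \dim_{\mathbb C} E_2(\Sigma_n(F); j) = \beta_2(\Sigma_n(F); j)$, the number of nonzero eigenvalues of the restricted form is at least $|\sigma_L(\zeta_n^j)|$, while the dimension of its radical equals the nullity; tracking these through the exact sequence relating $E_2(\Sigma_n(F); j)$, $E_2(\Sigma_n(L); j)$, and $E_2(\Sigma_n(F), \Sigma_n(L); j)$ yields the precise bound. Concretely, $|\sigma_L(\zeta_n^j)| \leq \beta_2(\Sigma_n(F); j) - (\text{radical dimension})$, and substituting the expression for $\beta_2$ together with the two cases for $\beta_1(\Sigma_n(F); j)$ produces $|\sigma_L(\zeta_n^j)| + |\eta_L(\zeta_n^j) - (\mu-1)| \leq \beta_1(F)$.

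The main obstacle I anticipate is the careful bookkeeping of the nullity of the restricted Hermitian form and its relation to $|\eta_L(\zeta) - (\mu-1)|$, which requires precise identification of the radical of $\langle \cdot, \cdot \rangle_{\Sigma_n(F)}|E_2(\Sigma_n(F); j)$ with an eigenspace coming from the boundary $\Sigma_n(L)$. Unlike the connected-Seifert-surface case treated in \S\ref{F connected}, where the exact sequence (\ref{seifert surface case}) makes the radical transparent, the general surface $F$ need not bound in a simply connected way, so I must use the surjectivity of $E_1(\Sigma_n(L); j) \to E_1(\Sigma_n(F); j)$ (established in the proof of Lemma \ref{lemma: prime power covers}(2) via duality $E_1(\Sigma_n(F), \Sigma_n(L); j) \cong E_3(\Sigma_n(F); j) = 0$) to control how much of the nullity is forced. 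Once the two cases in Lemma \ref{lemma: prime power covers}(2) are organized correctly, the inequality follows by direct substitution, and the final step is to invoke the density of prime-power roots of unity and the locally constant nature of $\sigma_L$ and $\eta_L$ to extend the bound from these special $\zeta$ to all $\zeta \in S^1 \setminus \Delta_L^{-1}(0)$.
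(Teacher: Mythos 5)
Your proposal follows essentially the same route as the paper's proof: reduce to prime-power roots of unity by density and local constancy, use Lemma \ref{lemma: prime power covers}(1) and Lemma \ref{lemma: chi} to compute $\beta_2(\Sigma_n(F);j)$, bound $|\sigma_L(\zeta_n^j)|$ by the rank of the adjoint map $\psi$ via the exact sequence of the pair $(\Sigma_n(F),\Sigma_n(L))$, and feed in both halves of the bound $\beta_1(\Sigma_n(F);j)\leq\min\{\eta_L(\zeta_n^j),\mu-1\}$ to obtain the two signed inequalities whose conjunction is the stated one. One small clarification: you do not need to split into cases according to which bound is ``active'' --- applying each of the two bounds separately to the single inequality $|\sigma_L(\zeta_n^j)|\leq\beta_1(F)-(\mu-1)-\eta_L(\zeta_n^j)+2\beta_1(\Sigma_n(F);j)$ yields both $|\sigma_L(\zeta_n^j)|\pm(\eta_L(\zeta_n^j)-(\mu-1))\leq\beta_1(F)$ simultaneously, which is exactly what the paper does (and note that $\eta_L(\zeta_n^j)=\beta_1(\Sigma_n(L);j)$ holds by (\ref{eta}) unconditionally, not as a consequence of $\zeta_n^j$ failing to be a root of $\Delta_L$).
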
 

\begin{proof}
Since $\sigma_L$ and $\eta_L$ are constant on the components of $S^1 \setminus \Delta_L^{-1}(0)$ and prime power roots of unity are dense in the circle, it suffices to show that the inequality holds for $\zeta = \zeta_n^j$ where $n= p^m$ with $p$ prime and $1 \leq j \leq n-1$. In this case, $H_1(\Sigma_n(F), \Sigma_n(L); j) \cong H_3(\Sigma_n(F); j) = 0$ (Lemma \ref{lemma: prime power covers} (1)). Hence we have an exact sequence
$$E_2(\Sigma_n(F); j) \xrightarrow{\psi} E_2(\Sigma_n(F), \Sigma_n(L); j) \to E_1(\Sigma_n(L); j) \to E_1(\Sigma_n(F); j) \to 0$$
We noted in \S \ref{subsec: signatures and branched covers} that $\psi$ can be identified with the adjoint homomorphism of the pairing $\langle \cdot , \cdot \rangle_{\Sigma_n(F)}$ restricted to $E_2(\Sigma_n(F); j)$. Thus $|\sigma_L(\zeta_n^j)|$ is bounded above by the rank of $\psi$. Hence
\begin{eqnarray} 
|\sigma_L(\zeta_n^j)| & \leq & \beta_2(\Sigma_n(F), \Sigma_n(L); j) - \beta_1(\Sigma_n(L); j) + \beta_1(\Sigma_n(F); j) \nonumber \\
& = & \beta_2(\Sigma_n(F); j) - \beta_1(\Sigma_n(L); j) + \beta_1(\Sigma_n(F); j) \nonumber \\
& = &  \beta_1(F) - (\mu - 1) - \eta_L(\zeta_n^j) + 2\beta_1(\Sigma_n(F); j)  \hbox{ {\rm by (\ref{eta}) and Lemmas \ref{lemma: prime power covers}(1) and  \ref{lemma: chi}}} \nonumber 
\end{eqnarray}
Since $\beta_1(\Sigma_n(F); j) \leq \eta_L(\zeta_n^j)$ (Lemma \ref{lemma: prime power covers}(2)), we obtain
$$|\sigma_L(\zeta_n^j)| -(\eta_L(\zeta_n^j) - (\mu - 1)) \leq \beta_1(F)$$ 
And since $\beta_1(\Sigma_n(F); j) \leq \mu-1$ (Lemma \ref{lemma: prime power covers}(2)), we also obtain
$$|\sigma_L(\zeta_n^j)| +(\eta_L(\zeta_n^j) - (\mu - 1)) \leq \beta_1(F)$$ 
The last two inequalities imply the conclusion of the proposition. 
\end{proof} 

\begin{prop} \label{g4 bound}  
If $K$ is a knot, then $|\sigma_K(\zeta)| \leq 2g_4^{top}(K)$ for all $\zeta \in S^1$ which are not roots of $\Delta_K(t)$. 
\end{prop}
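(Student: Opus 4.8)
The plan is to obtain this as an immediate consequence of the locally flat Murasugi-Tristram inequality, Proposition \ref{prop: mt inequality}, which was just established; indeed, the whole point of proving that inequality in the locally flat (rather than merely smooth) setting is to license a bound phrased in terms of $g_4^{top}$ rather than $g_4$. Fix $\zeta \in S^1$ which is not a root of $\Delta_K(t)$. Since $K$ is a knot we have $m = 1$, and by definition $g_4^{top}(K)$ is realised by some locally flat, compact, oriented surface $F$ properly embedded in $B^4$ with $\partial F = K$.

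First I would arrange that $F$ is connected. Because $K$ has a single component, at most one component of $F$ can carry the boundary, and every closed component may be discarded without changing $\partial F = K$; discarding closed components does not increase the genus. Hence we may assume $F$ is connected, so that $\mu = 1$ and
$$\beta_1(F) = 2g(F) + (m - \mu) = 2g(F) = 2g_4^{top}(K).$$

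With $m = \mu = 1$, Proposition \ref{prop: mt inequality} applied to $F$ yields
$$|\sigma_K(\zeta)| + |\eta_K(\zeta) - (\mu - 1)| = |\sigma_K(\zeta)| + \eta_K(\zeta) \leq \beta_1(F) = 2g_4^{top}(K),$$
using that $\mu - 1 = 0$ and that $\eta_K(\zeta)$ is a nullity, hence non-negative. Dropping the non-negative term $\eta_K(\zeta)$ gives $|\sigma_K(\zeta)| \leq 2g_4^{top}(K)$, as claimed.

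There is no real obstacle here beyond the routine reduction to a connected surface: all of the analytic content is carried by Proposition \ref{prop: mt inequality}. The only point worth emphasising is that the surface $F$ is merely locally flat, so the smooth Murasugi-Tristram inequality would not suffice; it is precisely the locally flat version proved above that makes the estimate sharp enough to involve $g_4^{top}$.
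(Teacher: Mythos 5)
Your proof is correct and takes essentially the same route as the paper: both results are immediate applications of the locally flat Murasugi--Tristram inequality (Proposition \ref{prop: mt inequality}). The paper applies that inequality to an arbitrary, possibly disconnected, locally flat surface and uses that $\eta_K(\zeta)=0$ off the roots of $\Delta_K(t)$ (since $\eta_K(\zeta)\le m-1=0$ there), whereas you first reduce to a connected surface and only use $\eta_K(\zeta)\ge 0$; the difference is purely one of bookkeeping.
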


\begin{proof}
Since $\eta_K(\zeta) = 0$ when $\zeta$ is not a root of $\Delta_K(t)$, Proposition \ref{prop: mt inequality} shows that $|\sigma_K(\zeta)| \leq 2g(F)$ for each $\zeta \in S^1 \setminus \Delta_K^{-1}(0)$ and any locally flat surface $F$ properly embedded in $B^4$ with boundary $L$. 
Hence $|\sigma_K(\zeta)| \leq 2g_4^{top}(K)$ for such $\zeta$. 
\end{proof}
\vspace{-.2cm} 
Recall that when $\Sigma_n(L)$ is a rational homology $3$-sphere, $\eta_L(\zeta_n^j) = 0$ for $1 \leq j \leq n-1$. 

\begin{cor} \label{cor: florens inequality qhs} 
Suppose that $\Sigma_n(L)$ is a rational homology $3$-sphere and that $F$ is a locally flat, compact, oriented surface properly embedded in $B^4$ with oriented boundary $L$ and $\mu$ components.  Then for $1 \leq j \leq n-1$, 
$$|\sigma_L(\zeta_n^j)| + 2(\mu -1) \leq 2g(F) + (m-1)$$ 
\qed
\end{cor}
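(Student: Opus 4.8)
The plan is to obtain this inequality as an immediate specialization of the Murasugi-Tristram inequality (Proposition \ref{prop: mt inequality}), which supplies essentially all the content; the corollary is then just a matter of checking hypotheses and rearranging. First I would verify that Proposition \ref{prop: mt inequality} applies at each $\zeta = \zeta_n^j$ with $1 \leq j \leq n-1$, i.e. that $\zeta_n^j$ is not a root of $\Delta_L(t)$. This is exactly the hypothesis that $\Sigma_n(L)$ be a rational homology $3$-sphere: as noted in \S\ref{subsec: signatures and branched covers}, the Hermitian intersection form on $H_2(\Sigma_n(F);\mathbb C)$ is non-singular, equivalently $\Sigma_n(L)$ is a rational homology $3$-sphere, precisely when no $n^{th}$ root of unity is a root of $\Delta_L(t)$. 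Hence $\zeta_n^j \notin \Delta_L^{-1}(0)$ for all $1 \leq j \leq n-1$, and the proposition is available for each such $j$.

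Next I would record the value of the nullity. Since $\Sigma_n(L)$ is a rational homology $3$-sphere we have $\beta_1(\Sigma_n(L)) = 0$, while equation (\ref{beta1snl}) gives $\beta_1(\Sigma_n(L)) = \sum_{j=1}^{n-1} \eta_L(\zeta_n^j)$. As each summand $\eta_L(\zeta_n^j)$ is non-negative, this forces $\eta_L(\zeta_n^j) = 0$ for every $j$ with $1 \leq j \leq n-1$ (this is precisely the fact recalled immediately before the statement).

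Finally I would substitute $\eta_L(\zeta_n^j) = 0$ into the conclusion of Proposition \ref{prop: mt inequality}. Since $\mu \geq 1$ we have $|\eta_L(\zeta_n^j) - (\mu - 1)| = \mu - 1$, so the inequality reads $|\sigma_L(\zeta_n^j)| + (\mu - 1) \leq \beta_1(F) = 2g(F) + (m - \mu)$. Adding $(\mu - 1)$ to both sides gives $|\sigma_L(\zeta_n^j)| + 2(\mu - 1) \leq 2g(F) + (m - \mu) + (\mu - 1) = 2g(F) + (m - 1)$, which is the desired estimate. In truth there is no real obstacle here beyond Proposition \ref{prop: mt inequality} itself: once the substantive Murasugi-Tristram inequality is in place, the corollary is a purely formal consequence obtained by setting the nullity to zero and rearranging, which is why it can be stated with \qed in lieu of a separate argument.
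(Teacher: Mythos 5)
Your proposal is correct and is exactly the argument the paper intends: the corollary is stated with \qed because it follows immediately from Proposition \ref{prop: mt inequality} once one notes that the rational homology sphere hypothesis forces $\zeta_n^j \notin \Delta_L^{-1}(0)$ and $\eta_L(\zeta_n^j) = 0$, after which the stated inequality is obtained by adding $\mu - 1$ to both sides. The hypothesis-checking and arithmetic are all in order.
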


\subsection{Links with maximal signatures} \label{sec: maximal signatures} 

Recall the definitions of $I_\pm(\zeta)$ for $\zeta \in S^1$ from the introduction. 

\begin{prop} \label{proposition: maximal signature}
Let $g(L)$ be the genus of a link of $m$ components $L$ and suppose that $|\sigma_L(\exp(i \theta_0))| = 2g(L) + (m-1)$ for some $\theta_0$. Then, 

$(1)$ $\exp(i \theta_0)$ is not a root of $\Delta_L(t)$ and therefore $\eta(\exp(i \theta_0)) = 0$; 

$(2)$ $g_4^{top}(L) = g(L)$. Further, any locally flat, compact, oriented surface $F$ properly embedded in $B^4$ with oriented boundary $L$ which realises $g_4^{top}(L)$ is connected;  

$(3)$ $|\tau_L(1)| = m-1$ and as $\zeta \ne \pm 1$ varies from $1$ to $-1$ through either hemisphere of $S^1$, the jumps in the values of $\sigma_L(\zeta)$ are all of the same sign. Further, the absolute value of the jump at $\zeta \ne \pm 1$ is $2Z_\zeta(\Delta_L(t))$ and if $m > 1$, its sign is the same as that of $\tau_L(1)$. 

$(4)$ all the roots of $\Delta_L(t)$ lie in $I_+(\exp(i \theta_0))$. Further, $|\sigma_L(\zeta)| = 2g(L) + (m-1) = \hbox{{\rm deg}}( \Delta_L(t)) = 2g(L) + (m-1)$ and $\eta(\zeta) = 0$ for all $\zeta \in \bar{I}_-(\exp(i \theta_0))$;  

$(5)$ if $\Delta_L(t)$ is monic, it is a non-trivial product of cyclotomic polynomials. 
\end{prop}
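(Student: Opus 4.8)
The plan is to extract everything from Lemma \ref{constant} together with the signature bound of Proposition \ref{g4 bound} (in its link form, via Proposition \ref{prop: mt inequality}). The hypothesis $|\sigma_L(\exp(i\theta_0))| = 2g(L) + (m-1)$ is exactly the equality case of Lemma \ref{constant}, since $\deg(\Delta_L(t)) \leq 2g(L) + (m-1)$ always (the degree of the Alexander polynomial is bounded by the first Betti number of a minimal genus Seifert surface, which is $2g(L) + (m-1)$). The first step is therefore to observe that equality forces $\deg(\Delta_L(t)) = 2g(L) + (m-1)$, so that $|\sigma_L(\exp(i\theta_0))| = \deg(\Delta_L(t))$ and the entire conclusion of Lemma \ref{constant} applies at $\zeta_0 = \exp(i\theta_0)$. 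In particular $\exp(i\theta_0) \in S^1 \setminus \Delta_L^{-1}(0)$ (it must avoid the roots for the equality statement to be meaningful), which gives $(1)$ immediately since $\eta_L$ vanishes off the roots of $\Delta_L(t)$.

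For part $(3)$, I would simply quote the three bullet points in the equality case of Lemma \ref{constant}: $|\tau_L(1)| = m-1$, and the jumps of $\sigma_L$ as $\zeta$ traverses a hemisphere from $1$ to $-1$ all share a common sign with absolute value $2Z_\zeta(\Delta_L(t))$, the sign agreeing with that of $\tau_L(1)$ when $m>1$. Part $(4)$ likewise repeats Lemma \ref{constant}: all roots of $\Delta_L(t)$ lie in $I_+(\exp(i\theta_0))$, and the final sentence of that lemma gives $|\sigma_L(\zeta)| = \deg(\Delta_L(t)) = 2g(L)+(m-1)$ for all $\zeta \in \bar I_-(\exp(i\theta_0))$; since these $\zeta$ are then not roots of $\Delta_L(t)$ (the roots being confined to $I_+$), we also get $\eta_L(\zeta) = 0$ there.

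Part $(2)$ is where the genuine topology enters, and I expect it to be the main obstacle. The inequality $g_4^{top}(L) \leq g(L)$ is automatic since any minimal-genus Seifert surface pushes into $B^4$. For the reverse, let $F$ be any locally flat, compact, oriented surface in $B^4$ bounding $L$ with $\mu$ components, realising $g_4^{top}(L)$. Applying the Murasugi-Tristram inequality (Proposition \ref{prop: mt inequality}) at $\zeta = \exp(i\theta_0)$, which by $(1)$ is not a root of $\Delta_L(t)$, gives
\[
2g(L) + (m-1) = |\sigma_L(\exp(i\theta_0))| + |\eta_L(\exp(i\theta_0)) - (\mu-1)| \leq 2g(F) + (m-\mu),
\]
using $\eta_L(\exp(i\theta_0)) = 0$ so that the middle term equals $\mu - 1$. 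Rearranging yields $2g(L) + (\mu - 1) \leq 2g(F)$, hence $g(L) \leq g(F) \leq g_4^{top}(L)$, forcing $g_4^{top}(L) = g(L)$; the same computation shows $\mu - 1 \leq 0$, so any genus-minimizing $F$ has $\mu = 1$ and is connected. The care needed here is in tracking the $\eta$ and $\mu$ contributions correctly, which is exactly what makes the Murasugi-Tristram form (with the $|\eta_L - (\mu-1)|$ term) rather than the naive bound indispensable.

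Finally, part $(5)$: if $\Delta_L(t)$ is monic, then by $(4)$ all its roots lie in the open arc $I_+(\exp(i\theta_0))$, in particular off the arc $\bar I_-(\exp(i\theta_0))$ containing $-1$. Here I would combine palindromicity of $\Delta_L(t)$ with monicity. Since $\Delta_L(t)$ is $\pm$-palindromic, its roots are closed under $z \mapsto 1/z$; being a monic integer polynomial whose roots come in inverse pairs and (by $(3)$–$(4)$, the jump analysis and reality of $\sigma_L$) lie on the unit circle, Kronecker's theorem forces every root to be a root of unity. A monic integer polynomial all of whose roots are roots of unity is a product of cyclotomic polynomials, and it is non-trivial because $\deg(\Delta_L(t)) = 2g(L) + (m-1) > 0$ (as $L$ is a non-trivial link with some signature jump). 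The subtle point to verify is that monicity plus palindromicity genuinely pins the roots to the unit circle; this is where I would invoke that the leading and constant coefficients both equal $\pm 1$, so the product of the roots has absolute value $1$, and combine this with the inverse-pairing to run Kronecker's argument.
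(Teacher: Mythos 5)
Your overall route---Lemma \ref{constant} for parts (3) and (4), the Murasugi--Tristram inequality (Proposition \ref{prop: mt inequality}) for part (2), and Kronecker's theorem for part (5)---is exactly the paper's, and those parts go through (modulo a slip in your displayed inequality: with $\eta_L(\exp(i\theta_0))=0$ the middle expression is $|\sigma_L(\exp(i\theta_0))|+(\mu-1)=2g(L)+(m-1)+(\mu-1)$, not $2g(L)+(m-1)$; the rearranged conclusion $2g(L)+(\mu-1)\leq 2g(F)$ that you actually use is nevertheless correct).

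The genuine gap is part (1). You never prove that $\exp(i\theta_0)$ is not a root of $\Delta_L(t)$: the parenthetical ``it must avoid the roots for the equality statement to be meaningful'' is circular, since Lemma \ref{constant} is only stated for $\zeta_0\in S^1\setminus\Delta_L^{-1}(0)$ and so cannot be invoked to establish its own hypothesis. Everything in your first paragraph (the identification $\deg(\Delta_L(t))=2g(L)+(m-1)$ and hence the applicability of Lemma \ref{constant} at $\exp(i\theta_0)$) rests on this unproved claim, so (2)--(5) are all conditional on it. The missing argument, which is how the paper opens its proof, is short: take a Seifert matrix $A$ of size $(2g(L)+(m-1))\times(2g(L)+(m-1))$ coming from a minimal-genus Seifert surface; the hypothesis says the Hermitian form $(1-\exp(i\theta_0))A+(1-\exp(-i\theta_0))A^T$ has signature of absolute value equal to its rank, hence is definite, hence has nonzero determinant, and the identity $\det(\mathcal{S}_F(\zeta))=\pm\zeta^{k}(1-\zeta)^{\beta_1(F)}\Delta_L(\zeta)$ then forces $\Delta_L(\exp(i\theta_0))\neq 0$. (Equivalently: $|\sigma_L(\zeta)|+\eta_L(\zeta)\leq\beta_1(F)$ for every $\zeta$, so maximal signature forces $\eta_L(\exp(i\theta_0))=0$ and nonsingularity of the form.) Once (1) is in place, the rest of your proposal is sound.
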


\begin{proof}
First observe that $\exp(i \theta_0)$ cannot be a root of $\Delta_L(t)$. Otherwise, if $A$ is a Seifert matrix for $L$ of size $(2g(L) + (m-1)) \times (2g(L) + (m-1))$, then the identity $2g(L) + (m-1) = |\hbox{signature}((1 - \exp(i \theta_0))A + (1 - \exp(-i \theta_0))A^T)|$ implies that $(1 - \exp(i \theta_0))A + (1 - \exp(-i \theta_0))A^T$ is definite. But then for some integer $k$, 
$$0 \ne \det((1 - \exp(i \theta_0))A + (1 - \exp(-i \theta_0))A^T) = \exp(i \theta_0)^{k}(1 - \exp(i \theta_0))^{(2g(L) + (k-1))}\Delta_L(\exp(i \theta_0))$$ 
This proves (1). 

Since $\exp(i \theta_0)$ is not a root of $\Delta_L(t)$, it follows from Proposition \ref{prop: mt inequality} that if $F$ is a locally flat, compact, oriented surface properly embedded in $B^4$ with oriented boundary $L$ and $\mu$ components, then by Proposition \ref{prop: mt inequality}, 
\begin{eqnarray} 
2g(L) + (m-1) = |\sigma_L(\exp(i \theta_0)|  &\leq & |\sigma_L(\exp(i \theta_0)| + (\mu -1) \nonumber \\
& = & |\sigma_L(\exp(i \theta_0)| + |\eta_L(\exp(i \theta_0)) - (\mu -1)| \nonumber \\ 
&\leq&  2g(F) + (m-\mu) \nonumber \\ 
&\leq& 2g(F) + (m-1). \nonumber 
\end{eqnarray} 
Thus $g(L) \leq g(F)$ and so taking $F$ which realises $g_4^{top}(L)$, we see that $g(L) \leq g_4^{top}(L)$ and $\mu(F) = 1$. Hence $g(L) = g_4^{top}(L)$ and $F$ is connected. Further, since $|\sigma_L(\exp(i \theta_0)| \leq \hbox{{\rm deg}} (\Delta_L(t)) \leq 2g(L) + (m-1) = |\sigma_L(\exp(i \theta_0)|$, we have $\hbox{{\rm deg}} (\Delta_L(t)) = 2g(L) + (m-1) = |\sigma_L(\exp(i \theta_0)|$. Thus (2) holds. 

Assertions (3) and (4) follow from Lemma \ref{constant}. 

Finally note that if $\Delta_L(t)$ is monic, Kronecker's theorem (\cite[page 118]{Pd}) combines with (2) and (4) to show that $\Delta_L(t)$ is a non-trivial product of cyclotomic polynomials, which is (5). 
\end{proof} 

\section{Genera of links} \label{sec: genera}

For a compact orientable surface $F$ with no closed components, the {\it big genus of $F$} (\cite[\S 5A]{BW}), denoted $G(F)$, is the genus of the closed surface obtained by attaching a connected planar surface with $m = |\partial F|$ boundary components to $F$. If $F$ has $\mu$ components, then $G(F) = g(F) + (m - \mu) = \frac12(m - \chi(F))$. 

The {\it big genus} of $L$, denoted $G(L)$, is the minimum value of $G(F)$ where $F$ is a smooth oriented surface with no closed components contained in the $3$-sphere with oriented boundary $L$.  

We use $G_4(L)$ to denote the minimum value of $G(F)$ where $F$ is a smooth oriented surface with no closed components properly embedded in the $4$-ball with oriented boundary $L$. If $\chi_4(L)$ denotes the greatest value of $\chi(F)$ among such $F$, then $G_4(L) = \frac12(m - \chi_4(L))$.

Similarly we use $G_4^{top}(L)$ to denote the minimum value of $G(F)$ where $F$ is a locally flat oriented surface with no closed components properly embedded in the $4$-ball with oriented boundary $L$. 

\begin{prop} \label{prop: florens inequality qhs 2} 
Suppose that $\Sigma_n(L)$ is a rational homology $3$-sphere. Then for $1 \leq j \leq n-1$ we have 
$$|\sigma_L(\zeta_n^j)| + (m-1) \leq 2G_4^{top}(L)$$
\end{prop}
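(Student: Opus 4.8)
The plan is to deduce this directly from Corollary \ref{cor: florens inequality qhs} by applying it to a surface realising the \emph{big} $4$-genus and then translating between the ordinary genus $g(F)$ and the big genus $G(F)$.

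First I would pick a locally flat, compact, oriented surface $F$ with no closed components, properly embedded in $B^4$ with $\partial F = L$, that realises $G_4^{top}(L)$; thus $G(F) = G_4^{top}(L)$. Let $\mu$ denote its number of components, and recall from \S \ref{sec: genera} that since $|\partial F| = m$ we have $G(F) = g(F) + (m - \mu)$.

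Next, because $\Sigma_n(L)$ is a rational homology $3$-sphere, Corollary \ref{cor: florens inequality qhs} applies to this $F$ and gives $|\sigma_L(\zeta_n^j)| + 2(\mu - 1) \leq 2g(F) + (m-1)$ for each $1 \le j \le n-1$. Adding $(m-1)$ to both sides and collecting terms upgrades the ordinary genus to the big genus:
$$|\sigma_L(\zeta_n^j)| + (m-1) \leq 2g(F) + 2(m-1) - 2(\mu-1) = 2\big(g(F) + (m - \mu)\big) = 2G(F) = 2G_4^{top}(L).$$

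The only step carrying content is the conversion in the last line: the correction term $-2(\mu-1)$ produced by Corollary \ref{cor: florens inequality qhs} is precisely what is needed, together with the added $(m-1)$, to replace $g(F)$ by $G(F) = g(F)+(m-\mu)$. Consequently there is no substantive obstacle beyond this bookkeeping, since the analytic input — the locally flat Murasugi--Tristram inequality and its refinement in Corollary \ref{cor: florens inequality qhs} — is already in hand. It is worth noting that this clean matching of correction terms is exactly why the statement is naturally phrased in terms of $G_4^{top}$ rather than the ordinary locally flat $4$-genus.
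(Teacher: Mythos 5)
Your proposal is correct and follows essentially the same route as the paper: choose a locally flat surface $F$ realising $G_4^{top}(L)$, apply Corollary \ref{cor: florens inequality qhs}, and use the identity $G(F) = g(F) + (m-\mu)$ to convert the resulting inequality into the stated bound. The bookkeeping with the $2(\mu-1)$ correction term matches the paper's computation exactly.
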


\begin{proof}
Suppose that $F$ is a locally flat surface with $\mu$ components, none closed, which is properly embedded in the $4$-ball and has oriented boundary $L$. Suppose as well that $G(F) = G_4^{top}(L)$. 
Then by Corollary \ref{cor: florens inequality qhs} we have $|\sigma_L(\zeta_n^j)| + 2(\mu -1) \leq 2g(F) + (m-1) = 2G(F) + 2(\mu - 1) - (m-1)$, which implies the desired inequality. 
\end{proof}

\begin{prop} {\rm (Kronheimer-Mrowka \cite[Corollary 1.3]{KM})}
\label{prop: rudolph}
Let $L$ be a link which is the oriented boundary of the intersection $F$ of a complex affine curve with a smooth $4$-ball in $\mathbb C^2$. Then $\chi_4(L) = \chi(F)$. Consequently,
$G_4(L) = G(F)$, and if $F$ is isotopic rel $\partial F$ to a Seifert surface for $L$, then $G(L) = G_4(L) = G(F)$. 
\end{prop}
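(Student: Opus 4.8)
The plan is to reduce the proposition to its one deep input, the Kronheimer--Mrowka genus bound, and then to deduce the two stated consequences by elementary Euler characteristic bookkeeping, using the identities recorded at the start of \S\ref{sec: genera}.

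First I would establish the central equality $\chi_4(L) = \chi(F)$. Since $F$ is the intersection of a complex affine curve with a smooth $4$-ball, it is a smooth, compact, oriented surface properly embedded in $B^4$ with oriented boundary $L$; moreover it has no closed components, since a closed component would be a compact piece of the affine curve disjoint from $L$, i.e. a compact complex curve in $\mathbb C^2$, forced by the maximum principle to be a point. Thus $F$ itself competes in the definition of $\chi_4(L)$, which gives $\chi_4(L) \geq \chi(F)$. For the reverse inequality I would invoke \cite[Corollary 1.3]{KM}: any smooth, compact, oriented surface $F'$ properly embedded in $B^4$ with $\partial F' = L$ satisfies $\chi(F') \leq \chi(F)$, so that a piece of complex curve maximizes the Euler characteristic, equivalently minimizes the genus, in its boundary class. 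Hence $\chi_4(L) \leq \chi(F)$, and the two bounds force $\chi_4(L) = \chi(F)$.

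The equality $G_4(L) = G(F)$ is then formal. By the identities at the beginning of \S\ref{sec: genera} one has $G_4(L) = \tfrac12(m - \chi_4(L))$ and $G(F) = \tfrac12(m - \chi(F))$, so substituting $\chi_4(L) = \chi(F)$ yields $G_4(L) = G(F)$ at once. For the final assertion I would compare the three-sphere and four-ball minimizers. Any Seifert surface for $L$ may be pushed into the interior of $B^4$ without changing its Euler characteristic or its boundary, so it competes in the definition of $G_4(L)$, giving $G_4(L) \leq G(L)$. Conversely, if $F$ is isotopic rel $\partial F$ to a Seifert surface $\widehat{F}$ for $L$, then $\chi(\widehat{F}) = \chi(F)$ and hence $G(\widehat{F}) = G(F) = G_4(L)$, whence $G(L) \leq G(\widehat{F}) = G_4(L)$. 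Combining the two inequalities gives $G(L) = G_4(L) = G(F)$.

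The only genuinely hard step is the first, and its difficulty is entirely absorbed into the cited Kronheimer--Mrowka theorem, whose proof rests on gauge-theoretic techniques (the projective-plane Thom conjecture, and subsequently Seiberg--Witten theory); the remaining steps are routine Euler characteristic arithmetic together with the observation that Seifert surfaces push into the four-ball.
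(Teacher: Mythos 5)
Your proposal is correct and follows essentially the same route as the paper: the paper cites Rudolph's deduction of $\chi_4(L)=\chi(F)$ from the Kronheimer--Mrowka local Thom conjecture, and you simply unpack that one citation into the two obvious inequalities, after which the remaining identities are the same Euler characteristic bookkeeping and the same two-sided comparison $G(L)\leq G(F)=G_4(L)\leq G(L)$ used in the paper. No gaps.
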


\begin{proof}
The fact that $\chi_4(L) = \chi(F)$ is \cite[Proposition, \S 3]{Ru3}. Since $G_4(L) = \frac12(m - \chi_4(L))$, this implies $G_4(L) = G(F)$. If $F$ is isotopic rel $\partial F$ to a Seifert surface for $L$, then $G(L) \leq G(F) = G_4(F) \leq G(L)$. This completes the proof. 
\end{proof}

\section{Notions of positivity} \label{sec: positivity} 

A link is called a {\it positive braid link} if it is the closure of a braid which can be expressed as a product of positive powers of the standard generators $\sigma_i$ of the braid group and all strings of the braid are like-oriented.  

A link is called {\it positive} if it has a diagram all of whose crossings are positive.

A braid is called {\it strongly quasipositive} if it is the product of conjugates of positive powers of the standard generators $\sigma_i$ of the braid group, where each conjugating element is of the form $\sigma_j \sigma_{j+1} \cdots \sigma_{i-1}$. A link $L$ is called {\it strongly quasipositive} if it is the closure of a strongly quasipositive braid where the braid components are like-oriented. Equivalently, a link $L$ in the $3$-sphere is strongly quasipositive if it bounds a Seifert surface obtained from a finite number of parallel like-oriented disks by attaching positive half-twisted bands. According to (\cite{Ru1}, \cite{BO}), when $S^3$ is viewed as the strictly pseudoconvex boundary of $B^4 \subset \mathbb R^4 = \mathbb C^2$, this Seifert surface can be isotoped into  $B^4$, relative to $L$, to a properly embedded surface $F$ which is the intersection of $B^4$ with a complex affine curve in $\mathbb C^2$. In this case, 
$$G(F) = G_4(L) = G(L)$$ 
by Proposition \ref{prop: rudolph}. 

It follows from \cite[Theorem 1.2]{He}, \cite[Corollary 1.3]{Ni} and the calculations of \cite{OS3} that L-space knots are strongly quasipositive. 

A braid is called {\it quasipostive} if it is the product of conjugates of the standard generators of the braid group. A link is called {\it quasipositive} if it is the closure of a quasipositive braid. Quasipositive links are precisely the class of links which bound the intersection $F$ of a smooth complex affine curve in $\mathbb C^2$ with $B^4$ (\cite{Ru1}, \cite{BO}). In this case, 
$$G(F) = G_4(L)$$ 
by Proposition \ref{prop: rudolph}. 

It is evident that positive braid links are positive links and that strongly quasipositive links are quasipositive links. What is less obvious is that positive links are strongly quasipositive links, but this has been shown by Nakamura \cite{Nak} and Rudolph \cite{Ru4}. 

A family of fibred strongly quasipositive knots which will arise below consists of A'Campo's knots of divides, or {\it divide knots} (\cite{A1}, \cite{A2}). These knots are fibred \cite{A1} and Kawamura proved that they are quasipositive and $g_4(K) = g(K)$ (\cite{Kaw}). Plamenevskaya \cite{Pl} has shown that $\tau(K) = g_4(K)$ for quasipositive knots, so by Hedden \cite[Theorem 1.2]{He}, divide knots are strongly quasipositive.  

\section{Strongly quasipositive links with L-space branched covers} \label{sec: sqp}

Throughout this section $L$ will be a strongly quasipositive link. Up to replacing $L$ by its mirror image, we can fix a Seifert surface of $L$ which can be isotoped, relative to $L$, to be a properly embedded surface $F \subset B^4$ which equals the intersection of a complex affine curve in $\mathbb{C}^2$ with $B^4$. Then $G_4(L) = G(F) = G(L)$ (cf. \S \ref{sec: positivity}). Also, $\Sigma_n(F)$ is a Stein filling of $\Sigma_n(L)$ (\cite[Theorem 1.3]{HKP}, \cite[Theorem 1.3]{Ru7}) which is simply-connected (\S \ref{F connected}).

\begin{prop} \label{prop: main prop}
Suppose that $L$ is a strongly quasipositive link. If $\Sigma_n(L)$ is an L-space for some $n \geq 2$, then $|\sigma_L(\zeta_n^j)| = 2 g(L) + (m-1)$ for $1 \leq j \leq n-1$. Consequently, $L$ satisfies the conclusions of Proposition \ref{proposition: maximal signature} with $\exp(i \theta_0) = \zeta_n$. 
\end{prop}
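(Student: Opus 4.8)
I want to show that if $\Sigma_n(L)$ is an L-space, then the Tristram-Levine signature attains its extreme value $|\sigma_L(\zeta_n^j)| = 2g(L) + (m-1)$ at each primitive $n$-th root of unity $\zeta_n^j$, $1 \le j \le n-1$. The strategy is to play off two facts: an L-space is a rational homology sphere whose definite filling forces the Hermitian intersection form to be definite, and the surface $F$ realizing the genus of $L$ lifts to a Stein (in particular, negative definite) filling $\Sigma_n(F)$ of $\Sigma_n(L)$. The signatures $\sigma_L(\zeta_n^j)$ are exactly the signatures of the intersection form restricted to the eigenspaces $E_2(\Sigma_n(F); j)$ by Theorem~\ref{thm: viro} (Viro), so definiteness of the ambient form will pin these signatures to their maximal absolute values.

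\textbf{First step.} Since $\Sigma_n(L)$ is an L-space it is a rational homology $3$-sphere, so by the remarks preceding Corollary~\ref{cor: conn ss} no $n$-th root of unity is a root of $\Delta_L(t)$; in particular $\eta_L(\zeta_n^j) = 0$ and the Hermitian intersection form $\langle \cdot, \cdot \rangle_{\Sigma_n(F)}$ on $H_2(\Sigma_n(F); \mathbb C)$ is non-singular. Because $F$ is isotopic rel $\partial F$ to the connected Seifert surface realizing $G(L) = G_4(L)$, I am in the setting of \S\ref{F connected} and Remark~\ref{rem: seifert surface case}, so $\Sigma_n(F)$ is simply connected, $\beta_1(\Sigma_n(F)) = \beta_3(\Sigma_n(F)) = 0$, and
$$
\dim_{\mathbb C} E_2(\Sigma_n(F); j) = \beta_2(\Sigma_n(F); j) = 2g(L) + (m-1) \quad \text{for } 1 \le j \le n-1.
$$

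\textbf{Second (crucial) step.} I now invoke definiteness. Because $F$ is the intersection of a complex affine curve with $B^4$, the branched cover $\Sigma_n(F)$ is a Stein filling of $\Sigma_n(L)$ (the references \cite{HKP}, \cite{Ru7} cited at the start of \S\ref{sec: sqp}). A Stein filling of an L-space has negative definite intersection form: this is where the L-space hypothesis does its real work, via the standard fact that a rational homology sphere L-space bounds no filling with nonstandard intersection form other than the (negative or positive) definite one — here negative definite by the Stein/complex orientation. Hence the Hermitian form $\langle \cdot, \cdot \rangle_{\Sigma_n(F)}$ on all of $H_2(\Sigma_n(F); \mathbb C)$ is negative definite. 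Restricting a definite Hermitian form to any subspace keeps it definite with the same sign, so its restriction to $E_2(\Sigma_n(F); j)$ is negative definite, whence by Theorem~\ref{thm: viro},
$$
\sigma_L(\zeta_n^j) = \operatorname{signature}\big(\langle \cdot, \cdot \rangle_{\Sigma_n(F)} | E_2(\Sigma_n(F); j)\big) = -\dim_{\mathbb C} E_2(\Sigma_n(F); j) = -(2g(L) + (m-1)).
$$
Taking absolute values gives $|\sigma_L(\zeta_n^j)| = 2g(L) + (m-1)$ for each $j$, as desired. The final sentence of the proposition is then immediate: applying Proposition~\ref{proposition: maximal signature} with $\exp(i\theta_0) = \zeta_n$ (legitimate because $\theta_0 = 2\pi/n \in (0,\pi]$ for $n \ge 2$, and the genus-attaining hypothesis is exactly what I have just verified) yields all the stated conclusions.

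\textbf{The main obstacle.} The delicate point is the definiteness claim in the second step — linking the L-space condition for the boundary $\Sigma_n(L)$ to negative-definiteness of the intersection form of the Stein filling $\Sigma_n(F)$. This requires the Heegaard Floer input that a Stein (symplectic) filling of an L-space must be negative definite, which presumably rests on Ozsv\'ath-Szab\'o's results on fillings and the $d$-invariant obstructions; I would isolate this as a cited lemma rather than reprove it. A secondary subtlety is ensuring $\beta_2(\Sigma_n(F); j)$ really equals the full second Betti number in each eigenspace so that definiteness forces the signature to be extremal — but this is handled cleanly by Remark~\ref{rem: seifert surface case}, which already records that $\beta_2(\Sigma_n(F); j) = 2g(F) + (m-1)$ in the connected-Seifert-surface case.
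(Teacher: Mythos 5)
Your argument is correct and follows essentially the same route as the paper: $\Sigma_n(F)$ is a simply connected Stein filling of the L-space $\Sigma_n(L)$, hence has negative definite intersection form by Ozsv\'ath--Szab\'o, and Viro's theorem together with Remark \ref{rem: seifert surface case} forces $|\sigma_L(\zeta_n^j)|$ to equal the full eigenspace dimension $\beta_2(\Sigma_n(F);j)$. The only point you elide is the identification $g(F)=g(L)$ (you write $2g(L)+(m-1)$ where the remark gives $2g(F)+(m-1)$), which the paper deduces explicitly from $G(F)=G(L)$ together with the connectedness of every Seifert surface of $L$ (Corollary \ref{cor: conn ss}), needed because $G(L)$ is a priori the minimal \emph{big} genus rather than $g(L)+(m-1)$.
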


\begin{proof}
It follows from the fact that $\Sigma_n(L)$ is a rational homology $3$-sphere that 
\vspace{-.2cm}
\begin{itemize}

\item $L$ bounds only connected surfaces in the $3$-sphere, so $F$ is connected (Corollary \ref{cor: conn ss});

\vspace{.2cm} \item $\eta_L(\zeta_n^j) = 0$ for $1 \leq j \leq n-1$ (\S \ref{sec: signature function});

\vspace{.2cm} \item $\Delta_L(\zeta_n^j) \ne 0$ for $1 \leq j \leq n-1$ (\S \ref{sec: signature function}).

\end{itemize}
\vspace{-.2cm}
Then as $G(L) = G(F)$, we have $g(L) + (m-1) = G(L) = G(F) = g(F) + (m-1)$, so $g(L) = g(F)$. 

The condition that $\Sigma_n(L)$ be a rational homology $3$-sphere also implies that the intersection form on $H_2(\Sigma_n(F))$ is non-singular, so as $\Sigma_n(F)$ is Stein, $\beta_2^+(\Sigma_n(F)) = 0$ by \cite[Theorem 1.4]{OS1}. (This means that the intersection form on the second homology of $\Sigma_n(F)$ is negative definite.) Hence for $1 \leq j \leq n-1$, 
$$|\sigma_L(\zeta_n^j)| = \beta_2(\Sigma_n(F); j) = 2g(F) + (m-1) = 2g(L) + (m-1)$$
by Remark \ref{rem: seifert surface case}. In particular, the hypotheses of Proposition \ref{proposition: maximal signature} hold for $\exp(i \theta_0) = \zeta_n$, which completes the proof. 
\end{proof}

\begin{proof}[Proof of Theorem \ref{thm: sqp links}]
Suppose that $\Sigma_n(L)$ is an L-space for some $n \geq 2$. Proposition \ref{prop: main prop} implies that the conclusions of Proposition \ref{proposition: maximal signature} hold for $\exp(i \theta_0) = \zeta_n$. In particular, Proposition \ref{proposition: maximal signature}(4) shows that all the roots of $\Delta_K(t)$ are contained in the  interval $I_+(\zeta_n)$ while $|\sigma_L(\zeta)| = 2g(L) + (m-1) = \hbox{{\rm deg}}(\Delta_L(t))$ for all $\zeta \in \bar{I}_-(\zeta_n)$. Thus $L$ is definite. This is assertion (1) of Theorem \ref{thm: sqp links}. Assertion (2) of the theorem is Proposition \ref{proposition: maximal signature}(2).  

Given a definite link of $m$ components $L'$ in the $3$-sphere whose Alexander polynomial is not a multiple of $(t-1)^{2g(L) + (m-1)}$, set
$$n_3(L') = \max\{r \geq 2: \Delta_{L'}^{-1}(0) \subset I_+(\zeta_r)\}$$
The integer $n_3(L')$ is well-defined by Proposition \ref{proposition: maximal signature}. It is clear that $n_3(L)$ depends only on $\Delta_{L}(t)$ and from part (1) that $\Sigma_k(L)$ is not an L-space for $k > n_3(L)$. Thus (3) holds. 

Finally we deal with Assertion (4) of the theorem. Suppose that $\Delta_L(t)$ is monic but not $(t-1)^{2g(L) + (m-1)}$. By Proposition \ref{proposition: maximal signature}(5), $\Delta_L(t)$ is a non-trivial product of cyclotomics. In particular, the case that $n = 2$ holds so we assume below that $n \geq 3$. As $\Delta_L(t)$ is not $(t-1)^{2g(L) + (m-1)}$, it is divisible by $\Phi_a(t)$ for some $a > 1$. Since $\Delta_L^{-1}(0) \subset I_+(\zeta_n)$, $a > n \geq 3$. 

If $a = 2j + 1 \geq 3$ is odd, then $j \geq 1$ is relatively prime to $a$ and therefore $\zeta =  \exp(2 \pi i j/a)$ is a primitive $a^{th}$ root of unity. Hence $\zeta \in I_+(\zeta_n)$ so that $j/(2j+1) = j/a < 1/n \leq 1/3$. But this implies that $j < 1$, contrary to our hypotheses.

If $a = 4j \geq 4$ is multiple of $4$, then $\zeta = \exp(2 \pi i (2j-1)/a)$ is a primitive $a^{th}$ root of unity. Hence $\zeta \in I_+(\zeta_n)$ so that $1/2 - 1/a =  (2j-1)/a < 1/n \leq 1/3$. It follows that $a < 6$ and hence, $4 = a > n$. 

If $a = 4j+2 > 1$, it is at least $6$ since $a > n$. Then $\zeta = \exp(2 \pi i (2j-1)/a)$ is a primitive $a^{th}$ root of unity and as above we have $(2j-1)/a < 1/n$. In other words, $n < \frac{a}{2j-1} = \frac{a}{(\frac{a-4}{2})} = 2 + \frac{8}{a-4}$. This implies that $n = 2$ if $a \geq 14$, $n \leq 3$ if $a = 10$ and $n \leq 5$ if $a = 6$. 

We conclude from the last three paragraphs that $n \leq 5$. Further, if $n = 4$ or $5$ then $\Delta_L(t)$ is a product of powers of $\Phi_1$ and $\Phi_{6}$, and if $n = 3$ then $\Delta_L(t)$ is a product of powers of $\Phi_1, \Phi_4, \Phi_6$ and $\Phi_{10}$. Assertion (4) of the theorem now follows.
\end{proof}

\begin{proof}[Proof of Corollary \ref{cor: monic sqp knots}]  
Corollary \ref{cor: monic sqp knots} is an immediate consequence of Theorem \ref{thm: sqp links} once we observe that neither $1$ nor prime power roots of unity are roots of the Alexander polynomial of a knot $K$. In particular, neither $\Phi_1(t)$ nor $\Phi_4(t)$ can be a factor of $\Delta_K(t)$. 
\end{proof}

Next we consider strongly quasipositive satellite knots.  

\begin{prop} \label{prop: satellite} 
Suppose that $K = P(C)$ is a strongly quasipositive satellite knot with non-trivial companion $C$ and pattern $P$ of winding number $w$. Let $K_1 = P(U)$ where $U$ is the unknot.  

$(a)$ If $\vert w \vert \geq 2$, then $\Sigma_n(K)$ is not an L-space for $n \geq 2$. 

$(b)$ If $|w| = 1$ and $\Sigma_n(K)$ is an L-space for some $n \geq 2$, then $|\sigma_{C}(\zeta)| = 2g(C)$ and $|\sigma_{K_1}(\zeta)| = 2g(K_1)$ for all $\zeta \in \bar{I}_-(\zeta_n)$. Thus, both $C$ and $K_1$ are definite. 

$(c)$ If $|w| = 0$ and $\Sigma_n(K)$ is an L-space for some $n \geq 2$, then $g(K_1) = g(K)$. 
\end{prop}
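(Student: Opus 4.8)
The plan is to reduce all three parts to two classical satellite formulas together with the strong constraints on $\sigma_K$ supplied by Proposition \ref{prop: main prop}. By that proposition, if $\Sigma_n(K)$ is an L-space then (after possibly replacing $K$ by its mirror) $K$ satisfies the conclusions of Proposition \ref{proposition: maximal signature} with $\exp(i\theta_0) = \zeta_n$; in particular $|\sigma_K(\zeta)| = 2g(K) = \hbox{deg}(\Delta_K(t))$ for every $\zeta \in \bar{I}_-(\zeta_n)$, and all roots of $\Delta_K(t)$ lie in $I_+(\zeta_n)$. The inputs I would combine with this are Litherland's satellite formula for the Tristram--Levine signature,
$$\sigma_K(\zeta) = \sigma_{K_1}(\zeta) + \sigma_C(\zeta^w),$$
Seifert's multiplicativity of the Alexander polynomial under the satellite operation, $\Delta_K(t) = \Delta_{K_1}(t)\,\Delta_C(t^w)$, and the satellite genus inequality $g(K) \ge g(K_1) + |w|\,g(C)$ (Schubert), which is the sole geometric ingredient beyond what the paper has already developed.

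First I would record that, since every root of $\Delta_K(t)$ lies in $I_+(\zeta_n)$ and the roots of $\Delta_{K_1}(t)$ and of $\Delta_C(t^w)$ are roots of $\Delta_K(t)$ by the product formula, no $\zeta \in \bar{I}_-(\zeta_n)$ is a root of $\Delta_{K_1}(t)$ and no $\zeta^w$ with $\zeta \in \bar{I}_-(\zeta_n)$ is a root of $\Delta_C(t)$. Hence Litherland's formula applies without nullity corrections throughout $\bar{I}_-(\zeta_n)$, and Lemma \ref{constant} together with the standard bound $\hbox{deg}(\Delta_J(t)) \le 2g(J)$ gives $|\sigma_{K_1}(\zeta)| \le 2g(K_1)$ and $|\sigma_C(\zeta^w)| \le 2g(C)$. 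Thus for any $\zeta \in \bar{I}_-(\zeta_n)$,
$$2g(K) = |\sigma_K(\zeta)| \le |\sigma_{K_1}(\zeta)| + |\sigma_C(\zeta^w)| \le 2g(K_1) + 2g(C),$$
so $g(K) \le g(K_1) + g(C)$, with the crucial feature that the factor $|w|$ does not appear on the right.

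For (a), combining this with $g(K) \ge g(K_1) + |w|\,g(C)$ and $|w| \ge 2$ forces $g(K_1) + 2g(C) \le g(K_1) + g(C)$, hence $g(C) = 0$ and $C$ trivial, contrary to hypothesis; so $\Sigma_n(K)$ is never an L-space. For (b), where $|w| = 1$ and $\sigma_C(\zeta^w) = \sigma_C(\zeta)$ (as $\sigma_C(\bar{\zeta}) = \sigma_C(\zeta)$), the genus inequality reads $g(K) \ge g(K_1) + g(C)$, which with the displayed upper bound yields $g(K) = g(K_1) + g(C)$; equality then propagates back through the two inequalities, forcing $|\sigma_{K_1}(\zeta)| = 2g(K_1)$ and $|\sigma_C(\zeta)| = 2g(C)$ for every $\zeta \in \bar{I}_-(\zeta_n)$, whence both $K_1$ and $C$ are definite. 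For (c), where $w = 0$, I would use either $\sigma_C(\zeta^0) = \sigma_C(1) = 0$ in Litherland's formula or $\Delta_C(1) = \pm 1$ in Seifert's formula to obtain $2g(K) = |\sigma_{K_1}(\zeta)| \le 2g(K_1)$, i.e. $g(K) \le g(K_1)$, while the genus inequality gives the reverse bound $g(K) \ge g(K_1)$, so $g(K) = g(K_1)$.

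The hard part, and the step I expect to be the main obstacle, is pinning down the precise satellite genus inequality $g(K) \ge g(K_1) + |w|\,g(C)$ and confirming it applies in full generality: the upper bounds on the signatures are immediate from Lemma \ref{constant}, and the definiteness of $K$ does all the extremal work, but it is exactly this matching lower bound on $g(K)$ that produces the contradiction in (a) and forces the genus equality in (b) and (c). I would isolate it as a separate lemma, proving it by placing a minimal-genus Seifert surface for $K$ in normal position with respect to the companion torus and bounding the genera of the pieces lying inside and outside the companion solid torus, the outside piece contributing $|w|\,g(C)$ and the inside piece contributing $g(K_1)$.
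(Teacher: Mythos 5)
Your argument is correct and is essentially the paper's proof: both squeeze $2g(K) = |\sigma_K(\zeta)|$ between Litherland's satellite signature formula together with the bound $|\sigma| \leq 2g$ on one side, and Schubert's inequality $g(K) \geq g(K_1) + |w|\,g(C)$ on the other, the factor $|w|$ appearing only in the latter so that $|w| \geq 2$ forces $g(C) = 0$ and $|w| \leq 1$ forces the equalities in (b) and (c). The paper simply cites Schubert for the genus inequality, so the step you flag as the main obstacle is taken as known and needs no separate lemma.
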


\begin{proof} 
Since $K$ is a strongly quasipositive knot if $\Sigma_n(K)$ is an L-space for some $n \geq 2$, Proposition \ref{prop: main prop} implies that $|\sigma_K(\zeta_n^j)| = 2g(K)$ for $1 \leq j \leq n-1$. By \cite{Sch}, one has $g(K) \geq \vert w \vert g(C) + g(K_1)$, and by \cite[Theorem 2]{Lith}, $\sigma_K(\zeta_n^j) = \sigma_{C}(\zeta_n^{wj}) + \sigma_{K_1}(\zeta_n^j)$ for each $1 \leq j \leq n-1$. Then $|\sigma_K(\zeta_n^j)| = 2g(K) \geq 2 |w| g(C) + 2g(K_1) \geq |w| |\sigma_{C}(\zeta_n^{wj})| + |\sigma_{K_1}(\zeta_n^j)| \geq |\sigma_{C}(\zeta_n^{wj})| + |\sigma_{K_1}(\zeta_n^j)| \geq |\sigma_K(\zeta_n^j)|$. Therefore this sequence of inequalities is a sequence of equalities. If $w \ne 0$, this can happen only if $|w| = 1$,  $|\sigma_{C}(\zeta_n^j)| = 2g(C)$ and $|\sigma_{K_1}(\zeta_n^j)| = 2g(K_1)$ for $1 \leq j \leq n-1$. Proposition \ref{proposition: maximal signature} now implies that (a) and (b) hold. If $w = 0$, we have $g(K_1) \leq 2g(K) = |\sigma_K(\zeta_n^j)| = |\sigma_{K_1}(\zeta_n^j)| \leq 2g(K_1)$, so $g(K_1) = g(K)$, which  completes the proof.
\end{proof}

\begin{rem} {\rm $(1)$ A classical argument of Schubert implies that case $(c)$ of Proposition \ref{prop: satellite} does not arise when $K$ is a {\it fibred} strongly quasipositive satellite knot. Indeed, Schubert showed that the existence of a companion with zero winding number would imply that the complement of the fibre surface of $K$ contains an essential torus. But this is impossible since the complement has a free fundamental group.

$(2)$ If $K$ is the closure of a strongly quasipositive braid $\beta$ and $\Sigma_n(K)$ is an L-space for some $n \geq 2$, then case $(a)$ of Proposition \ref{prop: satellite} implies that $\beta$ is either pseudo-Anosov or periodic as a diffeomorphism of the disk with holes. In the latter case, $K$ is a torus knot.

$(3)$ Ken Baker has constructed examples of prime fibred strongly quasipositive satellite knots for which case $(b)$ occurs. His work with Motegi shows that it cannot arise for L-space knots.}
\end{rem}

\begin{cor} \label{cor: lspace knot satellite}
If $K$ is a satellite L-space knot, then no $\Sigma_n(K)$ is an L-space.
\end{cor}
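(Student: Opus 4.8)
The plan is to reduce everything to Proposition~\ref{prop: satellite}. First I would recall two facts established earlier in the paper: an L-space knot is strongly quasipositive (\cite[Theorem 1.2]{He}) and fibred (\cite[Corollary 1.3]{Ni}). Hence a satellite L-space knot $K = P(C)$, with non-trivial companion $C$ and pattern $P$ of winding number $w$, is a strongly quasipositive satellite knot to which Proposition~\ref{prop: satellite} applies. Assuming for contradiction that $\Sigma_n(K)$ is an L-space for some $n \geq 2$, I would then run through the three exhaustive cases $|w| \geq 2$, $w = 0$, and $|w| = 1$, and show each is impossible.

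The first two cases follow at once from the surrounding results. If $|w| \geq 2$, then Proposition~\ref{prop: satellite}(a) states directly that $\Sigma_n(K)$ is not an L-space, contradicting the assumption. If $w = 0$, then since $K$ is fibred and strongly quasipositive, the Schubert argument recorded in part $(1)$ of the remark following Proposition~\ref{prop: satellite} shows this case cannot occur at all: a zero-winding-number companion would force the complement of the fibre surface of $K$ to contain an essential torus, which is impossible because that complement has free fundamental group.

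The remaining case $|w| = 1$ is the crux, and I expect it to be the main obstacle. Here Proposition~\ref{prop: satellite}(b) only yields that both $C$ and $K_1 = P(U)$ are definite, and definiteness by itself is insufficient to produce a contradiction: as noted in part $(3)$ of the remark following Proposition~\ref{prop: satellite}, Baker has exhibited prime fibred strongly quasipositive satellite knots realising case $(b)$, so the extra hypothesis that $K$ is an \emph{L-space} knot is genuinely needed to rule this case out. The plan is therefore to invoke the result of Baker and Motegi quoted there, which excludes case $(b)$ precisely for L-space knots. With all three cases eliminated, the assumption that some $\Sigma_n(K)$ is an L-space is untenable, which proves the corollary.
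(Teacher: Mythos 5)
Your proposal reaches the right conclusion and correctly isolates $|w|=1$ as the only genuinely hard case, but the way you close that case deserves a caution, and it is exactly where your route diverges from (and then collapses back onto) the paper's. The paper does not run a case analysis at all: it invokes the precise statement of Baker--Motegi \cite[Theorem 7.4]{BMot}, which says that a satellite L-space knot can be \emph{re-expressed} as a satellite whose pattern is a braid; for a genuine satellite a braid pattern has winding number at least $2$ in absolute value, so Proposition \ref{prop: satellite}(a) applies to that presentation and one is done in a single step. Your cases $|w|\geq 2$ and $w=0$ are handled correctly (the Schubert/fibredness argument for $w=0$ is fine, since L-space knots are fibred by \cite{Ni}), but for $|w|=1$ you appeal to the sentence in the remark following Proposition \ref{prop: satellite} asserting that Baker and Motegi's work rules out case $(b)$ for L-space knots. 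That sentence is an unproved aside in the paper, and its natural justification is precisely the corollary you are trying to prove (via the braided-satellite theorem), so leaning on it as stated risks circularity. The fix is to replace the appeal to the remark by a direct appeal to \cite[Theorem 7.4]{BMot}: pass to the braided-satellite presentation, where $|w|\geq 2$, and apply part (a) --- at which point your three-way case split on the original presentation becomes unnecessary and your argument reduces to the paper's.
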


\begin{proof}
Baker and Motegi show that satellite L-space knots can be expressed as a satellite knot where the pattern $P$ is a braid (\cite[Theorem 7.4]{BMot}). In this case, the winding number of $P$ is at least two in absolute value, so Proposition \ref{prop: satellite}(a) implies that no $\Sigma_n(K)$ is an L-space.
\end{proof}

\begin{proof}[Proof of Corollary \ref{cor: lspace knots}]  
If $3 \leq n \leq 5$, Corollary \ref{cor: monic sqp knots} implies that $\Delta_K(t)$ is of the form $\Phi_6^i \Phi_{10}^j$ for some $i + j > 0$. An elementary computation shows that if such a product yields a polynomial whose non-zero coefficients are $\pm 1$, then $i + j = 1$. Hence as the Alexander polynomials of L-space knots satisfy this condition, Corollary \ref{cor: monic sqp knots} implies that $\Delta_K(t)$ is $\Phi_6(t)$ if $n = 4, 5$ and either $\Phi_6(t)$ or $\Phi_{10}(t)$ if $n = 3$. The $(2,3)$ torus knot is the only fibred knot with Alexander polynomial $\Phi_6(t)$, so we deduce part (1) of the corollary. The only torus knot with Alexander polynomial $\Phi_{10}(t)$ is the $(2, 5)$ torus knot, so part (2) of the corollary will follow if we can show that if $K$ is a satellite knot with Alexander polynomial $\Phi_{10}(t)$, then $\Sigma_3(K)$ is not an L-space. This follows from Corollary \ref{cor: lspace knot satellite}. It also follows from \cite{HRW}, since the $3$-fold branched cover of any satellite knot with Alexander polynomial $\Phi_{10}(t)$ is an integer homology $3$-sphere. It is also toroidal by  \cite[Theorem 2]{GLit}. But then it is not an L-space by \cite[Corollary 9]{HRW}. 
\end{proof}

\section{Strongly quasipositive alternating links} \label{sec: sqp alt}

The following proposition follows from work of Kunio Murasugi. 

\begin{prop} \label{prop: definite alternating} 
Let $L$ be a non-split alternating link with $m$ components, then $L$ is definite if and only if $L$ is special alternating. 
\end{prop}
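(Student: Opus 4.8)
The plan is to characterize both classes—definite alternating links and special alternating links—in terms of the Goeritz/Seifert forms coming from an alternating diagram, and to show these characterizations coincide. Recall that a link is \emph{special alternating} if it admits an alternating diagram that is also \emph{special}, meaning the Seifert circles obtained from the diagram bound disjoint disks (equivalently, all crossings between two given Seifert circles carry the same sign, so that applying Seifert's algorithm to the alternating diagram yields a checkerboard surface). First I would fix a reduced, connected alternating diagram $D$ for the non-split link $L$ and pass to the two checkerboard surfaces, whose associated Goeritz forms are, up to sign, definite (this is classical for alternating diagrams: the black and white surfaces give forms of opposite definiteness). Murasugi's formula computes $\sigma(L)$ from the alternating diagram in terms of the writhe and the numbers of positive/negative crossings, and in particular expresses the defect $2g(L)+(m-1)-|\sigma(L)|$ purely in diagrammatic data.

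The key computational step is to recognize that $L$ is definite, i.e.\ $|\sigma(L)| = 2g(L)+(m-1)$, exactly when the Seifert surface produced by Seifert's algorithm on $D$ realizes the genus \emph{and} its Seifert form degenerates to a definite Hermitian form at $\zeta = -1$ with maximal signature. For an alternating diagram the Seifert genus is given by $\beta_1(F) = c - s + 1$ (with $c$ crossings and $s$ Seifert circles), and Murasugi's work identifies when the signature attains this extremal value. Concretely, I would show: the signature defect vanishes if and only if every crossing contributes with the same sign to the Seifert form, which is precisely the condition that the Seifert circles of $D$ bound disjoint disks—that is, $D$ is special. This is the heart of the argument, matching Murasugi's signature computation against the special-alternating condition.

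For the forward direction (definite $\Rightarrow$ special alternating), I would argue contrapositively: if $D$ is not special, then there exist two Seifert circles joined by crossings of both signs, forcing a cancellation in the signature computation so that $|\sigma(L)| < 2g(L)+(m-1)$, contradicting definiteness. Here I would lean on Murasugi's explicit signature formula for alternating links (\cite{Mu2}) together with the identification of $g(L)$ from the alternating diagram. For the reverse direction (special alternating $\Rightarrow$ definite), a special alternating diagram has a Seifert surface coinciding with one checkerboard surface, whose Goeritz form is definite; the Seifert form then has definite symmetrization at $-1$, giving $|\sigma(L)| = \beta_1(F) = 2g(L)+(m-1)$, using that the Seifert algorithm on a special alternating diagram is genus-minimizing.

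The main obstacle I anticipate is the careful bookkeeping in the forward direction: translating ``not special'' into a guaranteed strict drop in the signature requires tracking how mixed-sign crossings between Seifert circles produce off-diagonal terms in the Seifert form that strictly reduce $|\sigma_L(-1)|$ below $\beta_1(F)$, while simultaneously knowing that $\beta_1(F) = 2g(L)+(m-1)$ because Seifert's algorithm on an alternating diagram is genus-minimizing (a theorem of Murasugi--Crowell). Marshalling these two facts—the genus-minimality of the alternating Seifert surface and Murasugi's signature formula—so that they combine into the sharp dichotomy is where the real work lies; the rest is a direct appeal to the definiteness of the Goeritz form of a special diagram.
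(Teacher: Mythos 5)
Your reverse direction (special alternating $\Rightarrow$ definite) is essentially the paper's: the Seifert surface of a special alternating diagram is a checkerboard surface with definite symmetrized Seifert/Goeritz form, and genus-minimality of the alternating Seifert surface gives $|\sigma(L)|=\beta_1(F)=2g(L)+(m-1)$; the paper simply cites Murasugi for the maximality of the signature. The problem is in your forward direction, where the mechanism you identify is wrong. You translate ``$D$ is not special'' into ``there exist two Seifert circles joined by crossings of both signs,'' but in a connected alternating diagram all crossings joining a \emph{fixed} pair of Seifert circles automatically have the same sign; your parenthetical claim that this is equivalent to speciality is therefore already incorrect. The standard alternating diagram of the figure-eight knot is a concrete counterexample to your dichotomy: it is not special and the knot is indefinite ($\sigma=0$, $g=1$), yet no pair of its Seifert circles is joined by crossings of both signs --- the middle Seifert circle meets one neighbour only in positive crossings and the other neighbour only in negative crossings. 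So the ``cancellation'' you want does not come from mixed signs within a single pair, and as written your contrapositive argument does not get off the ground.

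The correct mechanism, and the one the paper uses, is global rather than pairwise: by Murasugi's theorem the alternating diagram decomposes as a star-product (Murasugi sum) of special alternating factors, each factor has all crossings of one sign, adjacent factors have crossings of opposite signs, and the signature is additive over the factors. Definiteness of the symmetrized Seifert form on the (genus-minimizing) Seifert surface then forces all factors to carry definite forms of the \emph{same} sign, which is incompatible with adjacent factors having opposite signs unless there is exactly one factor, i.e.\ unless $L$ is special alternating. If you want to salvage your diagrammatic approach you would need to replace ``two circles joined by mixed-sign crossings'' with ``two adjacent blocks of the star-product decomposition carrying opposite signs'' and then prove that plumbing a positive-definite piece to a negative-definite piece strictly drops $|\sigma|$ below $\beta_1$; at that point you have reconstructed Murasugi's decomposition argument, so you should cite it rather than the non-existent pairwise sign obstruction.
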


\begin{proof} 
If $L$ is special alternating, its signature is maximal by \cite[Lemma 5.2]{Mu2} and \cite[Theorem 3.8]{Mu1}. 

Conversely, suppose that $L$ is a definite alternating link. According to Murasugi \cite[Theorem 5.4]{Mu2}, $L$ is the star-product of finitely many special alternating links and the signature of $L$ is the sum of the signatures of the special alternating factors. This star product representation is obtained by applying the Seifert algorithm to a minimal alternating projection $D$ of $L$. The resulting Seifert surface $S$ of $L$ naturally decomposes as a Murasugi plumbing of surfaces obtained by applying the Seifert algorithm to alternating projections of the special alternating factors obtained from $D$. The crossings of each of the factors are either all positive or all negative, and adjacent factors have crossings of opposite sign. In particular, the signatures of adjacent factors are of opposite sign. But $L$ has maximal signature and $D$ is minimal, so the symmetrised Seifert form on $S$ is definite, which implies the symmetrised Seifert forms on the factors are definite of the same sign. Consequently, there can be only one factor, and therefore $L$ is special alternating. 
\end{proof}

Theorem 3.24 of \cite{Mu1} states that the only special alternating knots with monic Alexander polynomials are connected sums of $(2, k)$ torus knots ($k$ is allowed to vary). Hence, we have the following corollary of Proposition \ref{prop: definite alternating}. 

\begin{cor}
A prime fibred alternating knot is definite if and only if it is a $(2, k)$ torus knot. 
\qed 
\end{cor}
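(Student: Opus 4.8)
The plan is to prove both implications by combining Proposition \ref{prop: definite alternating} with the cited Theorem 3.24 of \cite{Mu1}, using the two standard facts that a fibred knot has monic Alexander polynomial and that a knot is non-split.

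For the forward implication, suppose $K$ is a prime, fibred, alternating knot which is definite. Since $K$ is a knot it is connected, hence non-split, so Proposition \ref{prop: definite alternating} applies and shows that $K$ is special alternating. First I would invoke the fact that a fibred knot has monic Alexander polynomial (its leading coefficient is $\pm 1$, since the top coefficient equals the determinant of the monodromy action on $H_1$ of the fibre). Thus $K$ is a special alternating knot with monic Alexander polynomial, and Theorem 3.24 of \cite{Mu1} identifies such knots as connected sums of $(2,k)$ torus knots. The final step uses primeness: a connected sum is prime only when there is a single non-trivial summand, so $K$ is itself a $(2,k)$ torus knot.

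For the reverse implication, I would note that each $(2,k)$ torus knot is prime, fibred, and alternating (its standard alternating diagram has all crossings of one sign, so it is in fact special alternating), and then apply Proposition \ref{prop: definite alternating} to conclude that it is definite. Equivalently one may cite \cite[Lemma 5.2]{Mu2} and \cite[Theorem 3.8]{Mu1} directly, as in the proof of the Proposition, to see that the signature of a special alternating link is maximal.

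I expect no serious obstacle here: the content is entirely carried by Proposition \ref{prop: definite alternating} and Murasugi's classification, so the only care needed is bookkeeping. The one point worth stating explicitly is the implication "fibred $\Rightarrow$ monic Alexander polynomial," which is what allows Theorem 3.24 of \cite{Mu1} to be invoked; without fibredness the hypothesis of that theorem would not be met and definite special alternating knots need not be connected sums of $(2,k)$ torus knots.
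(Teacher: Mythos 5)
Your argument is correct and is exactly the route the paper takes: Proposition \ref{prop: definite alternating} reduces definiteness to being special alternating, fibredness supplies the monic Alexander polynomial needed for Murasugi's Theorem 3.24, and primeness eliminates nontrivial connected sums, with the converse following from maximality of the signature of special alternating links. The paper leaves these steps implicit ("Hence, we have the following corollary"), so your write-up simply makes explicit what the authors intended.
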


Special alternating links are positive, hence strongly quasipositive by \cite{Ru4}, so our next corollary follows by combining Proposition \ref{prop: main prop} and the fact that the 2-fold branched cover of an alternating link is an L-space (\cite[Proposition 3.3]{OS2}). For knots, it also follows from a previous result of S. Baader (\cite{Baa3}). 

\begin{cor} \label{cor:sqp alternating} 
An alternating link $L$ is strongly quasipositive if and only if it is special alternating. A prime, fibred alternating knot is strongly quasipositive if and only if it is a $(2, k)$ torus knot. 
\qed
\end{cor}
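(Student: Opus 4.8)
The plan is to prove the two directions of the first equivalence separately and then read off the knot statement by chaining the link statement with the definiteness criterion established just above. Throughout I would work with $L$ non-split, exactly as in Proposition~\ref{prop: definite alternating}; for knots this is automatic, and I return to the split case as the one genuine subtlety at the end. The implication ``special alternating $\Rightarrow$ strongly quasipositive'' is already recorded in the sentence preceding the corollary: a special alternating link is positive, and positive links are strongly quasipositive by Nakamura~\cite{Nak} and Rudolph~\cite{Ru4}. So this direction requires nothing further.

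For the converse I would feed a strongly quasipositive alternating link directly into the machinery of \S\ref{sec: sqp}. The single external input is \cite[Proposition 3.3]{OS2}, that the double branched cover of a non-split alternating link is an L-space. Hence $\Sigma_n(L)$ is an L-space for $n=2$, so Proposition~\ref{prop: main prop} applies and yields $|\sigma_L(\zeta_2)| = |\sigma(L)| = 2g(L)+(m-1)$; that is, $L$ is definite. Proposition~\ref{prop: definite alternating} then forces $L$ to be special alternating, closing the loop. I would note that Corollary~\ref{cor: conn ss} is implicitly in play here as well, since the L-space hypothesis already guarantees that every Seifert surface of $L$ is connected, which is consistent with $L$ being non-split.

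For the second sentence I would specialize to a prime fibred alternating knot $K$, where $m=1$ and definiteness reads $|\sigma(K)| = 2g(K)$. A knot is automatically non-split, so the first sentence gives that $K$ is strongly quasipositive if and only if it is special alternating, and in the alternating setting Proposition~\ref{prop: definite alternating} equates ``special alternating'' with ``definite''. The (unlabelled) corollary immediately following Proposition~\ref{prop: definite alternating} then identifies the prime fibred alternating knots that are definite as precisely the $(2,k)$ torus knots. Concatenating these three equivalences gives the claim; conversely every $T(2,k)$ is a positive braid knot, hence strongly quasipositive, and is prime, fibred and alternating, so the characterization is exact. As remarked before the corollary, an independent route for knots is available through Baader's result~\cite{Baa3}.

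I do not expect the deduction itself to be the obstacle: all the real content sits in Proposition~\ref{prop: main prop}, the L-space property of alternating double covers, and Proposition~\ref{prop: definite alternating}, and the corollary is essentially their assembly. The one point demanding genuine care is the non-split hypothesis, which both \cite[Proposition 3.3]{OS2} and Proposition~\ref{prop: definite alternating} require; the hypothesis cannot simply be dropped, since the split union of two positive trefoil diagrams is strongly quasipositive and alternating yet not special alternating, and for such a link the argument breaks down exactly at the step that claims $\Sigma_2(L)$ is an L-space. I would therefore state and prove the corollary under the standing non-split assumption of Proposition~\ref{prop: definite alternating}.
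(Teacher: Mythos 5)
Your argument is correct and is essentially the paper's own: the authors likewise get one direction from ``special alternating $\Rightarrow$ positive $\Rightarrow$ strongly quasipositive'' via \cite{Ru4}, and the converse by combining \cite[Proposition 3.3]{OS2} with Proposition~\ref{prop: main prop} and Proposition~\ref{prop: definite alternating}, reading off the knot statement from the unlabelled corollary on definite prime fibred alternating knots. Your explicit handling of the non-split hypothesis is a sensible sharpening of the statement as printed, consistent with the standing assumption in Proposition~\ref{prop: definite alternating}.
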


The next corollary is a consequence of Proposition \ref{prop: definite alternating}, Theorem \ref{thm: sqp links}, and Theorem \ref{thm: qp links}(1) (which is proved in \S \ref{sec: qp links}, independently of the corollary). It answers positively a question of S. Baader (\cite[Question 3, page 268]{Baa1}) in the case of alternating links.

\begin{cor} \label{cor:qp alternating} A non-split quasipositive alternating link $L$  is strongly quasipositive if and only if $g(L) = g_4(L)$. 
\end{cor}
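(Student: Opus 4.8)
The plan is to prove the two implications separately, reducing the nontrivial direction to a statement about definiteness and then invoking the alternating-link machinery of \S\ref{sec: sqp alt}.

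The forward implication is immediate: if $L$ is strongly quasipositive, then, as recorded in \S\ref{sec: positivity}, the $3$-sphere genus and the smooth $4$-ball genus of $L$ coincide, so $g(L) = g_4(L)$. No work is needed here beyond citing that fact.

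For the converse I would start from a non-split quasipositive alternating link $L$ of $m$ components with $g(L) = g_4(L)$. The key observation, and really the only idea in the argument, is that because $L$ is alternating its double branched cover $\Sigma_2(L)$ is an L-space (\cite[Proposition 3.3]{OS2}), so $2 \in \mathcal{L}_{br}(L)$. Since $L$ is quasipositive, I can then apply Theorem \ref{thm: qp links}(1) with $n = 2$. As $\zeta_2 = -1$ and $\sigma_L(-1) = \sigma(L)$, this yields $g_4(L) = \tfrac12(|\sigma(L)| - (m-1))$, equivalently $|\sigma(L)| = 2 g_4(L) + (m-1)$. Combining this with the hypothesis $g(L) = g_4(L)$ gives $|\sigma(L)| = 2 g(L) + (m-1)$, which is exactly the assertion that $L$ is definite.

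Finally I would feed this definiteness into the structure theory of \S\ref{sec: sqp alt}: since $L$ is non-split alternating and definite, Proposition \ref{prop: definite alternating} shows that $L$ is special alternating, and special alternating links are positive, hence strongly quasipositive (by \cite{Ru4}; see also Corollary \ref{cor:sqp alternating}), completing the converse. I do not expect any step to present a genuine obstacle; the whole point is the recognition that the alternating hypothesis furnishes an L-space branched cover for free, placing $L$ within reach of Theorem \ref{thm: qp links}(1), after which the conclusion reduces to comparing $\sigma(L)$ with $g(L)$.
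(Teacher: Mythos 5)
Your proposal is correct and follows essentially the same route as the paper: the forward direction is the standard fact that $g = g_4$ for strongly quasipositive links, and the converse uses that $\Sigma_2(L)$ is an L-space for non-split alternating $L$ to invoke Theorem \ref{thm: qp links}(1) at $\zeta_2 = -1$, deduce $|\sigma(L)| = 2g_4(L) + (m-1) = 2g(L) + (m-1)$, and conclude via Proposition \ref{prop: definite alternating} and Rudolph's theorem that definite alternating links are special alternating, hence strongly quasipositive. The only (immaterial) difference is that the paper cites Theorem \ref{thm: sqp links} for the forward implication where you cite the genus equality recorded in \S\ref{sec: positivity}; both are valid.
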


\begin{proof} If $L$ is strongly quasipositive, then $L$ is quasipositive and $g_4(L) = g(L)$ by Theorem \ref{thm: sqp links}. (Recall that $\Sigma_2(L)$ is an L-space as $L$ is a non-split alternating link.) Conversely if $L$ is quasipositive, $|\sigma(L)| = 2g_4(L) + (m-1)$
by Theorem \ref{thm: qp links}(1). Therefore if $g_4(L) = g(L)$, $L$ is definite and therefore strongly quasipositive by Proposition \ref{prop: definite alternating}.
\end{proof}

In \cite[Proposition 3.6 and Corollary 3.7]{BR}, it is proved for a family of alternating arborescent links, including  2-bridge links, that quasipositivity implies strong  quasipositivity. So, in view of Corollary \ref{cor:qp alternating}, we ask the following question.

\begin{que} 
{\rm Does there exist a quasipositive alternating link which is not strongly quasipositive?}
\end{que}

\section{Strongly quasipositive pretzel links} \label{sec: sqp pretzels}

In this section we apply the results of \S \ref{sec: sqp} and \S \ref{sec: sqp alt} to study the strong quasipositivity of pretzel links. First we settle the question of which alternating pretzel links are strongly quasipositive, using Corollary \ref{cor:sqp alternating}. By a {\it projective orientation} on a link $L$ we mean an orientation modulo reversal of the orientations on all components of $L$. 

\begin{prop} \label{prop: alt pretzel links}
Let $L$ be the pretzel link $L(p_1,...,p_n)$ where $n \ge 3$, and $p_i \ge 2, 1 \le i \le n$.

$(1)$ If all the $p_i$ are odd then, with any orientation, $L$ is strongly quasipositive.

$(2)$ If all the $p_i$ are even then $L$ has an orientation with respect to which it is strongly quasipositive. Moreover, there is exactly one such projective orientation if $n$ is odd and exactly two if $n$ is even.

$(3)$ If there exist both odd and even $p_i$ then $L$ has an orientation with respect to which it is strongly quasipositive if and only if $n$ is even. Moreover, this orientation is projectively unique.
\end{prop}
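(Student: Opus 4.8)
The plan is to combine Corollary \ref{cor:sqp alternating} with an orientation-by-orientation bookkeeping of crossing signs in the standard pretzel diagram. First I would record that, since each $p_i \ge 2$, the standard diagram $D$ of $L = L(p_1,\dots,p_n)$ is a reduced alternating diagram, and that the $p_i$ crossings of the $i$-th twist region are parallel copies of a single positively handed crossing, so for any orientation they share a common sign $\epsilon_i$. The reduction then has an easy and a hard direction. Easy: if, after possibly passing to the mirror image, all crossings of $D$ are positive, then $D$ is a positive diagram, hence $L$ is a positive link and so strongly quasipositive (\S\ref{sec: positivity}). Hard: by Corollary \ref{cor:sqp alternating} a strongly quasipositive alternating link is special alternating, and a special alternating diagram has all of its crossings of one sign (each Seifert circle bounds a disk of the checkerboard Seifert surface, forcing uniform signs, which propagate by connectivity). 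Thus, for a fixed orientation, $L$ is strongly quasipositive if and only if all the $\epsilon_i$ agree.

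Next I would turn "all $\epsilon_i$ equal" into a combinatorial condition. Reversing the relative orientation of the two strands of a band flips that band's crossing sign, and since every $p_i>0$ all bands are equally handed; hence $\epsilon_i=+$ exactly when band $i$ is coherently (parallel) oriented and $\epsilon_i=-$ when it is antiparallel. So the $\epsilon_i$ agree iff every band is parallel, or every band is antiparallel. Encoding an orientation by the in/out signs $a_i = \operatorname{sign}(RT_i)$ at the top-right endpoint of band $i$, the top connecting arc forces $\operatorname{sign}(LT_{i+1}) = -a_i$, and a short check uniform in the parity of $p_i$ gives: band $i$ is parallel iff $a_i \neq a_{i-1}$. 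Thus "all parallel" is the alternating pattern $a_i = (-1)^i c$ and "all antiparallel" is the constant pattern $a_i \equiv c$. Tracing each band interior and the bottom connectors shows that the entire orientation is recovered from $(a_i)$ and reads off which patterns are realizable: an even-odd (or odd-even) adjacency of bands imposes $a_{i+1}=-a_i$, an odd-odd adjacency imposes $a_{i+1}=a_{i-1}$, and an even-even adjacency imposes nothing.

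With this dictionary the three cases are cycle-parity arguments. When all $p_i$ are odd, the relations yield $RT_{i+1}=-LT_i$ together with $LT_{i+1}=-RT_i$, so band $i$ is parallel iff band $i+1$ is; hence every realizable orientation is uniform, proving $(1)$. When all $p_i$ are even there are no adjacency constraints, so the constant pattern (all antiparallel) is always realizable, the alternating pattern (all parallel) is realizable iff $n$ is even, and each is unique modulo the global reversal $a_i\mapsto -a_i$; this gives the counts $1$ and $2$ of $(2)$. In the mixed case an even-odd adjacency is unavoidable and kills the constant pattern, so coherence can only come from the alternating (all-parallel) pattern; this closes up around the cyclic arrangement of $n$ bands precisely when $n$ is even, and is then unique up to global reversal, giving $(3)$.

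I expect the main obstacle to be the mixed case $(3)$, and specifically the clean two-sided parity statement: that all-antiparallel is always obstructed by any even-odd adjacency, so coherence must be supplied by the all-parallel pattern, and that this pattern is consistent around the length-$n$ cycle exactly when $n$ is even. Pinning down the sign conventions and endpoint bookkeeping so that the single criterion "band $i$ parallel $\iff a_i\neq a_{i-1}$" holds uniformly across all parities, and so that the realizability constraints are correctly extracted from the bottom connectors, is the delicate part; once that dictionary is fixed, the three assertions follow from the elementary counting above.
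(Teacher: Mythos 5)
Your proposal is correct and follows essentially the same route as the paper: both reduce the question to whether all crossings of the standard reduced alternating diagram share a sign, using Corollary \ref{cor:sqp alternating} for the forward direction and positivity $\Rightarrow$ strong quasipositivity for the converse, and then settle the three cases by an orientation/parity count around the cycle of bands. The only difference is presentational --- the paper handles case $(3)$ by tracking the components $K_j$ determined by the runs of odd bands and counting the even $m_j$, whereas you track endpoint signs $a_i$ band by band --- but the parity condition ``$n$ even'' that emerges is the same.
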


\begin{proof}
By hypothesis, $L$ is alternating and the obvious diagram for $L$ is reduced and therefore minimal. 

(1) If $n$ is odd then $L$ has one component, and for any orientation of $L$ all the crossings in the obvious diagram are positive. If $n$ is even then $L$ has two components, and for any orientations of these components, either all crossings are positive or all crossings are negative. Hence, in all cases, $L$ is strongly quasipositive by Corollary 7.3.

(2) In both cases $n$ odd and $n$ even there is a unique projective orientation of $L$ that makes the obvious diagram negative. If $n$ is even there is a unique projective orientation making the diagram positive, and no such orientation if $n$ is odd.

(3) Let $k$ be the number of even $p_i$; so $0 < k < n$. Suppose they occur in (cyclic) order $e_1,...,e_k$. Let $m_j$ be the number of (odd) $p_i$ strictly between $e_j$ and $e_{j+1}$ (interpreted cyclically). Note that $m_j$ may be zero, but there is at least one $m_j$ that is strictly positive. The $m_j$ determine the components $K_1,...,K_k$ of $L$, where $K_j$ is a pretzel knot with $m_j$ strands. For any orientation of $K_j$, the self-crossings of $K_j$ are all positive. Hence, if $L$ with some orientation is strongly quasipositive then by Corollary 7.3 all crossings must be positive. Choosing an orientation on $K_1$ then determines an orientation on $K_2$, and so on. It is easy to see that this determines a consistent orientation on $L$ if and only if the number of even $m_j$ is even. Since $ \displaystyle n = \sum_{j=1}^{k} (m _j + 1)$, this is equivalent to the condition that $n$ be even.
\end{proof} 

We now restrict our attention to knots. Let $K = P(\epsilon_1 p_1,...,\epsilon_n p_n)$ be a pretzel knot with $n \ge 3$ strands, $p_i \ge 2$ and $\epsilon_i = \pm 1, 1 \le i \le n$. Since $K$ is a knot, either exactly one $p_i$ is even, or $n$ and each $p_i$ is odd. In the first case we may assume that $p_1$ is even. In this case we will determine exactly when $K$ is strongly quasipositive (Proposition 8.5). In the second case we are only able to obtain a partial result (Proposition 8.6).

Let $d = \displaystyle \sum_{i=1}^n \epsilon_i $; so $d \equiv n$ (mod $2$).

From now until the end of Conjecture \ref{conj: odd case} we will assume the above notation.

The proof of Proposition 8.5 will use the following lemma, asserting that under certain conditions $g_4(K) < g(K)$, which implies that K is not strongly quasipositive (cf. \S \ref{sec: positivity}). 

\begin{lemma} \label{lemma: pretzel genera} 
If $p_1$ is even and either 
\vspace{-.25cm} 
\begin{itemize}
\item there exist $i, j$, $2 \leq i < j \leq n$ such that $\varepsilon_i \varepsilon_j = -1$, or

\vspace{.1cm} \item $n$ is even and there exists $i$, $2 \leq i \leq n$, such that $\varepsilon_1 \varepsilon_i = -1$,
\end{itemize}
\vspace{-.3cm} 
then $g_4(K) < g(K)$.

\end{lemma}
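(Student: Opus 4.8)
The plan is to produce, under each of the two hypotheses, a smoothly embedded orientable surface $F \subset B^4$ with $\partial F = K$ whose genus is strictly smaller than $g(K)$; since $g_4(K) \le g(F)$, this gives the asserted strict inequality $g_4(K) < g(K)$. First I would pin down $g(K)$ by reading it off the genus-minimizing Seifert surface $S$ obtained from Seifert's algorithm applied to the standard pretzel diagram. For pretzel knots of the present type (one even column $p_1$, the remaining columns odd) this surface realizes the Seifert genus, and I would record both $\chi(S) = 1 - 2g(K)$ and, crucially, the parallel/antiparallel pattern of the two strands in each column, which is governed by the parity of $n$ and by the position of the even column. This pattern controls which Seifert circles appear and hence the precise value of $g(K)$.

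The hypotheses are exactly what guarantee a pair of twist regions carrying crossings of \emph{opposite} sign that are positioned so that an \emph{oriented} band can be attached between them. Under the first hypothesis, columns $i,j$ with $2 \le i < j \le n$ and $\varepsilon_i \varepsilon_j = -1$ are two odd columns of opposite sign; I would attach an oriented saddle (band) joining these two columns so that, after the saddle and an ambient isotopy, the $+p_i$ and $-p_j$ half-twists combine and partially cancel, converting $K$ into a pretzel link $K'$ of strictly smaller complexity (or a connected sum of manifestly simpler pieces). Under the second hypothesis I would run the analogous band construction using the even column together with an odd column of opposite sign; here the assumption that $n$ is even is what forces the global orientation pattern to co-orient the relevant strands, so that the joining band can be taken oriented and $F$ orientable. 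In either case, gluing the resulting oriented cobordism to a genus-minimizing surface for $K'$, pushed into the interior of $B^4$, yields $F$, and the proof concludes with an Euler-characteristic count showing $\chi(F) > \chi(S)$, i.e. $g(F) < g(K)$.

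The main obstacle I anticipate is the explicit and correct description of this genus-reducing cobordism: verifying that the saddle joining the oppositely-signed columns can genuinely be chosen oriented (so that $F$ is orientable rather than merely a non-orientable spanning surface), that the post-saddle twist cancellation is a bona fide isotopy, and that the bookkeeping of saddles against the components that are created or destroyed really produces the strict inequality $\chi(F) > \chi(S)$. A secondary difficulty is the determination of $g(K)$ itself, since the parallel/antiparallel column pattern, and therefore the count of Seifert circles, changes with the parity of $n$ and with the location of the even column; this dependence is also precisely what distinguishes the two hypotheses and accounts for the extra requirement that $n$ be even in the second.
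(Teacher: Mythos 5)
Your plan is essentially the paper's proof: the authors perform exactly this oriented band move between two adjacent oppositely-signed columns, obtaining a cobordism (a thrice-punctured sphere $Q$ in $S^3\times I$) from $K$ to $K_0\#T(2,k)$ with $k=\varepsilon_ip_i+\varepsilon_{i+1}p_{i+1}$, cap off with Seifert surfaces to bound $g_4(K)$, and compare against the Seifert genus taken from \cite[Theorem 4.1]{KL} rather than re-deriving it from the Seifert algorithm. The Euler-characteristic bookkeeping you flag as the main obstacle works out to $g(K)-g_4(K)\geq\min\{p_i,p_{i+1}\}-1\geq 1$, exactly as you anticipate.
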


\begin{proof}
Assume first that there exist $i, j$, $2 \leq i < j \leq n$ such that $\varepsilon_i \varepsilon_j = -1$. Then there is an $i$,$1 < i < n$, so that $\varepsilon_i \varepsilon_{i+1} = -1$. With some orientation of $K$, the $i^{th}$ and $(i+1)^{st}$ (pairs of) strands are oriented as in Figure \ref{fig: g4 < g 1}, which illustrates the case $\varepsilon_i = 1, \varepsilon_{i+1} = -1$. 

\begin{figure}[!ht]
\centering

 \includegraphics[scale=0.7]{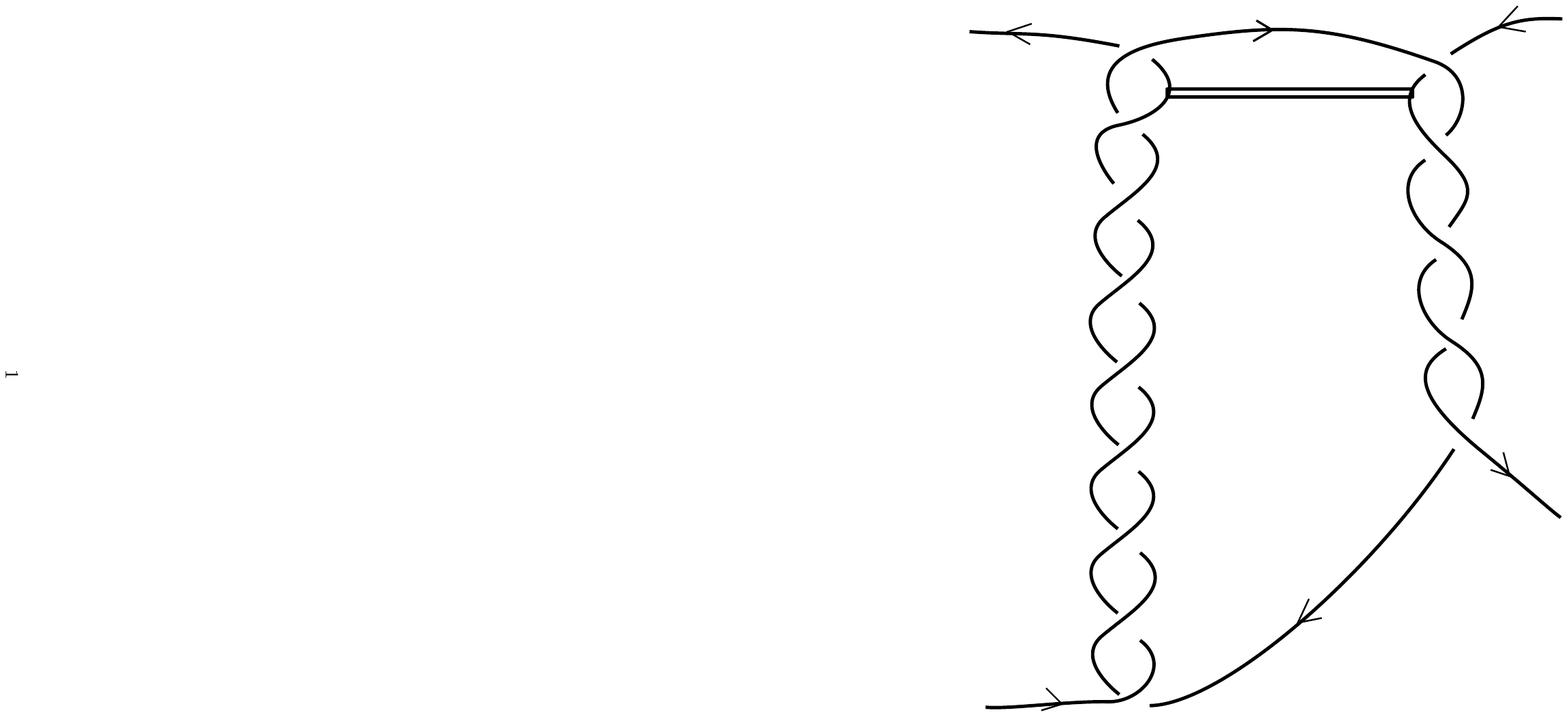}
 \caption{} 
 \label{fig: g4 < g 1}
\end{figure} 

Performing the band move shown in Figure \ref{fig: g4 < g 1} gives a thrice-punctured $2$-sphere $Q$ in $S^3 \times I$ such that $Q \cap ( S^3 \times \{1\}) = K$ and $Q \cap ( S^3 \times \{0\}) = K_0 \# T(2,k)$, where $k = \varepsilon_i p_i + \varepsilon_{i+1} p_{i+1}$ and $K_0$ is the knot $P(\varepsilon_1 p_1, \ldots, \varepsilon_{i-1} p_{i-1}, \varepsilon_{i+2} p_{i+2}, \ldots, \varepsilon_n p_n)$. Note that the components of the torus link $T(2,k)$ are oriented coherently as they lie on the torus. 

If $k \ne 0$, $T(2,k)$ has a Seifert surface $S_k$ of genus $|k|/2 - 1 = (|p_i - p_{i+1}| - 2)/2$. If $k = 0$, $T(2,0)$ is the $2$-component unlink, which bounds a pair of disjoint disks, $S_0$.

In $S^3 \times \{0\}$, $K_0 \# T(2,k)$ bounds the boundary connected sum $F$ of $S_k$ with a minimal genus Seifert surface for $K_0$. Taking the union of this surface $F$ with $Q$ shows that 
\begin{equation}
g_4(K) \leq g(K_0) + \frac12 (|p_i - p_{i+1}| - 2) + 1 = g(K_0) + \frac{|p_i - p_{i+1}|}{2} 
\end{equation}
On the other hand, by \cite[Theorem 4.1]{KL}, 
\begin{equation}
g(K) = g(K_0) + \frac12 (p_i + p_{i+1} - 2) = g(K_0) + \frac{(p_i + p_{i+1})}{2} - 1 
\end{equation}
Therefore, 
$$g(K) - g_4(K) \geq \frac12 \big((p_i + p_{i+1}) - |p_i - p_{i+1}|\big) - 1 = \min \{p_i, p_{i+1}\} - 1 \geq 2$$
Hence we are done.

Next suppose that $n$ is even and there exists an $i$, $2 \leq i \leq n$, such that $\varepsilon_1 \varepsilon_i = -1$. Without loss of generality we can suppose that $\varepsilon_1 \varepsilon_2 = -1$ by what we have just proven. 

The reader will verify that under our assumptions, the orientation on the first two (pairs of) strands of $K$ are as shown in Figure \ref{fig: g4 < g 1}. So we may perform the same band move as in the first case to obtain a thrice-punctured $2$-sphere $Q$ in $S^3 \times I$ such that $Q \cap ( S^3 \times \{1\}) = K$ and $Q \cap ( S^3 \times \{0\}) = L_0 \# T(2,k)$, where $k = \varepsilon_1 p_1 + \varepsilon_{2} p_{2}$ and $L_0$ is the $2$-component link $P(\varepsilon_3 p_3, \ldots, \varepsilon_n p_n)$.

The induced orientation on $L_0$ is such that each (pair of) strands is coherently oriented. Then the Seifert circles construction yields a Seifert surface for $L_0$ of genus $\displaystyle \frac12 \sum_{i=3}^{n} (p_i - 1)$. The genus of $T(2,k)$ is $(|k| - 1)/2 = (|p_1 - p_2| - 1)/2$ since $\varepsilon_1 \varepsilon_2 = -1$. Hence, as in the first case, we see that 
\begin{equation}
g_4(K) \leq \displaystyle \frac12 \big(\sum_{i=3}^{n} (p_i - 1) + (|p_1 - p_2| - 1 ) \big) + 1
\end{equation}
On the other hand, referring to the case $n$ even and ``$\alpha + \beta \ne 0$" in \cite[Theorem 4.1]{KL}, we have 
\begin{equation}
\displaystyle g(K) = \frac12 \big(p_1 +  \sum_{i=2}^{n} (p_i - 1) \big)
\end{equation}
Therefore, 
\begin{eqnarray} 
g(K) - g_4(K) & \geq & \frac12 \big((p_1 + p_2) - |p_1 - p_{2}|\big) - 1 \nonumber \\ 
& = & \min \{p_1, p_{2}\} - 1 \nonumber \\ 
& \geq & 1 \nonumber
\end{eqnarray} 
which completes the proof.
\end{proof}

\begin{prop} \label{prop: pretzel knot sqp} 
Suppose that $p_1$ is even. Then $K$ is strongly quasipositive if and only if either

$(1)$ $|d| = n$ and $n$ is even; or

$(2)$ $\vert d \vert = n -2$, $n$ is odd, and $\varepsilon_2 = \varepsilon_3 = \ldots = \varepsilon_n$.
\end{prop}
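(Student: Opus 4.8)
The plan is to prove both implications, using Lemma \ref{lemma: pretzel genera} together with the fact (\S \ref{sec: positivity}) that a strongly quasipositive knot satisfies $g_4(K) = g(K)$ as the obstruction driving the necessity direction, and Proposition \ref{prop: alt pretzel links} to dispose of the alternating (all-equal-sign) cases. Throughout I use that, by the mirror convention adopted for strong quasipositivity, I may replace $K$ by its mirror image, and I write $\epsilon$ for the common value of $\epsilon_2, \ldots, \epsilon_n$ whenever these signs agree. Recall that $d = \sum_i \epsilon_i$, so if $\epsilon_2 = \cdots = \epsilon_n = \epsilon$ then $d = \epsilon_1 + (n-1)\epsilon$; hence $|d| = n$ exactly when $\epsilon_1 = \epsilon$ and $|d| = n-2$ exactly when $\epsilon_1 = -\epsilon$.

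For necessity, suppose $K$ is strongly quasipositive, so that $g_4(K) = g(K)$. Then neither bullet of Lemma \ref{lemma: pretzel genera} can hold. Negating the first bullet forces $\epsilon_2 = \cdots = \epsilon_n = \epsilon$; negating the second forces $\epsilon_1 = \epsilon$ whenever $n$ is even. Thus if $n$ is even all signs agree and $|d| = n$, which is case (1). If $n$ is odd we already know $\epsilon_2 = \cdots = \epsilon_n = \epsilon$, and it remains to exclude $\epsilon_1 = \epsilon$. But if every $\epsilon_i$ agreed, then up to mirror image $K$ would be the all-positive pretzel knot $P(p_1, \ldots, p_n)$ with $p_1$ even and $p_2, \ldots, p_n$ odd, hence an alternating pretzel with both even and odd parameters present; by Proposition \ref{prop: alt pretzel links}(3) this admits no strongly quasipositive orientation when $n$ is odd, contradicting strong quasipositivity. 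Hence $\epsilon_1 = -\epsilon$, so $|d| = n-2$, which is case (2).

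For sufficiency I treat the two cases in turn. In case (1) all $\epsilon_i$ agree and $n$ is even, so up to mirror image $K = P(p_1, \ldots, p_n)$ is an all-positive alternating pretzel knot with one even and at least two odd parameters; Proposition \ref{prop: alt pretzel links}(3) then provides a strongly quasipositive orientation, and as $K$ is a knot this is precisely strong quasipositivity of $K$. In case (2) the constraint $|d| = n-2$ with $\epsilon_2 = \cdots = \epsilon_n$ forces $\epsilon_1 = -\epsilon$, so up to mirror image $K = P(-p_1, p_2, \ldots, p_n)$ with $p_1$ even, $p_2, \ldots, p_n$ odd and positive, and $n$ odd. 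Here the task is to exhibit a strongly quasipositive Seifert surface directly. I would fix the orientation of $K$ and run Seifert's algorithm on the pretzel diagram: the $n-1$ odd positive tassels carry coherently oriented strands and contribute columns of positively half-twisted bands joining a common family of like-oriented disks, while the single negative even tassel carries anti-parallel strands and is absorbed by a band surgery of the type used in the proof of Lemma \ref{lemma: pretzel genera}. Assembling these pieces into a fence (Bennequin) surface built from parallel like-oriented disks and positive half-twisted bands would exhibit the required strongly quasipositive surface, and I would confirm that its genus equals $g(K)$ via the genus formula of \cite[Theorem 4.1]{KL}, certifying that the band presentation is honest and genus-minimizing.

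The main obstacle is exactly this explicit construction in case (2): one must verify that, with the correct orientation, the odd tassels genuinely present positive bands attached to like-oriented disks and that the negative even tassel can be incorporated without creating any negative band, so that the resulting surface is strongly quasipositive and not merely a $g_4 = g$ surface. Carrying out the orientation bookkeeping for arbitrary odd $n$ and arbitrary odd $p_2, \ldots, p_n$, and matching the genus bound precisely, is the delicate point; the remaining cases reduce cleanly to the genus obstruction of Lemma \ref{lemma: pretzel genera} and the already-established alternating analysis of Proposition \ref{prop: alt pretzel links}.
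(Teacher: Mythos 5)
Your necessity argument and your treatment of case (1) are correct and essentially identical to the paper's: you negate the two bullets of Lemma \ref{lemma: pretzel genera} to force $\varepsilon_2=\cdots=\varepsilon_n$ in general and $\varepsilon_1=\varepsilon_2$ when $n$ is even, and you dispose of the all-signs-equal configurations via Proposition \ref{prop: alt pretzel links}(3). (The paper organizes the same content by the value of $|d|$ rather than by which bullet fails, but the logic is the same.)

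The genuine gap is the sufficiency of case (2), which you yourself flag as ``the main obstacle'' and do not complete. Your proposed route --- running Seifert's algorithm and ``absorbing'' the negative even tassel ``by a band surgery of the type used in the proof of Lemma \ref{lemma: pretzel genera}'' --- is not a viable construction as stated: that band move is a cobordism in $S^3\times I$ used to bound $g_4$ from above, not an operation that produces an embedded quasipositive Seifert surface in $S^3$, and nothing in your sketch certifies that the resulting surface is a Bennequin surface built from like-oriented disks and \emph{positive} bands. The paper's resolution is much simpler and avoids any surface construction: after mirroring so that $\varepsilon_1=-1$ and $\varepsilon_2=\cdots=\varepsilon_n=+1$ with $n$ odd and $p_1$ even, the unique orientation of the knot makes the two strands of the even tassel anti-parallel, so its negatively twisted crossings are \emph{positive} crossings of the oriented diagram, while the odd positive tassels carry parallel strands and also contribute positive crossings. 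Hence the obvious diagram is positive, and positive links are strongly quasipositive by Nakamura and Rudolph (\cite{Nak}, \cite{Ru4}, as recalled in \S \ref{sec: positivity}). Replacing your unfinished construction with this one-line observation closes the argument.
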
 

\begin{proof} 
If $|d| = n$, then $K$ is alternating so $K$ is strongly quasipositive if and only if $n$ is even by Proposition \ref{prop: alt pretzel links}(3). 

If $|d| \leq n-4$ then the first of the two conditions listed in Lemma \ref{lemma: pretzel genera} applies to show that $K$ is not strongly quasipositive. 

Finally suppose that $|d| = n-2$. Then the first of the two conditions listed in Lemma \ref{lemma: pretzel genera} shows that, unless $\varepsilon_1 \varepsilon_i = -1$, $2 \leq i \leq n$, we have $g_4(K) < g(K)$ and so $K$ is not strongly quasipositive. Assume then that $\varepsilon_1 \varepsilon_i = -1$ for $2 \leq i \leq n$. If $n$ is even, then $K$ is not strongly quasipositive by the second of the two conditions listed in Lemma \ref{lemma: pretzel genera}. If $n$ is odd, then the obvious knot diagram of $K$ is positive, and therefore $K$ is strongly quasipositive. 
\end{proof}

As a consequence of the proof we obtain: 

\begin{cor} \label{cor: g versus g4}
If $p_1$ is even, then $K$ is strongly quasipositive if and only if $g_4(K) = g(K)$.
\qed
\end{cor}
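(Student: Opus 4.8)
The final statement to prove is Corollary \ref{cor: g versus g4}, which asserts that when $p_1$ is even, the pretzel knot $K$ is strongly quasipositive if and only if $g_4(K) = g(K)$. The plan is to extract this equivalence directly from the proof of Proposition \ref{prop: pretzel knot sqp}, which has just been completed, rather than to argue afresh. The key observation is that the earlier proof already establishes both implications at the level of the genus defect $g(K) - g_4(K)$, so Corollary \ref{cor: g versus g4} is essentially a bookkeeping consequence of that argument.

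First I would recall the general principle, noted in \S \ref{sec: positivity}, that for any strongly quasipositive link the three-sphere genus and smooth four-ball genus coincide, i.e. $g_4(K) = g(K)$. This gives the forward implication immediately: if $K$ is strongly quasipositive then $g_4(K) = g(K)$, and this direction requires no pretzel-specific input. So the content lies entirely in the converse.

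For the converse I would walk through the three cases of the proof of Proposition \ref{prop: pretzel knot sqp} and verify that in every case where $K$ fails to be strongly quasipositive, one in fact has the \emph{strict} inequality $g_4(K) < g(K)$. When $|d| \leq n-4$, or when $|d| = n-2$ with $n$ even (after reducing to $\varepsilon_1\varepsilon_2 = -1$), Lemma \ref{lemma: pretzel genera} directly produces $g_4(K) < g(K)$. The remaining non-strongly-quasipositive case is $|d| = n$ with $n$ odd; here $K$ is alternating, and one must check that a definite (hence strongly quasipositive) alternating knot is forced, by Proposition \ref{prop: alt pretzel links}(3) and Corollary \ref{cor:sqp alternating}, to fail $g_4 = g$ when $n$ is odd. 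Contrapositively, whenever $g_4(K) = g(K)$, none of the failure cases can occur, so $K$ must fall into case (1) or (2) of Proposition \ref{prop: pretzel knot sqp} and is therefore strongly quasipositive.

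The main obstacle I anticipate is the alternating case $|d| = n$, $n$ odd: Lemma \ref{lemma: pretzel genera} does not apply there (its hypotheses require mixed signs $\varepsilon_i\varepsilon_j = -1$, which are absent when $K$ is alternating), so one cannot simply invoke it to get the strict genus inequality. Instead I would argue that for such an alternating knot, strong quasipositivity is equivalent to being special alternating (Corollary \ref{cor:sqp alternating}), which by Proposition \ref{prop: alt pretzel links}(3) is impossible for odd $n$; and since a non-strongly-quasipositive quasipositive alternating knot satisfies $g_4(K) < g(K)$ by Corollary \ref{cor:qp alternating}, the strict inequality follows. Assembling these pieces, $g_4(K) = g(K)$ holds precisely when $K$ satisfies condition (1) or (2) of Proposition \ref{prop: pretzel knot sqp}, which completes the equivalence.
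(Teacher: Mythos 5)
Your forward implication and your treatment of the cases governed by Lemma \ref{lemma: pretzel genera} are fine and agree with what the paper intends: the proof of Proposition \ref{prop: pretzel knot sqp} shows that in each of those non-strongly-quasipositive cases one actually has the strict inequality $g_4(K) < g(K)$, and strong quasipositivity gives $g_4(K) = g(K)$ in the other direction. You have also correctly isolated the one case that Lemma \ref{lemma: pretzel genera} does not reach, namely $|d| = n$ with $n$ odd, where $K = P(p_1, \ldots, p_n)$ is alternating with all parameters of the same sign.

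The problem is that your proposed treatment of that case does not work. Corollary \ref{cor:qp alternating} applies only to \emph{quasipositive} alternating links: the implication ``not strongly quasipositive $\Rightarrow g_4 < g$'' in its proof comes from Theorem \ref{thm: qp links}(1), which uses quasipositivity (via the Stein filling $\Sigma_2(F)$ of the branched double cover) to upgrade the Murasugi--Tristram inequality to the equality $|\sigma(K)| = 2g_4(K)$. For the alternating pretzel knots in question you have not established quasipositivity, and there is no reason to expect it -- these are precisely knots that fail to be strongly quasipositive, and nothing in Proposition \ref{prop: alt pretzel links} or Corollary \ref{cor:sqp alternating} produces a quasipositive braid representative for them or their mirrors. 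Without quasipositivity the only available signature bound is $|\sigma(K)| \leq 2g_4(K)$ from Proposition \ref{prop: mt inequality}, which bounds $g_4$ from \emph{below} and so cannot show $g_4(K) < g(K)$. (Your parenthetical claim that a ``definite (hence strongly quasipositive) alternating knot is forced to fail $g_4 = g$'' is also garbled: definite alternating knots are exactly the special alternating, hence strongly quasipositive, ones, and they satisfy $g_4 = g$.) What is needed in this case is an explicit geometric upper bound on $g_4(K)$ -- a cobordism or band-move argument in the spirit of Lemma \ref{lemma: pretzel genera}, exploiting the anti-parallel orientation of the strands in the even band -- together with the fact (Propositions \ref{prop: alt pretzel links}(3) and \ref{prop: definite alternating}) that $|\sigma(K)| < 2g(K)$; the indefiniteness of $K$ alone does not suffice.
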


The corollary does not hold for pretzel knots in general, even for classical pretzel knots. See Remark \ref{rem: g versus g4}. On the other hand, given Baader's question \cite[Question 3, page 268]{Baa1}, we ask: 

\begin{question}
If $K$ is a quasipositive pretzel knot with $g_4(K) = g(K)$, is $K$ strongly quasipositive? 
\end{question}

We now consider the case where all the $p_i$ are odd. We begin with a lemma which determines precisely which pretzel links have L-space $2$-fold branched covers. Throughout we assume that our pretzel links are non-split. Since pretzel links with two or fewer strands are $2$-bridge and hence have L-space $2$-fold branched covers, we restrict our attention to those with three or more strands. 

\begin{lemma} \label{lemma: pretzel l-space cover} 
Let $L$ be the pretzel link $P(p_1, \ldots, p_n, -q_1, \ldots, -q_r)$ where $n \geq r$, $n+r \geq 3$, $p_n \geq p_{n-1} \geq \ldots \geq p_1 \geq 2$, and $q_j \geq 2$ for each $j$. Then $\Sigma_2(L)$ is an L-space if and only if $($i$)$ $r = 0$ or $($ii$)$ $r = 1$ and either $q_1 \geq p_1$ or $q_1 = p_1 - 1$ and $2q_1+1 \geq p_2$. 
\end{lemma}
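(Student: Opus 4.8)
The plan is to identify $\Sigma_2(L)$ with a Seifert fibered space over $S^2$ and apply the classification of L-spaces among such manifolds. Since $L=P(p_1,\dots,p_n,-q_1,\dots,-q_r)$ is a Montesinos link with associated fractions $1/p_1,\dots,1/p_n,-1/q_1,\dots,-1/q_r$, its double branched cover is the Seifert fibered space over $S^2$ with these slopes; normalizing each $-1/q_j$ as $-1+(q_j-1)/q_j$ expresses it (up to orientation, which does not affect the conclusion) as $\Sigma_2(L)=M\big(-r;\,1/p_1,\dots,1/p_n,\,(q_1-1)/q_1,\dots,(q_r-1)/q_r\big)$, a rational homology sphere with $n+r\ge 3$ exceptional fibers. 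The case $r=0$ I would dispose of at once: then $L$ is a non-split alternating link, so $\Sigma_2(L)$ is an L-space by Ozsv\'ath--Szab\'o, giving conclusion (i). For the remaining cases I would invoke the equivalence, for Seifert fibered rational homology spheres, of being an L-space with the non-existence of a horizontal (transverse) foliation (Lisca--Stipsicz together with Boyer--Gordon--Watson), combined with the Jankins--Neumann--Naimi realizability criterion that tells exactly when such a foliation exists in terms of the normalized Seifert invariants.

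The heart of the argument is $r=1$, where the normalization produces $e_0=-1$ and the transparent form of the criterion applies: $\Sigma_2(L)$ fails to be an L-space if and only if there are coprime integers $0<a<m$ and an ordering of the invariants $\{1/p_1,\dots,1/p_n,\,1-1/q_1\}$ placing one value below $a/m$, a second below $(m-a)/m$, and the remaining $n-1$ values below $1/m$. First I would note that the large invariant $1-1/q_1\ge\tfrac12$ must occupy one of the first two slots, and that sending the smallest admissible value $1/p_1$ to the other slot is optimal (the other placements of a positive invariant are never more favourable); this forces $m\le p_2-1$ and reduces everything to whether the open interval $(1/p_1,1/q_1)$ contains a reduced fraction of denominator at most $p_2-1$. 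A short case analysis then finishes the proof: if $q_1\ge p_1$ no such fraction exists, since any fraction of denominator $\le p_1-1$ already exceeds $1/q_1$, giving the L-space conclusion; if $q_1\le p_1-2$ the fraction $1/(p_1-1)$ lies in the interval with denominator $p_1-1\le p_2-1$, so a foliation exists and $\Sigma_2(L)$ is not an L-space; and if $q_1=p_1-1$ the endpoints $1/p_1$ and $1/(p_1-1)$ are Farey neighbours, so the least denominator occurring strictly between them is that of their mediant $2/(2p_1-1)$, whence a suitable fraction exists iff $2p_1-1\le p_2-1$, i.e. iff $2q_1+1<p_2$. Negating, $\Sigma_2(L)$ is an L-space exactly when $q_1\ge p_1$, or $q_1=p_1-1$ and $2q_1+1\ge p_2$, which is precisely condition (ii).

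For $r\ge 2$ (so $n\ge r\ge 2$) the normalization gives $e_0=-r\le -2$, and I would show $\Sigma_2(L)$ always admits a horizontal foliation, hence is never an L-space, so that the stated criterion (permitting only $r=0,1$) is exhaustive. Here the two or more large invariants $1-1/q_j\ge\tfrac12$, together with the surplus in $e_0$, supply enough slack to satisfy the Jankins--Neumann--Naimi inequalities in their general (non-$e_0=-1$) form: placing the large invariants in the high slots and taking $m$ just above $\max_j q_j$ realizes the required data. The sanity checks $\Sigma_2(P(-2,3,5))=\Sigma(2,3,5)$ (an L-space, matching (ii)) and $\Sigma_2(P(-2,3,7))=\Sigma(2,3,7)$ (not an L-space, as $2q_1+1=5<7$) confirm the normalization and the resulting inequalities.

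The main obstacle I anticipate is twofold. First, one must pin down the orientation and normalization conventions so that the correct form of the realizability criterion is applied; this is exactly what makes the $r=1$ case fall under the clean $e_0=-1$ version and is where an arithmetic slip would be fatal. Second, the delicate boundary analysis when $q_1=p_1-1$ is subtle: the precise threshold $2q_1+1\ge p_2$ emerges only from the Farey-neighbour/mediant computation, not from any cruder estimate. By contrast, once the general criterion is in hand the $r\ge 2$ case is comparatively routine.
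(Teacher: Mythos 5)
Your proposal is correct and follows essentially the same route as the paper: identify $\Sigma_2(L)$ with the Seifert fibered space $M(r;\,1/p_1,\dots,1/p_n,(q_1-1)/q_1,\dots)$, dispose of $r=0$ via alternating links, of $r\geq 2$ via Jankins--Neumann, and reduce $r=1$ to the existence of a coprime pair $a,m$ with $q_1<m/a<p_1$ and $m<p_2$ (your "reduced fraction in $(1/p_1,1/q_1)$ of denominator $<p_2$"), using Lisca--Stipsicz to convert non-existence of a foliation into the L-space conclusion. Your Farey-mediant treatment of the boundary case $q_1=p_1-1$ is just a repackaging of the paper's explicit choice $a=2$, $m=2q_1+1$ and the inequality $a\geq 2\Rightarrow 2q_1+1\leq m\leq p_2-1$.
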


\begin{remark}
{\rm Since the homeomorphism type of $\Sigma_2(L)$ is invariant under mirroring $L$, permuting $p_1, \ldots, p_n, -q_1, \ldots, -q_r$, or altering the orientations of the components of $L$, the lemma gives necessary and sufficient conditions for an arbitrary pretzel link on three or more strands to have an L-space $2$-fold branched cover.}
\end{remark}

\begin{proof}[Proof of Lemma \ref{lemma: pretzel l-space cover}]
The $2$-fold branched cover of $L$ is the Seifert manifold
$$\Sigma_2(L) = M(r; \frac{1}{p_1}, \ldots, \frac{1}{p_n}, \frac{q_1 - 1}{q_1} \ldots, \frac{q_r-1}{q_r})$$
(See e.g. \cite[Proposition 3.3]{OwSt}.) 

If $r \geq 2$, then $n \geq 2$ so that $2 \leq r \leq n+r -2$. Then \cite[Theorem 2 and \S 7]{JN} implies that $\Sigma_2(L)$ admits a co-oriented taut foliation, hence is not an L-space. 

If $r = 0$, then $L$ is a non-split alternating link, so $\Sigma_2(L)$ is an L-space (\cite[Proposition 3.3]{OS2}).

Assume then that $r = 1$ and set $q = q_1$. If $q \leq p_1 - 2$ (so $p_1 \geq 4$) set $m = p_1 - 1$ and $a = 1$. Then 
$\frac{1}{p_i} < \frac{1}{m}$ for each $i$ while $\frac{1}{m} < \frac{1}{q}$, which implies that $\frac{q-1}{q} < \frac{m-1}{m}$. By Naimi's completion of the proof of \cite[Conjecture 2]{JN}, see \cite{Nai}, $\Sigma_2(L)$ admits a co-oriented taut foliation, so is not an L-space. 

Next suppose that $q \geq p_1 - 1$. By \cite{JN} and \cite{Nai}, $\Sigma_2(L)$ admits a co-oriented taut foliation if and only if there is a coprime pair $a, m$ such that $0 < a \leq \frac{m}{2}$ and a permutation $(\frac{a_1}{m}, \frac{a_2}{m}, \ldots, \frac{a_{n+1}}{m})$ of $(\frac{a}{m}, \frac{m-a}{m}, \frac{1}{m}, \ldots, \frac{1}{m})$ such that $\frac{1}{p_i} < \frac{a_i}{m}$ for $1 \leq i \leq n$ and $\frac{q-1}{q} < \frac{a_{n+1}}{m}$. Since $\frac{q-1}{q} \geq \frac{1}{2}$, it must be that $a_{n+1} = m - a$ where $m \geq 3$. Thus $\frac{a}{m} < \frac{1}{q}$. Since $\frac{1}{p_n} \leq \frac{1}{p_{n-1}} \leq \ldots \leq \frac{1}{p_1}$, we can suppose that $a_1 = a$ and $a_i = 1$ for $2 \leq i \leq n$. Thus $\Sigma_2(L)$ admits a co-oriented taut foliation if and only if there is a coprime pair $a, m$ such that 
$$\frac{1}{p_1} < \frac{a}{m} < \frac{1}{q} \hbox{ and } \frac{1}{p_i} < \frac{1}{m} \hbox{ for } 2 \leq i \leq n$$
Equivalently, 
$$q < \frac{m}{a} < p_1 \hbox{ and } m < p_2$$
This is impossible for $q \geq p_1$, so we must have $q = p_1 -1$. (Thus $\Sigma_2(L)$ is an L-space if $q \geq p_1$.) The inequality $q < \frac{m}{a} < p_1 = q+1$ implies that $a \geq 2$.  Then $2q + 1 < 2(\frac{m}{a}) + 1 \leq m + 1 \leq p_2$. Hence $\Sigma_2(L)$ is not an L-space if $q = p_1 -1$ and $2q+1 < p_2$. Conversely if $q = p_1 -1$ and $2q+1 < p_2$, take $m = 2q+1$ and $a = 2$. Then $q < \frac{m}{2} < q+1 = p_1$ and $\frac{1}{p_i} < \frac{1}{m} \hbox{ for } 2 \leq i \leq n$. Thus $\Sigma_2(L)$ admits a co-oriented taut foliation, which  completes the proof.
\end{proof}

\begin{prop} \label{prop: pi all odd} 
Suppose that $p_i$ is odd, $1 \le i \le n$.

$(1)$ If $|d| = n$ then $K$ is strongly quasipositive.

$(2)$ If $|d| = n-2$ $($so without loss of generality $\epsilon_1 = ... = \epsilon_{n-1} = +1, \epsilon_n = -1$ $)$, then $K$ is strongly quasipositive if and only if $p_n < min\{p_1,...,p_{n-1}\}$.

\end{prop}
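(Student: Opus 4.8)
The plan is to treat the two parts separately, and within part (2) to prove the two implications by quite different means. Part (1) I would dispatch immediately: when $|d|=n$ all the $\epsilon_i$ share a sign, so up to mirror image $K$ is the all-positive pretzel $P(p_1,\ldots,p_n)$, which is strongly quasipositive by Proposition \ref{prop: alt pretzel links}(1). For part (2) I first record that, since every $p_i$ is odd and $K$ is a knot, $n$ must be odd, and after isolating the negative band $K=P(p_1,\ldots,p_{n-1},-p_n)$. Throughout I will use that the isotopy type of a pretzel link is unchanged under permutation of its parameters, so I may reorder the positive bands at will.

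For the ``only if'' direction I would prove the contrapositive: if $p_n\ge \mu:=\min\{p_1,\ldots,p_{n-1}\}$ then $K$ is not strongly quasipositive. The idea is to combine an L-space obstruction with a signature computation. First, Lemma \ref{lemma: pretzel l-space cover} (applied with $r=1$, $q_1=p_n$, smallest positive parameter $\mu$) shows that $p_n\ge\mu$ forces $\Sigma_2(K)$ to be an L-space; so if $K$ were strongly quasipositive, Corollary \ref{cor: qa and sqp links} would make $K$ definite, i.e. $|\sigma(K)|=2g(K)=n-1$. I will contradict this by computing $\sigma(K)$ from the genus pretzel surface, whose symmetrised Seifert form is the $(n-1)\times(n-1)$ tridiagonal matrix $G$ with diagonal entries $a_i+a_{i+1}$ and off-diagonal entries $a_{i+1}$, where $a_i=p_i$ for $i<n$ and $a_n=-p_n$; crucially $a_n$ enters $G$ only through the corner entry $a_{n-1}+a_n$. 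Reordering so that $a_{n-1}=\mu$, the all-positive pretzel is special alternating, hence definite, so its form is positive definite and its leading pivots $d_1,\ldots,d_{n-2}$ (which do not involve $p_n$) are positive. A one-line induction on the recursion $d_k=(a_k+a_{k+1})-a_k^2/d_{k-1}$ gives $d_k>a_{k+1}$, so in particular $d_{n-2}>\mu$. The last pivot is $d_{n-1}=(\mu-p_n)-\mu^2/d_{n-2}$, which is positive exactly when $p_n<T:=\mu(1-\mu/d_{n-2})$, and $d_{n-2}>\mu$ yields $T<\mu$. Hence $p_n\ge\mu>T$ makes $d_{n-1}<0$, so $G$ has signature $n-3$ and $|\sigma(K)|=n-3<n-1$: $K$ is indefinite, the required contradiction.

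For the ``if'' direction I must instead construct strong quasipositivity when $p_n<\mu$, and here I would argue reductively. Adding a positive full twist to a band of a Seifert surface is a Murasugi sum with a positive Hopf band, and plumbing preserves quasipositivity of the surface, hence strong quasipositivity of its boundary (cf. \S\ref{sec: positivity}). Since $p_n<\mu$ means $p_i\ge p_n+2$ for every $i<n$, the genus surface of $K$ is obtained from that of the base pretzel $P(p_n+2,\ldots,p_n+2,-p_n)$ by repeatedly plumbing positive Hopf bands onto the positive bands, so it suffices to prove this base pretzel strongly quasipositive. For $n=3$ this is exactly Corollary \ref{cor: 3-strand knots}(3), and in general I would produce an explicitly quasipositive surface (or a positive diagram) for the base, using that its negative band differs from each neighbour by only two crossings.

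I expect the base case of the ``if'' direction to be the main obstacle. The signature argument for ``only if'' is a clean finite computation, and the positive-Hopf-plumbing reduction is formal; but the standard diagram of $P(p_n+2,\ldots,p_n+2,-p_n)$ is genuinely non-positive, so the delicate point is to convert it into a manifestly quasipositive surface, and this is precisely where the hypothesis $p_n<\mu$ must be used constructively rather than as an inequality.
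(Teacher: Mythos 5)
Your part (1) and the ``only if'' half of part (2) follow the same route as the paper: Lemma \ref{lemma: pretzel l-space cover} gives that $\Sigma_2(K)$ is an L-space when $p_n\ge\mu$, strong quasipositivity would then force definiteness, and a signature computation contradicts this. Where the paper simply quotes Jabuka's signature formula to get $\sigma(K)=n-3$, you compute the signature directly from the pivots of the tridiagonal symmetrised Seifert form; that computation is correct and is a reasonable self-contained substitute. Two points in it need attention, though. First, you assert $|\sigma(K)|=2g(K)=n-1$ without justification: the equality $2g(K)=n-1$ is Gabai's computation of the genus of these pretzel knots (\cite[\S 3]{Ga3}, which the paper cites), and it is genuinely needed --- knowing only that the pretzel surface has genus $(n-1)/2$ and that $|\sigma(K)|=n-3$, definiteness would still be consistent with $g(K)=(n-3)/2$. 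Second, your reordering so that the minimal positive parameter sits next to the negative band is a non-cyclic permutation of the pretzel parameters, so you should say why this preserves the knot (Montesinos classification) or at least its signature (mutation invariance).

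The genuine gap is the ``if'' direction, and you flag it yourself. The reduction by plumbing positive Hopf bands down to the base pretzel $P(p_n+2,\ldots,p_n+2,-p_n)$ is formally fine, but the base case is never established, and the anchor you offer for $n=3$ --- Corollary \ref{cor: 3-strand knots}(3) --- is circular: in the paper that corollary is \emph{deduced from} Proposition \ref{prop: pi all odd}(2). The paper closes this direction in one line by citing Rudolph's theorem on quasipositive pretzels (\cite{Ru5}), which exhibits a quasipositive Seifert surface for $P(p_1,\ldots,p_{n-1},-p_n)$ with all parameters odd exactly when $p_n<\min\{p_1,\ldots,p_{n-1}\}$; note that this makes your plumbing reduction unnecessary. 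Without that input, or an explicit construction of a quasipositive surface for the base pretzel, the forward implication of part (2) is missing.
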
 

\begin{proof} 
(1) This follows from Proposition \ref{prop: alt pretzel links}(1).

(2) We know from \cite[Theorem]{Ru5} that $K$ is strongly quasipositive if $p_n < \min\{p_1, p_2,  \ldots, p_{n-1}\}$. Conversely if $p_n \geq \min\{p_1, p_2,  \ldots, p_{n-1}\}$, Lemma \ref{lemma: pretzel l-space cover} implies that $\Sigma_2(K)$ is an L-space. If $K$ is strongly quasipositive, Theorem \ref{thm: sqp links} implies that $|\sigma(K)| = 2g(K)$. On the other hand, Jabuka has calculated the signature of $K$ (\cite[Theorem 1.18(3)]{Ja}): 
$$\sigma(K) = (n -2) + \hbox{sign}\big(\frac{1}{p_n} - (\frac{1}{p_1} + \frac{1}{p_2} + \ldots + \frac{1}{p_{n-1}})\big)$$ 
and since $p_n \geq \min\{p_1, p_2,  \ldots, p_{n-1}\}$ and $n \geq 3$, $\hbox{sign}\big(\frac{1}{p_n} - (\frac{1}{p_1} + \frac{1}{p_2} + \ldots + \frac{1}{p_{n-1}})\big) = -1$. Thus $\sigma(K) = n-3$. 
On the other hand, Gabai has shown that $2g(K) = n-1$ (\cite[\S 3]{Ga3}). Thus $|\sigma(K)| < 2g(K)$, so $K$ is not strongly quasipositive. 
\end{proof} 

\begin{rem}  
{\rm Let $F$ be the obvious (minimal genus) checkerboard Seifert surface for $K$, with $2g(K) = n-1$. If $|d| \le n-4$ then $F$ is not quasipositive. If we knew that $F$ was the unique minimal genus Seifert surface for $K$ then we could conclude that $K$ is not strongly quasipositive.

Another condition under which we can show that $K$ is not strongly quasipositive is if some non-empty subset of $\{\epsilon_1 p_1,...,\epsilon_n p_n\}$ has sum zero. For then it is easy to see that there is a non-separating simple closed curve on $F$ with framing zero. Doing surgery on this curve in the 4-ball shows that $g_4(K) < g(K)$, and hence $K$ is not strongly quasipositive.}
\end{rem}

In light of these observations we make the following conjecture.

\begin{conj} \label{conj: odd case}
{\rm If all the $p_i$ are odd then $K$ is strongly quasipositive if and only if either $|d| = n$, or $|d| = n-2$ and the condition in Proposition \ref{prop: pi all odd}(2) holds.}
\end{conj}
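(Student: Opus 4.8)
The backward implication and the cases $|d|=n$ and $|d|=n-2$ are already settled in Proposition \ref{prop: pi all odd}, so the entire content of the conjecture is the forward implication in the remaining range: when all $p_i$ are odd and $|d|\le n-4$, the knot $K$ should fail to be strongly quasipositive. Since $|d|\le n-4$ forces at least two of the $\epsilon_i$ to equal $+1$ and at least two to equal $-1$, and since Gabai's computation gives $2g(K)=n-1$ independently of the signs and of the sizes of the $p_i$, my plan is to prove the stronger statement $g_4(K)<g(K)$; this immediately yields non-strong-quasipositivity, because a strongly quasipositive knot satisfies $g_4=g$ (\S \ref{sec: positivity}).

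The main tool for bounding $g_4$ from above would be the cobordism technique of Lemma \ref{lemma: pretzel genera}. First I would fix an orientation of $K$ and record that, because every $p_i$ is odd, the two strands through each tangle are antiparallel --- the essential difference from the even case treated in Lemma \ref{lemma: pretzel genera}, and what governs the outcome of a band move between adjacent tangles of opposite sign. Choosing a cyclically adjacent pair $i,i+1$ with $\epsilon_i\epsilon_{i+1}=-1$ (one exists since both signs occur), I would perform a saddle move merging the two tangles and track the resulting link, aiming as in Lemma \ref{lemma: pretzel genera} to express it in terms of a pretzel knot on $n-2$ strands together with a torus link $T(2,k)$, $k=\epsilon_i p_i+\epsilon_{i+1}p_{i+1}$, and then to compare the genus of the capped-off surface with $g(K)=\tfrac{n-1}{2}$. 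Verifying that the antiparallel orientations still force a net genus drop is the crux of this step. In the subcase where some nonempty subset of $\{\epsilon_1 p_1,\dots,\epsilon_n p_n\}$ sums to zero (which includes any two equal adjacent tangles of opposite sign), the Remark preceding the conjecture already produces a zero-framed essential curve on the genus-$\tfrac{n-1}{2}$ checkerboard surface $F$, and surgering $F$ in $B^4$ gives $g_4(K)\le g(K)-1$.

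The hard part is the generic case in which no subset of $\{\epsilon_i p_i\}$ vanishes and no band move visibly drops the genus; for instance $n=5$ with signs $(+,+,+,-,-)$ and $(p_1,\dots,p_5)=(3,5,7,11,13)$ has $|d|=1=n-4$ yet admits no vanishing subset sum. Here the machinery of this paper is unavailable: by Lemma \ref{lemma: pretzel l-space cover} (with $r\ge 2$) the double branched cover carries a co-oriented taut foliation and so is not an L-space, and one expects no cyclic branched cover to be one, so Theorem \ref{thm: sqp links} cannot be invoked. Signature alone also cannot suffice, since strong quasipositivity forces only $g_4=g$ and not definiteness, so a computation à la Jabuka giving $|\sigma(K)|<n-1$ is perfectly consistent with $g_4=g$. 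The most promising route is therefore to establish $g_4(K)<g(K)$ via a slice-torus invariant --- showing $\tau(K)<g(K)$ or Rasmussen's $s(K)<2g(K)$, either of which contradicts strong quasipositivity since that property forces $\tau=g$ (respectively $s=2g$) --- by computing the relevant knot Floer or Khovanov data of these pretzel knots. The obstacle is that closed-form $\tau$ or $s$ computations for $n$-strand pretzel knots with mixed signs are not presently available, and one would need such a bound to hold uniformly over all odd $p_i$. An equally unresolved alternative is to prove that the checkerboard surface $F$ is the unique minimal-genus Seifert surface of $K$; since $F$ fails to be quasipositive precisely when $|d|\le n-4$, uniqueness would complete the proof.
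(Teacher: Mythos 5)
The statement you were asked to prove is presented in the paper as Conjecture \ref{conj: odd case}: the authors do not prove it, so there is no proof of theirs to compare against, and you were right not to manufacture one. Your assessment of the situation is accurate and in fact coincides with the authors' own. The backward direction and the cases $|d|=n$ and $|d|=n-2$ are Proposition \ref{prop: pi all odd}, so the open content is exactly the claim that $|d|\le n-4$ obstructs strong quasipositivity. The two partial observations you record --- that the checkerboard surface $F$ fails to be quasipositive when $|d|\le n-4$, so that uniqueness of the minimal genus Seifert surface would finish the argument, and that a vanishing subset sum of $\{\epsilon_1 p_1,\dots,\epsilon_n p_n\}$ yields a zero-framed non-separating curve on $F$ whose surgery in $B^4$ gives $g_4(K)<g(K)$ --- are precisely the two remarks the authors place immediately before the conjecture. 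Your diagnosis of why the remaining case is hard is also sound: Lemma \ref{lemma: pretzel genera} genuinely requires an even parameter (the antiparallel strands in the all-odd case change the effect of the band move), Lemma \ref{lemma: pretzel l-space cover} shows $\Sigma_2(K)$ is not an L-space once both signs occur at least twice, so Theorem \ref{thm: sqp links} is unavailable, and a signature computation alone cannot rule out $g_4=g$. Your further suggestion of bounding a slice-torus invariant ($\tau$ or $s$) strictly below $g(K)$ is a reasonable idea that goes beyond what the paper says, though, as you note, no such computation is currently available uniformly in the odd parameters. In short: there is no error and no false step in what you wrote, but there is also no proof --- correctly so, since the paper offers none and explicitly leaves the statement as a conjecture.
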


If $n = 3$ then (1) and (2) of Proposition \ref{prop: pi all odd} are the only possibilities, so we can prove Corollary \ref{cor: 3-strand knots}.

\begin{proof}[Proof of Corollary \ref{cor: 3-strand knots}] 
The case $r > 0$ follows from parts (1) of Propositions \ref{prop: pretzel knot sqp} and \ref{prop: pi all odd}, and the case $r < 0$ from parts (2) of these propositions. 
\end{proof} 

\begin{rem} \label{rem: g versus g4} 
{\rm The analogue of Corollary \ref{cor: g versus g4} does not hold for pretzel knots all of whose parameters are odd. For instance,  it follows from \cite[Theorem 1.5]{Mill}  
that the pretzel knot $K = P(3, 5, -7)$ with non-trivial Alexander polynomial verifies $g_4^{top}(K) = g_4(K) = g(K) = 1$ while it is not strongly quasipositive by Corollary \ref{cor: 3-strand knots}}.
\end{rem}

We can also classify the strongly quasipositive $3$-strand pretzel links, though the necessity of considering different orientations increases the number of cases to be examined. 

Let $L = P(p, q, r)$ be a pretzel link where the parameters $p, q, r$ are at least $2$ in absolute value. Up to taking a mirror image and cyclically permuting the parameters, we can suppose that $p, q \geq 2$, and given that we have just dealt with the case that $L$ is a knot, we assume that $L$ has two or three components. Thus at least two of the parameters are even, and without loss of generality we can assume that $p$ is even.  

Up to a simultaneous change in the orientations of the components of $L$, the four possibilities for the orientation of $L$ are depicted in Figure \ref{fig: pretzel orientations}. 

\begin{figure}[!ht]
\centering
 \includegraphics[scale=0.9]{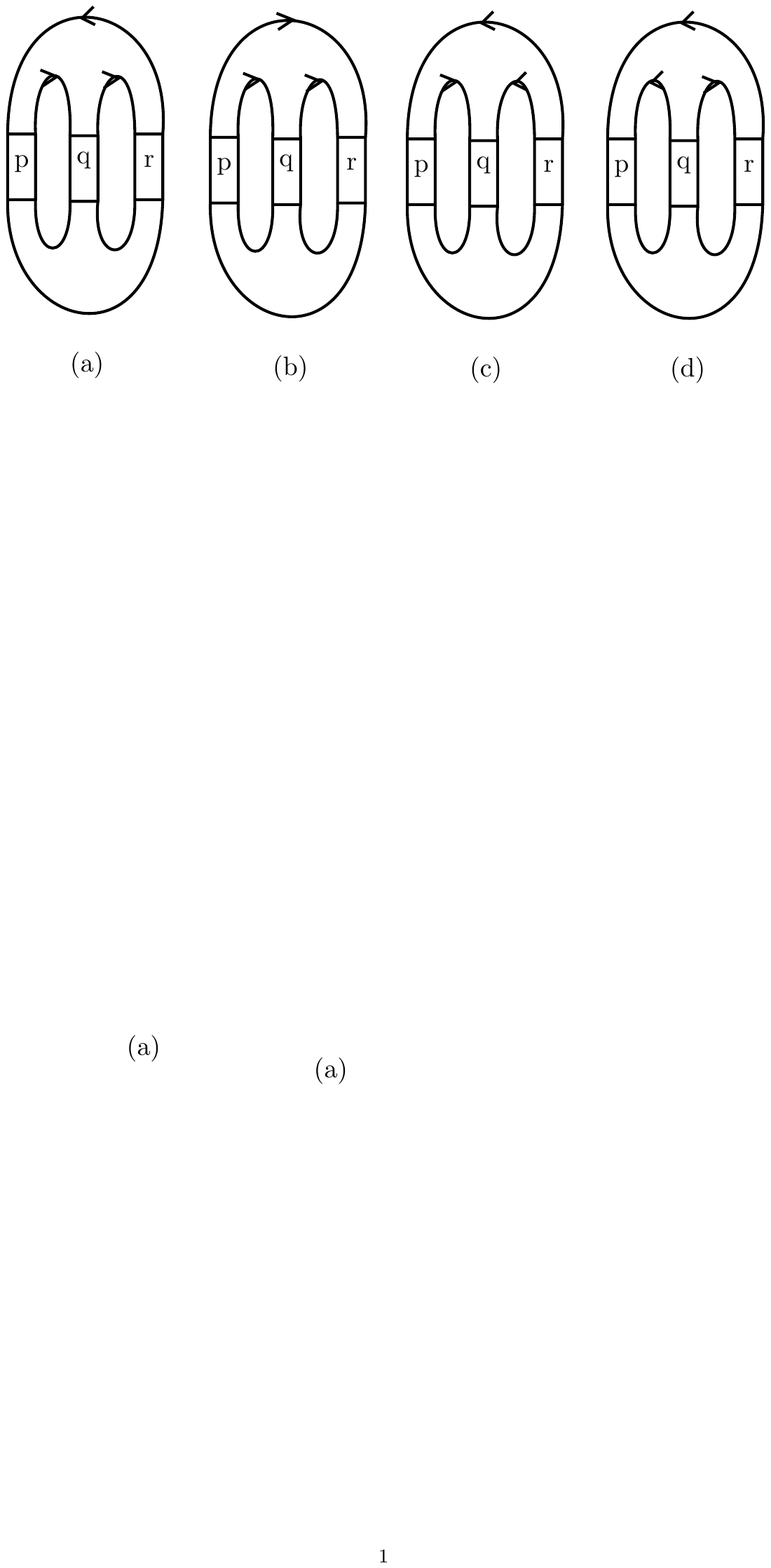}
  \caption{Possible orientations for $L = P(p, q, r)$} 
 \label{fig: pretzel orientations}
\end{figure} 

With these conventions, we remark that if $L = P(p, q, r)$ carries the orientation $(a)$, then $p, q$ and $r$ must be even. If $L$ carries the orientation $(b)$, then $q$ must be even,
and if $L$ carries the orientation $(d)$, $r$ must be even.

\begin{prop} \label{prop:3-strand links}
A $3$-strand oriented pretzel link $L = P(p, q, r)$ with $p, q, |r| \geq 2$ and $p$ even is strongly quasipositive if and only if $(p, q, r)$ verifies one of the following conditions: 

$(i)$ $r > 0$, each of $p, q , r$ is even, and $L$ carries the orientation $(a)$; 

$(ii)$ $r <0$ and $L$ carries the orientation $(d)$; 

$(iii)$ $r <0$, $|r| < min(p, q)$, and $L$ carries the orientation $(a)$.

In particular, if $L$ carries the orientation $(b)$ or $(c)$, it is never strongly quasipositive.
 
\end{prop}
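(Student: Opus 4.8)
The plan is to classify strongly quasipositive (SQP) $3$-strand pretzel links $L = P(p,q,r)$ (with $p$ even, $p,q,|r|\ge 2$, and $L$ having two or three components) by combining two complementary tools: the sufficiency direction via known positivity criteria, and the necessity direction via the definiteness obstruction coming from Corollary~\ref{cor:qp alternating} together with the $2$-fold branched cover computation of Lemma~\ref{lemma: pretzel l-space cover}. Throughout I will use the four possible orientations $(a),(b),(c),(d)$ of Figure~\ref{fig: pretzel orientations} and the observation recorded after that figure, namely that certain parities of $p,q,r$ are forced by each orientation (e.g. orientation $(a)$ forces all of $p,q,r$ even). This parity bookkeeping immediately organizes the proof into the orientation cases, and lets me discard many of them for parity reasons before doing any real work.

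First I would handle the sufficiency (``if'') direction, cases $(i),(ii),(iii)$. For $(i)$, where all parameters are even and positive and $L$ carries orientation $(a)$, all crossings in the standard diagram are positive, so $L$ is a positive link and hence SQP by the Nakamura--Rudolph result (positive $\Rightarrow$ SQP) invoked in \S\ref{sec: positivity}; the same positivity argument should cover case $(ii)$ with orientation $(d)$ once I check that this orientation makes the diagram positive, which again reduces to tracing the crossing signs. Case $(iii)$, with $r<0$ and $|r| < \min(p,q)$ under orientation $(a)$, is the subtle one: here I would appeal to Rudolph's criterion \cite[Theorem]{Ru5} (already used in the proof of Proposition~\ref{prop: pi all odd}(2)) which guarantees strong quasipositivity precisely when the negative parameter is smaller in absolute value than the positive ones. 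This mirrors the knot case (2) of Proposition~\ref{prop: pi all odd}, so I expect the same band/quasipositive-surface construction to apply.

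For the necessity (``only if'') direction I would argue by contradiction using the definiteness obstruction. Suppose $L$ is SQP but does not satisfy $(i),(ii),(iii)$. The key mechanism is: if $\Sigma_2(L)$ is an L-space then by Proposition~\ref{prop: main prop} and Theorem~\ref{thm: sqp links}(1), SQP forces $L$ to be definite, i.e. $|\sigma(L)| = 2g(L) + (m-1)$; so if I can exhibit a diagram in which the signature fails to be maximal relative to the genus, SQP is ruled out. Concretely, I would first eliminate orientations $(b)$ and $(c)$: for these the forced parities (for $(b)$, $q$ even) combined with the presence of oppositely-signed crossings should let me either apply Lemma~\ref{lemma: pretzel genera}-type band moves showing $g_4(L) < g(L)$ (hence not SQP, since SQP implies $g_4 = g$ by \S\ref{sec: positivity}), or else show $\Sigma_2(L)$ is an L-space via Lemma~\ref{lemma: pretzel l-space cover} while the signature is submaximal. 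For orientation $(a)$ with $r<0$, I would use Lemma~\ref{lemma: pretzel l-space cover} to determine exactly when $\Sigma_2(L)$ is an L-space, then compare $|\sigma(L)|$ (computed from a Goeritz/Seifert matrix of the alternating-when-$r>0$ or definite diagram) with $2g(L)+(m-1)$, using the pretzel genus formula of \cite{KL}; the constraint $|r| < \min(p,q)$ should emerge exactly as the boundary between the definite and indefinite regimes, matching condition~$(iii)$.

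The main obstacle will be the necessity direction for orientation $(a)$ with $r<0$ when $|r| \ge \min(p,q)$: here I must show SQP fails, and the cleanest route is to show $\Sigma_2(L)$ is an L-space (so that the SQP $\Rightarrow$ definite implication of Theorem~\ref{thm: sqp links}(1) applies) and then to compute that the Tristram--Levine signature at $-1$ is strictly smaller in absolute value than $2g(L)+(m-1)$. Getting both the L-space condition (via the Seifert-fibered structure and the Jankins--Neumann/Naimi criterion of Lemma~\ref{lemma: pretzel l-space cover}) and the exact signature value to line up into the single clean inequality $|r| < \min(p,q)$ is where the real computation lives; I anticipate needing a careful signature calculation analogous to Jabuka's \cite{Ja} for the knot case, adapted to the two- and three-component link settings, and matching it against the genus formula so that the threshold $|r| = \min(p,q)$ is pinned down exactly.
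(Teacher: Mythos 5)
Your treatment of orientations $(a)$ and $(d)$ essentially matches the paper's. For $(d)$ the diagram is positive and Rudolph's ``positive implies strongly quasipositive'' applies; for $(a)$ with $r<0$ and $|r|<\min(p,q)$ the paper also invokes Rudolph's criterion (here via \cite[Lemma 3]{Ru3}); for $(a)$ with $|r|\geq\min(p,q)$ the paper combines Lemma \ref{lemma: pretzel l-space cover} with a direct computation of the symmetrised Seifert matrix $\left(\begin{smallmatrix} p+r & r\\ r & q+r\end{smallmatrix}\right)$, whose determinant is negative, so $\sigma(L)=0$ and Theorem \ref{thm: sqp links}(1) gives a contradiction --- no Jabuka-style signature formula is needed. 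The case $r>0$ is the alternating case and follows from Corollary \ref{cor:sqp alternating} as you indicate.

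The genuine gap is in your elimination of orientations $(b)$ and $(c)$. Both obstructions you propose can fail there. The definiteness obstruction of Theorem \ref{thm: sqp links}(1) is only available when some $\Sigma_n(L)$ is an L-space; since $\Sigma_2(L)$ is independent of orientation, Lemma \ref{lemma: pretzel l-space cover} shows that for $r<0$ with $|r|$ sufficiently small compared to $\min(p,q)$ the double branched cover is \emph{not} an L-space, and indefiniteness by itself does not obstruct strong quasipositivity (Examples \ref{examples: sqp} list many indefinite strongly quasipositive knots). Likewise the band-move mechanism of Lemma \ref{lemma: pretzel genera} is set up for particular orientations of pretzel \emph{knots}; you have neither produced analogous moves for the links carrying orientations $(b)$ or $(c)$ nor given a reason to expect $g_4(L)<g(L)$ for them. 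The paper's argument for these two orientations is of a different nature: Neumann's plumbing calculus exhibits $L$ as a fibred link whose fibre surface is obtained by plumbing bamboos of Hopf bands onto the pretzel surface $F(-2,q,2)$ (respectively $F(p,-2,2)$); that pretzel surface is not quasipositive by \cite[Theorem 4.5]{Ru5}, hence the plumbed fibre surface is not quasipositive by \cite[Thm 2.15]{Ru6}, and since a fibred link bounds a unique incompressible Seifert surface up to isotopy, $L$ cannot be strongly quasipositive. Without this argument, or some replacement that covers the regime where $\Sigma_2(L)$ is not an L-space, your proof of the final assertion of the proposition is incomplete.
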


\begin{proof}
In the case that $r >0$, $L$ is alternating, so (i) follows from Corollary \ref{cor:sqp alternating}, as in the proof of Proposition \ref{prop: alt pretzel links}(2). 

Next suppose that $r < 0$. We consider the different possible orientations for $L$ one-by-one.  

If $L$ carries orientation $(a)$, then $p, q$ and $r$ must be even and hence the braid surface $F$ consisting of the upper and lower disks in the diagram connected by the three pretzel bands is a Seifert surface for $L$. It then follows from \cite[Lemma 3]{Ru3} that $L$ is strongly quasipositive as long as $|r| < \min(p, q)$. On the other hand, if $|r| \geq \min(p, q)$, then $\Sigma_2(L)$ is an L-space by Lemma \ref{lemma: pretzel l-space cover}. The reader will verify that $F$ has a Seifert matrix of the form $\frac12 \left(\begin{smallmatrix} p+r  &   r \\ r   &  q + r \end{smallmatrix}\right)$ and hence after summing with its transpose we obtain $\left(\begin{smallmatrix} p+r  &   r \\ r   &  q + r \end{smallmatrix}\right)$. The determinant of the latter is $pq + pr + qr = (q+r)p + qr = (p+r)q + pr$. By hypothesis, at least one of $p+r$ and $q+r$ is non-positive and since $pr$ and $qr$ are both negative, the determinant of $\left(\begin{smallmatrix} p+r  &   r \\ r   &  q + r \end{smallmatrix}\right)$ is less than $0$. Hence its signature, which is the signature of $L$, is $0$. It follows that $L$ cannot be strongly quasipositive as otherwise Theorem \ref{thm: sqp links} would imply that $0 = |\sigma(L)| = 2g(L) + (m-1) \geq 2$, a contradiction. 

In what follows, we use the term {\it bamboo} to denote the boundary of a plumbing of Hopf bands, all either positive or negative, along a connected graph whose vertices have at most two neighbours. 

If $L$ carries orientation $(b)$, then $q$ is even. The two non-oriented bands with $p$ and $r$ twists can be blown up using Neumann's plumbing calculus (\cite{Neu}) to show that the (oriented) link $L$ is the oriented boundary of the surface $S$ obtained from the pretzel surface $F(-2, q , 2)$ by plumbing a bamboo of $p-2$ positive Hopf bands on the band labeled $-2$ and a bamboo of $|r| -2$ negative Hopf bands on the band labeled $2$. Since $q$ is even, the pretzel link $P(-2, q, 2)$ is fibred with fibre surface $F(-2, q, 2)$, and thus by \cite{Ga1}, the oriented link $L$ is a fibred link with fibre $S$. By \cite[Theorem 4.5]{Ru5}, the pretzel surface $F(-2, q, 2)$ is not quasipositive (up to mirror image) and thus  by \cite[Thm 2.15]{Ru6}, the fibre surface $S$ is not quasipositive (up to mirror image). 
But then the (oriented) fibred link $L$ is not strongly quasipositive since it bounds a unique fibre surface, up to isotopy.

Suppose that $L$ carries orientation $(c)$. The argument in this case is similar to the previous one. The two non-oriented bands with $q$ and $r$ twists can be blown up using Neumann's plumbing calculus to show that $L$ is the oriented boundary of the surface $S$ obtained from the pretzel surface $F(p, -2, 2)$ by plumbing a bamboo of $q-2$ positive Hopf bands on the band labeled $-2$ and a bamboo of $|r| -2$ negative Hopf bands on the band labeled $2$. 
Since $p$ is even the pretzel link $P(p, -2, 2)$ is fibred with fibre surface $F(p, -2, 2)$, and thus by \cite{Ga1}, $L$ is a fibred link with fibre $S$; As the pretzel surface $F(p, -2, 2)$ is not quasipositive (up to mirror image), we conclude as before that  $L$ is not strongly quasipositive.

Finally suppose that $L$ carries the orientation $(d)$. Then the given diagram is positive and therefore $L$ is a positive link. The main result of \cite{Ru4} now shows that $L$ is strongly quasipositive.
\end{proof}

\begin{problem}
{\rm Determine necessary and sufficient conditions for pretzel links to be strongly quasipositive.}
\end{problem}

\section{Branched covers of fibred strongly quasipositive knots} \label{sec: fsqpm knots}

\begin{prop} \label{prop: fsqpm signature Michel} 
Let $K$ be a strongly quasipositive fibred Montesinos knot, then $K$ is definite if and only if $K$ is a $(2,k)$, $(3,4)$, or $(3,5)$ torus knot. 
\end{prop}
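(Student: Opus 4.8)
The plan is to split the argument according to whether $K$ is alternating, handling the forward implication by direct verification and the converse by a reduction to the fibre surface. For the forward direction, I would simply record that each of $T(2,k)$, $T(3,4) = P(-2,3,3)$, and $T(3,5) = P(-2,3,5)$ is a fibred, strongly quasipositive Montesinos knot whose fibre surface is a plumbing of positive Hopf bands along a simply laced Dynkin tree of type $A$, $E_6$, or $E_8$ respectively. Since the associated quadratic form is definite, so is the knot (this is exactly the simply laced arborescent discussion of the introduction). Thus the forward implication reduces to the observation that these three families are Montesinos knots, which is immediate.

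For the converse, suppose $K$ is definite. Since a nontrivial Montesinos knot with at least three rational tangles is prime, and a two-tangle Montesinos knot is a prime $2$-bridge knot, $K$ is prime. If $K$ is alternating, then it is a prime, fibred, alternating, definite knot, and Proposition \ref{prop: definite alternating} together with Murasugi's structure theorem (as used in Corollary \ref{cor:sqp alternating}) forces $K$ to be special alternating with a single $(2,k)$-torus-knot factor; hence $K = T(2,k)$. It therefore remains to treat the non-alternating case, and here I would pass to the fibre surface.

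Because $K$ is fibred it bounds a unique minimal genus Seifert surface $F$, and because $K$ is a fibred Montesinos (hence arborescent) link, $F$ is isotopic to a plumbing of Hopf bands along a star-shaped tree $\Gamma$ whose central vertex carries three arms coming from the continued fraction expansions of the tangle slopes. The symmetrised Seifert form of such a plumbing is the even integral quadratic form $Q_\Gamma$ attached to $\Gamma$, with diagonal entries $\pm 2$ and off-diagonal entries $\pm 1$. Strong quasipositivity guarantees that $F$ is genus-minimizing and, up to mirror image, uses positive Hopf bands, so that $Q_\Gamma$ is negative definite exactly when $K$ is definite. By the classical classification of connected definite trees of this type, $\Gamma$ must be one of the Dynkin diagrams $A_m, D_m, E_6, E_7, E_8$. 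The chains $A_m$ give the alternating knots $T(2,k)$ already excluded, while $D_m$ and $E_7 = P(-2,3,4)$ yield two-component links rather than knots; hence $\Gamma \in \{E_6, E_8\}$, and therefore $K = T(3,4)$ or $K = T(3,5)$, both of which are genuinely non-alternating.

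The main obstacle is the structural input of the last paragraph: identifying the fibre surface of a fibred Montesinos knot with a Hopf-band plumbing along an explicit star-shaped tree, and then matching definiteness of its Seifert form with the classification of definite simply laced trees while tracking precisely which Dynkin diagrams correspond to knots rather than links. Should the plumbing description prove awkward to invoke cleanly, the fallback is to argue directly from the known genus and Tristram--Levine signature formulas for Montesinos knots: definiteness forces $|\sigma(K)| = 2g(K)$ by Theorem \ref{thm: sqp links}, and imposing this identity on the non-alternating (mixed-sign) parameters, together with the fiberedness constraint, should isolate exactly $P(-2,3,3)$ and $P(-2,3,5)$.
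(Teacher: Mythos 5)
Your forward direction and your treatment of the alternating (in particular two‑tangle/2‑bridge) case are fine, but the converse in the remaining case rests on a structural claim that is false in the generality you need: it is not true that the fibre surface of every fibred (strongly quasipositive) Montesinos knot is a plumbing of positive Hopf bands along a star‑shaped tree. The paper's proof goes through the Baker--Moore classification, which splits the fibred strongly quasipositive Montesinos knots with $r \geq 3$ branches into two families: the \emph{even type} knots $K(\tfrac{-m_1}{m_1+1},\dots,\tfrac{-m_r}{m_r+1}\,|\,2)$, whose fibres really are star‑shaped plumbings of positive Hopf bands and for which your Dynkin‑diagram argument ($E_6$, $E_8$ are the only definite options giving knots) applies verbatim; and the \emph{odd type} knots $K(\tfrac{-d_1}{2d_1+1},\dots,\tfrac{-d_r}{2d_r+1}\,|\,1)$, whose fibre is obtained from the pretzel surface $F(-3,\dots,-3,1)$ by plumbing on bamboos of positive Hopf bands. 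That pretzel surface is not a Hopf‑band tree plumbing, and its symmetrised Seifert form is degenerate ($\det P(-3,-3,-3,1)=0$); the paper rules out definiteness for the whole odd‑type family precisely by exhibiting this degenerate form as a direct summand‑free subform via the homologically injective inclusion $F(-3,-3,-3,1)\hookrightarrow S$. Your argument simply never sees these knots, so as written it proves the classification only for the even‑type family.

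Two smaller points. First, your dichotomy alternating/non‑alternating does not line up with the dichotomy that actually controls the fibre surface (odd type versus even type), so even granting your structural claim for ``genuinely arborescent'' fibres you would still owe an argument that every non‑alternating case falls under it. Second, your fallback --- imposing $|\sigma(K)| = 2g(K)$ on signature and genus formulas for Montesinos knots --- is the right spirit (it is essentially what the degenerate‑subform computation accomplishes), but as stated it is a plan rather than a proof; to make it work you would need the explicit parametrisation of the fibred strongly quasipositive Montesinos knots, which brings you back to Baker--Moore.
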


\begin{proof} 
Let $K$ be a strongly quasipositive fibred Montesinos knot. If $K$ has two or fewer branches, it is a $2$-bridge knot. It then follows from \cite{BR} and \cite{Ga2} that it is a bamboo. These are exactly the $(2, k)$ torus knots. So for the remainder of the proof we assume that the number $r$ of branches of $K$ is at least $3$.

According to Baker and Moore \cite[Theorem 2]{BM}, a fibred strongly quasipositive Montesinos knot $K$ with three or more branches is isotopic to either: \begin{itemize}

\item[(a)] $K(\frac{-d_1}{2d_{1} + 1}, \cdots , \frac{-d_r}{2d_{r} + 1} \vert 1)$, $d_1, \cdots , d_r$ a set of positive integers such that $\sum_{i=1}^{r} d_i$ is even, or \\

\item[(b)] $K(\frac{-m_1}{m_{1} + 1}, \cdots , \frac{-m_r}{m_{r} + 1} \vert 2)$, $m_1 \geq 1$ an odd integer and $m_2, \cdots , m_r \geq 2$ even integers.

\end{itemize}

In case $(a)$ the knots are called {\it odd type} Montesinos knots and in case $(b)$ {\it even type} Montesinos knots. We use the plumbing calculus of Neumann \cite{Neu}, or equivalently the Kirby calculus, to describe these knots as the boundaries of plumbings of twisted bands according to a star shaped tree with $r$ branches. 

We deal with cases (a) and (b) separately.

\subparagraph{{\bf Case (a)}}
Since $\frac{-d_i}{2d_{i} + 1} = \frac{1}{\big( -2  - \frac{1}{d_i} \big)}$, $K$ is the boundary of the surface determined by the plumbing graph depicted in Figure \ref{fig: case(a)}(a). The latter can be modified using the plumbing calculus (\cite{Neu}) to show that $K$ is the boundary of the surface $S$ determined by the plumbing graph depicted in Figure \ref{fig: case(a)}(b). It follows that $S$ is obtained from the pretzel surface $F(-3, \cdots , -3, 1)$ (with $r + 1$ strands) by plumbing on each of the $-3$-twisted bands a bamboo of $d_{i} - 1$ positive Hopf bands, $1 \leq i \leq r$.  

\begin{figure}[!ht] 
\centering
 \includegraphics[scale=1]{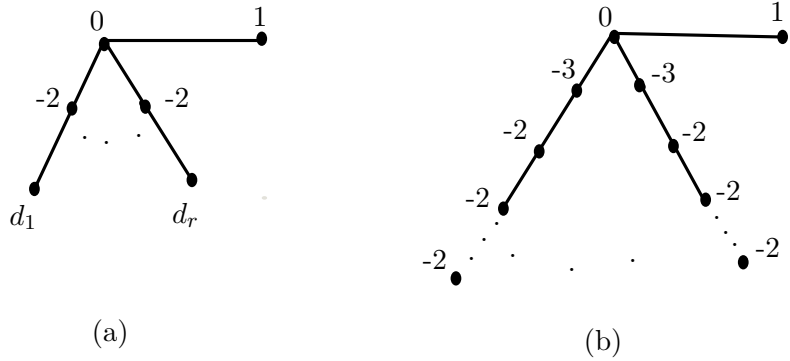}
 \caption{Plumbing diagrams for $K(\frac{-d_1}{2d_{1} + 1}, \cdots , \frac{-d_r}{2d_{r} + 1} \vert 1)$ } 
 \label{fig: case(a)}
\end{figure} 

The pretzel link $P(-3, \cdots , -3, 1)$ is fibred with fibre  $F(-3, \cdots , -3, 1)$ by \cite[page 538]{Ga2}.  As $S$ is obtained from the plumbing of a fibred surface with positive Hopf bands, $K$ is fibred with fibre $S$. In particular, $g(K) = g(S)$. 
  
Suppose that $|\sigma(K)| = 2g(K) = 2g(S)$. Then the symmetrised Seifert form of the Seifert surface $S$ of $K$ is definite. Since $S$ is obtained by plumbing Hopf bands to $F(-3, \cdots , -3, 1)$, the inclusion map $F(-3, -3 , -3, 1)\to S$ induces an injection on homology. Hence the symmetrised Seifert form of $F(-3, -3, -3, 1)$ is definite. But it is simple to see that the determinant of this form, i.e. the determinant of the link $P(-3,-3,-3,1)$, is zero, a contradiction. This completes the proof in case (a). 
 
\subparagraph{{\bf Case (b)}} 
Since $\frac{-m_i}{m_{i} + 1} = \frac{1}{\big( -1  - \frac{1}{m_i} \big)}$, $K$ is the boundary of the surface determined by the plumbing graph depicted in Figure \ref{fig: case(b)}(a), and therefore, via the plumbing calculus, it is the boundary of the fibre surface obtained by plumbing positive Hopf bands according to a star shaped tree depicted in Figure \ref{fig: case(b)}(c). 

\begin{figure}[!ht] 
\centering
 \includegraphics[scale=1]{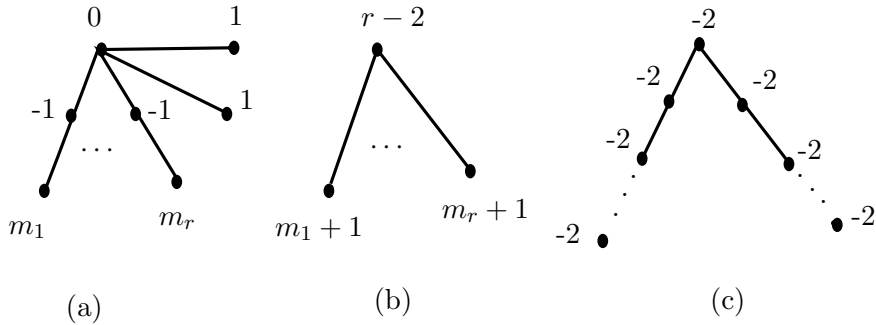}
 \caption{Plumbing diagrams for $K(\frac{-m_1}{m_{1} + 1}, \cdots , \frac{-m_r}{m_{r} + 1} \vert 2)$ } 
  \label{fig: case(b)}
\end{figure} 

Hence the fibre surface corresponds to a 
plumbing tree where each edge has weight $-2$. The only such trees whose associated quadratic forms are negative definite correspond to the simply laced Dynkin diagrams $A_n$, $D_n$, $E_6$, $E_7$ and $E_8$. See for example \cite[pages 61-62]{HNK}. The first case yields the torus knot or link $T(2, m+1)$ while the second and fourth yield links with two components; see \S \ref{sec: introduction}. Thus the only relevant cases for a Montesinos knot $K$ with three or more branches are $E_6$ and $E_8$, which correspond respectively to the $(3,4)$ and $(3,5)$ torus knot.
\end{proof}

\begin{remark}
{\rm Suppose that a quasipositive Seifert surface with connected boundary $K$ is given by the tree plumbing of twisted bands. Then \cite[Theorem 2.45]{Ru7} implies that every weight is even and strictly negative. If we require the surface to be a fibre of an open book, then the bands have to be positive Hopf bands (\cite[Theorem 3]{Ga1}). 
Conversely, if the boundary of a tree plumbing of positive Hopf bands is connected, it is a fibred strongly quasipositive arborescent knot. In this case the proof of case (b) of Proposition \ref{prop: fsqpm signature Michel} shows that $|\sigma(K)| < 2g(K)$ unless $K$ is a $(2,k)$, $(3,4)$, or $(3,5)$ torus knot. }
\end{remark}

\begin{prop} \label{prop: arborescent}
Let $K$ be an arborescent knot which bounds a Seifert surface obtained as a plumbing of positive Hopf bands along a tree. Then $K$ is definite if and only if it is a $(2,k)$, $(3,4)$, or $(3,5)$ torus knot. In particular, some $\Sigma_n(K)$ is an L-space if and only if $K$ is either a $(2,k)$ torus knot, the $(3,4)$ torus knot, or the $(3,5)$ torus knot. 
\qed
\end{prop}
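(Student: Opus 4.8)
The plan is to reduce the proposition to the definiteness computation carried out in case (b) of Proposition \ref{prop: fsqpm signature Michel} and then to feed the outcome into Theorem \ref{thm: sqp links}. First I would record the structural facts: since $K$ bounds a surface $S$ obtained by plumbing positive Hopf bands along a tree $\Gamma$, $S$ is a fibre surface (\cite{Ga1}) and $K$ is a fibred, strongly quasipositive knot. In particular $S$ is the unique minimal-genus Seifert surface of $K$, so $g(K) = g(S)$; consequently $|\sigma(K)| \leq 2g(S) = 2g(K)$, with equality — that is, $K$ definite — exactly when the symmetrised Seifert form $\mathcal{S}_S + \mathcal{S}_S^T$ is definite.

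Next I would identify this form. The symmetrised Seifert form of a tree-plumbing of positive Hopf bands is, up to an overall sign and a change of basis, the matrix $2I - A(\Gamma)$, where $A(\Gamma)$ is the adjacency matrix of $\Gamma$; this is precisely the situation treated in case (b) of Proposition \ref{prop: fsqpm signature Michel}, where every plumbing edge carries weight $-2$. Thus $K$ is definite if and only if this quadratic form is (negative) definite, and by the classical classification of such trees (\cite[pages 61-62]{HNK}) this holds if and only if $\Gamma$ is one of the simply-laced Dynkin diagrams $A_m, D_m, E_6, E_7, E_8$. Reading off the identifications of \S\ref{sec: introduction}, the diagrams with connected boundary are exactly $A_m, E_6, E_8$, giving $T(2, m+1)$, $T(3,4)$, and $T(3,5)$ respectively, while $D_m$ and $E_7$ produce two-component links. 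This would establish that $K$ is definite if and only if $K$ is a $(2,k)$, $(3,4)$, or $(3,5)$ torus knot.

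Finally I would deduce the L-space statement. If some $\Sigma_n(K)$ is an L-space then, $K$ being strongly quasipositive, Theorem \ref{thm: sqp links}(1) forces $K$ to be definite, hence a $(2,k)$, $(3,4)$, or $(3,5)$ torus knot by the previous paragraph. For the converse I would check directly that each of these torus knots has $\Sigma_2(K)$ an L-space: a $(2,k)$ torus knot is $2$-bridge, so $\Sigma_2$ is a lens space, while for $T(3,4)$ and $T(3,5)$ I would invoke \cite[Theorem 1.2]{GLid1} (cf. the list following Corollary \ref{cor: monic sqp knots}).

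The step I expect to require the most care is confirming that the symmetrised Seifert form is exactly the ADE intersection matrix $2I - A(\Gamma)$ for an \emph{arbitrary} tree, since Proposition \ref{prop: fsqpm signature Michel} only treats the star-shaped trees arising from Montesinos knots; once that is in place, the classification of definite trees and the torus-knot identifications are classical, and the L-space conclusion is an immediate application of Theorem \ref{thm: sqp links}.
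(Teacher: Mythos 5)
Your proposal is correct and is essentially the paper's own argument: the paper proves this proposition via the Remark preceding it, which extends the computation in case (b) of Proposition \ref{prop: fsqpm signature Michel} from star-shaped to arbitrary trees — identifying the symmetrised Seifert form of the fibre surface with the tree's quadratic form with $-2$'s on the diagonal, invoking the ADE classification of negative definite trees, and then applying Theorem \ref{thm: sqp links}(1) together with the known L-space status of $\Sigma_2$ of the $(2,k)$, $(3,4)$ and $(3,5)$ torus knots. The one step you flag as delicate (passing from star-shaped to general trees) is indeed the only new content beyond Proposition \ref{prop: fsqpm signature Michel}, and it goes through because the symmetrised Seifert form of a tree plumbing of Hopf bands is computed locally, edge by edge, exactly as you describe.
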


We can now give the proof of  Corollary \ref{cor: positive braid or divide knot}.

\begin{proof}[Proof of Corollary \ref{cor: positive braid or divide knot}] Since $K$ strongly quasipositive,  if $\Sigma_n(K)$ is an L-space for some $n \geq 2$, then $|\sigma(K)| = 2g(K)$ by Proposition \ref{prop: main prop}. If $K$ is a strongly quasipositive prime alternating knot it must be a $(2,k)$-torus knot by Proposition \ref{prop: definite alternating}. If $K$ is a fibred Montesinos knot Proposition \ref{prop: fsqpm signature Michel} shows that  it must be a torus knot of type $(2,k)$, $(3,4)$ or $(3, 5)$. The same holds if $K$ is a positive braid knot by \cite{Baa2}, and if it is a divide knot by \cite[Theorem C]{BD}. (Beware that the link corresponding to the divide $Q$ is incorrectly depicted in Figure 0.1 of  \cite{BD}.)
\end{proof}

For the strongly quasipositive fibred knots considered in Corollary \ref{cor: positive braid or divide knot}, the proof shows that they are definite if and only if they belong to the following list: the $(2,k)$,
$(3,4)$ and $(3,5)$ torus knots. It might seem plausible that these are the only prime, fibered, definite, strongly quasipositive knots, but this is not the case. For instance there are genus $2$ definite plumbings of Hopf bands which are strongly quasipositive, fibred and hyperbolic knots, and thus do not belong to the above list. See \cite{Mis}. 

\section{Branched covers of non-fibred strongly quasipositive  knots} \label{sec: nfsqpk}

Theorem \ref{thm: sqp links} shows that once we know the Alexander polynomial and signature of a strongly quasipositive knot $K$, we can estimate at which value of $n$ the manifolds 
$\Sigma_n(K)$ become non-L-spaces. In this section we illustrate this point in several cases. 

\subsection{Strongly quasipositive knots of genus $1$} \label{sec: sqp genus 1}

The Alexander polynomial of a knot $K$ of genus $1$ is of the form
$$\Delta_K(t) = at^2 + (1-2a)t + a$$ 
where $a \in \mathbb Z$. When $a \leq 0$, $\Delta_K(t)$ has no roots on the unit circle so $\sigma_K \equiv 0$. When $a > 0$, the roots of $\Delta_K(t)$ are $\exp(\pm i \theta_a) \in S^1$ where $\theta_a = \arccos(1 - \frac{1}{2a})$. Further, 
$$|\sigma_K(\zeta)| = \left\{ 
\begin{array}{ll} 
0 & \hbox{if }  \zeta \in I_+(\exp(i \theta_a))   \\ 
1 & \hbox{if } \zeta = \exp(\pm i \theta_a) \\   
2 & \hbox{if } \zeta \in I_-(\exp(i \theta_a))   
\end{array} \right.$$
Consequently, Theorem \ref{thm: sqp links} implies

\begin{prop} 
\label{prop: genus 1 sqp} 
Suppose that $K$ is a strongly quasipositive knot of genus $1$ and Alexander polynomial $\Delta_K(t) = at^2 + (1-2a)t + a$. If some $\Sigma_n(K)$ is an L-space, then $a > 0$ and  $n < 2\pi/\textup{arccos}(1 - 1/2a)$. 
\qed
\end{prop}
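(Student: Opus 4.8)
The plan is to read off both assertions directly from Theorem \ref{thm: sqp links}, using the explicit description of $\sigma_K$ and of the roots of $\Delta_K(t)$ recorded just above the statement. Since $K$ is a genus $1$ knot we have $m = 1$ and $\deg(\Delta_K(t)) = 2g(K) = 2$, so the relevant maximal signature value is $2g(K) + (m-1) = 2$.

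First I would settle the sign of $a$. Suppose $\Sigma_n(K)$ is an L-space. By Theorem \ref{thm: sqp links}(1), $K$ is definite, so $|\sigma_K(\zeta)| = 2g(K) + (m-1) = 2$ for every $\zeta \in \bar{I}_-(\zeta_n)$; in particular $\sigma_K$ is not identically zero. From the case analysis preceding the statement, $\sigma_K \equiv 0$ whenever $a \leq 0$, since for such $a$ the polynomial $at^2 + (1-2a)t + a$ has no root on $S^1$. Hence we must have $a > 0$, which is the first conclusion.

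Next I would extract the bound on $n$. For $a > 0$ the two roots of $\Delta_K(t)$ are $\exp(\pm i \theta_a)$ with $\theta_a = \arccos(1 - \tfrac{1}{2a}) \in (0, \pi)$, as computed above. Theorem \ref{thm: sqp links}(1) asserts that all roots of $\Delta_K(t)$ lie in the open arc $I_+(\zeta_n) = \{\exp(i\varphi) : |\varphi| < 2\pi/n\}$. Applying this to the root $\exp(i \theta_a)$ gives $\theta_a < 2\pi/n$, which rearranges to $n < 2\pi/\theta_a = 2\pi/\arccos(1 - 1/2a)$, as required. Equivalently, this is just the statement that $n \leq n_3(K)$, where $n_3(K)$ is the largest integer $r \geq 2$ with $\pm \theta_a \in (-2\pi/r, 2\pi/r)$, i.e.\ the largest integer strictly below $2\pi/\theta_a$.

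I expect no genuine obstacle here: the entire content is carried by Theorem \ref{thm: sqp links}, and the genus $1$ hypothesis enters only to fix $\deg(\Delta_K(t)) = 2$ and to supply the explicit angular location $\theta_a = \arccos(1 - 1/2a)$ of the two roots. The one point requiring care is the translation of the geometric condition ``all roots lie in $I_+(\zeta_n)$'' into the elementary inequality $\theta_a < 2\pi/n$, which is immediate once the arc $I_+(\zeta_n)$ is written out in terms of its angular endpoints $\pm 2\pi/n$.
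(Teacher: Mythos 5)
Your proposal is correct and follows exactly the paper's route: the paper derives the proposition as an immediate consequence of Theorem \ref{thm: sqp links}(1) together with the same case analysis of the roots and signature function of $at^2+(1-2a)t+a$ recorded just before the statement (the sign of $a$ forced by definiteness, and the bound $n < 2\pi/\theta_a$ forced by the requirement that the roots $\exp(\pm i\theta_a)$ lie in the open arc $I_+(\zeta_n)$). No differences worth noting.
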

 
Lee Rudolph has shown that any Alexander polynomial can be realized by a strongly quasipositive knot. See \cite[Theorem, \S3]{Ru2} or \cite[Proposition 87]{Ru7}. Therefore Proposition \ref{prop: genus 1 sqp} shows the existence of infinitely many  strongly quasipositive genus $1$ knots for which no $\Sigma_n(K)$ is an L-space.

\subsection{Alternating knots of genus $1$} 
The $2$-fold branched covers of alternating knots are L-spaces (\cite[Proposition 3.3]{OS2}) and so a necessary condition for such a knot $K$ to be strongly quasipositive is that $|\sigma(K)| = 2g(K)$. An alternating knot of genus $1$ is either a $2$-bridge knot or a $3$-strand pretzel knot $P(p,q,r)$ where $p,q,r$ are odd and of the same sign (\cite[Lemma 3.1]{BZ}).  We consider these two families separately. 

\subsubsection{Alternating pretzel knots of genus $1$} 

Consider the $3$-strand pretzel knot $K = P(p,q,r)$ where $p, q, r$ are odd and positive. Then $K$ is of genus $1$ and has Alexander polynomial $at^2  + (1-2a)t + a$ where $a = \frac{1 + pq + qr + rp}{4} > 0$. Rudolph has shown that $K$ is strongly quasipositive \cite[Lemma 3]{Ru3}. Thus from Proposition \ref{prop: genus 1 sqp}  we deduce, 

\begin{cor} \label{cor: alt pretzel genus 1} 
Let $K = P(p,q,r)$ where $p, q, r$ are odd and positive. Then $\Sigma_n(K)$ is not an L-space for $n \geq 2\pi/\textup{arccos}(1 - 2/(1 + pq + qr + rp))$. 
\end{cor}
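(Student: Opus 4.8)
The plan is to obtain this as a direct specialization of Proposition \ref{prop: genus 1 sqp}, so the work consists almost entirely of matching parameters. First I would record the two inputs that are already in hand from the discussion immediately preceding the statement: that $K = P(p,q,r)$ with $p,q,r$ odd and positive is a genus $1$ knot whose Alexander polynomial has the normalized form $\Delta_K(t) = at^2 + (1-2a)t + a$ with $a = \tfrac{1}{4}(1 + pq + qr + rp)$, and that $K$ is strongly quasipositive by \cite[Lemma 3]{Ru3}. Since $p,q,r \geq 1$ are odd, we have $a \geq 1 > 0$, so $K$ lands in exactly the hypothesis of Proposition \ref{prop: genus 1 sqp}.

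Next I would apply that proposition: if some $\Sigma_n(K)$ is an L-space, then necessarily $n < 2\pi/\arccos(1 - \tfrac{1}{2a})$. The only computation required is the substitution
$$\frac{1}{2a} = \frac{1}{2 \cdot \tfrac{1}{4}(1 + pq + qr + rp)} = \frac{2}{1 + pq + qr + rp},$$
which turns the bound into $n < 2\pi/\arccos\!\big(1 - \tfrac{2}{1+pq+qr+rp}\big)$. Here I would also note in passing that $a \geq 1$ gives $1 - \tfrac{1}{2a} \in [\tfrac12, 1)$, so $\arccos$ is defined and strictly positive, and hence the quotient makes sense and the inequality is genuine. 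Taking the contrapositive of this implication yields precisely the asserted conclusion: for $n \geq 2\pi/\arccos\!\big(1 - \tfrac{2}{1+pq+qr+rp}\big)$, the manifold $\Sigma_n(K)$ cannot be an L-space.

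I do not expect any genuine obstacle in this argument, since all of the substance has been front-loaded: the identification of the Alexander polynomial and genus, the strong quasipositivity of $K$, and the signature computation underlying Proposition \ref{prop: genus 1 sqp} are all established. The one point meriting a careful check is the elementary arithmetic of the substitution above, together with confirming that $K$ truly satisfies the hypotheses of Proposition \ref{prop: genus 1 sqp} (in particular the strict positivity of $a$, which here is automatic). Thus the ``hard part'' is really the verification that the parameter $a$ for this pretzel family is exactly $\tfrac14(1+pq+qr+rp)$, and once that is granted the corollary is immediate.
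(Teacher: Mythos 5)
Your proposal is correct and is exactly the paper's argument: the paper records that $K$ has genus $1$ with Alexander polynomial $at^2+(1-2a)t+a$ where $a=\frac{1+pq+qr+rp}{4}>0$, cites Rudolph's \cite[Lemma 3]{Ru3} for strong quasipositivity, and then deduces the corollary directly from Proposition \ref{prop: genus 1 sqp} via the same substitution $\frac{1}{2a}=\frac{2}{1+pq+qr+rp}$. No differences worth noting.
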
 

This corollary should be contrasted with the fact that for $n = 2,3, \Sigma_n(K)$ is an L-space. For $n=2$ this follows from \cite[Proposition 3.3]{OS2}, and for $n=3$ from \cite[Theorem 1.1]{Te3}.

\subsubsection{$2$-bridge knots of genus $1$} \label{sec: $2$-bridge genus 1} 

Up to taking mirror image, every $2$-bridge knot $K$ corresponds to a rational number $p/q$ where $p$ and $q$ are coprime integers such that $1 < q < p$, $p$ is odd and $q$ is even. Then $p/q$ has a continued fraction expansion with all terms even, and the number of terms in this expansion is $2g(K)$. It follows from Corollary \ref{cor:sqp alternating}  that $K$ is strongly quasipositive if and only if $|\sigma(K)| = 2g(K)$, see also \cite[Proposition 3.6 and Corollary 3.7]{BR}.  

Consider a $2$-bridge knot $K$ of genus 1, so $p/q = 2k - 1/2l = [2k,-2l]$ where $k$ and $l$ are integers with $k > 0$ and $l \ne 0$. The Alexander polynomial of $K$ is $\Delta_{K}(t) = klt^2 + (1 - 2kl)t + kl$. Since $|\sigma(K)| = 2$ if and only if $l > 0$, $K$ is strongly quasipositive if and only if $l > 0$ by Corollary \ref{cor:sqp alternating}. Here is a result which is a consequence of \cite{Pe} and Proposition \ref{prop: genus 1 sqp}.  

\begin{cor} \label{cor: 2bridge} 
Let $K$ be the $2$-bridge knot corresponding to the rational number $p/q = [2k,-2l]$ where $k > 0$ and $l \ne 0$.   

$(1)$ If $K$ is not strongly quasipositive $($i.e. $l < 0$$)$, each $\Sigma_n(K)$ is an L-space. 

$(2)$ If $K$ is strongly quasipositive $($i.e. $l > 0$$)$, $\Sigma_n(K)$ is not an L-space for $n \geq 2\pi/\textup{arccos}(1 - 1/2kl)$. 
\end{cor}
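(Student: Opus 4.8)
The plan is to treat the two parts separately, since they draw on very different inputs: part $(2)$ will be an immediate specialisation of Proposition \ref{prop: genus 1 sqp}, whereas part $(1)$ will rest on the external computation \cite{Pe}. In both cases the starting point is the Alexander polynomial recorded just before the statement, namely $\Delta_K(t) = kl\, t^2 + (1-2kl)t + kl$, which is exactly of the genus-$1$ form $at^2 + (1-2a)t + a$ with $a = kl$. Thus a single parameter $a = kl$ controls everything, and since $k > 0$ we have $a > 0$ precisely when $l > 0$ and $a < 0$ precisely when $l < 0$.

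For part $(2)$ I would simply note that $l > 0$ gives $a = kl > 0$, and that $K$ is strongly quasipositive by the discussion preceding the statement (indeed $|\sigma(K)| = 2 = 2g(K)$ via Corollary \ref{cor:sqp alternating}), so the hypotheses of Proposition \ref{prop: genus 1 sqp} are met. That proposition then applies verbatim: if some $\Sigma_n(K)$ is an L-space, then $n < 2\pi/\arccos(1 - 1/2a)$, which with $a = kl$ is exactly the asserted threshold. Equivalently, $\Sigma_n(K)$ is not an L-space once $n \geq 2\pi/\arccos(1 - 1/2kl)$. No further computation is required.

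For part $(1)$ the geometry is different: when $l < 0$ we have $a = kl < 0$, so $\Delta_K(t)$ has no root on the unit circle, whence $\Delta_K(\zeta_n^j) \neq 0$ for all $n$ and $\Sigma_n(K)$ is always a rational homology $3$-sphere, while $\sigma_K \equiv 0$ shows $K$ is neither definite nor strongly quasipositive. The positive assertion that \emph{every} $\Sigma_n(K)$ is an L-space is not something the signature machinery of this paper can produce, since our results only obstruct the L-space property. Instead I would invoke \cite{Pe}, whose computation of the Heegaard Floer homology of the cyclic branched covers of these genus-$1$ two-bridge knots shows directly that $\Sigma_n(K)$ is an L-space for each $n \geq 2$.

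The genuinely substantive step is part $(1)$, and the only real obstacle is bookkeeping: confirming that the normalisation $p/q = [2k, -2l]$ used here matches the conventions of \cite{Pe}, and that the sign condition $l < 0$ (equivalently $a < 0$) selects precisely the subfamily whose branched covers \cite{Pe} proves to be L-spaces. Once that identification is made, the two cases $l < 0$ and $l > 0$ partition the genus-$1$ two-bridge knots of this form, and part $(2)$ is a one-line consequence of Proposition \ref{prop: genus 1 sqp}.
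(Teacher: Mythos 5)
Your proposal is correct and follows the paper's own proof exactly: the paper likewise deduces part $(2)$ by specialising Proposition \ref{prop: genus 1 sqp} to $a = kl$, and obtains part $(1)$ by citing the main result of \cite{Pe}. Your additional remarks on the roots of $\Delta_K(t)$ when $kl<0$ and on matching conventions with \cite{Pe} are sensible bookkeeping but not needed beyond what the paper records.
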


\begin{proof}
Assertion (1) follows from the main result of \cite{Pe}  while assertion (2) is a consequence of Proposition \ref{prop: genus 1 sqp}. 
\end{proof}

\begin{remarks}  \label{rem: genus one 2-bridge} 
{\rm 
(1) As noted above, if $k = l = 1$ then $K$ is the left-handed trefoil and the conclusion of Proposition 8.4(2) holds for $n \ge 6$. Since $\pi_1(\Sigma_n(K))$ is finite for $n = 2,3,4$ and 5, Corollary \ref{cor: 2bridge}(2) is best possible. 

(2) If $k = 2$ and $l = 1$ then $p/q = 7/2$ and $K$ is the knot $5_2$. Corollary \ref{cor: 2bridge} shows that $\Sigma_n(K)$ is not an L-space if $n \ge 9$. The situation for $2 \le n \le 8$ is as follows: The branched cover $\Sigma_n(K)$ is an L-space for $n = 2$ (since $\Sigma_2(K)$ is a lens space), $n = 3$ (\cite{Pe}), and $n = 4$ (\cite{Te1}). Robert Lipshitz has shown by computer calculation that $\Sigma_n(K)$ is an L-space for $n = 5$, and is not an L-space for $n = 6, 7$, and $8$. (The fact that $\Sigma_5(K)$ is an L-space was also proved by Mitsunori Hori. See \cite{Te2}). }
\end{remarks} 

\subsection{Non-alternating pretzel knots of genus $1$} 

Consider a non-alternating $3$-strand pretzel knot $K = P(p,q,r)$ where $p, q, r$ are odd. Then $K$ is of genus $1$ and has Alexander polynomial $at^2  + (1-2a)t + a$ where $a = \frac{1 + pq + qr + rp}{4}$.  Up to replacing $K$ by its mirror image, we can suppose that $p, q > 0$. Since $K$ is not alternating we have $r < 0$  and also that it is not $2$-bridge, so $\min\{p, q, |r|\} \geq 3$. It is known that $\Sigma_2(K)$ is an L-space if and only if $\min\{p, q\} \leq -r$ (see Lemma \ref{lemma: pretzel l-space cover} or  the discussion in \S 3.1 of \cite{CK}). In other words, $\min\{p, q\} + r \leq 0$. On the other hand, by Corollary \ref{cor: 3-strand knots} $K$ is strongly quasipositive if and only if $\min\{p, q\} + r > 0$.  

\begin{cor} \label{prop: non-alt pretzel of genus 1} 
Let $K$ be a strongly quasipositive non-alternating $3$-strand pretzel knot $P(p,q,r)$ where $p, q, r$ are odd with $p$ and $q$ positive $($so $r$ negative and  $\min\{p, q, |r|\} \geq 3$ $)$. If some $\Sigma_n(K)$ is an L-space, then $pq + qr + rp > 0$.  Further, $n < 2\pi/\textup{arccos}(1 - 2/(1 + pq + qr + rp))$. 
\end{cor}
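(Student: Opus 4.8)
The plan is to deduce this corollary directly from Proposition \ref{prop: genus 1 sqp}, which already handles all strongly quasipositive genus-$1$ knots; the only work is to match the hypotheses and to translate its conclusions into the pretzel parameters. The discussion preceding the corollary records the two facts I need: such a knot $K = P(p,q,r)$ has genus $1$ and Alexander polynomial $\Delta_K(t) = at^2 + (1-2a)t + a$ with $a = \frac{1 + pq + qr + rp}{4}$, and $K$ is strongly quasipositive by hypothesis. Thus $K$ falls squarely within the scope of Proposition \ref{prop: genus 1 sqp}.

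First I would invoke Proposition \ref{prop: genus 1 sqp}: if $\Sigma_n(K)$ is an L-space for some $n \geq 2$, then $a > 0$ and $n < 2\pi/\textup{arccos}(1 - 1/2a)$. Substituting $a = \frac{1 + pq + qr + rp}{4}$ gives $1/2a = 2/(1 + pq + qr + rp)$, which immediately yields the stated bound $n < 2\pi/\textup{arccos}\big(1 - 2/(1 + pq + qr + rp)\big)$.

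Second, I would upgrade the inequality $a > 0$, i.e. $1 + pq + qr + rp > 0$, to the sharper $pq + qr + rp > 0$ by a parity observation. Since $p, q, r$ are all odd, each of the products $pq$, $qr$, $rp$ is odd, so their sum $pq + qr + rp$ is an odd integer. An odd integer strictly greater than $-1$ is at least $1$; hence $pq + qr + rp \geq 1 > 0$, as required.

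The point is that there is no genuine obstacle here: the substantive content lies in Proposition \ref{prop: genus 1 sqp} (and ultimately in Theorem \ref{thm: sqp links} via Proposition \ref{prop: main prop}), which is already available. The only things left to verify are the elementary parity step and the correctness of the substitution $a = \frac{1 + pq + qr + rp}{4}$, both of which are routine once the genus-$1$ Alexander polynomial of $P(p,q,r)$ has been recorded in the preamble to the statement.
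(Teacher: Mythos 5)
Your proof is correct and follows essentially the same route as the paper: both deduce $a>0$ from Proposition \ref{prop: genus 1 sqp} (the paper phrases this via $|\sigma(K)|=2$ and the calculations of \S \ref{sec: sqp genus 1}), both use the parity of $pq+qr+rp$ to upgrade $1+pq+qr+rp>0$ to $pq+qr+rp>0$, and both obtain the bound on $n$ by the substitution $1/2a = 2/(1+pq+qr+rp)$. Nothing is missing.
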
 

\begin{proof}
If some $\Sigma_n(K)$ is an L-space, then $|\sigma(K)| = 2$ so the calculations in \S \ref{sec: sqp genus 1}  imply that $0 < a = (1 + pq + qr + rp)/4$. Since $pq + qr + rp$ is odd it must be positive, so $pq + qr + rp > 0$. The final claim of the corollary follows from Proposition \ref{prop: genus 1 sqp}.
\end{proof}

\begin{remark}
{\rm Suppose that $K$ is as in Corollary \ref{prop: non-alt pretzel of genus 1}. Since $\Sigma_2(K)$ is not an L-space (Lemma \ref{lemma: pretzel l-space cover}), we expect that no $\Sigma_n(K)$ is an L-space.}
\end{remark}

\begin{cor} \label{cor: non-alt pretzel of genus 1} 
Let $K$ be a $3$-strand pretzel $P(p,q,r)$ where $p, q, r$ are odd and $p, q > 0$. If $\Delta_K(t) = 1$, then no branched cover $\Sigma_n(K)$ is an L-space. 
\end{cor}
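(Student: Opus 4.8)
The plan is to reduce the statement to strong quasipositivity together with the signature constraint of Theorem~\ref{thm: sqp links}. First I would record the arithmetic content of the hypothesis. Since $K=P(p,q,r)$ with $p,q,r$ odd has genus $1$ and Alexander polynomial $at^2+(1-2a)t+a$ with $a=\tfrac14(1+pq+qr+rp)$, the condition $\Delta_K(t)=1$ is exactly $a=0$, i.e. $pq+qr+rp=-1$. As $p,q>0$ are odd, this forces $r<0$; writing $r=-s$ with $s=|r|$ odd, the equation becomes $pq+1=s(p+q)$. (Here I use the standing assumptions of this subsection, namely $p,q,s\ge 3$.)

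The crux is to show that $K$ is strongly quasipositive, which by Proposition~\ref{prop: pi all odd}(2) (equivalently Corollary~\ref{cor: 3-strand knots}(3)) is equivalent to $s=|r|<\min\{p,q\}$. I would deduce this directly from $pq+1=s(p+q)$: assuming without loss of generality $p\le q$, if we had $s\ge p$ then $s(p+q)\ge p(p+q)=p^2+pq>pq+1$, using $p\ge 3$, which contradicts $s(p+q)=pq+1$. Hence $s<p=\min\{p,q\}$, and so $K$ is strongly quasipositive.

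Finally I would invoke the signature obstruction. Because $a=0\le 0$, the Alexander polynomial has no roots on $S^1$, so $\sigma_K\equiv 0$ (as recorded in \S\ref{sec: sqp genus 1}); in particular $\sigma(K)=0$ while $2g(K)=2$. Thus if some $\Sigma_n(K)$ were an L-space, the strong quasipositivity of $K$ together with Theorem~\ref{thm: sqp links}(1) would force $|\sigma(K)|=2g(K)=2$, a contradiction; equivalently, one may simply quote Corollary~\ref{prop: non-alt pretzel of genus 1}, which would give $pq+qr+rp>0$, contradicting $pq+qr+rp=-1$. I expect the only real subtlety to lie in the middle step: one must genuinely establish strong quasipositivity, since if instead $|r|\ge\min\{p,q\}$ then Lemma~\ref{lemma: pretzel l-space cover} shows that $\Sigma_2(K)$ \emph{is} an L-space and the conclusion would fail. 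The hypothesis $\Delta_K(t)=1$ is precisely what forces $|r|<\min\{p,q\}$ and thereby rules this out.
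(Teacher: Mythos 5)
Your proof is correct and follows essentially the same route as the paper: derive $pq+qr+rp=-1$, deduce $|r|<\min\{p,q\}$ so that $K$ is strongly quasipositive, and then contradict the definiteness/signature constraint (Proposition \ref{prop: genus 1 sqp}, equivalently Theorem \ref{thm: sqp links}(1)); your bound $s(p+q)\ge p(p+q)>pq+1$ is just a repackaging of the paper's identity $-1=q(p+r)+pr=p(q+r)+qr$. The only difference is that the paper does not treat $p,q,|r|\ge 3$ as a standing assumption but derives it inside the proof: $\Delta_K(t)=1$ has degree $0<2g(K)$, so $K$ is non-alternating, hence not $2$-bridge, forcing $\min\{p,q,|r|\}\ge 3$ — a point worth making explicit since the corollary's stated hypotheses only require $p,q,r$ odd with $p,q>0$.
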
 

\begin{proof}
Recall that $K$ has Alexander polynomial $at^2  + (1-2a)t + a$ where $a = (1 + pq + qr + rp)/4$, so our hypotheses imply that $pq + qr + rp = -1$. Hence $r < 0$. Further, since the degree of the Alexander polynomial of an alternating knot is twice its genus, $K$ must be non-alternating. Hence $\min\{p, q, |r|\} \geq 3$. 

Now $p + q > 0$ by assumption and the identity $-1 =  pq + qr + rp = q(p+r) + pr = p(q+r) + qr$ implies that both $p+r$ and $q+r$ are positive since $pr, qr \leq -9$.  Hence $K$ is strongly quasipositive. Applying Proposition \ref{prop: genus 1 sqp}, if some $\Sigma_n(K)$ is an L-space, then $-1 = pq + qr + rp > 0$, a contradiction. Thus no $\Sigma_n(K)$ is an L-space.
\end{proof}

\subsection{Some $2$-bridge knots of large genus} \label{sec: 2-bridge}

Let $K(k,m)$ be the $2$-bridge knot corresponding to the rational number $(2m(2k-1)+1)/2m$, where $k,m \ge 1$. Note that $(2m(2k-1)+1)/2m$ has even continued fraction expansion $[2k,-2,2,-2,...2,-2]$, where the number of terms is $2m$. Hence $m = g(K(k,m))$.

The left-orderability of $\pi_1(\Sigma_n(K(k,m)))$ has been investigated by Tran \cite{Tra}; see \S \ref{subsubsec: Tran}. See also \cite[Theorem 4.3]{Hu}. 

\begin{remarks}

{\rm (1) Since the signature of $K(k,m)$ is $2m = 2g(K(k,m))$, $K(k,m)$ is strongly quasipositive by Corollary \ref{cor:sqp alternating}, see also \cite{BR}, and all the roots of its Alexander polynomial lie on $S^1$ (Proposition \ref{proposition: maximal signature}).

(2) $K(1,m)$ is the $(2,2m+1)$-torus knot.
}
\end{remarks}

\begin{lemma} \label{lemma: gae}
The Alexander polynomial of $K(k,m)$ is $k - (2k-1)t + (2k-1)t^2 - ...-(2k-1)t^{2m-1} + kt^{2m}$.
\end{lemma}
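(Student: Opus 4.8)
The plan is to compute $\Delta_{K(k,m)}(t)$ directly from the definition $\Delta_L(t)=\det(\mathcal{S}_F-t\,\mathcal{S}_F^T)$ of Section~\ref{sec: signature function}, using the standard genus-$m$ Seifert surface $F$ associated to the even continued fraction expansion $[2k,-2,2,-2,\ldots,2,-2]$. This expansion presents $K(k,m)$ as the boundary of a linear plumbing of $2m$ twisted bands, and the core curves of these bands form a basis of $H_1(F)$ for which the Seifert form is bidiagonal. First I would pin down this matrix explicitly. The genus-$1$ computation recorded earlier (where $[2k,-2l]$ yields $\Delta=klt^2+(1-2kl)t+kl$) fixes the sign conventions: the diagonal entry attached to the $i$-th band is $(-1)^{i-1}$ times half the $i$-th continued fraction term, so that the $\pm 2$ tail collapses to a diagonal of $1$'s. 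Thus
\[
\mathcal{S}_F=\begin{pmatrix} k & 1 & & & \\ & 1 & 1 & & \\ & & \ddots & \ddots & \\ & & & 1 & 1 \\ & & & & 1\end{pmatrix}\quad(2m\times 2m),
\]
and I would sanity-check the $m=1$ and $m=2$ cases against the known answers before proceeding.

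Next I would form $\mathcal{S}_F-t\,\mathcal{S}_F^T$, which is tridiagonal with diagonal $\big(k(1-t),(1-t),\ldots,(1-t)\big)$, superdiagonal entries $1$, and subdiagonal entries $-t$. The computational engine is the auxiliary family $E_j$, the determinant of the $j\times j$ tridiagonal block with uniform diagonal $(1-t)$, superdiagonal $1$, and subdiagonal $-t$. Expanding along the last row gives the two-term recursion $E_j=(1-t)E_{j-1}+t\,E_{j-2}$ with $E_0=1$ and $E_1=1-t$, whose solution is $E_j=\sum_{i=0}^{j}(-t)^i$ (these are, up to sign, the Alexander polynomials of the $(2,j+1)$ torus links, as one expects).

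Then I would expand $\det(\mathcal{S}_F-t\,\mathcal{S}_F^T)$ along its first row to isolate the exceptional corner entry $k(1-t)$, obtaining
\[
\Delta_{K(k,m)}(t)=k(1-t)\,E_{2m-1}+t\,E_{2m-2}.
\]
Substituting the closed form for $E_j$ and using the two identities $(1-t)\sum_{i=0}^{2m-1}(-t)^i=1+2\sum_{i=1}^{2m-1}(-t)^i+t^{2m}$ and $t\sum_{i=0}^{2m-2}(-t)^i=-\sum_{i=1}^{2m-1}(-t)^i$ collapses the sum to
\[
\Delta_{K(k,m)}(t)=k+kt^{2m}+(2k-1)\sum_{i=1}^{2m-1}(-t)^i,
\]
which is precisely $k-(2k-1)t+(2k-1)t^2-\cdots-(2k-1)t^{2m-1}+kt^{2m}$. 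Since this is palindromic of degree $2m$ with matching leading and constant terms, it is already the normalized Alexander polynomial, with no leftover $\pm t^j$ ambiguity.

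The only genuinely delicate step is the first one: correctly reading off $\mathcal{S}_F$, in particular the alternating sign convention that turns the $\pm 2$ tail into a diagonal of $1$'s. Once that matrix is in hand the rest is a routine tridiagonal determinant recursion. To handle this cleanly I would either cite a standard reference for the Seifert form of a $2$-bridge knot from its even continued fraction (e.g. Burde--Zieschang), or, to keep things self-contained, derive the bidiagonal form directly from the plumbing picture and anchor the signs to the genus-$1$ computation discussed above.
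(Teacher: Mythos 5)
Your computation is correct, and it reaches the stated polynomial by a genuinely different route from the paper. The paper first replaces $K(k,m)$ by its mirror image so that the $2$-bridge fraction becomes $(2m(2k-1)+1)/(2k-1)$ with odd denominator, and then invokes Minkus's closed formula $\Delta(t)=\sum_{j}(-t)^{\sum_{i\le j}\epsilon_i}$, reducing the proof to bookkeeping of the signs $\epsilon_i$ over the blocks $I_s$; no Seifert surface ever appears. You instead read off the bidiagonal Seifert matrix from the even continued fraction $[2k,-2,2,\ldots,2,-2]$ and run the tridiagonal recursion $E_j=(1-t)E_{j-1}+tE_{j-2}$, $E_j=\sum_{i=0}^{j}(-t)^i$, arriving at $\Delta=k(1-t)E_{2m-1}+tE_{2m-2}=k+kt^{2m}+(2k-1)\sum_{i=1}^{2m-1}(-t)^i$; I checked the expansion along the first row, the two collapsing identities, and the $k=1$ sanity case ($E_{2m}$, i.e.\ $T(2,2m+1)$), and all are right. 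What your approach buys is self-containedness (an elementary determinant, no appeal to Minkus's formula or the mirror-image step) plus the Seifert form itself, which is the object the surrounding sections actually use for signatures; what it costs is the one step you correctly flag as delicate, namely justifying the matrix $\mathcal{S}_F$ with diagonal $(k,1,\ldots,1)$ and superdiagonal $1$'s. Your sign-anchoring against the paper's genus-one case ($[2k,-2l]\mapsto\left(\begin{smallmatrix}k&1\\0&l\end{smallmatrix}\right)$, giving $klt^2+(1-2kl)t+kl$) is consistent, but the genus-one case alone does not rule out non-adjacent linking at higher genus, so you should indeed either cite the standard bidiagonal Seifert matrix for $C(2a_1,\ldots,2a_{2m})$ (Burde--Zieschang) or derive it from the linear plumbing picture as you propose. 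With that reference supplied, the proof is complete.
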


\begin{proof}
Recall that if $qq' \equiv -1$ (mod $p$) then the $2$-bridge knot $K_{p/q'}$ is the mirror image of $K_{p/q}$. Therefore, up to mirror image, $K(k,m)$ corresponds to the rational number $(2m(2k-1)+1)/(2k-1)$. Since $(2k-1)$ is odd, we can use the formula in \cite[Lemma 11.1]{Min} (see also \cite{HS}) to compute the Alexander polynomial $\Delta(t)$ of $K(k,m)$. This states that $\Delta(t) = \sum_{j = 0}^{2m(2k-1)} (-t)^{\sum_{i=1}^{j} \epsilon_i}$, where $\epsilon_i = (-1)^{r_i}$ and 
\begin{equation*}
r_i = \lfloor i(2k-1)/(2m(2k-1)+1) \rfloor, \hspace{2 mm} 1 \le i \le 2m(2k-1). 
\end{equation*}
Setting 
\begin{equation*}
I_s = \{i : 2m(s-1) + 1 \le i \le 2ms \}, \hspace{2 mm}  1 \le s \le 2k-1, 
\end{equation*}
we see that 

$$\epsilon_i = \left\{ 
\begin{array}{ll}
+1 & \mbox{if $i \in I_s$ and $s$ odd}, \\
-1 &\mbox{if $i \in I_s$ and $s$ even.}
\end{array} \right.$$

Hence, writing $\sigma(t) = -t+t^2-t^3+...-t^{2m-1}$, we have that
\begin{eqnarray} 
\Delta(t) & = & (1+ \sigma(t) + t^{2m}) + \sigma(t) + (1 + \sigma(t) + t^{2m}) + \sigma(t) + ... + (1 + \sigma(t) + t^{2m}) \nonumber \\
& = & (2k-1)\sigma(t) + k(1 + t^{2m}) \nonumber 
\end{eqnarray} 

\end{proof}

Since the case $m = 1$ is included in the class of knots discussed in \S \ref{sec: $2$-bridge genus 1}, we assume from now on that $m \ge 2$.
\medskip

\begin{lemma} \label{lemma: roots 2} $\;$ 

$(1)$ If $m \ge 3$, then $\Delta(t)$ has a root $\exp(i\theta)$ with $2\pi/3 < \theta < \pi$.

$(2)$ If $m = 2$, then $\Delta(t)$ has a root $\exp(i\theta)$ with $\pi/2 < \theta < \pi$. 
\end{lemma}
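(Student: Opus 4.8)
The plan is to evaluate $\Delta$ on the unit circle in closed form and thereby reduce the problem to detecting a sign change of an explicit real-valued trigonometric function. Starting from Lemma \ref{lemma: gae} I would write $\Delta(t) = k(1+t^{2m}) + (2k-1)\sum_{j=1}^{2m-1}(-t)^j$ and sum the geometric series, $\sum_{j=1}^{2m-1}(-t)^j = -t(1+t^{2m-1})/(1+t)$. Multiplying through by $1+t$ and collecting terms gives the palindromic identity
\[
(1+t)\Delta(t) = k(1+t^{2m+1}) - (k-1)(t+t^{2m}).
\]
Substituting $t=e^{i\theta}$, factoring the common phase $e^{i(2m+1)\theta/2}$ out of each symmetric pair, and using $1+e^{i\theta}=2\cos(\theta/2)e^{i\theta/2}$, I obtain for $\theta\in(0,\pi)$ the formula
\[
\Delta(e^{i\theta}) = \frac{e^{im\theta}}{\cos(\theta/2)}\,f(\theta), \qquad f(\theta) = \cos(m\theta)\cos(\tfrac{\theta}{2}) - (2k-1)\sin(m\theta)\sin(\tfrac{\theta}{2}),
\]
where the displayed form of $f$ follows from a product-to-sum expansion of $k\cos(\frac{(2m+1)\theta}{2}) - (k-1)\cos(\frac{(2m-1)\theta}{2})$. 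Since $\cos(\theta/2)\neq 0$ on $(0,\pi)$, the roots of $\Delta$ on the open upper unit semicircle are exactly the zeros of the real function $f$ on $(0,\pi)$.

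Next I would produce a zero of $f$ by the intermediate value theorem, using two explicit test points just below $\pi$. At $\theta_1 = \pi - \pi/m$ one has $m\theta_1 = (m-1)\pi$, so $\sin(m\theta_1)=0$ and $f(\theta_1) = (-1)^{m-1}\cos(\theta_1/2) = (-1)^{m-1}\sin(\pi/2m)$, of sign $(-1)^{m-1}$. At $\theta_2 = \pi - \pi/2m$ one has $m\theta_2 = (2m-1)\pi/2$, so $\cos(m\theta_2)=0$ and $f(\theta_2) = (2k-1)(-1)^{m}\sin(\theta_2/2)$, of sign $(-1)^{m}$. Because $\sin(\pi/2m)$, $\sin(\theta_2/2)$ and $2k-1$ are all positive, $f(\theta_1)$ and $f(\theta_2)$ have opposite signs, so $f$, and hence $\Delta(e^{i\theta})$, vanishes at some $\theta^\ast \in (\theta_1,\theta_2) = (\pi - \pi/m,\ \pi - \pi/2m)$.

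Finally I would read off the two cases from the location of this arc: since $\theta^\ast < \pi - \pi/2m < \pi$ always, and $\theta^\ast > \pi - \pi/m$, the case $m\geq 3$ gives $\pi - \pi/m \geq 2\pi/3$ and hence $2\pi/3 < \theta^\ast < \pi$, which is part (1), while the case $m=2$ gives $\pi - \pi/m = \pi/2$ and hence $\pi/2 < \theta^\ast < \pi$, which is part (2). I note that this argument is self-contained and does not need the prior knowledge that all roots of $\Delta$ lie on $S^1$. The main obstacle is essentially bookkeeping: cleanly factoring out $(1+t)$ to reach the closed form for $\Delta(e^{i\theta})$, and then tracking the parity-dependent signs $(-1)^{m-1}$ and $(-1)^{m}$; the point that makes the whole argument uniform in $m$ and $k$ is the choice of the two \emph{consecutive} special points $\pi-\pi/m$ (a zero of $\sin(m\theta)$) and $\pi-\pi/2m$ (a zero of $\cos(m\theta)$), whose separation $\pi/2m$ keeps them both strictly inside the relevant arc.
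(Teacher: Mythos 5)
Your proof is correct and follows essentially the same route as the paper: both multiply $\Delta$ by $(1+t)$, reduce the problem to the real trigonometric function $G(\theta)=k\cos(\tfrac{(2m+1)\theta}{2})-(k-1)\cos(\tfrac{(2m-1)\theta}{2})$ (your $f$ is exactly this after a product-to-sum expansion), and locate a sign change on an arc just below $\pi$ via the intermediate value theorem. Your bracketing by the two test points $\pi-\pi/m$ and $\pi-\pi/(2m)$, at each of which one summand of $f$ vanishes, is a clean uniform device that replaces the paper's derivative computation $G'(\pi)$ (needed there because $G(\pi)=0$) together with its separate evaluations at $2\pi/3$ for $m=3$ and at $\pi/2$ for $m=2$.
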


\medskip

\begin{cor} \label{cor: br covers 2bridge 2} $\;$ 

$(1)$ If $m \ge 3$, then $\Sigma_n(K(k,m))$ is an L-space if and only if $n = 2$.

$(2)$ If $m = 2$, then $\Sigma_n(K(k,m))$ is an L-space if and only if $n = 2, 3$. 

\end{cor}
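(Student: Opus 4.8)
The plan is to prove each ``if and only if'' by treating its two implications separately. The forward (``only if'') direction is a direct consequence of the signature machinery assembled earlier, while the reverse (``if'') direction amounts to exhibiting the relevant covers as L-spaces.

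\emph{Necessity.} By the Remarks preceding Lemma \ref{lemma: gae}, $K(k,m)$ is strongly quasipositive, so Theorem \ref{thm: sqp links}(1) applies: if $\Sigma_n(K(k,m))$ is an L-space for some $n \geq 2$, then every root of $\Delta(t)$ lies in the open arc $I_+(\zeta_n)$, which consists of those $\exp(i\theta)$ with $|\theta| < 2\pi/n$. Lemma \ref{lemma: roots 2} produces a root $\exp(i\theta_0)$ with $\theta_0$ bounded below, and its membership in $I_+(\zeta_n)$ (together with that of its conjugate, by the symmetry of $I_+(\zeta_n)$ about the real axis) forces $\theta_0 < 2\pi/n$, equivalently $n < 2\pi/\theta_0$. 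In case (1), $\theta_0 > 2\pi/3$ gives $n < 3$, hence $n = 2$; in case (2), $\theta_0 > \pi/2$ gives $n < 4$, hence $n \in \{2,3\}$. This yields exactly the stated upper bounds on $n$, with no further input.

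\emph{Sufficiency.} In both cases the value $n = 2$ is immediate: $K(k,m)$ is a non-split alternating knot, being $2$-bridge, so $\Sigma_2(K(k,m))$ is an L-space by \cite[Proposition 3.3]{OS2}. It remains, in case (2), to show that $\Sigma_3(K(k,2))$ is an L-space, where $K(k,2)$ is the $2$-bridge knot corresponding to the fraction $(4(2k-1)+1)/4$. As a sanity check, when $k = 1$ one has $K(1,2) = T(2,5)$, whose $3$-fold cyclic branched cover is the Brieskorn sphere $\Sigma(2,3,5)$, the Poincar\'e homology sphere, which is an L-space. For general $k$ I would deduce the same conclusion from the analysis of $3$-fold cyclic branched covers of $2$-bridge knots already invoked for the genus-$1$ family in Corollary \ref{cor: 2bridge}(2), namely via \cite{Pe}.

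\emph{Main obstacle.} The genuinely non-trivial step is precisely the sufficiency of $n = 3$ in case (2). Unlike the double cover, the $3$-fold cyclic branched cover of $K(k,2)$ is no longer a lens space, so the alternating/$2$-bridge criterion does not extend and the L-space property cannot simply be read off from a lens-space or Seifert-fibered description; it must instead come from a dedicated computation for $3$-fold branched covers of $2$-bridge knots. By contrast, the necessity direction, which produces the sharp upper bounds on $n$, is entirely forced by Theorem \ref{thm: sqp links}(1) together with the root estimates of Lemma \ref{lemma: roots 2}, and requires nothing beyond the arc inclusion $\Delta^{-1}(0) \subset I_+(\zeta_n)$.
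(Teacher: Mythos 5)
Your necessity argument and your $n=2$ sufficiency argument are exactly the paper's: Lemma \ref{lemma: roots 2} supplies a root $\exp(i\theta_0)$ of $\Delta_{K(k,m)}(t)$ with $\theta_0 > 2\pi/3$ (resp.\ $\theta_0 > \pi/2$), and since $K(k,m)$ is strongly quasipositive, Theorem \ref{thm: sqp links}(1) forces every root into $I_+(\zeta_n)$, i.e.\ $\theta_0 < 2\pi/n$, giving $n=2$ (resp.\ $n \in \{2,3\}$); and $\Sigma_2$ is an L-space because $2$-bridge knots are alternating. All of that is fine.

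The genuine gap is the step you yourself single out as the hard one: showing $\Sigma_3(K(k,2))$ is an L-space for all $k \geq 1$. You propose to settle it ``via \cite{Pe}'', but that reference, as used in Corollary \ref{cor: 2bridge}, concerns the \emph{genus-one} $2$-bridge knots $K_{[2k,-2l]}$, whereas $K(k,2)$ has genus $2$; for $k \geq 2$ it lies outside the family treated there, so the citation does not deliver the claim. (Your sanity check succeeds only because $K(1,2) = T(2,5)$.) The paper devotes the remainder of \S\ref{sec: 2-bridge} to precisely this point: by \cite[Theorem 8]{MV} one has $\Sigma_3(K(k,2)) \cong \Sigma_2(L(k,k,k))$ for an explicit link $L(k,k,k)$ built from three copies of a tangle $T(k)$; Goeritz-matrix computations of determinants (Lemma \ref{lemma: additive dets}) produce additive triples in the sense of \cite{OS2}, and an induction on $k_1+k_2+k_3$ with base cases $L(k,\infty,\infty)=T(3,4)$, $L(1,1,\infty)=P(-2,3,4)$, $L(1,1,1)=T(3,5)$ establishes Theorem \ref{thm: Sigma_2(L(k_1,k_2,k_3)) is an L-space}, hence the missing implication. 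Some dedicated argument of this kind is indispensable; as written, your proof establishes only the ``only if'' halves of both statements and the ``if'' half for $n=2$.
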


\begin{remarks} $\;$ 

{\rm (1) The (2,5)-torus knot shows that part (2) of Corollary \ref{cor: br covers 2bridge 2} is best possible.

(2) One can show that conclusion (1) of Lemma \ref{lemma: roots 2} fails for $K(k,2)$ for all $k \ge 1$.}

\end{remarks}

\begin{proof}[Proof of Lemma \ref{lemma: roots 2}]
By Lemma \ref{lemma: gae}, 
\begin{equation*}
\Delta(t) = k(1-t+t^2-...+t^{2m}) - (k-1)(t-t^2+t^3-...+t^{2m-1}).
\end{equation*}

Define $h(t) = (1+t)\Delta(t) = k(1+t^{2m+1}) - (k-1)t(1+t^{2m-1})$. We will show that $h(t)$ has a root of the form stated.

Symmetrizing, let $g(t) = t^{-(2m+1)/2}h(t) = k(t^{-(2m+1)/2} + t^{(2m+1)/2}) + (k-1)(t^{-(2m-1)/2} + t^{(2m-1)/2})$. 

Writing $t = \exp(i\theta)$, the roots of $g(t)$ correspond to the roots of 
\begin{equation*}
G(\theta) = k\cos((2m+1)\theta/2)  - (k-1)\cos((2m-1)\theta/2).
\end{equation*}
Note that $G(\pi) = 0$.

Taking derivatives, and using 
$$\sin((2m+1)\pi/2) = -\sin((2m-1)\pi/2) = \left\{ 
\begin{array}{ll}
+1 & \hbox{if $m$ even}, \\
-1 & \hbox{if $m$ odd},  
\end{array} \right.$$

we see that  
\begin{equation} \label{G'}
G'(\pi) \hbox{ is } \left\{ 
\begin{array}{ll} 
< 0 & \hbox{if $m$ even}, \\
> 0 & \hbox{if $m$ odd}.
\end{array} \right.
\end{equation}
Let $\theta_0 = ((m-1)/(2m+1))2\pi$.
Then $G(\theta_0) = k\cos((m-1)\pi) - (k-1)\cos \varphi$, for some $\varphi$. Since 
$$\cos((m-1)\pi) = \left\{ 
\begin{array}{ll}
-1 & \hbox{if $m$ even}, \\
+1 & \hbox{if $m$ odd}, 
\end{array} \right.$$ 
we have
\begin{equation} \label{G}
G(\theta_0) \hbox{ is } \left\{ 
\begin{array}{ll} 
< 0 & \hbox{if $m$ even}, \\
> 0 & \hbox{if $m$ odd}.
\end{array} \right.
\end{equation}
Combining (\ref{G'}) and (\ref{G}) we see that $G(\theta)$ has a root $\in (\theta_0, \pi)$. 

If $m\ge 4$ then $(m-1)/(2m+1) \ge 1/3$, and so $\theta_0 \ge 2\pi/3$.

If $ m = 3$, $G(2\pi/3) = k\cos(7\pi/3) - (k-1)\cos(5\pi/3) = \cos(\pi/3) > 0$. 
So $G(\theta)$ has a root $\in (2\pi/3, \pi)$.

Finally, if $m = 2$, $G(\pi/2) = k\cos(5\pi/4) - (k-1)\cos(3\pi/4) = \cos(3\pi/4) < 0$. Hence $G(\theta)$ has a root in $(\pi/2, \pi)$.
\end{proof}

Since $\Sigma_2(K(k, m))$ is an L-space for all $k, m \geq 1$, the only part of Corollary \ref{cor: br covers 2bridge 2} not covered by Lemma \ref{lemma: roots 2} and Theorem \ref{thm: sqp links}(1) is showing that $\Sigma_3(K(k, 2))$ is an L-space for all $k \geq 1$. We do this now. 

Recall that $K(k, 2)$ is the $2$-bridge knot corresponding to the rational number with continued fraction $[2k, -2, 2, -2]$, $k \geq 1$. Let $T(k)$ be the tangle shown in Figure \ref{fig: cam 16}, where the box labeled $k$ denotes $k$ vertical negative half-twists.

\begin{figure}[!ht] 
\centering
 \includegraphics[scale=.7]{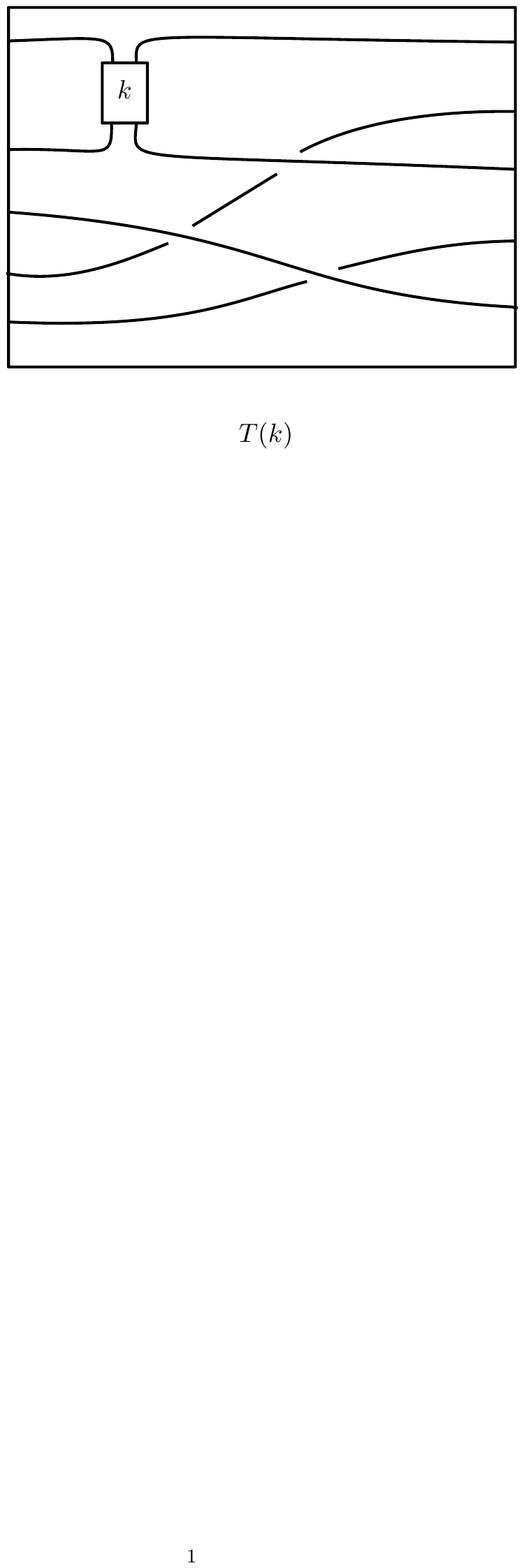}
\caption{The tangle $T(k)$} 
\label{fig: cam 16}
\end{figure} 

Let $L(k,k,k)$ be the (unoriented) link obtained by concatenating three copies of $T(k)$ and identifying the two ends of the resulting tangle in the obvious way. By \cite[Theorem 8]{MV}, $\Sigma_3(K(k, 2)) \cong \Sigma_2(L(k,k,k))$ (cf. \cite[Proposition 1]{Te1}). We will show that $\Sigma_2(L(k,k,k))$ is an L-space. 

It is actually more convenient to prove a more general result. To this end, let $L(k_1,k_2,k_3)$ be the link obtained by connecting $T(k_1), T(k_2)$, and $T(k_3)$ and connect the ends as before. Here we allow $k_i$ to also take the value $\infty$, meaning that we substitute the tangle in Figure \ref{fig: cam 17}. 

\begin{figure}[!ht]
\centering
 \includegraphics[scale=0.7]{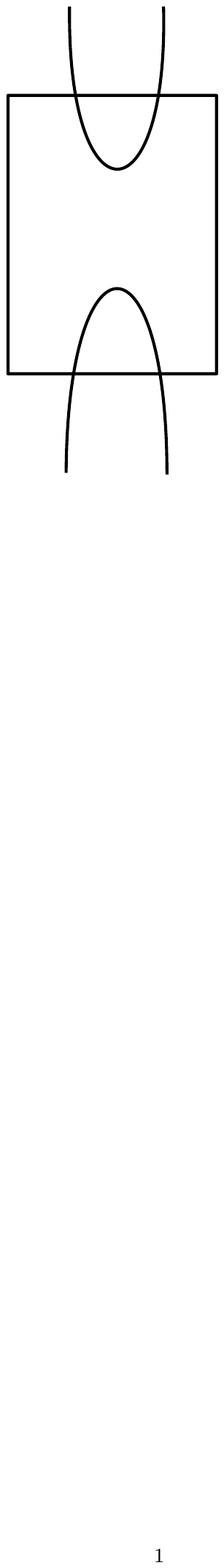}
\caption{} 
\label{fig: cam 17}
\end{figure}  

Clearly, $L(k_1,k_2,k_3)$ is unchanged by a cyclic permutation of $(k_1,k_2,k_3)$.

\begin{thm} \label{thm: Sigma_2(L(k_1,k_2,k_3)) is an L-space}
Suppose that $k_1, k_2, k_3$ are positive integers. Then $\Sigma_2(L(k_1,k_2,k_3))$ is an L-space. 
\end{thm}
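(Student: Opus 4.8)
The plan is to prove Theorem \ref{thm: Sigma_2(L(k_1,k_2,k_3)) is an L-space} by induction on the complexity $k_1 + k_2 + k_3$, running the Ozsv\'ath--Szab\'o argument that produces L-space double branched covers for quasi-alternating links (\cite{OS2}). The two ingredients are: (i) the branched double covers of the three links in an unoriented skein triple fit into a surgery exact triangle in Heegaard Floer homology $\widehat{HF}$; and (ii) the double branched cover of a non-split alternating (in particular $2$-bridge) link is an L-space by \cite[Proposition 3.3]{OS2}, which will supply the base cases.

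First I would set up the inductive step. Choose a single crossing in the twist region defining $T(k_1)$ in the first slot. Its two resolutions replace $T(k_1)$ by the tangle $T(k_1-1)$ and by the tangle $T(\infty)$ of Figure \ref{fig: cam 17}, so that $L(k_1,k_2,k_3)$, $L(k_1-1,k_2,k_3)$ and $L(\infty,k_2,k_3)$ form an unoriented skein triple; by the cyclic symmetry noted before the statement, the same applies to any slot. The associated exact triangle has the form
\[
\cdots \to \widehat{HF}(\Sigma_2(L(k_1,k_2,k_3))) \to \widehat{HF}(\Sigma_2(L(k_1-1,k_2,k_3))) \to \widehat{HF}(\Sigma_2(L(\infty,k_2,k_3))) \to \cdots,
\]
from which one extracts the rank inequality $\operatorname{rk}\widehat{HF}(\Sigma_2(L(k_1,k_2,k_3))) \leq \operatorname{rk}\widehat{HF}(\Sigma_2(L(k_1-1,k_2,k_3))) + \operatorname{rk}\widehat{HF}(\Sigma_2(L(\infty,k_2,k_3)))$. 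Since $|H_1(\Sigma_2(L))| = \det(L)$ whenever the double cover is a rational homology sphere, the L-space property will propagate through the triangle exactly when the two smaller covers are already L-spaces and $\det L(k_1,k_2,k_3) = \det L(k_1-1,k_2,k_3) + \det L(\infty,k_2,k_3)$.

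Next I would dispose of the base cases. When one entry is $\infty$, or when a twist region degenerates to zero crossings, a direct diagrammatic check identifies the resulting link as one whose double branched cover is a lens space or, more generally, as a non-split alternating link; in either case $\Sigma_2$ is an L-space by \cite[Proposition 3.3]{OS2}. With these base cases in place, the induction reduces $k_1$ one crossing at a time, each step feeding on the strictly smaller link $L(k_1-1,k_2,k_3)$ (handled by the inductive hypothesis, or a base case when $k_1=1$) and on the base-case link $L(\infty,k_2,k_3)$.

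The main obstacle will be verifying the determinant additivity $\det L(k_1,k_2,k_3) = \det L(k_1-1,k_2,k_3) + \det L(\infty,k_2,k_3)$ at the chosen crossing, i.e.\ checking that the crossing is \emph{reduced} in the sense of the quasi-alternating induction, since it is precisely this equality that upgrades the rank inequality to the equality $\operatorname{rk}\widehat{HF} = |H_1|$ characterizing an L-space. I would compute all three determinants from a Goeritz matrix adapted to the twist regions, or via the Kauffman bracket skein relation localized at the chosen crossing, and check both that they are nonzero (so each cover is a rational homology sphere) and that they satisfy the additivity with the correct signs, together with the precise identification of the degenerate links $L(\infty,k_2,k_3)$ used in the base step. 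Once this routine but delicate bookkeeping is carried out, the exact triangle and the alternating base cases complete the induction.
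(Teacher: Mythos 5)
Your overall strategy is exactly the one the paper uses: induct on $k_1+k_2+k_3$, resolve one crossing in a twist region so that the three links form an unoriented skein triple, verify the determinant additivity $\det L = \det L_0 + \det L_\infty$ with all determinants nonzero via Goeritz matrices, and invoke the Ozsv\'ath--Szab\'o exact triangle to propagate the L-space property. The additive-triple bookkeeping you anticipate is precisely Lemma \ref{lemma: additive dets} and Corollary \ref{cor: additive triples}.

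There is, however, a genuine gap in how you handle the terminal links of the induction. You assert that when a slot degenerates to $\infty$ (or the parameters bottom out) the resulting link is ``a non-split alternating link'' or one with lens space double branched cover, so that \cite[Proposition 3.3]{OS2} applies. This is false for the links that actually arise: $L(k,\infty,\infty)=T(3,4)$, $L(1,1,\infty)=P(-2,3,4)$, and $L(1,1,1)=T(3,5)$ (Lemma \ref{lemma: simple parameters}) are all non-alternating, and none has a lens space as double branched cover; their covers are L-spaces for a different reason (they are Seifert fibred spaces with finite fundamental group, the boundaries of the negative definite $E_6$, $E_7$, $E_8$ plumbings). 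More seriously, $L(\infty,k_2,k_3)$ is not a base case at all: it is a two-parameter family that needs its own secondary induction (the paper's Lemma \ref{Sigma2 k1k2inf}, run on $k_1+k_2$ with base $P(-2,3,4)$ and auxiliary link $T(3,4)$ at each step) before it can be fed into the main induction. Without that nested induction, and with the alternating criterion unavailable for the true base cases, your argument does not close as written; also note that decrementing a slot all the way to $k_1=0$ introduces a further degenerate family $L(0,k_2,k_3)$ you would have to identify, which the paper avoids by only decrementing a slot with $k_i>1$ and stopping at $L(1,1,1)=T(3,5)$.
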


\begin{cor}
$\Sigma_3(K(k, 2))$ is an L-space. 
\qed
\end{cor}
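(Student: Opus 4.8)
The plan is to package the corollary as an immediate specialization of the theorem that precedes it, combined with the covering-space identification already recalled in the text. Concretely, there are exactly two ingredients to assemble. First, the identification of branched covers: the discussion immediately before Theorem \ref{thm: Sigma_2(L(k_1,k_2,k_3)) is an L-space} records, via \cite[Theorem 8]{MV} (cf. \cite[Proposition 1]{Te1}), that the $3$-fold cyclic cover of $S^3$ branched over the $2$-bridge knot $K(k,2)$ is homeomorphic to the $2$-fold cyclic cover of $S^3$ branched over the link $L(k,k,k)$ obtained by concatenating three copies of the tangle $T(k)$ and closing up; that is, $\Sigma_3(K(k,2)) \cong \Sigma_2(L(k,k,k))$. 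Second, the L-space conclusion: Theorem \ref{thm: Sigma_2(L(k_1,k_2,k_3)) is an L-space} asserts that $\Sigma_2(L(k_1,k_2,k_3))$ is an L-space whenever $k_1,k_2,k_3$ are positive integers.

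With these in hand, the proof is a one-line deduction. First I would note that since $k \geq 1$, the triple $(k_1,k_2,k_3) = (k,k,k)$ satisfies the positivity hypothesis of Theorem \ref{thm: Sigma_2(L(k_1,k_2,k_3)) is an L-space}, so that theorem applies directly to give that $\Sigma_2(L(k,k,k))$ is an L-space. Then, composing with the homeomorphism $\Sigma_3(K(k,2)) \cong \Sigma_2(L(k,k,k))$ from \cite[Theorem 8]{MV}, and using that being an L-space is a homeomorphism invariant of the underlying closed oriented $3$-manifold, I conclude that $\Sigma_3(K(k,2))$ is an L-space, as claimed.

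There is essentially no obstacle to overcome at the level of the corollary itself: all of the substantive work — the recursive tangle/double-branched-cover analysis establishing that the Seifert-fibered or graph-manifold pieces underlying $\Sigma_2(L(k_1,k_2,k_3))$ are L-spaces — is carried out in the proof of Theorem \ref{thm: Sigma_2(L(k_1,k_2,k_3)) is an L-space}, while the only genuinely nontrivial topological input for the corollary, namely the cyclic-cover identification $\Sigma_3(K(k,2)) \cong \Sigma_2(L(k,k,k))$, is supplied by the Mullazzani--Vesnin result \cite[Theorem 8]{MV} already cited. The one point worth double-checking is that the continued fraction $[2k,-2,2,-2]$ defining $K(k,2)$ yields exactly the three-fold symmetric tangle closure $L(k,k,k)$ needed to invoke \cite[Theorem 8]{MV}, which is precisely the content of the construction of $T(k)$ and $L(k,k,k)$ set up in the paragraphs preceding the theorem. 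Hence the corollary follows formally, and I would present it simply as the specialization $k_1=k_2=k_3=k$ of the theorem transported across the branched-cover homeomorphism.
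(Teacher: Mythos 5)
Your proposal is correct and is exactly the paper's (implicit) argument: the corollary is stated with a \verb|\qed| precisely because it is the specialization $k_1=k_2=k_3=k$ of Theorem \ref{thm: Sigma_2(L(k_1,k_2,k_3)) is an L-space}, transported across the identification $\Sigma_3(K(k,2)) \cong \Sigma_2(L(k,k,k))$ from \cite[Theorem 8]{MV} recorded in the preceding paragraph. Nothing further is needed.
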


This completes the proof of Corollary \ref{cor: br covers 2bridge 2} modulo that of Theorem \ref{thm: Sigma_2(L(k_1,k_2,k_3)) is an L-space}, which we deal with now. 

\begin{lemma} \label{lemma: simple parameters} $\;$

$(1)$ $L(k,\infty,\infty) = T(3,4)$ for all $k \geq 1$.

$(2)$ $L(1,1, \infty) = P(-2,3,4)$. 

$(3)$ $L(1,1,1) = T(3,5)$.

\end{lemma}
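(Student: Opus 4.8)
The three statements are all identifications of a specific link, so in each case the plan is to start from the explicit diagram of $L(k_1,k_2,k_3)$ obtained by concatenating the tangles $T(k_1),T(k_2),T(k_3)$ of Figures \ref{fig: cam 16} and \ref{fig: cam 17} and closing up the result, and then to reduce this diagram by planar isotopy, Reidemeister moves, and flypes until it matches a standard diagram of the claimed link. Since $L(k_1,k_2,k_3)$ is invariant under cyclic permutation of its parameters, I am free to place the simplest tangle in whichever slot is most convenient at each stage.

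For part $(1)$, the key point is that inserting the $\infty$-tangle of Figure \ref{fig: cam 17} into two of the three slots reroutes the strands so that the block of $k$ negative half-twists in the remaining $T(k)$ tangle can be removed: the capping arcs of the two $\infty$-tangles let one slide the twist region off by a sequence of Reidemeister~I (and~II) moves, so the resulting link is independent of $k$. I would then straighten the remaining diagram and check that it is the standard diagram of the $(3,4)$ torus link, for instance by reading off a closed $3$-braid word $(\sigma_1\sigma_2)^4$ or, equivalently, by matching it to the diagram of $P(-2,3,3)=T(3,4)$ recorded in \S\ref{sec: introduction}.

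For part $(3)$ I would substitute $T(1)$ in all three slots; the resulting diagram is genuinely $3$-fold symmetric, and after straightening each $T(1)$ block it presents as a closed $3$-braid that I expect to simplify to $(\sigma_1\sigma_2)^5$, the standard form of the $(3,5)$ torus link. For part $(2)$, substituting $T(1),T(1),\infty$ replaces one strand-block of the symmetric picture by the capping arcs of the $\infty$-tangle, which opens the closed braid up; a single flype then carries the diagram into the pretzel form $P(-2,3,4)$. This is consistent with the relations $T(3,4)=P(-2,3,3)$ and $L(E_7)=P(-2,3,4)$ listed in \S\ref{sec: introduction}, so passing from $(1)$ to $(2)$ amounts to changing one tangle block.

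The difficulty here is entirely bookkeeping rather than conceptual: the only real work is to draw the tangles of Figures \ref{fig: cam 16}--\ref{fig: cam 17} accurately and to follow each strand through the threefold concatenation without error, particularly at the step where a string of crossings is recognized as the standard torus braid. The most error-prone steps are the recognition of $T(3,5)$ from the symmetric diagram in $(3)$ and the flype that produces the pretzel in $(2)$; I would confirm each independently by computing the determinant of the resulting link, or by comparing the Seifert data of the double branched cover against the Seifert invariants recorded in the proof of Lemma \ref{lemma: pretzel l-space cover}.
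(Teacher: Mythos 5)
Your plan is correct in substance and is the same kind of argument the paper gives: a direct diagrammatic identification, with determinant computations available as a sanity check (and indeed the determinants match: $\det L(1,1,\infty)=2=\det P(-2,3,4)$ and $\det L(1,1,1)=1=\det T(3,5)$ by Lemma \ref{lemma: additive dets}). The one place where the paper's route is genuinely slicker, and where you would save yourself the error-prone straightening you worry about, is in exploiting the period-three structure of the construction: since $L(k_1,k_2,k_3)$ is three copies of the same tangle concatenated and closed up, $L(k,\infty,\infty)$ is immediately the closure of $\beta^3$ for the $4$-braid $\beta=\sigma_3\sigma_1\sigma_2$, and $L(1,1,1)$ is the closure of $\beta^3$ for the $5$-braid $\beta=\sigma_2\sigma_4\sigma_1\sigma_3$. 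One then only has to conjugate $\beta$ to $\sigma_1\sigma_2\sigma_3$ (resp.\ $\sigma_4\sigma_3\sigma_2\sigma_1$), using that the generators in $\beta$ commute appropriately, to recognize the closures as $T(4,3)=T(3,4)$ and $T(5,3)=T(3,5)$. Your proposed reduction to the $3$-strand presentations $(\sigma_1\sigma_2)^4$ and $(\sigma_1\sigma_2)^5$ identifies the same links but discards this symmetry and requires more Reidemeister bookkeeping. For part $(2)$ the paper offers nothing more than ``direct diagram manipulation,'' so your flype-to-pretzel plan plus the determinant check is entirely in the spirit of the original. Your observation that the $\infty$-tangles allow the $k$ twists to be cancelled, making $L(k,\infty,\infty)$ independent of $k$, is exactly the paper's first remark in the proof of $(1)$.
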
 

\begin{proof}
(1) It is clear that $L(k,\infty,\infty)$ is independent of $k$, and is the closure of $\beta^3$, where $\beta$ is the $4$-braid $\sigma_3 \sigma_1 \sigma_2$. With ``$\sim$" denoting conjugacy, $\sigma_3 \sigma_1 \sigma_2 \sim \sigma_1 \sigma_2\sigma_3$. Thus $\beta^3 \sim ( \sigma_1 \sigma_2\sigma_3)^3$, whoose closure is $T(3,4)$.

(2) This can be shown by direct diagram manipulation.

(3) $L(1,1,1)$ is the closure of $\beta^3$, where $\beta$ is the $5$-braid $\sigma_2 \sigma_4 \sigma_1 \sigma_3 = \sigma_2 \sigma_1 \sigma_4 \sigma_3 \sim \sigma_4 \sigma_3 \sigma_2 \sigma_1$. Hence $\beta^3 \sim (\sigma_4 \sigma_3 \sigma_2 \sigma_1)^3$, whose closure is $T(3,5)$.
\end{proof}

Let $L_, L_0, L_\infty$ be links that differ only locally as shown in Figure \ref{fig: cam 18}. 

\begin{figure}[!ht] 
\centering
 \includegraphics[scale=0.9]{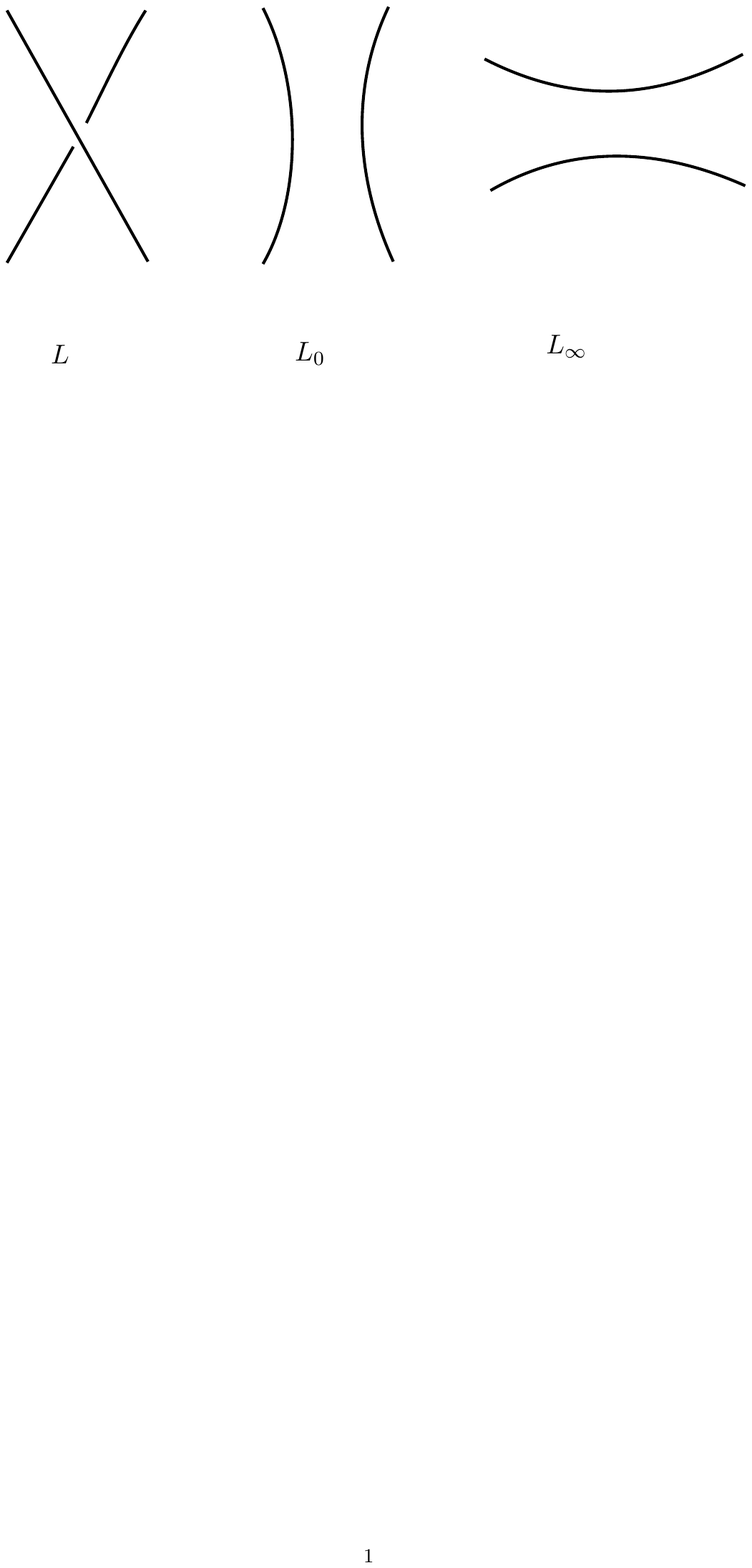}
\caption{} 
\label{fig: cam 18}
\end{figure}  

We shall say that $(L, L_0, L_\infty)$ is an {\it additive triple} if $\det L_0 \ne 0, \; \det L_\infty \ne 0$ and $\det L = \det L_0 + \det L_\infty$. In the proof of Theorem \ref{thm: Sigma_2(L(k_1,k_2,k_3)) is an L-space} we will make repeated use of the fact that if $(L, L_0, L_\infty)$ is an additive triple, and if $\Sigma_2(L_0)$ and $\Sigma_2(L_\infty)$ are L-spaces, then $\Sigma_2(L)$ is an L-space. See \cite{OS2}.

\begin{lemma} \label{lemma: additive dets}
Suppose that $k_1, k_2, k_3$ are positive integers.

$(1)$ $\det L(k_1,k_2,k_3) = 3(k_1k_2 + k_2k_3 + k_3k_1)  - 4(k_1 + k_2 + k_3) + 4$. 

$(2)$ $\det L(k_1,k_2, \infty) = 3(k_1 + k_2) - 4$.
\end{lemma}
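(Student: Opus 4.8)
The plan is to compute both determinants by induction, exploiting the fact that resolving a single crossing inside a twist region produces an unoriented skein triple whose determinants add. Recall that $\det L = |H_1(\Sigma_2(L))|$, and that the notion of an additive triple $(L, L_0, L_\infty)$ together with the relation $\det L = \det L_0 + \det L_\infty$ has already been introduced (following \cite{OS2}). The geometric input I would establish first is that resolving the topmost crossing of the twist region $T(k_j)$ in $L(\ldots, k_j, \ldots)$ produces the two links $L(\ldots, k_j - 1, \ldots)$ and $L(\ldots, \infty, \ldots)$, the latter being the $\infty$-substitution of Figure \ref{fig: cam 17}: one smoothing deletes a single half-twist, the other caps the twist region off. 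Via the Montesinos trick, the three manifolds $\Sigma_2$ are the Dehn fillings of one fixed manifold along the slopes $1/(k_j-1)$, $1/k_j$, $0/1$ (in suitable coordinates on the filling torus), which form a Farey triangle with $1/k_j$ the mediant of the other two. Since all three fillings here are rational homology spheres, the orders of $H_1$ obey the mediant (additive) relation rather than the subtractive one. I expect this step — pinning down the two diagrammatic smoothings and confirming that we are in the additive, not subtractive, regime — to be the main obstacle; everything after it is bookkeeping.

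Granting the skein relation, I would prove part $(2)$ first. Resolving a crossing in $T(k_1)$ gives
$$\det L(k_1, k_2, \infty) = \det L(k_1 - 1, k_2, \infty) + \det L(\infty, k_2, \infty),$$
and by cyclic invariance $L(\infty, k_2, \infty) = L(k_2, \infty, \infty) = T(3,4)$ (Lemma \ref{lemma: simple parameters}$(1)$), which has determinant $3$. Thus each unit decrease of $k_1$, and symmetrically of $k_2$, lowers the determinant by $3$. Anchoring at $L(1,1,\infty) = P(-2,3,4)$ (Lemma \ref{lemma: simple parameters}$(2)$), whose determinant is $|(-2)(3)+(3)(4)+(4)(-2)| = 2$, an induction on $k_1 + k_2$ yields $\det L(k_1, k_2, \infty) = 2 + 3(k_1 - 1) + 3(k_2 - 1) = 3(k_1 + k_2) - 4$, which is part $(2)$.

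For part $(1)$, resolving a crossing in $T(k_1)$ gives
$$\det L(k_1, k_2, k_3) = \det L(k_1 - 1, k_2, k_3) + \det L(\infty, k_2, k_3),$$
and by cyclic invariance $L(\infty, k_2, k_3) = L(k_2, k_3, \infty)$, of determinant $3(k_2 + k_3) - 4$ by part $(2)$. I would then induct on $k_1 + k_2 + k_3$, with base case $L(1,1,1) = T(3,5)$ (Lemma \ref{lemma: simple parameters}$(3)$), of determinant $1 = 3(1+1+1) - 4\cdot 3 + 4$. The inductive step is the routine check that
$$\big[3\big((k_1-1)k_2 + k_2k_3 + k_3(k_1-1)\big) - 4\big((k_1-1)+k_2+k_3\big) + 4\big] + \big[3(k_2+k_3) - 4\big]$$
collapses to $3(k_1k_2 + k_2k_3 + k_3k_1) - 4(k_1+k_2+k_3) + 4$, the $-3k_2-3k_3$ from the first bracket cancelling the $3(k_2+k_3)$ from the second. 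Throughout, every determinant that occurs is positive for $k_i \geq 1$, so the hypotheses $\det L_0 \neq 0$ and $\det L_\infty \neq 0$ of an additive triple hold at each stage, which closes the induction.
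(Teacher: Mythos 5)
Your base cases and the arithmetic of the induction are all correct, but the step you yourself flag as the main obstacle --- that each skein triple is additive rather than subtractive --- is exactly where the argument fails, and the justification you offer does not work. Writing the three double branched covers as Dehn fillings $Y(\gamma)$, $Y(\alpha)$, $Y(\beta)$ of a fixed manifold $Y$ with torus boundary, with $\gamma$ the Farey mediant of $\alpha$ and $\beta$, one has $|H_1(Y(\delta))| = c\,|\langle \delta, \lambda\rangle|$ where $\lambda$ is the rational longitude of $Y$; hence $\det L = \det L_0 + \det L_\infty$ holds if and only if $\langle\alpha,\lambda\rangle$ and $\langle\beta,\lambda\rangle$ have the same sign, i.e.\ if and only if $\lambda$ does not lie in the interval of slopes between $\alpha$ and $\beta$ that contains $\gamma$. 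The hypothesis that all three fillings are rational homology spheres only excludes $\lambda \in \{\alpha,\beta,\gamma\}$: for instance $\lambda = 2/(2k-1)$ lies strictly between $1/k$ and $1/(k-1)$, makes all three fillings rational homology spheres, and yields the subtractive relation $\det L = |\det L_0 - \det L_\infty|$. So additivity requires an independent input (a signature computation via the Goeritz/Gordon--Litherland form, or a direct determinant calculation), and as written your induction assumes precisely what must be proved.

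The paper sidesteps this issue entirely: it computes both determinants in closed form from the Goeritz matrices of checkerboard shadings of the obvious diagrams, with no skein relation involved, and then \emph{deduces} that the triples of Corollary \ref{cor: additive triples} are additive by comparing the resulting formulas. Your argument runs this dependency in the opposite direction, so relative to the paper's logic it is circular unless you supply the missing sign determination by some other means.
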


\begin{proof}
These formulae are obtained by computing the determinants of the Goeritz matrices associated with suitable checkerboard shadings of the obvious diagrams of $L(k_1,k_2,k_3)$ and $L(k_1,k_2, \infty)$. 

Note that under our assumptions on $k_1, k_2, k_3$ the expressions on the right-hand sides of (1) and (2) are positive.
\end{proof}

\begin{cor} \label{cor: additive triples}
Suppose that $k_1, k_2, k_3$ are positive integers. The following are additive triples.

$(1)$ $(L(k_1,k_2 + 1, \infty), L(k_1,k_2,\infty), T(3, 4))$.

$(2)$ $(L(k_1, k_2, k_3 + 1), L(k_1,k_2, k_3), L(k_1,k_2,\infty))$.

\end{cor}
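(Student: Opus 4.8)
The plan is to check, for each of the two listed triples, exactly the two things required by the definition of an additive triple: first that the three links really do form a local (skein) triple of the form $(L, L_0, L_\infty)$ pictured in Figure \ref{fig: cam 18}, and second that the numerical conditions $\det L_0 \ne 0$, $\det L_\infty \ne 0$, and $\det L = \det L_0 + \det L_\infty$ hold. The determinant conditions will be read off from Lemma \ref{lemma: additive dets}, so the only genuinely geometric input is the identification of the skein triple.

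First I would pin down the distinguished crossing. In case $(1)$ it is the extra negative half-twist that distinguishes $L(k_1, k_2+1, \infty)$ from $L(k_1, k_2, \infty)$, sitting in the $T(k_2+1)$ box. One smoothing undoes this twist and reduces the box to $T(k_2)$, producing $L(k_1, k_2, \infty)$; the other smoothing caps the twist region off into the tangle of Figure \ref{fig: cam 17}, converting the second slot into $\infty$ and hence producing $L(k_1, \infty, \infty)$. By Lemma \ref{lemma: simple parameters}$(1)$ this last link equals $T(3,4)$ for every $k_1$, which is exactly the $L_\infty$ claimed in $(1)$. Case $(2)$ is handled identically, with the distinguished crossing now in the $T(k_3+1)$ box: the two smoothings give $L(k_1, k_2, k_3)$ and $L(k_1, k_2, \infty)$ respectively. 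This verifies that each listed triple genuinely has the form of Figure \ref{fig: cam 18}. (Since the additive-triple conditions are symmetric in $L_0$ and $L_\infty$, it is immaterial which smoothing is labelled $0$ and which $\infty$.)

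For the determinant identities I would substitute directly into Lemma \ref{lemma: additive dets}, using in addition the value $\det T(3,4) = 3$, which is immediate from $\Delta_{T(3,4)}(t) = (t^2 - t + 1)(t^4 - t^2 + 1)$ evaluated at $t = -1$. In case $(1)$, Lemma \ref{lemma: additive dets}$(2)$ gives $\det L(k_1, k_2+1, \infty) = 3(k_1+k_2) - 1$ and $\det L(k_1, k_2, \infty) = 3(k_1+k_2) - 4$, so that $\det L(k_1, k_2, \infty) + \det T(3,4) = 3(k_1+k_2) - 4 + 3 = 3(k_1+k_2) - 1$, as needed. In case $(2)$, expanding $\det L(k_1, k_2, k_3+1)$ by Lemma \ref{lemma: additive dets}$(1)$ and subtracting $\det L(k_1, k_2, k_3)$ leaves exactly $3(k_1+k_2) - 4 = \det L(k_1, k_2, \infty)$. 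The non-vanishing of $\det L_0$ and $\det L_\infty$ is covered by the remark following Lemma \ref{lemma: additive dets}, since all determinants involved are positive for $k_i \ge 1$ and $\det T(3,4) = 3 > 0$.

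The arithmetic above is routine, so the only step needing real care is the diagrammatic one: confirming that the $\infty$-smoothing of the extra half-twist truly converts the relevant twist box into the $\infty$-tangle of Figure \ref{fig: cam 17} — and, in case $(1)$, that this yields $L(k_1, \infty, \infty)$ rather than some other link, so that Lemma \ref{lemma: simple parameters}$(1)$ applies. This is where I expect the main obstacle to lie, and I would resolve it by checking the local pictures against the tangle conventions fixed in Figures \ref{fig: cam 16} and \ref{fig: cam 17}.
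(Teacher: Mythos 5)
Your proposal is correct and follows essentially the same route as the paper: identify the local skein triple at the extra half-twist in the relevant box (using Lemma \ref{lemma: simple parameters}(1) to recognise $L(k_1,\infty,\infty)=T(3,4)$), then verify the determinant identity and non-vanishing directly from Lemma \ref{lemma: additive dets} together with $\det T(3,4)=3$. The only difference is that you spell out the diagrammatic and arithmetic details that the paper leaves as "clear".
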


\begin{proof}
It is clear that for each triple the links differ locally as in Figure \ref{fig: cam 18}, recalling that in (1) $L(k_1,\infty,\infty) = T(3,4)$ by Lemma \ref{lemma: simple parameters}(1). 

(1) Lemma \ref{lemma: additive dets}(2) implies that $\det L(k_1,k_2 + 1, \infty) = \det L(k_1,k_2,\infty) +3$, and since $\det T(3, 4) = 3$, the result follows.

(2) Lemma \ref{lemma: additive dets}(1) and (2) give $\det L(k_1,k_2,k_3+1) = \det L(k_1,k_2,k_3) + \det L(k_1,k_2, \infty)$. Hence the stated triple is additive. 
\end{proof}

\begin{lemma} \label{Sigma2 k1k2inf}
Suppose that $k_1, k_2$ are positive integers. Then $\Sigma_2(L(k_1, k_2, \infty))$ is an L-space.
\end{lemma}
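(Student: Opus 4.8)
The plan is to prove the lemma by induction on the tangle parameters, feeding Corollary \ref{cor: additive triples}(1) into the principle (from \cite{OS2}, recalled just before Lemma \ref{lemma: additive dets}) that if $(L,L_0,L_\infty)$ is an additive triple and both $\Sigma_2(L_0)$ and $\Sigma_2(L_\infty)$ are L-spaces, then so is $\Sigma_2(L)$. The relevant triple $(L(k_1,k_2+1,\infty),\,L(k_1,k_2,\infty),\,T(3,4))$ passes from $k_2$ to $k_2+1$, so the engine of the proof is an induction on the second parameter. Here the ``$L_\infty$'' term is $L(k_1,\infty,\infty)=T(3,4)$ by Lemma \ref{lemma: simple parameters}(1), and $\Sigma_2(T(3,4))$ is an L-space: it is the spherical space form $\Sigma(2,3,4)$ (equivalently, $T(3,4)=P(-2,3,3)$, to which Lemma \ref{lemma: pretzel l-space cover} applies).

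First I would record a symmetry. As far as the L-space condition on $\Sigma_2$ is concerned, $\Sigma_2(L(k_1,k_2,k_3))$ is invariant under any permutation of $(k_1,k_2,k_3)$: cyclic invariance is noted in the text, while reversing the cyclic order of the three tangles replaces the link by its mirror image, and the double branched covers of a link and its mirror image are orientation-reversingly homeomorphic, so one is an L-space exactly when the other is. This is the point that lets the single available additive triple, which only increments the second slot, reach every pair $(k_1,k_2)$; without it we could not increment the first parameter, since cyclic permutation moves $\infty$ out of the third position.

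Next I would establish the one-parameter case $\Sigma_2(L(1,k,\infty))$ for all $k\ge 1$ by induction on $k$. The base case $k=1$ is $L(1,1,\infty)=P(-2,3,4)$ by Lemma \ref{lemma: simple parameters}(2); writing this as $P(3,4,-2)$ and applying Lemma \ref{lemma: pretzel l-space cover} with $p_1=3,\,p_2=4,\,q_1=2$, the condition ``$q_1=p_1-1$ and $2q_1+1\ge p_2$'' holds since $2=3-1$ and $5\ge 4$, so $\Sigma_2(P(-2,3,4))$ is an L-space. The inductive step is the triple $(L(1,k+1,\infty),\,L(1,k,\infty),\,T(3,4))$ of Corollary \ref{cor: additive triples}(1): both $\Sigma_2(L(1,k,\infty))$ (induction) and $\Sigma_2(T(3,4))$ are L-spaces, hence so is $\Sigma_2(L(1,k+1,\infty))$.

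Finally, for arbitrary $k_1,k_2\ge 1$ I would fix $k_1$ and induct on $k_2$. The base case $k_2=1$ is $\Sigma_2(L(k_1,1,\infty))$, which equals $\Sigma_2(L(1,k_1,\infty))$ by the permutation symmetry above and is therefore an L-space by the previous paragraph; the inductive step is once more Corollary \ref{cor: additive triples}(1), which raises $k_2$ to $k_2+1$. This completes the proof. Since the additive triples are handed to us, the only substantive points are the base case $\Sigma_2(L(1,1,\infty))$ and the permutation reduction; I expect the main obstacle to be simply verifying carefully that $P(-2,3,4)$ falls under the $r=1$ branch of Lemma \ref{lemma: pretzel l-space cover} and that the mirror/reversal symmetry genuinely licenses permuting the parameters.
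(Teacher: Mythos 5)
Your proof runs on exactly the same engine as the paper's: the base case $L(1,1,\infty)=P(-2,3,4)$ from Lemma \ref{lemma: simple parameters}(2), the additive triple of Corollary \ref{cor: additive triples}(1), and the fact (from \cite{OS2}) that an additive triple with two L-space double branched covers forces the third. Your explicit verifications that $\Sigma_2(P(-2,3,4))$ and $\Sigma_2(T(3,4))$ are L-spaces via Lemma \ref{lemma: pretzel l-space cover} are correct and are more than the paper bothers to record. The one place you diverge is in how the first parameter is incremented, and that is where there is a gap. The paper inducts on $k_1+k_2$ and, when $k_1>1$, uses the ``appropriate analogue'' of Corollary \ref{cor: additive triples}(1) in the first slot, namely $(L(k_1+1,k_2,\infty),\,L(k_1,k_2,\infty),\,L(\infty,k_2,\infty))$; this is additive by the same computation, since $\det L(k_1,k_2,\infty)=3(k_1+k_2)-4$ is symmetric in $k_1,k_2$ and $L(\infty,k_2,\infty)=L(k_2,\infty,\infty)=T(3,4)$ by cyclic invariance. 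You instead reduce $L(k_1,1,\infty)$ to $L(1,k_1,\infty)$ via a claimed invariance of $\Sigma_2(L(k_1,k_2,k_3))$ under arbitrary permutations, justified by asserting that reversing the cyclic order of the three tangles yields the mirror image. That assertion is not automatic: the left--right reflection of the diagram that reverses the order of the tangles also reflects each constituent tangle $T(k_i)$, so you would still need to check that the reflected (or $\pi$-rotated) copy of $T(k)$ is again $T(k)$ as a tangle rel its endpoints; nothing in the text gives you this, and the needed transposition $(k_1,1,\infty)\mapsto(1,k_1,\infty)$ is genuinely non-cyclic, so Lemma \ref{lemma: additive dets} and the stated cyclic invariance do not cover it. The repair is immediate --- replace the symmetry claim by the first-slot additive triple just described --- but as written this step is unproved and is precisely the point at which your argument and the paper's part company.
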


\begin{proof}
We induct on $k_1 + k_2$, the base case being $L(1,1,\infty) = P(-2, 3, 4)$ by Lemma \ref{lemma: simple parameters}(2). 

Suppose that $k_1 + k_2 > 2$. Assume $k_2 > 1$; the case $k_1 > 1$ is similar, using the appropriate analogue of Corollary \ref{cor: additive triples}(1). By our inductive hypothesis, 
$\Sigma_2(L(k_1, k_2 - 1, \infty))$ is an L-space. Since $\Sigma_2(T(3,4))$ is also an L-space, Corollary \ref{cor: additive triples}(1) implies that $\Sigma_2(L(k_1, k_2, \infty))$ is an L-space. 
\end{proof}

\begin{proof}[Proof of Theorem \ref{thm: Sigma_2(L(k_1,k_2,k_3)) is an L-space}]
We induct on $k_1 + k_2 + k_3$ where the base case is $L(1,1,1) = T(3,5)$ by Lemma \ref{lemma: simple parameters}(3). Suppose $k_1 + k_2 + k_3 > 3$. By cyclically permuting $(k_1, k_2, k_3)$ we may assume that $k_3 > 1$. Since $\Sigma_2(L(k_1, k_2, \infty))$ is an L-space by Lemma \ref{Sigma2 k1k2inf}, the result follows by induction from Corollary \ref{cor: additive triples}(2)
\end{proof}

\section{Extensions to quasipositive links}  \label{sec: qp links}

Throughout this section we suppose that $L$ is a quasipositive link and that up to replacing $L$ by its mirror image, it is the boundary of a properly embedded surface $F \subset B^4$ which is the intersection of a complex affine curve in $\mathbb{C}^2$ with $B^4$. Then $G(F) = G_4(L)$ (\cite[Proposition, \S 3]{Ru3}) and $\Sigma_n(F)$ is a Stein filling of $\Sigma_n(L)$ (\cite[Theorem 1.3]{HKP}, \cite[Theorem 1.3]{Ru7}).

\begin{proof}[Proof of Theorem \ref{thm: qp links}]
Suppose that $\Sigma_n(L)$ is an L-space for some $n \geq 2$ and let $\mu = |F|$. 

Since $\Sigma_n(L)$ is a rational homology $3$-sphere, $\eta_L(\zeta_n^j) = 0$ for $1 \leq j \leq n-1$ and $\Delta_L(\zeta_n^j) \ne 0$ for $1 \leq j \leq n-1$. 
Further, the intersection form on $H_2(\Sigma_n(F))$ is non-singular and the fact that $\Sigma_n(L)$ is an L-space implies that  $\beta_2^+(\Sigma_n(F)) = 0$. Then for $1 \leq j \leq n-1$, 
\begin{eqnarray} 
2g(F) + (m-\mu) + \beta_1(\Sigma_n(F);j) + \beta_3(\Sigma_n(F); j) & = & \beta_2(\Sigma_n(F); j) \hbox{ by Lemma \ref{lemma: chi}} \nonumber \\
& = &  |\sigma_L(\zeta_n^j)|  \nonumber \\ 
& \leq & 2g(F) + (m-\mu) - (\mu -1) \hbox{ by Corollary \ref{cor: florens inequality qhs}} \nonumber 
\end{eqnarray} 
It follows that $\mu = 1$ and $\beta_1(\Sigma_n(F);j) = \beta_3(\Sigma_n(F); j) = 0$. Hence 
$$|\sigma_L(\zeta_n^j)|  = 2g(F) + (m-1) \hbox{ for } 1 \leq j \leq n-1,$$ 
and since $G_4(L) = G(F) = g(F) + (m-1)$, we have $|\sigma_L(\zeta_n^j)| + (m-1) = 2G_4(L)$. Proposition \ref{prop: florens inequality qhs 2} now shows that $G_4(L) = G_4^{top}(L)$. 

Suppose that $F'$ is a locally flat surface of $\mu'$ components properly embedded in $B^4$ which realises $g_4^{top}(L)$. Then $g(F') + (m-\mu') = G(F') \geq G_4^{top}(L) = G(F) = g(F) + (m-1)$. Then $g_4^{top}(L) = g(F') \geq g(F) + (\mu'-1) \geq g(F) \geq g_4^{top}(L)$. Hence $\mu' = 1$ and $g(F) = g_4^{top}(L)$.  It follows that $g_4(L) = g_4^{top}(L)$. Then 
$$|\sigma_L(\zeta_n^j)|  = 2g_4(L) + (m-1)  = 2g_4^{top}(L) + (m-1) \hbox{ for }1 \leq j \leq n-1.$$ 
Part (1) of the theorem follows from these observations. 

Suppose that $L'$ is a link of $m$ components for which $\Delta_{L'}(t)$ is not divisible by $(t-1)^{2g_4(L') + (m-1)}$ and for which there is some $r \geq 2$ such that for all 
$j \in \{1, 2, \ldots , r-1\}$ we have $|\sigma_{L'}(\zeta_r^j)| = 2g_4(L') + (m-1)$. Define
$$n_4(L') = \max\{n \geq 2: \displaystyle \sum_{\zeta \in I_+(\zeta_n)} Z_\zeta(\Delta_{L'}(t)) \geq 2g_4(L') + (m-1)\},$$
which is well-defined by (\ref{matumoto}) and the remarks following it. It is clear that $n_4(L)$ depends only on $\Delta_{L}(t)$ and $g_4(L)$ and, from part (1), that $\Sigma_k(L)$ is not an L-space for $k > n_4(L)$, which is (2).

Finally, assuming the hypothesis of (3), its conclusion follows from the fact that $|\sigma_L(\zeta_n^j)| = \beta_2(\Sigma_n(F)) = 2g(F) + (m-1) = 2g(L) + (m-1)$ for $1 \leq j \leq n-1$ and we now proceed as in the proof of Theorem \ref{thm: sqp links}. 
\end{proof}

\begin{examples} \label{examples: qp}
{\rm Theorem \ref{thm: qp links} has some interesting consequences for knots $K$ with $12$ or fewer crossings.

(1) KnotInfo lists 296 knots of 12 or fewer crossings as being quasipositive. Of these, 156 are alternating and a further 78 have been identified as quasi-alternating (\cite{Jbn}). Two more are H-thin (\cite{Jbn}) and Nathan Dunfield has calculated that one other, $11n126$, has an L-space $2$-fold branched cover (private communication). In all then, 237 of the 296 listed quasipositive knots of 12 or fewer crossings have L-space $2$-fold branched covers. Corollary \ref{cor: qa and qp links} then calculates that $g_4^{top}(K) = g_4(K) = \frac12 |\sigma(K)|$ for these knots, which can be verified in KnotInfo. The remaining 59 knots are, interestingly, strongly quasipositive. Thus Corollary \ref{cor: qa and qp links} calculates the smooth and locally flat $4$-ball genera of all listed quasipositive, non-strongly quasipositive knots of 12 or fewer crossings. Further, of the 59 strongly quasipositive knots whose $2$-fold branched covers do not appear to be L-spaces, 36 have different smooth and locally flat $4$-ball genera. For such knots, no $\Sigma_n(K)$ can be an L-space. 

(2) Of the forty-five knots $K$ with $12$ or fewer crossings that KnotInfo lists as quasipositive but not strongly quasipositive, six are slice. For the remaining thirty-nine we can obtain an upper bound for $n_4(K)$ by examining their signature functions. For instance, the signature function of $K = 12n_{-}0234$ is non-positive decreasing and attains its maximum absolute value at  $0.678089 \pi$ (cf. KnotInfo). Hence $n_4(K) = \lfloor \frac{2}{0.678089} \rfloor = 2$ and so $\Sigma_n(K)$ is not an L-space for $n \geq 3$ by Theorem \ref{thm: qp links}. Applying Theorem \ref{thm: qp links} to the remaining thirty-eight knots we have that $\Sigma_n(K)$ is not an L-space for $n$ at least
\vspace{-.2cm} 
\begin{itemize}

\item $4$ when $K$ is either $10_{-}127, 12n_{-}0113, 12n_{-}0114, 12n_{-}0191, 12n_{-}0233, 12n_{-}0344, \\ 12n_{-}0466, 12n_{-}0570, 12n_{-}0674, 12n_{-}0684, 12n_{-}0707, 12n_{-}0722, 12n_{-}0747, 12n_{-}0820, \\ 12n_{-}0882$, or $12n_{-}0887$;  

\vspace{.2cm} \item 5 when $K$ is either $10_{-}149, 10_{-}157, 12n_{-}0190, 12n_{-}0683$, or $12n_{-}0831$; 

\vspace{.2cm} \item $7$ when $K$ is either $8_{-}21, 10_{-}143, 10_{-}159, 12n_{-}0604, 12n_{-}0666$, or $12n_{-}0767$; 

\vspace{.2cm} \item $8$ when $K$ is either $10_{-}131, 10_{-}133, 12n_{-}0467$, or $12n_{-}0822$; 

\vspace{.2cm} \item $9$ when $K$ is either $12n_{-}0345$ or $12n_{-}0748$; 

\vspace{.2cm} \item $10$ when $K$ is $9_{-}45$; 

\vspace{.2cm} \item $11$ when $K$ is either $10_{-}148, 10_{-}165$ or $12n_{-}0829$; 

\vspace{.2cm} \item $13$ when $K$ is $10_{-}126$.

\end{itemize}
}
\end{examples}

\begin{question}
{\rm Are there quasipositive non-strongly quasipositive knots with $g_4^{top}(L) < g_4(L)$? For such $L$, no $\Sigma_n(L)$ can be an L-space. }
\end{question}

\begin{cor} \label{cor: trivial alexander} Let $K$ be a quasipositive knot with trivial Alexander polynomial. If $\Sigma_n(K)$ is an L-space for some $n \geq 2$, then $K$ is a slice knot. In particular, if $K$ is knotted, it cannot be strongly quasipositive. 
\end{cor}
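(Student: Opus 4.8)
The plan is to deduce everything from Theorem \ref{thm: qp links}(1) together with the fact that a knot with trivial Alexander polynomial has identically zero signature function. First I would observe that since $\Delta_K(t) = 1$ has no zeros on $S^1$, the set $\Delta_K^{-1}(0)$ is empty, so the Tristram--Levine signature function $\sigma_K$, being constant on the components of $S^1 \setminus \Delta_K^{-1}(0)$ (cf. \S \ref{sec: signature function}), is constant on all of $S^1$. Evaluating at $\zeta = 1$, where $\mathcal{S}_F(1) = (1-1)\mathcal{S}_F + (1-1)\mathcal{S}_F^T = 0$ forces $\sigma_K(1) = 0$, gives $\sigma_K \equiv 0$.

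Next, since $K$ is a quasipositive knot (so $m = 1$) with $\Sigma_n(K)$ an L-space for some $n \geq 2$, Theorem \ref{thm: qp links}(1) yields $g_4(K) = \frac12 |\sigma_K(\zeta_n^j)|$ for $1 \leq j \leq n-1$. Combined with $\sigma_K \equiv 0$, this gives $g_4(K) = 0$, i.e. $K$ bounds a smoothly embedded disk in $B^4$ and is therefore a slice knot. This proves the first assertion.

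For the \emph{in particular}, I would argue by contradiction: if $K$ were a knotted strongly quasipositive knot, then $g_4(K) = g(K)$ (cf. \S \ref{sec: positivity}), and knottedness forces $g(K) \geq 1$, whence $g_4(K) \geq 1$, contradicting $g_4(K) = 0$. Hence a knotted $K$ satisfying the hypotheses cannot be strongly quasipositive.

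The main point is not a technical obstacle but a conceptual subtlety: the statement is a short corollary of the already-established Theorem \ref{thm: qp links}, yet trivial Alexander polynomial does \emph{not} by itself bound the Seifert genus (untwisted Whitehead doubles have trivial polynomial and positive genus). Thus the conclusion that $K$ is slice genuinely relies on the vanishing of the signature function forcing $g_4(K) = 0$ through Theorem \ref{thm: qp links}(1), and the non-strong-quasipositivity of knotted examples relies separately on the equality $g_4 = g$ for strongly quasipositive knots.
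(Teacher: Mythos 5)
Your proof is correct, but it takes a genuinely different route from the paper's. The paper first invokes Freedman's theorem that a knot with trivial Alexander polynomial has $g_4^{top}(K) = 0$, and then uses the equality $g_4(K) = g_4^{top}(K)$ supplied by Theorem \ref{thm: qp links}(1) to conclude that $K$ is smoothly slice. You instead use the other half of the displayed identity in Theorem \ref{thm: qp links}(1), namely $g_4(K) = \frac12|\sigma_K(\zeta_n^j)|$, together with the elementary observation that $\Delta_K(t) = 1$ forces $\sigma_K \equiv 0$ (constancy of $\sigma_K$ on $S^1 \setminus \Delta_K^{-1}(0) = S^1$ plus $\sigma_K(1) = 0$; equivalently, via (\ref{matumoto}), all jumps of $\sigma_K$ are bounded by multiplicities of roots of $\Delta_K$, which all vanish). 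The payoff of your route is that it bypasses Freedman's deep theorem entirely and rests only on classical signature theory plus Theorem \ref{thm: qp links}(1); the payoff of the paper's route is that it makes transparent exactly where topological sliceness enters and why the corollary is really a statement about the collapse $g_4 = g_4^{top}$. In fact you only need $\sigma_K(\zeta_n^j) = 0$ at the points $\zeta_n^j \neq 1$, where $\Delta_K(\zeta_n^j) \neq 0$ guarantees the constancy argument applies without any care about the degenerate form at $\zeta = 1$. Your handling of the \emph{in particular} clause ($g_4 = g$ for strongly quasipositive knots, and $g \geq 1$ for knotted $K$) is identical to the paper's.
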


\begin{proof} By Freedman (\cite[Theorem 1.13]{Fr}), $g_4^{top}(K) = 0$ for a knot $K$ with trivial Alexander polynomial. If $\Sigma_n(K)$ is an L-space for some $n \geq 2$, $g_4(K) = g_4^{top}(K) = 0$ by Theorem \ref{thm: qp links}, and so $K$ is slice. The second assertion follows immediately from the fact that $g_4(K) = g(K)$ for a strongly quasipositive knot.
\end{proof}

Here is an application to quasipositive satellite knots. 

\begin{cor} \label{cor: satellite} 
Suppose that $K = P(C)$ is a satellite knot with non-trivial companion $C$ and pattern $P$ of winding number $w$. Let $K_1 = P(U)$ where $U$ is the unknot. Suppose that $K$ and $C$ are quasipositive and $P$ is contained in its unknotted solid torus as the closure of a quasipositive $|w|$-braid where $|w| \geq 1$. 
If $\Sigma_n(K)$ is an L-space for some $n \geq 2$, then $|\sigma_{C}(\zeta_n^{wj})| = 2g_4(C)$ and $|\sigma_{K_1}(\zeta_n^j)| = 2g_4(K_1)$ for $1 \leq j \leq n-1$. Further, $|w| = 1$ if $C$ is not smoothly slice.
\end{cor}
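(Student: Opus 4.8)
The plan is to mirror the proof of Proposition \ref{prop: satellite}, replacing the Seifert genus by the smooth $4$-genus and the input of Proposition \ref{prop: main prop} by Theorem \ref{thm: qp links}(1). First, since $K$ is quasipositive and $\Sigma_n(K)$ is an L-space, Theorem \ref{thm: qp links}(1) gives $|\sigma_K(\zeta_n^j)| = 2g_4(K)$ for $1 \le j \le n-1$, and Litherland's satellite formula \cite[Theorem 2]{Lith} gives $\sigma_K(\zeta_n^j) = \sigma_{C}(\zeta_n^{wj}) + \sigma_{K_1}(\zeta_n^j)$. The classical factorization $\Delta_K(t) = \Delta_{K_1}(t)\Delta_C(t^w)$ together with $\Delta_K(\zeta_n^j) \ne 0$ (as $\Sigma_n(K)$ is a rational homology $3$-sphere) shows that $\zeta_n^j$ is not a root of $\Delta_{K_1}$ and that $\zeta_n^{wj} = (\zeta_n^j)^w$ is not a root of $\Delta_C$; hence Proposition \ref{g4 bound} (combined with $g_4^{top} \le g_4$) yields $|\sigma_{C}(\zeta_n^{wj})| \le 2g_4(C)$ and $|\sigma_{K_1}(\zeta_n^j)| \le 2g_4(K_1)$.

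The heart of the argument, and the step I expect to be the main obstacle, is the $4$-genus additivity
\[
g_4(K) = |w|\,g_4(C) + g_4(K_1).
\]
I would establish this by a surface construction. Let $F_C$ be a complex-curve Seifert surface for the quasipositive knot $C$ with $g(F_C) = g_4(C)$, and let $\widehat F$ be the Bennequin surface of the quasipositive $|w|$-braid $P$; since $K_1 = P(U)$ is the closure of a quasipositive braid, $\widehat F$ is quasipositive with $g(\widehat F) = g_4(K_1)$, so $\chi(\widehat F) = |w| - c$ where $c$ is its number of bands. Replacing the $|w|$ disks of $\widehat F$ by $|w|$ parallel copies of $F_C$ (taken along the companion framing) and retaining the pattern bands produces a Seifert surface $F_K$ for $K$ with $\chi(F_K) = |w|\chi(F_C) - c$, whence $g(F_K) = |w|g_4(C) + g_4(K_1)$. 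By Rudolph's satellite construction for quasipositive surfaces \cite{Ru7}, $F_K$ is itself quasipositive, so Proposition \ref{prop: rudolph} applies and gives $g_4(K) = g(F_K)$, which is the identity. Verifying that the satellite of quasipositive surfaces by a quasipositive braided pattern is again quasipositive, and carrying out the Euler-characteristic bookkeeping carefully, is the delicate point.

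With the identity in hand, the conclusions follow by a sandwich. Since $|w| \ge 1$ and $g_4(C) \ge 0$, the identity gives $2g_4(K) = 2|w|g_4(C) + 2g_4(K_1) \ge 2g_4(C) + 2g_4(K_1)$, while the signature estimates give
\[
2g_4(K) = |\sigma_K(\zeta_n^j)| \le |\sigma_{C}(\zeta_n^{wj})| + |\sigma_{K_1}(\zeta_n^j)| \le 2g_4(C) + 2g_4(K_1).
\]
Thus every inequality is forced to be an equality: in particular $|\sigma_{C}(\zeta_n^{wj})| = 2g_4(C)$ and $|\sigma_{K_1}(\zeta_n^j)| = 2g_4(K_1)$ for $1 \le j \le n-1$, which is the first assertion. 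Moreover the resulting equality $2g_4(K) = 2g_4(C) + 2g_4(K_1)$, compared with the identity, yields $(|w|-1)g_4(C) = 0$; hence if $C$ is not smoothly slice then $g_4(C) > 0$ and so $|w| = 1$, which is the final assertion.
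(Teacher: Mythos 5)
Your overall route coincides with the paper's: Theorem \ref{thm: qp links}(1) gives $|\sigma_K(\zeta_n^j)| = 2g_4(K)$, Litherland's formula \cite[Theorem 2]{Lith} splits the signature, the additivity $g_4(K) = |w|\,g_4(C) + g_4(K_1)$ closes the sandwich exactly as in Proposition \ref{prop: satellite}, and your use of Proposition \ref{g4 bound} together with $g_4^{top} \leq g_4$ to obtain $|\sigma_C(\zeta_n^{wj})| \leq 2g_4(C)$ and $|\sigma_{K_1}(\zeta_n^j)| \leq 2g_4(K_1)$ is correct. The single point of divergence is the additivity itself: the paper simply quotes J\"oricke \cite[Lemma 1]{Jo}, whereas you attempt to prove it, and that is where your write-up has a genuine gap.

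Concretely: since $C$ is only assumed quasipositive, the surface $F_C$ realising $g_4(C)$ is properly embedded in $B^4$ and need not be a Seifert surface, so the surface $F_K$ you assemble is likewise a surface in $B^4$, not a Seifert surface for $K$; the Euler characteristic bookkeeping then yields only the upper bound $g_4(K) \leq |w|\,g_4(C) + g_4(K_1)$. But the inequality your sandwich actually consumes is the opposite one, $g_4(K) \geq |w|\,g_4(C) + g_4(K_1)$: you need $2|w|g_4(C) + 2g_4(K_1) \leq 2g_4(K)$ to force every inequality in the chain to be an equality. That lower bound comes only from knowing that $F_K$ is again (isotopic to) a piece of a complex affine curve, so that Proposition \ref{prop: rudolph} (Kronheimer--Mrowka) makes it genus-minimising --- and this is precisely the ``delicate point'' you flag and do not carry out. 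Showing that the satellite of a quasipositive surface by a quasipositive braided pattern is again quasipositive is essentially the entire content of J\"oricke's lemma (see also \cite{Ru7}), so the weight of the argument rests exactly on the step you leave open. Everything else in your proposal matches the paper; citing \cite[Lemma 1]{Jo} at that point would complete it.
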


\begin{proof} 
By hypothesis $K, C$, and $P$ are quasipositive with $P$ contained in its unknotted solid torus as the closure of a quasipositive $|w|$-braid where $|w| \geq 1$. According to  J\"{o}ricke \cite[Lemma 1]{Jo}, $g_4(K) = |w|g_4(C) + g_4(K_1)$. If $\Sigma_n(K)$ is an L-space for some $n \geq 2$,  Theorem \ref{thm: qp links} implies that $|\sigma_K(\zeta_n^j| = 2g_4(K)$ for $1 \leq j \leq n-1$. By using Litherland 's result (\cite[Theorem 2]{Lith}) and the same argument as in the proof of Proposition \ref{prop: satellite},  one concludes that $|\sigma_{C}(\zeta_n^{wj})| = 2g_4(C)$, $|\sigma_{K_1}(\zeta_n^j)| = 2g_4(K_1)$, for $1 \leq j \leq n-1$, and that $|w| = 1$ if $g_4(C) > 0$. 
\end{proof}

\section{Questions, problems, and remarks} \label{sec: questions and problems}

\subsection{The expected form of $\mathcal{L}_{br}(L)$} 

The expected form of $\mathcal{L}_{br}(L)$ (cf. \S \ref{sec: introduction}) suggests the following problem. 

\begin{problem} \label{prob: expected}
{\rm Show that if $L$ is a strongly quasipositive knot such that $\Sigma_n(L)$ is an L-space, then $\Sigma_r(K)$ is an L-space for each $2 \leq r \leq n$. }
\end{problem}

Let $L$ be as in Problem \ref{prob: expected} and choose a connected Seifert surface of $L$ which can be isotoped, relative to $L$, to a properly embedded surface $F \subset B^4$ which is the intersection of $B^4$ with a complex affine curve in $\mathbb C^2$. If Problem \ref{prob: expected} has a positive solution, then $\Sigma_r(L)$ would be a rational homology $3$-sphere and $\beta_2^+(\Sigma_r(F)) = 0$ would hold for $2 \leq r \leq n$. This is indeed the case. 

\begin{prop} \label{cor: b2+ = 0 filling}
Suppose that $L$ is a strongly quasipositive link such that $\Sigma_n(L)$ is an L-space for some $n \geq 2$. Let $F$ be as above. Then for each $2 \leq r \leq n$, $\Sigma_r(L)$ is a rational homology $3$-sphere  and $\beta_2^+(\Sigma_r(F)) = 0$. 
\end{prop}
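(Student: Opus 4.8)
The plan is to compute $\beta_2^+(\Sigma_r(F))$ directly from the behaviour of the Tristram--Levine signature of $L$ at the $r$-th roots of unity, thereby sidestepping the fact that $\Sigma_r(L)$ is not \emph{a priori} known to be an L-space when $r<n$. First I would record the structural input already available. Since $\Sigma_n(L)$ is an L-space, Theorem \ref{thm: sqp links}(1) (equivalently Proposition \ref{proposition: maximal signature}(4)) shows that every root of $\Delta_L(t)$ lies in the open arc $I_+(\zeta_n)$ and that $|\sigma_L(\zeta)| = 2g(L) + (m-1) = \deg \Delta_L(t)$ for all $\zeta \in \bar I_-(\zeta_n)$. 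Because $2 \le r \le n$ gives $2\pi/r \ge 2\pi/n$, we have the inclusions $I_+(\zeta_n) \subseteq I_+(\zeta_r)$ and $\{\zeta_r^j : 1 \le j \le r-1\} \subseteq \bar I_-(\zeta_n)$. Consequently no $\zeta_r^j$ with $1 \le j \le r-1$ is a root of $\Delta_L(t)$, which by \S\ref{sec: signature function} means precisely that $\Sigma_r(L)$ is a rational homology $3$-sphere. This settles the first assertion.

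For the second assertion I would compute the signature of the Hermitian intersection form on each eigenspace of the deck transformation. Since $F$ arises from a \emph{connected} Seifert surface of $L$, the branched cover $\Sigma_r(F)$ is simply connected (\S\ref{F connected}), so Remark \ref{rem: seifert surface case} gives $\dim_{\mathbb C} E_2(\Sigma_r(F); j) = \beta_2(\Sigma_r(F); j) = 2g(F) + (m-1)$ for $1 \le j \le r-1$, while $E_2(\Sigma_r(F); 0) = 0$ by (\ref{betar0}). Moreover $g(F) = g(L)$, as established in the proof of Proposition \ref{prop: main prop}. On the other hand, Viro's theorem (Theorem \ref{thm: viro}) identifies the signature of $\langle\cdot,\cdot\rangle_{\Sigma_r(F)}$ restricted to $E_2(\Sigma_r(F); j)$ with $\sigma_L(\zeta_r^j)$, whose absolute value is $2g(L)+(m-1) = \dim_{\mathbb C} E_2(\Sigma_r(F); j)$. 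Thus the form is definite on each eigenspace, of a sign equal to that of $\sigma_L(\zeta_r^j)$.

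The common sign is determined once and for all from the case $r = n$: there $\beta_2^+(\Sigma_n(F)) = 0$ (Proposition \ref{prop: main prop}, via \cite[Theorem 1.4]{OS1}), which forces each restricted form to be negative definite and hence $\sigma_L(\zeta_n) = -(2g(L)+(m-1)) < 0$. Since $\sigma_L$ is constant on the connected, root-free arc $\bar I_-(\zeta_n)$, it equals $\sigma(L) = -(2g(L)+(m-1))$ throughout, and in particular $\sigma_L(\zeta_r^j) = -(2g(L)+(m-1))$ for every $2 \le r \le n$ and $1 \le j \le r-1$. Therefore the intersection form is negative definite on each $E_2(\Sigma_r(F); j)$.

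Finally I would assemble the pieces. The deck transformation $\tau$ of $\Sigma_r(F)$ is an orientation-preserving homeomorphism, so $t = \tau_*$ is unitary for $\langle\cdot,\cdot\rangle_{\Sigma_r(F)}$; the resulting identity $\langle a, b\rangle = \zeta_r^{\,j-k}\langle a,b\rangle$ for $a \in E_2(\Sigma_r(F);j)$ and $b \in E_2(\Sigma_r(F);k)$ shows that eigenspaces for distinct eigenvalues are orthogonal. Hence $H_2(\Sigma_r(F);\mathbb C) = \bigoplus_{j=1}^{r-1} E_2(\Sigma_r(F); j)$ is an orthogonal direct sum of negative definite subspaces, so the whole form is negative definite and $\beta_2^+(\Sigma_r(F)) = 0$. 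The main obstacle is precisely the observation that one cannot invoke the Stein-filling/L-space input of Proposition \ref{prop: main prop} for $r < n$; the real work lies in extracting, from the single hypothesis that $|\sigma_L|$ is maximal throughout $\bar I_-(\zeta_n)$, both the maximality of $|\sigma_L(\zeta_r^j)|$ at all the relevant roots of unity and the \emph{common} negative sign there, after which Viro's theorem and the orthogonality of eigenspaces do the rest.
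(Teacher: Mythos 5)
Your proposal is correct and follows essentially the same route as the paper's proof: establish that the $r$-th roots of unity (other than $1$) lie in the root-free arc $\bar I_-(\zeta_n)$ to get the rational homology sphere claim, then combine Viro's theorem with the constancy of $\sigma_L$ at the value $-(2g(L)+(m-1))$ on that arc (the sign being pinned down by the negative definiteness at level $n$) to conclude each eigenspace is negative definite. The only difference is that you spell out the unitarity/orthogonality of the eigenspace decomposition, which the paper leaves implicit.
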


\begin{proof} 
Fix $r \in \{2, 3, \ldots, n\}$. We already know that $\Sigma_r(F)$ is a simply-connected Stein filling of $\Sigma_r(L)$. We show below that $\Sigma_r(L)$ is a rational homology $3$-sphere and  $\beta_2^+(\Sigma_r(F)) = 0$. 

It follows from Proposition \ref{prop: main prop} that there are no roots of $\Delta_L(t)$ in the closed sector $\bar{I}_-(\zeta_n)$, and since this sector contains all $r^{th}$ roots of unity other than $1$, $\Sigma_r(L)$ is a rational homology $3$-sphere (cf. \S \ref{sec: signature function}). Proposition \ref{prop: main prop} also implies that the restriction of $\sigma_L$ to $I_-(\zeta_n)$ is constantly $-(2g(L) +(m-1))$, so in particular if $1 \leq j \leq r-1$, then $\sigma_L(\zeta_r^{j}) = -(2g(L) + (m-1)) = -2(g(F) + (m-1))= - \beta_2(\Sigma_r(F); j) \hbox{ (Lemma \ref{lemma: chi} and Remark \ref{rem: seifert surface case})} = -\hbox{dim}_{\mathbb C} (E_2(\Sigma_r(F));j)$. On the other hand, $\sigma_L(\zeta_r^j)$ equals $\hbox{signature}(\langle \cdot, \cdot\rangle_{\Sigma_r(F)}|E_2(\Sigma_r(F));j)$ (Theorem \ref{thm: viro}), and so $\langle \cdot, \cdot\rangle_{\Sigma_r(F)}|E_2(\Sigma_r(F);j)$ is negative definite. Hence as $H_2(\Sigma_r(F); \mathbb C)$ is isomorphic to the sum $\oplus_{j=1}^{r-1} E_2(\Sigma_r(F);j)$, $\langle \cdot, \cdot\rangle_{\Sigma_r(F)}$ is negative definite. Thus $\beta_2^+(\Sigma_r(F)) = 0$.
\end{proof}

\subsection{Extensions to more general branched covers} 

\subsubsection{Branched covers of manifolds obtained by surgery on knots in $S^3$} We will write $\mathcal{L}_{br}(K; \mu)$ for $\mathcal{L}_{br}(K)$ for reasons which will become  clear below. 

There are various ways to associate branched covers to the manifolds obtained by surgery on knots in the $3$-sphere. Here is one.

Let $K$ be a knot in the $3$-sphere with exterior $M$. For each $r = p/q \in \mathbb Q \setminus \{0\}$ in lowest terms we can associate a slope $\alpha \leftrightarrow \pm(p \mu_K + q \lambda_K) \in H_1(\partial M_K)/\pm$ where $M_k$ is the exterior of $K$ and $\mu_K$ and $\lambda_K$ are meridional and longitudinal classes of $K$. If $n \geq 2$ is an integer relatively prime to $p$, there is an $n$-fold cyclic cover $\Sigma_n(K; \alpha)$ of the Dehn filling $K(\alpha)$ branched over the core of the $\alpha$-filling torus. (It's easy to see that $\Sigma_n(K; \alpha)$ can be obtained by Dehn filling the $n$-fold cyclic cover of $M_K$.) Set 
$$\mathcal{L}_{br}(K, \alpha) = \{n \geq 2: \Sigma_n(K, \alpha) \hbox{ is an L-space }\}$$ 

\begin{question}
{\rm What are the constraints on $\mathcal{L}_{br}(K, \alpha)$? For instance, is it always a possibly empty set of successive integers begining with $2$? }
\end{question}
The sets $\mathcal{L}_{br}(K, \alpha)$ can be calculated in the case that $K$ is a torus knot with interesting results.    

\begin{question}
{\rm What is the geometry of 
$$\mathcal{L}_{br}(K; \partial M) = \{n \alpha \in H_1(\partial M) : \Sigma_n(K; \alpha)\hbox{ is an L-space}\}$$ 
considered as a subset of $H_1(\partial M; \mathbb Z)  $($\cong \mathbb Z^2$)$ \subset H_1(\partial M; \mathbb R) $($\cong \mathbb R^2$)?}
\end{question}

\subsubsection{Branched covers of manifolds other than $S^3$} 
The definition of (strongly) quasipositive links has been extended to links in arbitrary closed, connected, orientable $3$-manifolds and Hayden has shown that transverse $\mathbb C$-links in such manifolds are quasipositive (\cite{Ha}). Conversely, he has shown that quasipositive links contained in Stein-fillable contact $3$-manifolds bound symplectic surfaces in some Stein domain. Thus many of our arguments can be applied in a more general setting. 

\begin{question} 
{\rm In what form do our results extend to quasipositive links in rational homology $3$-spheres?}
\end{question}

\subsection{Branched covers of slice quasipositive knots} 
The constant $n_4(L)$ of Theorem \ref{thm: qp links} is defined only for quasipositive links for which $\Delta_L(t)$ is not divisible by $(t-1)^{2g_4(L) + (m-1)}$, and for such links $\Sigma_n(L)$ is not an L-space for $n > n_4(L)$. As it is not defined for smoothly slice quasipositive. knots, we are led to the following question.  
 
\begin{question} 
{\rm Is there a smoothly slice quasipositive knot all of whose branched covers are L-spaces?}
\end{question} 

\subsection{Branched cover integer homology spheres} 
Ozsv\'ath and Szab\'o have conjectured that the only irreducible integer homology $3$-sphere L-spaces are $S^3$ and the Poincar\'e homology $3$-sphere (\cite[Problem 11.4 and the remarks which follow it]{Sz}). Hence if $K$ is prime and $\Sigma_n(K)$ is an integer homology $3$-sphere L-space, the orbifold theorem leads us to expect $K$ to be either the $(2,3), (2, 5)$ or $(3, 5)$ torus knot.

\begin{problem}
{\rm Show that if $K$ is a prime quasipositive knot and some $\Sigma_n(K)$ is a $\mathbb Z$-homology $3$-sphere L-space, then $K$ is either the $(2,3), (2, 5)$ or $(3, 5)$ torus knot (so respectively, $n$ is $5, 3,$ or $2$).}
\end{problem}

Our methods do provide constraints on strongly quasipositive knots for which some $\Sigma_n(K)$ is a $\mathbb Z$-homology $3$-sphere L-space. For instance, if $n = 2$,  the genus of $K$ must be divisible by $4$ or equivalently, the signature of $K$ is a multiple of $8$. To see this, let $F$ be a quasipositive Seifert surface for $K$ and recall that there is an isomorphism between $H_1(F)$ and $H_2(\Sigma_2(F))$ under which the intersection form on $H_2(\Sigma_2(F))$ is isomorphic to the symmetrised Seifert form $\mathcal{S}_F$ (cf. \S \ref{subsec: signatures and branched covers}). The intersection form on $H_2(\Sigma_2(F))$ is unimodular since $\Sigma_2(F)$ is simply-connected and $\Sigma_2(K)$ is a $\mathbb Z$-homology $3$-sphere. It is also an even form, as this is clearly true of $\mathcal{S}_F$. Thus its signature is a multiple of $8$ (cf. \cite[Corollary 1, \S V.2]{Ser}), and since $K$ is a strongly quasipositive knot, this signature is twice the genus of $K$ (Theorem \ref{thm: sqp links}(1)), which completes the argument. 

The same argument implies that if the $\mu$-invariant of $\Sigma_2(K)$ is zero, then the genus of $K$ is divisible by $8$. 

\subsection{Quasipositive links with LO or CTF branched cyclic covers} 
It has been conjectured that for closed, connected, orientable, irreducible $3$-manifolds, the conditions of not being an L-space (NLS), of having a left-orderable fundamental group (LO), and of admitting a co-oriented taut foliation (CTF) are equivalent (\cite{BGW}, \cite{Ju}). Thus it is natural to ask whether the analogues of our results hold with the condition of some branched cyclic cover being an L-space replaced by either it not admitting a co-oriented taut foliation or it not having a left-orderable fundamental group. 

\begin{problem}
{\rm Prove analogous results to Theorem \ref{thm: sqp links} and Theorem \ref{thm: qp links} for strongly quasipositive links for which some $\Sigma_n(L)$ does not admit a co-oriented taut foliation or for which some $\Sigma_n(L)$  has a non-left-orderable fundamental group.}
\end{problem}

For instance, it is known that the conjectures hold for graph manifolds (\cite{BC}, \cite{HRRW}) and since the $2$-fold branched covers of arborescent links are of this type, we deduce: 

\begin{prop}

Let $L$ be an arborescent link and suppose that either $\Sigma_2(L)$ is not CTF or $\Sigma_2(L)$ is not LO. 

$(1)$ If $L$ strongly quasipositive, then $g_4^{top}(L) = g(L) = \frac12(|\sigma(L)| - (m-1))$. 

$(2)$ If $L$ quasipositive, then $g_4^{top}(L) = g_4(L) = \frac12(|\sigma(L)| - (m-1))$. 
\qed
\end{prop}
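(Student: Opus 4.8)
The plan is to deduce both statements from the corollaries already in hand by showing that each hypothesis forces $\Sigma_2(L)$ to be an L-space. The first step is to record that the double branched cover of an arborescent link is a graph manifold. Indeed, an arborescent link is built by plumbing rational tangles along a tree, and passing to the double branched cover turns each tangle piece into a Seifert fibred piece, these being glued along tori according to the plumbing tree; the resulting manifold is a Waldhausen graph manifold. (This is classical, going back to Montesinos; it can also be read off directly from the Seifert description of the double cover of a Montesinos tangle.) When $L$ is prime and non-split, $\Sigma_2(L)$ is moreover closed, connected, orientable and irreducible, so it lies in the scope of the L-space conjecture.

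The second step is to invoke the graph-manifold case of that conjecture. By \cite{BC} and \cite{HRRW}, for a closed, connected, orientable, irreducible graph manifold $Y$ the three conditions NLS, CTF and LO are equivalent. Contrapositively, if $Y$ admits no co-oriented taut foliation, or if $\pi_1(Y)$ is not left-orderable, then $Y$ must be an L-space. Applying this with $Y = \Sigma_2(L)$, each of the two hypotheses of the proposition implies that $\Sigma_2(L)$ is an L-space.

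The final step is to feed this into the earlier corollaries. For (1), with $L$ strongly quasipositive and $\Sigma_2(L)$ an L-space, Corollary \ref{cor: qa and sqp links} gives at once $g_4^{top}(L) = g(L) = \frac12(|\sigma(L)| - (m-1))$. For (2), with $L$ quasipositive and $\Sigma_2(L) = \Sigma_{2 \cdot 1}(L)$ an L-space, Corollary \ref{cor: qa and qp links} gives $g_4^{top}(L) = g_4(L) = \frac12(|\sigma(L)| - (m-1))$.

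The main obstacle, and the only point needing real care, is the first step: verifying that $\Sigma_2(L)$ genuinely is a graph manifold to which \cite{BC} and \cite{HRRW} apply, and in particular handling reducibility. If $L$ is split, or the plumbing tree disconnected, the double cover can be reducible and the irreducibility hypothesis of the conjecture fails; one should either restrict to prime, non-split $L$ (so that $\Sigma_2(L)$ is irreducible), or observe that a reducible manifold is an L-space if and only if each of its prime summands is, thereby reducing to the irreducible case summand by summand. Everything else is a formal combination of the equivalence for graph manifolds with the two corollaries.
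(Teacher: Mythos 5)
Your argument is exactly the one the paper intends: it notes just before the statement that the NLS/CTF/LO equivalence is known for graph manifolds by \cite{BC} and \cite{HRRW} and that $2$-fold branched covers of arborescent links are graph manifolds, so either hypothesis forces $\Sigma_2(L)$ to be an L-space, after which Corollaries \ref{cor: qa and sqp links} and \ref{cor: qa and qp links} give the two conclusions. Your additional care about irreducibility (restricting to prime non-split links or passing to prime summands) is a reasonable refinement of a point the paper leaves implicit, but the route is the same.
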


Let $K$ be a hyperbolic fibred knot in a closed, connected, orientable $3$-manifold $W$ with monodromy $f$. It was observed in \cite[Theorem 4.1]{HKM2} that if the fractional Dehn twist coefficient of $f$ is at least $1$ in absolute value, then work of Roberts \cite{Ro} as interpreted in \cite{HKM2} implies that $W$ is CTF. In a recent preprint \cite{BHu}, Boyer and Hu show that the same condition implies that $W$ is LO. Since the fractional Dehn twist coefficient of $f^n$ is $n$ times the fractional Dehn twist of $f$, and since the fractional Dehn twist coefficient of a hyperbolic fibred strongly quasipositive knot $K$ is non-zero, $\Sigma_n(K)$ is CTF and LO for large $n$.

\begin{problem}
{\rm If $K$ is a fibred strongly quasipositive knot (not necessarily hyperbolic), show that $\Sigma_n(K)$ is LO for $n \gg 0$. }
\end{problem}
 
Of course, given the conjectures described above and the conclusions of Corollary \ref{cor: monic sqp knots}, we expect that in the fibred case, $\Sigma_n(K)$ is CTF and LO for $n \geq 6$.  

\subsection{Strongly quasipositive $2$-bridge knots} 
In the special case that $K$ is a strongly quasipositive $2$-bridge knot, it is interesting to compare what is known about the LO nature of $\Sigma_n(K)$ in comparison with what we've deduced about its L-space nature. (Ying Hu has shown that if $K$ is the $p/q$ $2$-bridge knot where $q \equiv 3$ (mod $4$), then $\Sigma_n(K)$ is LO for $n \gg 1$. See \cite[Theorem 4.3]{Hu}. More generally, Gordon has shown that the same statement holds for $2$-bridge knots with non-zero signatures \cite{Go}.) The CTF status of these manifolds appears to be unknown in general at the present. 

\subsubsection{Genus one $2$-bridge knots} 
Consider the genus one $2$-bridge knots $K = K_{[2k, -2l]}$ where $k > 0$ and $l \ne 0$ are integers (cf. \S \ref{sec: $2$-bridge genus 1}). Since $\Sigma_2(K)$ is a lens space, it is an L-space, has non-left-orderable fundamental group, and supports no co-oriented taut foliation. More generally, it is known that Corollary \ref{cor: 2bridge} holds with ``(not) being an L-space" replaced by ``(does not have) has non-left-orderable fundamental group" (\cite{DPT},  \cite{Tra}). Thus if $l <  0$, $\Sigma_n(K_{[2k, -2l]})$ is not LO for $n \geq 2$ and when $l > 0$ it is LO for $n > 2\pi/\textup{arccos}(1 - 1/2kl)$. Further, in the case that $l > 0$, it is known that $\Sigma_n(K_{[2k, -2l]})$ is an L-space and not LO if $n = 3$ (\cite{Pe} and \cite{DPT} respectively), $n = 4$ (\cite{Te1}, respectively \cite{GLid1}), and $n = 5$ (Hori (Remark \ref{rem: genus one 2-bridge}(2)), respectively \cite{Ba}). 

When $k = 2$ and $l = 1$, $K$ is the knot $5_2$. We discussed what is known about the L-space nature of $\Sigma_n(K)$ in Remark \ref{rem: genus one 2-bridge}(2) and in particular, $\Sigma_n(K)$ is an L-space if and only if $2 \leq n \leq 5$. Marc Culler has used a computer program of Nathan Dunfield to test the left-orderability of $\pi_1(\Sigma_n(K))$ for $n = 6, 7$, and $8$, though the result was inconclusive suggesting that for those values of $n$, $\pi_1(\Sigma_n(K))$ is left-orderable. Finally, we note that the left-orderability of $\pi_1(\Sigma_n(K))$ for $n \ge 9$ uses the fact that these groups have non-abelian representations into $SL(2,\mathbb R)$. In a private communication, Culler and Dunfield have shown that there are no such representations when $n = 6, 7$ or $8$. 

\subsubsection{The $2$-bridge knots $K(k, m)$} \label{subsubsec: Tran}
Next consider $K = K(k, m)$ (cf. \S \ref{sec: 2-bridge}). Again, the fact that $\Sigma_2(K)$ is a lens space implies that it is an L-space, has non-left-orderable fundamental group, and supports no co-oriented taut foliation. Ba has shown that for all $k$, $\Sigma_3(K(k,2))$ is not LO (\cite{Ba}), which should be compared with Corollary \ref{cor: br covers 2bridge 2}(2). It is expected that for $n \geq 4$ or for $n = 3$ when $m \geq 3$, $\Sigma_n(K(k,2))$ is LO. Tran has shown that $\pi_1(\Sigma_n(K(k,m)))$ is left-orderable for $n$ sufficiently large (\cite[Theorem 2(b)]{Tra}), however, the lower bound on $n$ that he obtains for this to happen is a function of $k$ and $m$ which tends to $\infty$ with either $k$ or $m$. (Note that Tran's ``$n$" is our ``$m$", and his ``$m$" is our ``$k-1$".)

\begin{problem}
{\rm Is $\pi_1(\Sigma_n(K(k,m)))$ left-orderable for the values of $n \geq 4$ when $m = 2$ and $n \geq 3$ when $m \geq 3$ (cf. Corollary \ref{cor: br covers 2bridge 2})?}
\end{problem}

\end{document}